\documentclass[a4paper, 11pt, twoside]{amsart}
\usepackage[a4paper, top=3cm, bottom=3cm, right=2.5cm, left=2.5cm]{geometry}

\usepackage[english]{babel}
\usepackage[utf8]{inputenc}
\usepackage[T1]{fontenc}
\usepackage{microtype}
\usepackage[style=british]{csquotes}

\usepackage{amsmath, amsthm, amssymb, bm,bbm, mathtools}
\usepackage{enumerate}

\usepackage{calc}
\usepackage{graphicx}
\usepackage{caption}
\usepackage[labelfont=normalsize]{subcaption}
\graphicspath{{img/}}
\usepackage[dvipsnames]{xcolor}
\definecolor{mygreen}{RGB}{0, 152,15}

\usepackage[%
	unicode,
	colorlinks=true,
	linkcolor=blue,
	urlcolor=blue,
    linktocpage=true,
	citecolor=blue]{hyperref}
\hypersetup{%
	pdfpagemode = UseNone,
	pdfstartview = FitH,
	pdfdisplaydoctitle = true,
	pdfborder = {0 0 0}
}

\tolerance=1
\emergencystretch=\maxdimen
\hyphenpenalty=10000
\hbadness=10000

\makeatletter
\renewcommand\subsection{\@startsection{subsection}{2}%
  \z@{-.5\linespacing\@plus-.7\linespacing}{.5\linespacing}%
  {\normalfont\scshape}}
\makeatother

\newcommand\blfootnote[1]{%
  \begingroup
  \renewcommand\thefootnote{}\footnote{#1}%
  \addtocounter{footnote}{-1}%
  \endgroup
}

\newtheorem{thm}{Theorem}[section]
\newtheorem{lemma}[thm]{Lemma}
\newtheorem{prop}[thm]{Proposition}
\newtheorem{cor}[thm]{Corollary}
\newtheorem{conj}[thm]{Conjecture}
\newtheorem{defn}[thm]{Definition}

\theoremstyle{remark}
\newtheorem{remark}[thm]{Remark}

\numberwithin{equation}{section}

\def\P{\mathbb{P}}
\def\H{\mathbb{H}}
\def\D{\mathbb{D}}
\def\Q{\mathbb{Q}}

\def\R{\mathbb{R}}
\def\C{\mathbb{C}}

\def\Z{\mathbb{Z}}

\DeclareMathOperator{\SLE}{SLE}
\DeclareMathOperator{\CLE}{CLE}
\DeclareMathOperator{\BCLE}{BCLE}

\DeclareMathOperator{\diam}{diam\,}
\DeclareMathOperator{\dist}{dist\,}

\DeclarePairedDelimiter\abs{\lvert}{\rvert}
\DeclarePairedDelimiter\norm{\lVert}{\rVert}

\begin{document}

\title{The fuzzy Potts model in the plane: Scaling limits and arm exponents}
\author{Laurin Köhler-Schindler and Matthis Lehmkuehler}
\date{January 16, 2025}
\begin{abstract}
    We consider a critical  Fortuin-Kasteleyn (FK) percolation with cluster weight $q \in [1,4)$  in the plane, and color its clusters in red (respectively blue) with probability $r \in (0,1)$ (respectively $1-r$), independently of each other. We study the resulting fuzzy Potts model, which corresponds to the critical Ising model in the special case $q=2$ and $r=1/2$. We show that under the assumption that the critical FK percolation converges to a conformally invariant scaling limit (which is known to hold for the FK-Ising model, i.e.\ $q=2$), the obtained coloring converges to variants of Conformal Loop Ensembles constructed, described and studied by Miller, Sheffield and Werner. Based on discrete considerations, we also show that the arm exponents for this coloring in the discrete model are identical to the ones of the continuum model. Using the values of these arm exponents in the continuum, we determine the arm exponents for the fuzzy Potts model.
\end{abstract}
\blfootnote{L.\,KS., M.L.\ -- ETH Zürich, Rämistrasse 101, 8092 Zürich, Switzerland}

\maketitle
\tableofcontents
\thispagestyle{empty}
\newpage

\section{Introduction} \label{sec:intro-fuzzy}

\subsection{Coloring FK percolation clusters}

The subject of investigation of the present paper is the random coloring (with
two colors) of a planar graph (more specifically, here $\Z^2$ or $\Z \times
\Z_+$) that is obtained when one colors independently the clusters of a FK
percolation model as introduced in \cite{fk-model}. The resulting models are
often referred to as fuzzy Potts models \cite{haggstrom-fuzzy-positive,
fuzzy-potts-def,percolation-density-chayes,haggstrom-color-percolation} (also
known as fractional fuzzy Potts models) and belong to a wider class of random
colorings known as `divide and color' models. In this paper, we will study the
case where we start with a critical FK percolation. Note that the construction
involves two layers of randomness, first the sampling of a critical FK
percolation with some cluster weight $q$ and then the coloring of each of the
clusters in red or blue with respective probabilities $r$ and $1-r$.

The name `fuzzy Potts model' is due to the fact that if $r=i/q$ for $i\in
\{1,\dots,q-1\}$ the fuzzy Potts model can also be obtained from a Potts model
with spins $\{1,\dots,q\}$ by coloring each vertex with spin in $\{1,\dots,i\}$
in red and in blue otherwise. This is a consequence of the Edwards-Sokal
coupling and it is exactly the construction of an Ising model from an FK-Ising
percolation.

Before describing our results on these models, it is worth recalling how the
scaling limit and arm exponents of critical models such as percolation or the
Ising model have been derived mathematically: (i) A first crucial step is to
show (via discrete considerations exhibiting conformal invariance features) that
the interfaces in the discrete models converge to SLE type curves in the scaling
limit. (ii) A second independent ingredient is the derivation of exponents
related to this continuum scaling limit (this is performed via SLE computations
and considerations) that in turn enable the computation of `arm exponents' for
this continuum scaling limit. (iii) A final step is to show that these continuum
arm exponents are actually identical to the corresponding arm exponents for the
discrete model (this step involves typically discrete `arm separation' and
`quasi-multiplicativity' arguments for the discrete arm events). See for
instance \cite{werner-perc-notes} for an overview in the case of critical
Bernoulli percolation (see also \cite{smirnov-werner-percolation,smirnov-cardy,
lsw-bm-exponents1, lsw-bm-exponents2, lsw-bm-exponents3, lsw-mono-arm}).

\medspace

For the fuzzy Potts model mentioned above, the situation is more involved, due
to this `two layer randomness', so that this strategy needs to be adapted
appropriately.

(i) We \emph{assume} that the critical FK percolation model converges to its
conjectured Conformal Loop Ensemble (CLE) scaling limit (we will state this
conjecture more precisely as Conjecture \ref{conj:fk-to-cle} in this paper, and
just refer to it as the FK conformal invariance conjecture). This conjecture is
known to hold in the $q=2$ case. Under this assumption, the first question is
whether the scaling limit of the coloring is the coloring of the scaling limit.
This is not obvious, because one has for instance to rule out the possibility
that the coloring of the `small microscopic FK cluster' that disappear in the
scaling limit contribute to creating larger monochromatic islands. This will be
the first type of results that we will derive in the present paper.

(ii) The conclusion of this first step is then that the scaling limit of these
random colorings are described by the colorings of these continuous conformal
loop ensembles. These continuum colorings have been introduced and studied in a
series of works by Miller, Sheffield and Werner on the shoulder of which the
present work will stand.  It is worth recalling some aspects of this series of
papers: In the seminal paper \cite{cle-percolations}, it is shown that
when one starts with a non-simple CLE (i.e., a $\CLE_{\kappa'}$ for $\kappa' \in
(4,8)$ of the type that conjecturally appears as scaling limits of critical FK
percolation models with cluster weight $q\in (0,4)$) and one colors its clusters
independently using some coloring parameter $r$, then the obtained coloring is
described by a variant of $\CLE_{\kappa}$ for $\kappa = 16 / \kappa'$ that they
describe in terms of Boundary Conformal Loop Ensembles (BCLE). The interfaces of
these BCLEs are variants of $\SLE_\kappa$ curves involving another parameter
$\rho$. 
In \cite{cle-percolations}, the explicit relation between $\rho$ and $r$
is not determined, but Miller, Sheffield and Werner have derived this relation
in a subsequent recent paper \cite{msw-non-simple} using arguments based on
Liouville Quantum Gravity. Altogether, it provides a full explicit description
of these continuous fuzzy Potts models in terms of $\kappa'$ and $r$. In
\cite{cle-percolations,msw-simple}, a related setting is also considered when one starts
with a simple CLE instead of a non-simple CLE. This led them to obtain the values of some continuum arm
exponents. The formulas for these exponents are of a rather novel type,
reflecting the fact that the relation between $r$ and $\rho$ seems to be
connected directly to `Liouville Quantum Gravity' considerations. In the present
paper, we will use these facts to derive the (almost) complete set of continuum
arm exponents for these `colored CLE' models.

(iii) Finally, one has to show that the arm exponents for these continuum
`colored CLE' models do match those of the discrete fuzzy Potts models. This
will require again some new inputs and results about these discrete models but is
more subtle than the corresponding problem in the percolation setting since the
fuzzy Potts model does not satisfy a Markov property which allows the existing
proofs for arm separation and quasi-multiplicativity to be adapted. Instead, we
will work via the FK percolation model and use its Markov property to reason
about the fuzzy Potts model. In particular, the work \cite{duminil2021planar}
will be the key tool allowing us to perform the relevant constructions in the
discrete setting.

\medspace

The above ideas will then be combined into the main results of this paper that
we now state. We consider $q \in [1,4)$ and consider the critical FK percolation
model with cluster weight $q$. Throughout
this paper we will then define
\begin{align*}
    \kappa' =4\pi/\arccos(-\sqrt{q}/2) \in (4, 6 ]  \quad{and}\quad \kappa =
    16 / \kappa' \in [8/3, 4)\;.
\end{align*}
The results are then {\em conditional on the conformal invariance conjecture}
for this model, which says that the scaling limit of the cluster boundaries of
this critical FK percolation with cluster weight $q$ is given by a
$\CLE_{\kappa'}$, which is a conformally invariant law on collections of loops.
So far, this conjecture has only been proved in the case $q=2$ (i.e.,
$\kappa'=16/3$) in a series of works \cite{smirnov-ising, kemp-smirnov-fk-bdy,
kemp-smirnov-fk-full} and so in this particular case, the theorems that we will
now state are in fact unconditional.

To state the theorems, we first need to define discrete \emph{arm events}
$A^{s}_\tau(m,n)\subset \{R,B\}^{\Z^2}$ and $A^{+s}_{\tau}(m,n)\subset \{R,B\}^{\Z\times
\Z_+}$ for $1\le m\le n$ where $\tau=\tau_1\cdots\tau_k$ is a finite length word
with letters $\tau_1,\dots,\tau_k\in \{R,B\}$, called \emph{color sequence} in
the following. The superscript $s$ will be explained in Section
\ref{sec:fuzzy-potts}; it should be ignored for the moment. We write $\Lambda_n
= [-n,n]^2\cap \Z^2$ for the box of size $n\ge 1$ with boundary
$\partial\Lambda_n = \Lambda_n \setminus \Lambda_{n-1}$ and $\Lambda_{m,n} =
\Lambda_{n}\setminus \Lambda_{m-1}$ for the annulus from $m\ge 1$ to $n\ge m$.
We also set $\Lambda_{m,n}^+ = \Lambda_{m,n}\cap (\Z\times \Z_+)$.

Let $A^s_{\tau}(m,n)$ be the event which contains a configuration $\sigma \in
\{R,B\}^{\Z^2}$ if and only if there are disjoint nearest-neighbor paths
$\gamma^1,\dots,\gamma^k$ in $\Lambda_{m,n}$ from $\partial \Lambda_m$ to
$\partial \Lambda_n$ which are ordered counterclockwise and are such that
$\sigma$ has color $\tau_i$ along $\gamma^i$ for all $1 \le i\le k$. Similarly,
$A^{+s}_{\tau}(m,n)$ is the event in the upper halfplane which contains a color
configuration $\sigma \in \{R,B\}^{\Z\times \Z_+}$ if and only if there are
disjoint nearest-neighbor paths $\gamma^1,\dots,\gamma^k$ in $\Lambda^+_{m,n} $
from $\partial \Lambda_m$ to $\partial \Lambda_n$ which are ordered
counterclockwise, start with the rightmost arm, and are such that $\sigma$ has color $\tau_i$ along $\gamma^i$
for all $1 \le i\le k$.

We write $p_\tau(q,r,m,n)$ for the probability of the event $A^s_{\tau}(m,n)$
under the fuzzy Potts model on $\Z^2$ with coloring parameter $r$ on a critical
FK percolation with weight $q$ and $p^+_\tau(q,r,m,n)$ for the probability of
the event $A^{+s}_{\tau}(m,n)$ under the fuzzy Potts model on $\Z\times \Z_+$
with coloring parameter $r$ on a critical FK percolation with weight $q$ and
free boundary conditions.

Finally, we need to define the number of `interfaces' that are associated to the arm
events $A^s_{\tau}(m,n)$ and $A^{+s}_{\tau}(m,n)$; it will turn out that
only this interface count affects the arm exponent. We let $\tau_{k+1}:=\tau_1$
and define
\begin{align*}
	I(\tau) = \#\{1\le i\le k\colon \tau_i\neq \tau_{i+1}\}\quad\text{and}\quad
	I^+(\tau) = 1+\#\{1\le i< k\colon \tau_i\neq \tau_{i+1}\}\;.
\end{align*}
Note that $I(\tau)$ will always be an even number. One way of phrasing the fact
that only the interface count is relevant in the theorems below is the statement
that having one red arm yields any finite number of additional consecutive red
arms with only a slightly smaller probability that is insignificant for the arm
exponent. The following theorem provides the values of all arm exponents (except
the monochromatic ones) for the full plane.

\begin{thm}
	\label{thm:main-bulk}
    Let $q \in [1, 4)$ and suppose that the conformal invariance conjecture
    holds for the critical FK percolation model with cluster weight $q$ and
    write $\kappa=4\arccos(-\sqrt{q}/2))/\pi \in [8/3,4)$. Let $r\in (0,1)$ and
    define for $j\ge 1$,
	\begin{align*}
		\alpha_{2j}(r) = \frac{16j^2 - (\kappa-4)^2}{8\kappa}\;.
	\end{align*}
    Then for any color sequence $\tau=\tau_1\cdots\tau_k$ which is not all red
    or all blue, i.e.\ $I(\tau) >0$, we have
	\begin{align*}
		p_\tau(q,r,m,n) = \left(\frac{m}{n}
        \right)^{\alpha_{I(\tau)}(r)+o(1)}
	\end{align*}
	as $n/m\to\infty$ for $k\le m\le n$, where the $o(1)$ term may depend on
	$\tau$.
\end{thm}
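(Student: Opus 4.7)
The plan is to combine three ingredients that, per the introduction, are developed in the body of the paper: (a) the convergence of the critical fuzzy Potts model to its continuum colored-CLE scaling limit under the assumed conformal invariance conjecture; (b) the explicit value of the $2j$-arm exponent for the colored CLE in the bulk, obtained from the Miller-Sheffield-Werner description together with the $\BCLE/\SLE_\kappa(\rho)$ calculation yielding $\alpha_{2j}(r)$; and (c) the identification of the discrete fuzzy Potts arm exponent with the continuum one, via discrete quasi-multiplicativity and arm separation applied to the underlying FK model.

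The first step is a reduction to alternating color sequences: I want to show that only $I(\tau)=2j$ matters, i.e.\ that $p_\tau(q,r,m,n)$ and $p_{\tau_{\mathrm{alt}}}(q,r,m,n)$ agree at the level of exponents, where $\tau_{\mathrm{alt}}=RBRB\cdots RB$ has $2j$ letters. The upper bound is immediate since any configuration with arms colored according to $\tau$ can be reinterpreted as one with arms colored according to $\tau_{\mathrm{alt}}$ by dropping extra same-color arms. For the lower bound I argue locally near $\partial \Lambda_m$ and $\partial \Lambda_n$: given $2j$ alternating arms crossing $\Lambda_{m,n}$, I insert additional monochromatic arms in two thin annuli $\Lambda_{m,2m}$ and $\Lambda_{n/2,n}$. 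These insertions are produced using the Russo--Seymour--Welsh technology for critical FK percolation from \cite{duminil2021planar}: with probability bounded away from $0$, one can add extra FK clusters touching $\partial \Lambda_m$ or $\partial \Lambda_n$ and colored appropriately in between the existing arms, independently of everything happening deeper inside the annulus. Landmark arm-separation estimates ensure that the $2j$ original arms can be perturbed to leave the required room for the new arms. Stitching via discrete quasi-multiplicativity for FK arm events then gives $p_\tau(q,r,m,n) \asymp p_{\tau_{\mathrm{alt}}}(q,r,m,n)$ up to a multiplicative constant depending only on $\tau$.

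The second step is to evaluate the alternating exponent. Here I first prove the discrete quasi-multiplicativity $p_{\tau_{\mathrm{alt}}}(q,r,m,n)\asymp p_{\tau_{\mathrm{alt}}}(q,r,m,\ell)\,p_{\tau_{\mathrm{alt}}}(q,r,\ell,n)$ for $m\le \ell \le n$, again by working at the level of the FK configuration and its independent coloring. This quasi-multiplicativity, combined with the scaling-limit convergence of the fuzzy Potts model to the colored-CLE and standard \emph{a priori} bounds, transfers the exponent question to the continuum: the limiting probability of seeing $2j$ alternating color arms across a fixed conformal annulus decays as $(m/n)^{\alpha_{2j}(r)+o(1)}$, where $\alpha_{2j}(r)$ is the continuum bulk polychromatic $2j$-arm exponent for the colored $\CLE_{\kappa'}$. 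Using the $\BCLE$ description of the colored clusters, the $2j$ arms correspond to $2j$ strands of the associated $\SLE_\kappa(\rho;\rho)$-type interfaces, and the exponent is computed exactly as in a CLE$_\kappa$ polychromatic $2j$-arm event, giving the closed form $(16j^2-(\kappa-4)^2)/(8\kappa)$.

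The main obstacle is that the fuzzy Potts model lacks a Markov/domain-Markov property, so the classical arm-separation and quasi-multiplicativity proofs for Bernoulli percolation cannot be copied. My plan is to circumvent this by always performing the combinatorial surgery (creating landing zones, inserting extra arms, gluing across scales) on the FK configuration, where the Markov property does hold, and only then sampling the independent coloring. The delicate point is that the conditioning needed to separate arms in the FK picture must remain compatible with producing the prescribed colors with positive probability, uniformly over the scale — this is where the quantitative RSW and mixing estimates from \cite{duminil2021planar}, together with the positive density of FK clusters touching each arm, will be the essential inputs.
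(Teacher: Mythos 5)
Your high-level skeleton --- reduce to alternating color sequences, compute the alternating continuum exponent, transfer back via quasi-multiplicativity --- matches the architecture of the paper's proof, but your reduction step contains a genuine gap and your continuum step elides a nontrivial identification.

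The reduction to alternating is where the argument breaks. You propose to ``insert additional monochromatic arms in two thin annuli $\Lambda_{m,2m}$ and $\Lambda_{n/2,n}$'' using RSW. But $A_\tau(m,n)$ asks for $\abs{\tau}$ disjoint arms \emph{each spanning the full annulus} $\Lambda_{m,n}$; a new arm produced inside a thin annulus does not extend across the bulk, and forcing an extra arm from $\partial\Lambda_m$ all the way to $\partial\Lambda_n$ is a full-annulus event whose probability decays polynomially in $n/m$, not at constant rate. (Iterating a fixed-scale comparison through quasi-multiplicativity does not rescue this: if the per-scale insertion loses a factor, those factors compound across $\log(n/m)$ dyadic scales and shift the exponent.) What the paper actually proves in Proposition \ref{prop:alternating-non-alternating} is not that you can \emph{construct} the extra same-color arm, but that conditionally on the alternating event the extra arm is already present with conditional probability tending to $1$; the obstruction is a ``defect'' in a monochromatic circuit, which induces a local alternating four-arm event, and summing over scales this is negligible precisely because $\alpha_4(r)>2$. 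That inequality in turn is obtained \emph{from} the alternating case of the theorem, so the paper's reduction sits downstream of the exponent computation --- a bootstrap your proposal, as written, does not respect and cannot be replaced by RSW insertion.

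The second gap is the continuum exponent. You say the $2j$-arm exponent is computed ``exactly as in a $\CLE_\kappa$ polychromatic $2j$-arm event.'' But the fuzzy Potts interfaces are $\SLE_\kappa(\kappa-6-\rho,\rho)$ curves with nonzero force points, and the nontrivial claim (Proposition \ref{prop:main-exp-interior}) is that the interior $2j$-arm exponent of such a curve equals that of a plain $\SLE_\kappa$ --- i.e.\ that the $\rho$-dependence drops out in the bulk, which is precisely why $\alpha_{2j}(r)$ has no $r$-dependence. Establishing that requires the imaginary-geometry comparisons of Section \ref{sec:continuum-exp} (Lemmas \ref{lem:exp-map-in}, \ref{lem:exp-sle-excursion}, \ref{lem:exp-coord}); it does not follow by saying the event is ``exactly as in'' a pure $\CLE_\kappa$ arm event. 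Relatedly, in your quasi-multiplicativity step you gesture at conditioning on the FK layer, but the subtlety the paper resolves with ``almost-arm events'' is that one must never condition on the color of a boundary-touching cluster --- the insertion of this buffer between color information and boundary conditions is what makes the FK domain Markov property usable, and your sketch does not surface it.

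So: the route is essentially the paper's route, but the reduction step as described would fail, and the two technical cores (the defect argument with the four-arm bootstrap, and the $\rho$-independence of the interior exponent) need to be supplied, not asserted.
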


For the upper halfplane, we obtain the values of all arm exponents.

\begin{thm}
	\label{thm:main-boundary}
    Let $q \in [1, 4)$ and suppose that the conformal invariance conjecture
    holds for the critical FK percolation model with cluster weight $q$ and write
    $\kappa = 4\arccos(-\sqrt{q}/2)/\pi\in [8/3,4)$. Let $r\in (0,1)$ and define
    for $j\ge 1$,
	\begin{align*}
		\alpha_{2j}^+(r) &= \frac{2j(2j+\kappa/2-2)}{\kappa}\;,\\
		\alpha_{2j-1}^+(r) &= \frac{1}{\kappa}\left(
		2j+\kappa-4-\frac{2}{\pi}\arctan\left(
		\frac{\sin(\pi\kappa/2)}{1+\cos(\pi\kappa/2)-1/r} \right) \right)\\
		&\qquad\cdot \left( 2j+ \frac{\kappa-4}{2}-\frac{2}{\pi}\arctan\left(
		\frac{\sin(\pi\kappa/2)}{1+\cos(\pi\kappa/2)-1/r} \right) \right) \;.
	\end{align*}
    Then for any color sequence $\tau=\tau_1\cdots\tau_k$ starting with $R$ we
    have
	\begin{align*}
		p^+_\tau(q,r,m,n )
		= \left(\frac{m}{n}\right)^{\alpha^+_{I^+(\tau)}(r) +o(1)}
	\end{align*}
	as $n/m\to \infty$ for $k\le m\le n$, where the $o(1)$ term may depend on
	$\tau$. The case where $\tau$ starts with $B$ is obtained by replacing $r$
	with $1-r$.
\end{thm}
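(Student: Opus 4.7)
The plan follows the three-step blueprint described in the introduction. First, I would use discrete arm separation and quasi-multiplicativity for colored FK arms to reduce to the alternating pattern $\tau^\star$ of length $I^+(\tau)$ starting with $R$. Second, I would combine this with the scaling-limit results proved earlier in the paper to identify the polynomial decay rate of $p^+_{\tau^\star}(q,r,m,n)$ with a continuum $I^+(\tau)$-arm boundary exponent of the colored $\CLE_{\kappa'}$ in $\H$. Third, I would compute that continuum exponent using the BCLE description of the coloring from \cite{cle-percolations}, combined with the explicit relation $\rho = \rho(r)$ between the coloring parameter and the $\SLE$ force-point weight coming from \cite{msw-non-simple,msw-simple}.

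\textbf{Discrete step.} Since the fuzzy Potts model does not have the spatial Markov property, both the reduction to $\tau^\star$ and quasi-multiplicativity are carried out at the level of the critical FK percolation with an independent coloring prescribed on the relevant arms, using the spatial Markov property of FK with free boundary and the RSW-type estimates of \cite{duminil2021planar}. To go from $\tau$ to $\tau^\star$, I would resample additional FK clusters in the half-annulus so that two consecutive same-colored discrete arms can be collapsed into one at polylogarithmic multiplicative cost, while the scaling-limit results of the paper rule out microscopic FK clusters producing spurious monochromatic macroscopic arms. The standard gluing construction then yields $p^+_{\tau^\star}(q,r,m,n) \asymp p^+_{\tau^\star}(q,r,m,N)\, p^+_{\tau^\star}(q,r,N,n)$ uniformly in $k \le m \le N \le n$, which pins down the exponent once convergence to the continuum is invoked.

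\textbf{Continuum computation.} In the half-plane with free boundary, the scaling-limit coloring is a BCLE variant whose branches are $\SLE_\kappa(\rho;\,\kappa-6-\rho)$ strands with $\rho = \rho(r)$ prescribed by the MSW formula. For $I^+(\tau) = 2j$ the two outermost interfaces eventually detach from the real line before leaving the semi-annulus, so the continuum arm event reduces to a pure $2j$-arm boundary crossing for $\SLE_\kappa$; the classical computation gives $\alpha^+_{2j}(r) = 2j(2j + \kappa/2 - 2)/\kappa$, independent of $r$. For $I^+(\tau) = 2j-1$ the outermost interface remains a boundary-hugging $\SLE_\kappa(\rho)$ strand, and a standard radial Bessel analysis of the conformal radius seen from a bulk point produces two linear factors in $j$ shifted by a function of $\rho$; substituting $\rho(r)$ produces the $\arctan$ term in $\alpha^+_{2j-1}(r)$. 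Swapping the two colors amounts to $r \leftrightarrow 1-r$, giving the last sentence of the theorem.

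\textbf{Main obstacle.} The hardest step is the odd case of the continuum computation, where the exponent depends non-trivially on $r$. Two matters require care: one must correctly match the free FK boundary condition with the BCLE orientation used in \cite{cle-percolations,msw-non-simple}, so that the effective drift entering the SLE computation is really $\rho(r)$ and not $\rho(1-r)$ or some other variant; and one must control the $o(1)$ term in the exponent uniformly in the aspect ratio $n/m$, not merely in $n$ with $m$ fixed. The latter uniformity is what forces the discrete quasi-multiplicativity to be sharp up to constants, and it is precisely there that the strong half-plane RSW tools from \cite{duminil2021planar} are indispensable; without them one would only obtain the exponent along subsequences, insufficient for the stated polynomial asymptotics.
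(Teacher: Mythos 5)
Your overall three-step strategy matches the paper's: arm separation and quasi-multiplicativity to reduce to the alternating case, identify the exponent with a continuum $\SLE$ arm exponent via the scaling-limit results, and then compute that exponent via the BCLE/SLE description with $\rho=\rho(r)$. The discrete part of your sketch (almost-arm machinery, defects to collapse consecutive same-colored arms at polylogarithmic cost, quasi-multiplicativity) is essentially the paper's route, via Theorem~\ref{thm:quasi-multiplicativity-halfplane}, Theorem~\ref{thm:arm-separation-halfplane}, Proposition~\ref{prop:arm-events-with-defects} and Remark~\ref{rk:halfplane-non-alternating}.

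The continuum step is where your proposal breaks down. The outermost colored-CLE interface is, by Remark~\ref{rk:interface-def}, an $\SLE_\kappa(\rho_B,\kappa-6-\rho_B)$ curve with \emph{two} boundary force points, not a one-sided $\SLE_\kappa(\rho)$. The only available input is Wu's Theorem~\ref{thm:wu-boundary}, which computes boundary exponents for $\SLE_\kappa(0,\rho)$; one therefore cannot ``read off'' the answer, and transferring the exponents from $\SLE_\kappa(0,\rho)$ to $\SLE_\kappa(\kappa-6-\rho,\rho)$ is the heart of Section~\ref{sec:continuum-exp}. The paper does this through the imaginary-geometry Lemmas~\ref{lem:exp-map-in} and~\ref{lem:exp-coord}: adding a nonnegative $\bar\rho$ to the left force weight can only decrease arm-event probabilities (via the coupling of Lemma~\ref{lem:imaginary-map-in} and carefully controlled conformal distortion from Appendix~\ref{sec:app-conformal}), while the target-change property of Lemma~\ref{lem:coordinate-change} plus Lemma~\ref{lem:positive-hit} gives the matching lower bound. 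None of this appears in your sketch, and it is precisely why the odd exponent turns out to be Wu's formula evaluated at $\rho_R=\kappa-6-\rho_B$, and why the even exponent is $r$-independent (Wu's $\alpha^+_{2j}$ is already $\rho$-free — your heuristic that ``the interfaces detach from the real line'' is not the reason).

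A second, smaller error: ``a standard radial Bessel analysis of the conformal radius seen from a bulk point'' is the one-point/interior computation, not the boundary one. The odd boundary exponent in Wu's Theorem~\ref{thm:wu-boundary} is a chordal hitting-probability estimate at a boundary point $x\in\partial\D$, driven by a one-dimensional boundary SLE martingale; conformal radius and radial Bessel processes play no role there. Naming the wrong mechanism here matters because the dependence of $\alpha^+_{2j-1}$ on $\rho$ is linear in both factors only through the boundary Loewner analysis, and invoking the radial machinery would not produce the correct $\rho$-shift.
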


In both cases, we stress the following feature: The `even' exponents
$\alpha_{2j}(r)$ (resp.\ $\alpha^+_{2j}(r)$) do not depend on the actual value of
$r \in (0,1)$. This is not at all clear (or even intuitive) from the definition
of the discrete model (except in the special case $\alpha_2^+(r)=1$ which turns
out to be a universal arm exponent) and it appears to  be difficult to prove
this result using only discrete tools (i.e., without relying on scaling limit
conjectures).

It is worth emphasizing that in the special case of the FK-Ising model (i.e.,
$q=2$), the conformal invariance conjecture is known to hold by
\cite{smirnov-ising, kemp-smirnov-fk-bdy, kemp-smirnov-fk-full} (see also
\cite{garban-wu-fk-ising}), so that the previous result is unconditional. In
that case, we have $\kappa =3$ and one can then summarize the previous theorems
in the following (unconditional) formulation.

\begin{thm}
    If $q=2$ then the critical exponents for the fuzzy Potts model with coloring
    parameter $r\in (0,1)$ are
    \begin{gather*}
        \alpha_{2j}(r) = \frac{(2j)^2-1/4}{6}\;,\quad
        \alpha^+_{2j}(r) = \frac{2j(2j-1/2)}{3}\;,\\
        \alpha^+_{2j-1}(r) = \frac{1}{3}\left(2j-1-\frac{2}{\pi}\arctan\left(
                \frac{r}{1-r}\right) \right) \left(2j-\frac{1}{2}-
                \frac{2}{\pi}\arctan\left( \frac{r}{1-r}\right) \right)
    \end{gather*}
    for $j\ge 1$ provided that the color sequence starts with $R$. The case where it
    starts with $B$ is obtained by replacing $r$ by $1-r$.
\end{thm}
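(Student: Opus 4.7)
The statement is simply a specialization of Theorems \ref{thm:main-bulk} and \ref{thm:main-boundary} to the case $q=2$, combined with the fact (cited in the excerpt from \cite{smirnov-ising, kemp-smirnov-fk-bdy, kemp-smirnov-fk-full, garban-wu-fk-ising}) that the FK conformal invariance conjecture is known to hold for $q=2$. Hence the two earlier theorems apply unconditionally in this case, and the plan is just to evaluate their formulas at $\kappa=3$.

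First I would note that $q=2$ gives $\arccos(-\sqrt{2}/2)=3\pi/4$, so $\kappa = 4\arccos(-\sqrt{q}/2)/\pi = 3$. Plugging $\kappa=3$ into the bulk exponent yields
\[
\alpha_{2j}(r) \;=\; \frac{16j^2-(\kappa-4)^2}{8\kappa}\bigg|_{\kappa=3} \;=\; \frac{16j^2 - 1}{24} \;=\; \frac{(2j)^2 - 1/4}{6},
\]
and similarly $\alpha^+_{2j}(r) = 2j(2j+\kappa/2-2)/\kappa|_{\kappa=3} = 2j(2j-1/2)/3$. These exponents are $r$-independent, exactly as asserted in Theorems \ref{thm:main-bulk} and \ref{thm:main-boundary}.

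The only point that requires a small trigonometric simplification is the odd boundary exponent. Here I would compute $\sin(\pi\kappa/2) = \sin(3\pi/2) = -1$ and $\cos(\pi\kappa/2) = \cos(3\pi/2) = 0$, so that
\[
\frac{\sin(\pi\kappa/2)}{1+\cos(\pi\kappa/2) - 1/r} \;=\; \frac{-1}{1-1/r} \;=\; \frac{r}{1-r},
\]
using that $r\in(0,1)$. Substituting this and $\kappa=3$ into the formula for $\alpha^+_{2j-1}(r)$ reproduces exactly the two factors $2j-1-(2/\pi)\arctan(r/(1-r))$ and $2j-1/2-(2/\pi)\arctan(r/(1-r))$ multiplied by $1/3$. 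The final clause about swapping $r$ with $1-r$ when $\tau$ starts with $B$ is inherited verbatim from Theorem \ref{thm:main-boundary}.

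There is no genuine obstacle here: the entire content of the statement is the arithmetic verification above together with invoking the unconditional status of the $q=2$ conformal invariance input. The real work is carried by Theorems \ref{thm:main-bulk} and \ref{thm:main-boundary} themselves.
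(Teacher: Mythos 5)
Your proposal is correct and is exactly the argument the paper intends: the theorem is stated as an unconditional specialization of Theorems \ref{thm:main-bulk} and \ref{thm:main-boundary} at $\kappa=3$, which is legitimate because the FK conformal invariance conjecture is proved for $q=2$. The arithmetic (including the simplification $\sin(3\pi/2)/(1+\cos(3\pi/2)-1/r)=r/(1-r)$) checks out.
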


\begin{remark}
    The case $r=1/q$ is special. In particular, when $q=2$ (i.e.\ $\kappa=3$) or
    $q=3$ (i.e.\ $\kappa=10/3$), it is related to
    the Potts models. We obtain the formulas
    \begin{align*}
        \alpha^+_{2j-1}(1/q) &= \frac{1}{\kappa}\left(
		2j+\frac{3\kappa}{2}-6\right)(2j+\kappa-4)\;,\\
        \alpha^+_{2j-1}(1-1/q) &= \frac{1}{\kappa}\left(
		2j-\frac{\kappa}{2}\right)(2j-\kappa+2)\;.
    \end{align*}
    The critical Ising model corresponds to cluster weight $q = 2$ (so
    $\kappa=3$) and coloring parameter $r = 1/2$. The exponents then agree with
    the results obtained for the Ising model in \cite{wu-ising-arm}.
\end{remark}

\subsection{Phase diagram of the fuzzy Potts model}

As explained in the previous section, this paper is concerned with the fuzzy
Potts model on a critical FK percolation with cluster weight $q\in [1,4)$. The
following paragraphs elaborate on the whole phase diagram of this model. We
sample $\omega\sim \phi^0_{\Z^2,p,q}$ from the infinite volume measure of FK
percolation with parameter $p\in [0,1]$ and cluster weight $q\ge 1$ (the
condition $q\ge 1$ ensures that the model is positively associated). Thus
$\omega$ is a percolation configuration in $\{0,1\}^{E(\Z^2)}$ where $E(\Z^2)$
denotes the nearest-neighbor edges on $\Z^2$. We write $\mathcal{C}$ for the set
of clusters and $\{0\leftrightarrow x\}$ for the event that $0$ and $x$ are in
the same cluster.

The fuzzy Potts measure $\mu^0_{\Z^2,p,q,r}$ on $\{R,B\}^{\Z^2}$ is obtained by
coloring the vertices of each cluster in $\mathcal{C}$ independently all with
the color red ($R$) with probability $r\in (0,1)$ and all in blue ($B$)
otherwise.

We would like to understand the geometry of the red and blue fuzzy Potts clusters as a
function of the parameters $(p,q,r)$ of the model. To this end, we
first review some major results about the geometry of the underlying
FK percolation model. It turns out that the event $\{0\leftrightarrow x\}$ is
well-suited to describe the phase diagram of FK percolation.
\begin{itemize}
    \item For every cluster weight $q\ge 1$, there is a sharp phase transition
        at the critical point $p_c(q)=\sqrt{q}/(1+\sqrt{q})$: For subcritical $p
        <p_c(q)$, the probability $\phi^0_{\Z^2,p,q}(0\leftrightarrow x)$ decays
        exponentially fast to $0$ as $\norm{x}\to \infty$, whereas for
        supercritical $p>p_c(q)$, it is bounded away from 0 uniformly in the
        point $x$.
        This has been established by \cite{befarra2012self}; see also
        \cite{drt-sharp-fk-tree, dm-q-sharpness, drt-sharp-old}.
    \item At the critical point $p_c(q)$, the geometry depends on the cluster
        weight: For $q>4$, the phase transition is discontinuous (see
        \cite{duminil2016discontinuity, ray2020,duminil2020}) and like in the
        subcritical phase, $\phi^0_{\Z^2,p_c(q),q}(0\leftrightarrow x)$ decays
        \emph{exponentially} fast to $0$ as $\norm{x}\to \infty$ . For $q \in
        [1,4]$, the phase transition is continuous as shown in
        \cite{duminil2017continuity} and  the probability
        $\phi^0_{\Z^2,p,q}(0\leftrightarrow x)$ decays \emph{polynomially} fast to
        $0$ as $\norm{x}\to \infty$. The key tool to study models at the point
        of a continuous phase transition are Russo-Seymour-Welsh (RSW) estimates
        on crossing probabilities. Recently, very powerful RSW estimates have
        been established in \cite{duminil2021planar} for $q\in [1,4)$, and this
        allows for a good understanding of the critical and near-critical
        geometry. Much less is known in the $q=4$ case, which
        is more subtle since the RSW estimates of \cite{duminil2021planar} are
        expected to be wrong in this case.
\end{itemize}
We can now make some simple observations about the fuzzy Potts model with the
parameters $(p,q,r)$: Let $R_\infty$ denote the event on $\{R,B\}^{\Z^2}$ that
there is an infinite red fuzzy Potts cluster.
\begin{itemize}
    \item When the clusters in $\mathcal C$ are exponentially small (i.e.\ for
        $p<p_c(q)$ and for $p=p_c(q)$ when $q>4$), the fuzzy Potts model with
        coloring parameter $r$ should behave similarly to Bernoulli site
        percolation. In particular, one expects there to be a critical point
        $r_c(p,q)\in (0,1)$ such that
        \begin{align*}
            \mu^0_{\Z^2,p,q,r}(R_\infty)>0\quad\text{if and only if}\quad
            r>r_c(p,q)\;.
        \end{align*}
        In \cite{balint-dac-sharp}, this has been established
        for $q=1$ and in fact, there do not appear to be major obstacles to
        generalizing this result to $q\ge 1$ by making use of (more recent)
        sharpness results for FK percolation. Further properties have been
        obtained for $q=1$ in \cite{balint-dac-critical,tassion-dac-confidence,
        tassion2014planarity}.
    \item When there exists a unique infinite cluster in $\mathcal C$ almost
        surely (i.e. for $p>p_c(q)$), we see that for any coloring parameter
        $r\in (0,1)$,
        \begin{align*}
	        \mu^0_{\Z^2,p,q,r}(R_\infty) = r > 0\;.
        \end{align*}
        In particular, there exists a unique infinite cluster in the fuzzy
        Potts model that is either red or blue. \item It remains to describe the
        geometry of the fuzzy Potts model for $p=p_c(q)$ and $q \in [1,4]$. In
        this case, one can use weaker RSW estimates (see
        \cite{duminil2017continuity}) to show that $\mathcal{C}$ almost surely
        contains infinitely many disjoint clusters surrounding the origin. Since
        infinitely many of these clusters will be colored in blue almost surely,
        it follows that for any coloring parameter $r\in (0,1)$,
        \begin{align*}
	        \mu^0_{\Z^2,p,q,r}(R_\infty)=0\;,
        \end{align*}
        and analogously, there is also no infinite blue fuzzy Potts cluster almost surely.
        We emphasize that this behavior differs drastically from the two
        previous cases.
\end{itemize}
What is achieved in this paper is the derivation of more precise results on the
geometry of the fuzzy Potts model for $p=p_c(q)$ and $q \in [1,4)$ as described
in the previous section. The case of $p=p_c(q)$ and $q=4$ is different since one
expects there to be a sharp phase transition at $r_c(p_c(4),4)=1/2$. A
construction of the conjectured near-critical scaling limits in this case
appears in \cite{cle-percolations, lehmkuehler-cle4}. The key difference to the
case $q<4$ is that one expects the scaling limit of the coloring to be different
from the coloring of the scaling limit when one considers $q=4$ (since the
scaling limit of the loop encoding of the model is conjectured to be a simple
CLE).

\subsection{Overview}

Let us now provide an outline of the paper. As mentioned above, there will be
three distinct steps which are needed to determine the exponents in Theorem
\ref{thm:main-bulk} and \ref{thm:main-boundary} and which
will be performed in this paper.
\begin{itemize}
    \item Section \ref{sec:background-fuzzy} contains all the relevant background
        information and references on the discrete objects (FK percolation and
        the fuzzy Potts model) as well as the continuum objects (CLE and BCLE).
        This is also the section within which we are precisely stating the
        conformal invariance conjecture as Conjecture \ref{conj:fk-to-cle}.
    \item In Section \ref{sec:discrete-fuzzy}, we develop arm separation and
        quasi-multiplicativity tools for the discrete fuzzy Potts model (which
        are instrumental for deducing the discrete from the continuum
        exponents). This section relies heavily on the recent paper
        \cite{duminil2021planar} which is valid for $q\in [1,4)$. We however
        expect that quasi-multiplicativity also holds when $q=4$ case, even
        though our proof techniques do not work in this case. Many techniques in
        this section are motivated by the seminal work \cite{Kesten1987}.
    \item In Section \ref{sec:convergence}, we show that under the conformal
        invariance assumption for FK percolation with parameter $q\in [1,4)$, the
        scaling limit of the (discrete) fuzzy Potts cluster boundaries is given
        by the continuum fuzzy Potts cluster boundaries as constructed in the
        CLE percolation paper by Miller, Sheffield and Werner
        \cite{cle-percolations}. We defer two technical lemmas on the topology
        involved to Appendix \ref{sec:app-loops}. This is the step where the
        situation is significantly different when $q=4$ instead of $q\in [1,4)$.
        We also obtain a new derivation of the scaling limit of the
        Ising model loops (see \cite{benoist-hongler-cle3} for the existing
        proof).
    \item In Section \ref{sec:continuum-exp}, we work on the continuum side, and
        we compute the critical exponents in the setting of the continuum fuzzy
        Potts model. More specifically, now that the interfaces are described in
        terms of the SLE variants from the papers by Miller, Sheffield
        and Werner, we need to compute the corresponding exponents for these
        processes. This will bear many similarities (and will rely on) the SLE
        computations by  Wu in \cite{wu-ising-arm}. Some technical results on
        conformal transformations relevant to this section are deferred to
        Appendix \ref{sec:app-conformal}.
    \item Finally, we will combine the above ingredients in Section
        \ref{sec:combine-results} to establish the main results, namely
        Theorem \ref{thm:main-bulk} and \ref{thm:main-boundary}.
\end{itemize}

\medspace

{\bf Notation.} Throughout, if $f,g\colon X\to [0,\infty]$ are functions on some
space $X$, we write $f(x)\lesssim g(x)$ if there is a constant $C\in (0,\infty)$
such that $f(x)\le Cg(x)$ for all $x\in X$. We write $f(x) \asymp g(x)$ if
$f(x)\lesssim g(x)$ and $g(x)\lesssim f(x)$ hold. We also let
$\Z_+=\{0,1,2,\dots\}$ and write $\log$ for the logarithm to the base $2$.
Finally, we write $B_r(z_0)=z_0+r\D$ and $(\!(a,b)\!)$ for the open
counterclockwise boundary arc from $a\in \partial\D$ to $b\in \partial\D$ along
$\partial\D$.

\medspace

{\bf Acknowledgments.} L.\,KS. has received funding from the European Research
Council (ERC) under the European Union's Horizon 2020 research and innovation
program (grant 851565). M.\,L. was supported by grant 175505 of the Swiss
National Science Foundation. Both authors are part of NCCR SwissMAP. We would
like to thank Wendelin Werner for proposing this project and we are grateful to
Vincent Tassion and Wendelin Werner for many insightful inputs and discussions.
We also thank Hugo Vanneuville for valuable comments on an earlier version of this paper.

\section{Background}
\label{sec:background-fuzzy}

\subsection{FK percolation}
\label{sec:FK-percolation}

In this subsection, we formally introduce critical FK percolation and we state
all preliminary results that will be used throughout the paper. We direct the
reader to \cite{grimmett-fk} for a broader introduction and to
\cite{duminil-fk-notes} for an exposition of recent results. Besides standard
properties of FK percolation, our work mostly relies on the strong crossing
estimates which were recently established in \cite{duminil2021planar}.

We consider the square lattice $(\Z^2,E(\Z^2))$ which is the graph with vertex
set $\Z^2$ and edges between nearest neighbors. Abusing notation slightly, we
often refer to the graph itself as $\Z^2$.

Let $G=(V,E)$ be a finite subgraph of $\Z^2$. We define its vertex boundary $\partial V = \{ v \in V : \deg_G(v)< \deg_{\Z^2}(v)=4\}$, where the  $\deg_G(v)=\abs{\{w \in V : vw \in E\}}$ is the number of neighbors of $v$ in $G$.
An element $\omega \in \{0,1\}^E$ encodes
a subgraph of $G$ with vertex set $V$ and edge set $o(\omega):=\{e\in E:
\omega_e = 1\}$, where we recall that an edge $e$ is called open (resp.\ closed)
if $\omega_e = 1$ (resp.\ $\omega_e=0$). 
We now consider a partition $\xi$ of
$\partial V$, called \emph{boundary condition}, and
we denote by $\omega^\xi$ the graph obtained from $\omega$ by adding additional
edges between vertices belonging to the same partition element. Let
$\mathcal{C}(\omega^\xi)$ denote the set of clusters of this graph. FK
percolation on $G=(V,E)$ with boundary condition $\xi$, edge weight $p\in[0,1]$
and cluster weight $q>0$ is the measure
\begin{align*}
	\phi_{G,p,q}^\xi (\omega) := \frac{1}{Z_{G,p,q}^\xi}
    p^{\abs{o(\omega)}} (1-p)^{\abs{E \setminus o(\omega)}}
    q^{\abs{\mathcal{C}(\omega^\xi)}}
	\quad\text{on}\; \{0,1\}^{E}\;,
\end{align*}
where $Z_{G,p,q}^\xi$ is a normalizing constant, called \emph{partition
function}.

The product ordering provides a partial order on $\{0,1\}^E$. A function
$f:\{0,1\}^E \to \R$ is said to be \emph{increasing} if $\omega \le \omega'$
implies $f(\omega) \le f(\omega')$, and an event $A$ is \emph{increasing} if its
indicator function $1_A$ is increasing. For cluster weights $q \ge 1$, FK
percolation is positively associated, i.e.\ for any two increasing functions
$f,g: \{0,1\}^E \to \R$,
\begin{equation*}
	\phi_{G,p,q}^\xi (f \cdot g) \ge \phi_{G,p,q}^\xi (f)
    \cdot \phi_{G,p,q}^\xi (g)\;.
\end{equation*}
This property is also referred to as the FKG inequality. Actually, it does not
hold for $q\in (0,1)$ and much less is known about the model in this case.
Throughout the paper, we will therefore only consider cluster weights $q \ge 1$.

We will now discuss boundary conditions in more detail. For two partitions
$\xi, \xi'$ of $\partial V$, we write $\xi \le \xi'$ if $\xi$ is a finer
partition than $\xi'$, i.e.\ every element in $\xi$ is a subset of an element in
$\xi'$. We refer to the finest partition, where $\partial V$ is partitioned into
singletons, as \emph{free} boundary conditions, denoted by $\phi_{G,p,q}^0$, and
to the coarsest partition, which consists only of the element $\partial V$, as
\emph{wired} boundary conditions, denoted by $\phi_{G,p,q}^1$. Boundary
conditions can easily be compared for increasing events $A \subset \{0,1\}^E$:
For any two boundary conditions
$\xi \le \xi'$,
\begin{equation*}
	 \phi_{G,p,q}^\xi (A) \le \phi_{G,p,q}^{\xi'}(A)\;.
\end{equation*}

The domain Markov property is another fundamental tool to study FK percolation.
Let $G'=(V',E')$ be a subgraph of the finite graph $G=(V,E)$ and $\xi$ be a boundary condition on $G$.
Given an event $A$ that is measurable with respect to the status of edges in
$E'$, it says that for
any $\psi \in \{0,1\}^{E \setminus E'}$,
\begin{equation*}
	\phi_{G,p,q}^\xi(A \mid \omega_e = \psi_e, \forall e \in
    E\setminus E') = \phi_{G',p,q}^{\xi'}(A)\;.
\end{equation*}
where $\xi'=\xi'(\psi,\xi)$ denotes the partition of $\partial V'$ that is
induced by $\xi$ and $\psi$, i.e.\ two vertices $x',y' \in \partial V'$ belong
to the same element of $\xi'$ if they can be connected to the same element of
$\xi$ or directly to each other using open edges in $\psi$.

Throughout the paper, we naturally embed the square lattice $(\Z^2,E(\Z^2))$ in the plane $\R^2$ by viewing edges as closed line segments.
Whenever $S\subset \R^2$ is closed and bounded, by definition, the finite subgraph
	\emph{induced} by $S$ is the subgraph of $\Z^2$ with vertex set $S\cap\Z^2$ and edge set $E(S)$ containing those edges that are completely contained in $S$. We often refer to the
	subgraph itself by $S$ when no confusion can arise, and define its vertex boundary $\partial S$ as before. 
	If the subgraph induced by $S$ is  contained in
	the upper halfplane, we also define its vertex boundary $\partial_+ S$ with
	respect to  $\Z \times \Z_+$, consisting of those vertices $v$ in $S$ with incomplete degree, i.e.\ $\deg_S(v) < \deg_{\Z\times\Z^+}(v) \in \{3,4\}$.

For $r,s >0$, we define the box $\Lambda_r = [-r,r]^2$ and the annulus
$\Lambda_{r,s}= [-s,s]^2 \setminus (-r,r)^2$. We also define their intersections
with the upper halfplane as $\Lambda_r^+ = [-r,r] \times [0,r]$ and
$\Lambda_{r,s}^+ = [-s,s]\times [0,s] \setminus (-r,r) \times [0,r)$. As
mentioned before, we will often use the same notation when referring to their
induced subgraphs. In particular, our notation is consistent with the
definitions given in Section \ref{sec:intro-fuzzy}.

To define FK percolation on the infinite graph $\mathbb{Z}^2$, one can consider
the sequence $(\Lambda_n)_{n\ge 1}$ and then take the weak
limit of the measures $\phi_{\Lambda_n,p,q}^\xi$ along this sequence for $\xi=0$
and $\xi=1$, respectively. While the limiting measures $\phi_{\Z^2,p,q}^0$ and
$\phi_{\Z^2,p,q}^1$ might a priori be different, it is straightforward to see
that they also satisfy the FKG inequality. In the same way, one can define the
FK percolation measure $\phi_{\Z\times\Z_+,p,q}^0$ on the upper halfplane
$\Z\times\Z_+$ by considering the sequence $(\Lambda_n^+)_{n\ge 1}$.

As explained before, this paper is concerned with critical FK percolation.
Therefore, we fix the edge weight to be $p_c(q)=\sqrt{q}/(1+\sqrt{q})$ from now
on and we drop it from our notation. It was proven in
\cite{duminil2017continuity} that for $q \in [1,4]$, the two extremal measures
$\phi_{\Z^2,q}^0$ and $\phi_{\Z^2,q}^1$ are in fact the same, and so we will
also drop the dependence on boundary conditions and simply write $\phi_{\Z^2,q}$
for the (critical) FK percolation measure on $\Z^2$ with cluster weight $q\in
[1,4]$.

A consequence of the crossing estimates in rectangles that were established in
\cite{duminil2017continuity} is the following mixing statement which says that
the boundary conditions do not affect the values of the probabilities
significantly (see \cite[Section 1.3.1]{duminil2017continuity}).

\begin{cor}\label{cor:mixing}
    Let $q \in [1,4]$. For all $c>0$, there exists a constant $C>0$ such that
    for any $n\ge 1$ with $\Lambda_{(1+c)n} \subset G$ and for any event $A$
    measurable with respect to the edges in $\Lambda_n$,
	\begin{equation*}
		C^{-1} \cdot \phi_{\Z^2,q}(A) \le \phi_{G,q}^\xi(A) \le
        C \cdot \phi_{\Z^2,q}(A)\;,
	\end{equation*}
    uniformly in the boundary condition $\xi$.
\end{cor}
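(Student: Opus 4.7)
The corollary is a classical mixing estimate for critical FK percolation with $q\in[1,4]$ (essentially Corollary~1.5 of~\cite{duminil2017continuity}), and my plan is to describe the standard proof at a high level. The two inputs I would rely on are the uniform RSW crossing estimates of~\cite{duminil2017continuity,duminil2021planar}, which provide a lower bound $c_1=c_1(c)>0$, uniform in the ambient domain $G\supset\Lambda_{(1+c)n}$ and in the boundary condition $\xi$, on the probability of an open primal circuit — and likewise of a dual-closed circuit — in the buffer annulus $\Lambda_{(1+c)n}\setminus\Lambda_{(1+c/2)n}$; and the continuity $\phi^0_{\Z^2,q}=\phi^1_{\Z^2,q}=\phi_{\Z^2,q}$ for $q\in[1,4]$ already invoked just before the corollary.

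First I would reduce to monotone events: any cylinder atom is the intersection of an increasing and a decreasing event, and it is enough to control the comparison on monotone envelopes. For definiteness take $A$ increasing and measurable with respect to the edges in $\Lambda_n$. Monotonicity of FK percolation in the boundary conditions gives
\[
\phi^0_{G,q}(A) \le \phi^\xi_{G,q}(A) \le \phi^1_{G,q}(A),
\]
and monotonicity in the domain (free boundary conditions are increasing in the domain, wired boundary conditions are decreasing, for increasing events inside) combined with the continuity input yields $\phi^0_{G,q}(A)\le\phi_{\Z^2,q}(A)\le\phi^1_{G,q}(A)$. The corollary is thereby reduced to the comparison $\phi^1_{G,q}(A)\le C\,\phi^0_{G,q}(A)$.

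To establish this, I would condition on a shielding circuit. Let $\mathcal{D}$ denote the event that a dual-closed circuit surrounding $\Lambda_{(1+c/2)n}$ exists in the buffer, and $\Gamma^*$ its outermost realization on $\mathcal{D}$. The domain Markov property identifies the conditional law inside $\Gamma^*$ with $\phi^0_{\mathrm{int}(\Gamma^*),q}$, independently of the external configuration, and monotonicity of free boundary conditions in the domain gives $\phi^0_{\mathrm{int}(\Gamma^*),q}(A)\le\phi_{\Z^2,q}(A)$ for $A$ increasing in $\Lambda_n$. Averaging over $\Gamma^*$ yields $\phi^\eta_{G,q}(A\cap\mathcal{D})\le\phi_{\Z^2,q}(A)$ for both $\eta\in\{0,1\}$, and the symmetric argument based on the innermost open primal circuit produces the matching lower bound via wired boundary conditions inside.

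The main technical obstacle is converting these conditional bounds into an unconditional multiplicative comparison, because FKG applied to $A$ together with the circuit event goes in the wrong direction (the increasing event $A$ and the decreasing circuit event are positively correlated, which provides a bound in the opposite sense to the one needed). The remedy, carried out in Section~1.3.1 of~\cite{duminil2017continuity}, is to iterate the shielding argument over a finite family of nested sub-buffers inside $\Lambda_{(1+c)n}\setminus\Lambda_n$: at each step RSW produces the desired shielding circuit with probability at least $c_1$ conditionally on the configuration revealed so far (using that the crossing estimates hold uniformly in the boundary conditions), and the Markov property feeds this back into the comparison. A bounded number of iterations — depending only on the modulus $c$ — then delivers the claimed constant $C=C(q,c)$, independent of $n$ and $\xi$.
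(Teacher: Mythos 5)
Your lower bound is fine, but your upper bound does not close, and the ``iteration over nested sub-buffers'' does not fix it. Conditioning on the outermost dual circuit gives $\phi^\eta_{G,q}(A\cap\mathcal D)\le\phi_{\Z^2,q}(A)$, and if you iterate with $k$ disjoint sub-annuli you can bound the leftover term by $\phi^\eta_{G,q}\bigl(\cap_i\mathcal D_i^c\bigr)\le(1-c_1)^k$. But $k$ is bounded in terms of $c$ only, so $(1-c_1)^k$ is a fixed constant, while $\phi_{\Z^2,q}(A)$ can be made arbitrarily small (e.g.\ take $A$ to be a polynomially or exponentially rare cylinder event in $\Lambda_n$). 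So the tail term $\phi^\eta_{G,q}(A\cap\bigcap_i\mathcal D_i^c)$ is never controlled by $\phi_{\Z^2,q}(A)$, and the claimed $C=C(q,c)$ does not emerge. The fix is simply to run your \emph{open}-circuit argument again, but under the measure that sits on the large side of the target inequality. Let $\mathcal O$ be the event of an open circuit in $\Lambda_{(1+c)n}\setminus\Lambda_n$ and $\Gamma$ its outermost occurrence. Then, for increasing $A$ in $\Lambda_n$,
\begin{align*}
\phi_{\Z^2,q}(A)\ \ge\ \phi_{\Z^2,q}(A\cap\mathcal O)
\ =\ \sum_{\gamma}\phi_{\Z^2,q}(\Gamma=\gamma)\,\phi^1_{\mathrm{int}(\gamma),q}(A)
\ \ge\ \phi_{\Z^2,q}(\mathcal O)\cdot\phi^1_{\Lambda_{(1+c)n},q}(A)
\ \ge\ c_1\,\phi^\xi_{G,q}(A)\,,
\end{align*}
using that $\Lambda_n\subset\mathrm{int}(\gamma)\subset\Lambda_{(1+c)n}$, so wired boundary conditions on $\mathrm{int}(\gamma)$ dominate wired boundary conditions on $\Lambda_{(1+c)n}$, which in turn dominate $\phi^\xi_{G,q}$ for increasing events in $\Lambda_n$. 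Combined with your lower-bound calculation $\phi^\xi_{G,q}(A)\ge\phi^0_{G,q}(A\cap\mathcal O)\ge c_1\,\phi_{\Z^2,q}(A)$, this proves the corollary for increasing $A$ with a single application of RSW in each direction, no dual circuits and no iteration needed.

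A second, smaller issue: your reduction of a general $A$ to ``monotone envelopes'' is not a valid reduction for a two-sided ratio estimate. Writing a cylinder atom as $A^+\cap A^-$ with $A^+$ increasing and $A^-$ decreasing does not let you deduce a ratio bound for the atom from ratio bounds for $A^\pm$ --- the atom probability can be much smaller than either envelope probability (and FKG between $A^+$ and $A^-$ points the wrong way). To handle arbitrary events one typically conditions on both an open circuit and a dual-closed circuit in the buffer (the open one outside the closed one), so that the law inside the inner circuit is completely determined by the discovered annulus configuration, and then compares the two ambient measures on the event that such a pair of circuits exists; the remainder event must then be handled by a coupling/total-variation argument rather than a ratio argument. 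This is more delicate than what you sketch, and is precisely what Section~1.3.1 of the cited reference does.
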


The situation is similar in the case of the halfplane measure $\phi^0_{\Z\times
\Z_+,q}$. The following corollary can be proven analogously to the full plane case. This makes use of stronger crossing estimates as obtained in
\cite{duminil2021planar} and we therefore exclude the case $q=4$.

\begin{cor}\label{cor:mixing-half}
    Let $q\in [1,4)$. For all $c>0$, there exists $C>0$ such that for all $n\ge
    1$ with $\Lambda_{(1+c)n}^+\subset G\subset \Z\times \Z_+$ and for any event
    $A$ measurable with respect to the edges in $\Lambda^+_n$,
	\begin{equation*}
		C^{-1} \cdot \phi^0_{\Z\times \Z_+,q}(A) \le \phi_{G,q}^\xi(A) \le
        C \cdot \phi^0_{\Z\times \Z_+,q}(A)\;,
	\end{equation*}
    uniformly in boundary conditions $\xi$ for which $\{(i,0)\}\in \xi$ when
	$\abs{i}\le n$.
\end{cor}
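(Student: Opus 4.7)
The strategy is to mimic the proof of the full-plane mixing result Corollary \ref{cor:mixing} (as carried out in \cite[Sec.~1.3.1]{duminil2017continuity}), with the annular open circuits in $\Lambda_{n,(1+c)n}$ replaced by open \emph{half-circuits} in $\Lambda^+_{n,(1+c)n}$ --- arcs in the upper half-plane part of the half-annulus that start on the real line at some $(x,0)$ with $x\in[-(1+c)n,-n]$ and end at some $(y,0)$ with $y\in[n,(1+c)n]$, thereby enclosing $\Lambda^+_n$. The hypothesis that $\xi$ puts singletons on $\{(i,0):|i|\le n\}$ is exactly what is required so that the real segment enclosed by such a half-circuit remains free, ensuring that the boundary conditions induced inside the half-circuit do not feel the exterior $\xi$ via the real line.

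\textbf{Step 1 --- Half-plane RSW.} Using the strong half-plane crossing estimates established in \cite{duminil2021planar} for $q\in[1,4)$, a finite chaining of crossings of bounded-aspect-ratio quads in the half-annulus gives a uniform lower bound $\phi^\xi_{G,q}(\mathcal{O})\ge c_0>0$ for the existence of an open half-circuit $\mathcal{O}$ in $\Lambda^+_{n,(1+c)n}$, together with the analogous bound for a dual-closed half-circuit $\mathcal{O}^*$. These bounds are uniform in $n$ and in all boundary conditions of the type allowed by the hypothesis. The restriction to $q<4$ is precisely the one under which the required crossing estimates are available; they are expected to fail at $q=4$.

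\textbf{Step 2 --- Mixing via domain Markov.} As in the full-plane case, the two-sided comparison is obtained by conditioning on circuits. On the event that an open half-circuit exists, let $\Gamma$ be the outermost such half-circuit in $\Lambda^+_{n,(1+c)n}$; by the domain Markov property, the configuration in the domain $D$ enclosed by $\Gamma$ and the real segment between its endpoints is distributed as critical FK on $D$ with wired boundary on $\Gamma$ and free boundary on the enclosed real segment --- a law that depends on the circuit but not on $\xi$. The analogous statement with a dual-closed half-circuit produces fully free boundary conditions inside. Combining both with the uniform RSW lower bounds of Step~1 and with the same decomposition applied to $\phi^0_{\Z\times\Z_+,q}$ itself, standard manipulations yield the sandwich
\[
C^{-1}\,\phi^0_{\Z\times\Z_+,q}(A) \;\le\; \phi^\xi_{G,q}(A) \;\le\; C\,\phi^0_{\Z\times\Z_+,q}(A)\;.
\]

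\textbf{Main obstacle.} The subtlest point is the same as in the full plane: once one conditions on the circuit, one must compare the two extremal ``inside'' measures (those with wired and with free conditions along the half-circuit, combined with free on the real segment) and show that they agree up to constants on events measurable in $\Lambda^+_n$. This is the half-plane analogue of the coincidence $\phi^0_{\Z^2,q}=\phi^1_{\Z^2,q}$ used in the full-plane proof, and it follows by a further iteration of the RSW estimates of \cite{duminil2021planar} --- which is precisely why the case $q=4$ must be excluded from the statement.
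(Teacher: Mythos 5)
Your plan --- separating half-circuits in the semi-annulus $\Lambda^+_{n,(1+c)n}$, the domain Markov property, and the strong crossing estimates of \cite{duminil2021planar} --- is exactly the adaptation the paper has in mind (the paper offers no proof, only that the corollary ``can be proven analogously to the full plane case''), and your remark about why $q=4$ is excluded is correct.

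However, there is a gap in the step you present as the key observation. A half-circuit separating $\Lambda^+_n$ inside $\Lambda^+_{n,(1+c)n}$ meets the real axis at two endpoints that can lie anywhere in $[n,(1+c)n]\times\{0\}$ and $[-(1+c)n,-n]\times\{0\}$. The enclosed real segment therefore extends past $[-n,n]\times\{0\}$, and on the leftover portion between $|i|=n$ and the endpoints the hypothesis says nothing about $\xi$. After the domain Markov decomposition, these vertices lie on the boundary of the enclosed domain as well as on $\partial G$, and whatever wiring $\xi$ imposes among them survives the conditioning (they have no edges leaving the enclosed region, so the dual circuit does not free them, and in the open-circuit case they need not be wired to $\Gamma$). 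Thus your assertion that ``the real segment enclosed by such a half-circuit remains free'' is not justified as written, and the induced inside law \emph{does} depend on $\xi$ through a segment of length up to $cn$ adjacent to the corners $(\pm n,0)$ of $\Lambda^+_n$ --- precisely where you cannot shield it with a further separating circuit because there is no room. To close this you must either control this residual corner wiring (for instance by sandwiching it between free and locally wired conditions and running an additional RSW iteration over the scales between $n$ and $(1+c)n$), or work with the stronger hypothesis $\{(i,0)\}\in\xi$ for $|i|\le(1+c)n$. This is the genuinely half-plane-specific subtlety; your ``Main obstacle'' paragraph instead locates the remaining work at the wired-versus-free comparison along the half-circuit, which is the same iteration as in the full plane and not the new difficulty here.
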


Let us now present the stronger crossing estimates from \cite{duminil2021planar}
which allow for a precise understanding of the behavior at criticality. We
introduce a few notions which will also appear throughout the paper.

A path $\gamma=(\gamma_i)_{i=0}^n$ is a finite sequence of nearest-neighbor
vertices and it is called simple if the vertices are distinct. A loop is a path
$\lambda$ with $\lambda_n=\lambda_0$ and such that $(\lambda_i)_{i=1}^{n}$ is
simple. A path $\gamma=(\gamma_i)_{i=0}^n$ is said to be \emph{open} in a
percolation configuration $\omega$ if $\omega_{\gamma_{i-1}\gamma_i}=1$ for all
$1 \le  i \le n$. A discrete domain is a finite subgraph $\mathcal{D}=(V,E)$ of $\Z^2$
which is enclosed by a loop (all vertices and edges on the loop belong to the
domain, see also \cite{duminil2021planar}) and we write $\partial \mathcal{D}$
for the vertices on the boundary loop. If $a,b,c,d\in \partial \mathcal{D}$ are
distinct and ordered counterclockwise, we call $(\mathcal{D},a,b,c,d)$ a
\emph{discrete quad}. The boundary points define four boundary arcs $(ab)$,
$(bc)$, $(cd)$ and $(da)$ (which are subsets of $\partial \mathcal{D}$ and
contain their endpoints by convention).

Later, we will allow for a slightly more general type of domain, which we refer
to as an approximate discrete domain. By definition, it is a connected subgraph
$\mathcal{D}'=(V',E')$ obtained from a discrete domain $\mathcal{D}=(V,E)$ by
adding a set of edges, each of which has exactly one endpoint in $V$. Note that
$\mathcal{D}$ can be recovered from $\mathcal{D}'$. Moreover, if we consider
$a,b,c,d\in \partial \mathcal{D}$ as above then we define the boundary segments
by $(ab)'=\{v\in \partial V'\colon \min_{w\in (ab)} d_1(v,w)\le 1\},$ where $d_1$ denotes the graph metric
on $\Z^2$.

Let $\mathcal{D}'=(V',E')$ be as above.
We define the crossing event $\{(ab)' \xleftrightarrow{\mathcal{D}'} (cd)'\}$ to
be the event that there exists an open path from $(ab)'$ to $(cd)'$ in
$\mathcal{D}'$. The domain is then said to be \emph{crossed} and such open paths
are  called \emph{crossings}.

If the discrete domain $\mathcal{D}$ is defined in terms of a loop $\lambda$ (as
above), we can view $\lambda$ as a piecewise linear Jordan curve surrounding a
domain $\lambda^o$ in $\R^2$. By the Riemann mapping theorem, there exists
$\ell>0$ and a unique conformal transformation from $\lambda^o$ to $(0,1)\times
(0,\ell)$ such that its continuous extension maps $a, b, c, d$ to the corners of
$[0,1]\times[0,\ell]$ in counterclockwise order with $a$ being mapped to
$(0,0)$. This unique value $\ell$, denoted by $\ell_{\mathcal D}((ab),(cd))$, is
referred to as the \emph{extremal distance} from $(ab)$ to $(cd)$ in
$\mathcal{D}$.

\begin{thm}[\cite{duminil2021planar}]
    \label{thm:strongRSW}
    Let $q \in [1,4)$. For every $L>0$, there exists $\epsilon =
    \epsilon(L)\in (0,1)$ such that if $(\mathcal D, a, b, c,
    d)$ and $\mathcal{D}'$ are as above, then for any boundary condition $\xi$,
	\begin{itemize}
        \item if $\ell_{\mathcal D}((ab),(cd)) \le L$, then $\phi_{\mathcal
            D',q}^\xi ((ab)' \xleftrightarrow{\mathcal D'} (cd)') \ge \epsilon$,
        \item if $\ell_{\mathcal D}((ab),(cd)) \ge L^{-1}$, then $\phi_{\mathcal
            D',q}^\xi ((ab)' \xleftrightarrow{\mathcal D'} (cd)') \le
            1- \epsilon$.
	\end{itemize}
\end{thm}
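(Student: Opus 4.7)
This theorem is the deep strong RSW estimate of \cite{duminil2021planar}, so the only realistic proof plan is to outline the Duminil-Copin--Manolescu--Tassion strategy and indicate where the main difficulties sit; with the theorem itself treated as cited, what follows is the conceptual route.

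The starting point is the weaker crossing estimates already available in \cite{duminil2017continuity}: crossing probabilities of rectangles of bounded aspect ratio, with wired boundary conditions on some arcs and free on others, are bounded away from $0$ and $1$ uniformly in the scale for all $q \in [1,4]$. The first step is to bootstrap these into crossings of arbitrary discrete quads by essentially a ``conformal invariance without conformal invariance'' argument. Namely, one fixes a conformal map from $\lambda^o$ to $(0,1)\times(0,\ell)$ (so $\ell = \ell_{\mathcal{D}}((ab),(cd))$), and one constructs inside $\lambda^o$ a chain of overlapping topological rectangles, each of bounded modulus, running from $(ab)$ to $(cd)$. Each such rectangle admits a uniform lower bound on its crossing probability from the baseline RSW in bounded-aspect-ratio rectangles; combining them via the FKG inequality along the chain gives a uniform lower bound on the crossing probability of the whole quad, with a constant depending only on $\ell$ (hence only on $L$). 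The number of rectangles in the chain can be controlled in terms of $\ell$, which is exactly why $\ell \le L$ enters.

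The central obstacle (and the actual content of \cite{duminil2021planar} compared to \cite{duminil2017continuity}) is obtaining uniformity over \emph{arbitrary} boundary conditions $\xi$ on $\mathcal{D}'$, not just ``nice'' ones. Free boundary everywhere is unfavorable to crossing the dual, wired everywhere is unfavorable to the primal, and exotic partitions $\xi$ (for instance, sequences of wired intervals alternating with free ones along $\partial V'$) could in principle alter the crossing probability of a quad drastically. The Duminil-Copin--Manolescu--Tassion approach upgrades the baseline estimates under such adverse boundary conditions by an iterative, multiscale exploration, revealing open or closed macroscopic paths emanating from the boundary and then invoking the domain Markov property to reduce to a smaller quad inside with improved boundary conditions. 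FKG, duality, and the fact that extremal distance is preserved (up to constants) under removing a thin annular neighborhood are used to control the error at each stage. This is the step I expect to be the main obstacle; it is also the step where the restriction $q<4$ enters essentially, since several of the iterative comparisons rely on crossing estimates that are expected to fail at $q=4$.

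Once the lower bound is established for general quads and general boundary conditions, the upper bound in the second bullet follows by planar duality: the dual model of critical FK percolation with cluster weight $q$ at $p_c(q)$ is itself critical FK percolation with weight $q$, and the absence of a primal crossing from $(ab)$ to $(cd)$ is equivalent to a dual crossing from $(bc)$ to $(da)$ in an approximate dual discrete quad whose extremal distance is $1/\ell_{\mathcal{D}}((ab),(cd)) \le L$. Applying the first bullet to the dual gives a lower bound $\epsilon$ on the dual crossing probability, i.e.\ an upper bound $1-\epsilon$ on the primal one. Finally, the passage from a discrete domain $\mathcal{D}$ to an approximate discrete domain $\mathcal{D}'$ is handled by noting that the additional dangling edges only increase crossing probabilities (so only the upper bound needs care), and their effect on extremal distance and on the domain Markov step is uniformly controlled, so the same $\epsilon(L)$ works for all $\mathcal{D}'$ obtained from $\mathcal{D}$.
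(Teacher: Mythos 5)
The paper does not prove this theorem: it is stated as a direct citation from \cite{duminil2021planar}, and the only supplementary remark the paper makes is a single sentence noting that the passage from discrete domains (the setting in which \cite{duminil2021planar} states the result) to approximate discrete domains is straightforward. Your proposal correctly treats the theorem as cited, and your conceptual outline of the strategy of \cite{duminil2021planar} (baseline crossing bounds from \cite{duminil2017continuity} bootstrapped to general quads via chains of bounded-modulus rectangles and FKG; the genuinely hard part being uniformity over arbitrary boundary conditions, handled by multiscale exploration and domain Markov; the upper bound obtained by self-duality since the complementary arcs have reciprocal extremal distance) is a fair description, so there is no conflict with the paper.

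One small point worth flagging in the last step. Your claim that ``the additional dangling edges only increase crossing probabilities (so only the upper bound needs care)'' is somewhat loose: the random-cluster measure $\phi^\xi_{G,q}$ is not monotone in the underlying graph $G$ (the partition function changes, and $\xi$ lives on $\partial V'$ rather than $\partial V$), so one cannot simply say that adding edges raises crossing probabilities. The cleaner route, which is presumably what the paper has in mind when calling this ``straightforward,'' is to condition on the status of the dangling edges (domain Markov property), apply the theorem on $\mathcal{D}$ under the induced boundary condition, and then observe that a crossing of $\mathcal{D}$ from $(ab)$ to $(cd)$ yields a crossing of $\mathcal{D}'$ from $(ab)'$ to $(cd)'$ up to at most two extra edges at the endpoints, whose openness can be forced via finite energy at the cost of an absolute multiplicative constant. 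Both bullets go through the same way; neither direction is more delicate than the other.
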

These crossing estimates are the main result of \cite{duminil2021planar} and
they will serve as the key tool to study the critical fuzzy Potts model in
Sections \ref{sec:discrete-fuzzy} and \ref{sec:convergence}. Note that in
\cite{duminil2021planar} they are only stated in the case of discrete domains
but deducing the case for approximate discrete domains is straightforward. While
boundary conditions are irrelevant for Bernoulli percolation ($q=1$), it is
difficult to control the effect of boundary conditions for $q \in (1,4)$ and
these estimates are a major improvement compared with
\cite{duminil2017continuity} since they include crossings that touch unfavorable
boundary conditions in arbitrary domains.

We will now state some applications of Theorem \ref{thm:strongRSW} that were
presented (among others) in Sections 6 and 7 of \cite{duminil2021planar}. Many
of the arguments go back to \cite{Kesten1987} (see also \cite{Nolin2008}) in the
case of Bernoulli percolation ($q=1$) and have been extended to the case of
FK-Ising ($q=2$) in \cite{Chelkak2016}.

We begin with the universal arm exponents for FK percolation. We consider the
dual lattice $(\Z^2)^\star = (1/2,1/2) + \mathbb Z^2$ with edge set
$E^\star((\Z^2)^\star)$. It can naturally be embedded in the plane $\R^2$
together with the (primal) lattice $\Z^2$ by drawing its edges as straight line
segments. Every dual edge $e^\star \in E^\star((\Z^2)^\star)$ intersects a
unique (primal) edge $e \in E(\Z^2)$, and we define the dual percolation
configuration $\omega^\star$ by $\omega^\star_{e^\star}=1-\omega_e$. We say that
an edge $e^\star \in E^\star((\Z^2)^\star)$ is dual open if
$\omega^\star_{e^\star}=1$.  A type sequence $\tau=\tau_1\cdots\tau_k$ is a
finite length word with letters $\tau_1,\dots,\tau_k\in \{0,1\}$. An open simple
path in the annulus $\Lambda_{m,n}$ from the inner boundary $\partial \Lambda_m$
to the outer boundary $\partial \Lambda_m$  is called an \emph{arm of type 1}.
Similarly, an \emph{arm of type 0} from $\partial \Lambda_m$ to $\partial
\Lambda_m$ in $\Lambda_{m,n}$ denotes a dual open simple path on
$\Lambda_{m,n}^\star$ (the subgraph of $(\Z^2)^\star$ induced by the edges dual
to the edges of $\Lambda_{m,n}$) from
the inner boundary to the outer boundary.

\begin{defn}
    Let $1 \le m \le n$ and let $\tau$ be a type sequence. The arm event
    $A_\tau(m,n)$ denotes the existence of $\abs{\tau}$
    counterclockwise-ordered, disjoint arms
    $\gamma^1,\ldots,\gamma^{\abs{\tau}}$ from $\partial \Lambda_m$ to $\partial
    \Lambda_n$ in $\Lambda_{m,n}$ such that $\gamma^i$ has type $\tau_i$ for all
    $1\le i \le \abs{\tau}$. Similarly, we define the arm event $A^+_\tau(m,n)$
    by requiring that the arms stay in the upper halfplane $\Z\times\Z_+$ and
    that $\gamma^1,\dots,\gamma^{\abs{\tau}}$ starts with the rightmost arm.
\end{defn}

Note that the condition $m \ge \abs{\tau}$ is sufficient to guarantee that
$A^+_\tau(m,n)$ and $A_\tau(m,n)$ are non-empty events.

We will make use of the following results on critical exponents for FK
percolation. They appear in \cite[Proposition 6.6, Corollary
6.7]{duminil2021planar} and are stated for the measure $\phi_{\Z^2,q}$ there.
However, using the inclusions $A^+_{010}(m,n)\subset A^+_{010}(m,n/2)$ and
$A_{100100}(m,n)\subset A_{100100}(m,n/2)$ for $m\le n/2$ together with the mixing
property, we obtain the result as stated below.

\begin{cor}[\cite{duminil2021planar}]\label{cor:six-arms-FK-with-bc}
    Let $q \in [1,4)$. There exists a constant $\beta_1=\beta_1(q)>0$ such that
    for all $n \ge m \ge 1$,
	\begin{align*}
        \phi_{\Lambda_n,q}^\xi\left(A^+_{010}(m,n)\right) &\lesssim
        \left(\frac{m}{n}\right)^{1+\beta_1}\;,\\
        \phi_{\Lambda_n,q}^\xi\left(A_{100100}(m,n)\right) &\lesssim
        \left(\frac{m}{n}\right)^{2+\beta_1}\;,
	\end{align*}
	where the bounds are uniform in $\xi$ and in $m,n$.
\end{cor}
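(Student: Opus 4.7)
The plan is to reduce the finite-volume estimates stated here to the infinite-volume counterparts established in \cite{duminil2021planar}, via the mixing statement of Corollary \ref{cor:mixing}. The preamble already flags the strategy: shrink the outer radius from $n$ to $n/2$ to localize the arm event inside a box strictly smaller than $\Lambda_n$, then compare finite- and infinite-volume probabilities.

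First I would dispose of the trivial case $m > n/2$: there the bounds $(m/n)^{1+\beta_1}$ and $(m/n)^{2+\beta_1}$ are bounded away from zero, so the inequalities follow from $\phi^\xi_{\Lambda_n,q}(\cdot) \le 1$. Assume therefore that $m \le n/2$. The monotone inclusions
\[
    A^+_{010}(m,n) \subset A^+_{010}(m,n/2), \qquad A_{100100}(m,n) \subset A_{100100}(m,n/2)
\]
hold because the first crossing of $\partial\Lambda_{n/2}$ by each arm of a configuration realizing the left-hand event yields arms of the same types reaching only $\partial\Lambda_{n/2}$. Crucially, these smaller events are measurable with respect to the edges in $\Lambda_{n/2}$ (respectively $\Lambda^+_{n/2}\subset \Lambda_{n/2}$).

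Next I would apply Corollary \ref{cor:mixing} with $c = 1$, which is admissible since $\Lambda_{(1+c)(n/2)} = \Lambda_n$. This yields a universal constant $C > 0$ such that, for any event $B$ measurable on $\Lambda_{n/2}$ and any boundary condition $\xi$,
\[
    \phi^\xi_{\Lambda_n,q}(B) \le C\,\phi_{\Z^2,q}(B).
\]
Invoking the cited bounds for the infinite-volume measure from \cite[Proposition 6.6 and Corollary 6.7]{duminil2021planar}, namely $\phi_{\Z^2,q}(A^+_{010}(m,n')) \lesssim (m/n')^{1+\beta_1}$ and $\phi_{\Z^2,q}(A_{100100}(m,n')) \lesssim (m/n')^{2+\beta_1}$ for all $n'\ge m$, with $n' = n/2$ and then absorbing the factors $2^{1+\beta_1}$ and $2^{2+\beta_1}$ into the $\lesssim$, one obtains the two asserted estimates.

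There is no genuine obstacle to this argument; its purpose is simply to plug existing results together. The only point that requires care is recognizing that the mixing corollary demands the event to live strictly inside the ambient domain, which is precisely why the elementary shrinking step $n \mapsto n/2$ is inserted. All deeper technical content (the strong RSW theorem and the infinite-volume arm bounds themselves) is contained in the cited results.
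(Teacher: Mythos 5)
Your argument is exactly the one the paper indicates in the sentence immediately preceding the corollary: reduce to the infinite-volume bounds of \cite[Proposition 6.6, Corollary 6.7]{duminil2021planar} via the inclusions $A^+_{010}(m,n)\subset A^+_{010}(m,n/2)$, $A_{100100}(m,n)\subset A_{100100}(m,n/2)$ for $m\le n/2$ and the mixing property (Corollary \ref{cor:mixing}). The added clarifications (dispatching $m>n/2$ trivially, checking that $c=1$ makes $\Lambda_{(1+c)(n/2)}=\Lambda_n$ admissible) are correct and just spell out what the paper leaves implicit.
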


Another application of Theorem \ref{thm:strongRSW} will be needed in Section
\ref{sec:convergence} to prove the convergence of the (discrete) fuzzy Potts
cluster boundaries. A sequence of distinct clusters $C_1,\ldots,C_k$ such that
for every $1\le i< k$, the clusters $C_i$ and $C_{i+1}$ have graph distance 1,
is called a \emph{cluster chain} of length $k\ge 1$. For $\ell >0$ and $n \ge
1$, denote by $\Lambda_n^\ell$ the subgraph induced by $[-\ell n, \ell
n]\times[-n,n]$. The following theorem (see \cite[Theorem
7.5]{duminil2021planar}) states that with probability close to 1, the box
$\Lambda_n^\ell$ is crossed from left to right by a cluster chain consisting of
macroscopic clusters. The result in \cite{duminil2021planar} is stated with
respect to $\phi_{\Z^2,q}$ but the proof goes through with more general boundary
conditions (away from the box $\Lambda_n^\ell$ under consideration).

\begin{thm}[\cite{duminil2021planar}]\label{thm:large-cluster-chains}
    Let $q \in [1,4)$. For $\alpha > 0$ let $S(\alpha,n,\ell)$ be the event that
    there exists a cluster chain with each cluster having diameter at least
    $\alpha n$, that crosses $\Lambda_n^\ell$ from the left side to the right
    side. Then, for every $\epsilon ,\ell>0$ and $c>0$, there exists $\alpha >0$
    such that for every $n \ge 1$,
	\begin{equation*}
		\phi^\xi_{\Lambda_{(1+c)n}^{\ell},q}(S(\alpha,n,\ell))
        \ge 1-\epsilon \;.
	\end{equation*}
\end{thm}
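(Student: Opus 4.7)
This is essentially Theorem 7.5 of \cite{duminil2021planar}; the only new content is the uniformity in the boundary conditions $\xi$ on a larger enclosing domain $G \supset \Lambda_{(1+c)n}^\ell$. The plan is to run through the original proof and verify, step by step, that each ingredient can be used with bounds uniform in such $\xi$.

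The core strategy of the argument in \cite{duminil2021planar} is the following: (a) cover $\Lambda_n^\ell$ by a finite collection of overlapping square boxes of side comparable to $n$, with the number of boxes depending only on $\ell$; (b) inside each box produce a cluster of diameter at least $\alpha n$ that crosses it horizontally, using Theorem \ref{thm:strongRSW} applied to a quad of bounded extremal distance; (c) chain clusters of adjacent boxes through short open connections in the overlap region between them; (d) combine all these positive-probability events by FKG. Repeating step (b) inside each tile on disjoint sub-rectangles, a bounded number of times, inflates the combined success probability to $1-\epsilon$ by independence within a tile conditionally on the status outside, or more cleanly by an FKG-type iteration.

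The reason the argument passes to arbitrary exterior boundary conditions is that Theorem \ref{thm:strongRSW} already provides crossing probabilities that are uniform in $\xi$, for \emph{any} approximate discrete domain $\mathcal{D}'$ containing the quad under consideration. Since the buffer layer $\Lambda_{(1+c)n}^\ell \setminus \Lambda_n^\ell$ keeps every quad used in (b) and (c) at positive distance from $\partial G$, each RSW input is bounded below by a constant depending only on $c$, $\ell$, and $\alpha$. The FKG inequality holds in arbitrary finite subgraphs with arbitrary boundary conditions, so the combination step (d) is unaffected.

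The delicate part of the original argument, and the main obstacle here, is step (c): one must ensure that the macroscopic clusters produced in adjacent tiles are genuinely joined into a single cluster chain of \emph{distinct} clusters, rather than being separated (or identified) by small intermediate clusters. In \cite{duminil2021planar} this is achieved by an induction on the number of tiles traversed, together with additional RSW crossings in thin gluing strips that localize the connections. Because every crossing probability used in that induction is again bounded below uniformly in $\xi$ by Theorem \ref{thm:strongRSW}, the proof carries over verbatim, giving the stated uniform estimate.
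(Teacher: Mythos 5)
Your proposal matches the paper's treatment: the paper simply cites~\cite[Theorem~7.5]{duminil2021planar} and notes that the proof there extends to arbitrary boundary conditions $\xi$ on $\Lambda^\ell_{(1+c)n}$ because the buffer annulus keeps all relevant crossing events away from $\partial\Lambda^\ell_{(1+c)n}$ and Theorem~\ref{thm:strongRSW} (together with FKG and the domain Markov property) is uniform in~$\xi$. Your more detailed sketch of why the uniformity carries through is consistent with this one-line justification and does not introduce any gap.
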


\subsection{Fuzzy Potts model}
\label{sec:fuzzy-potts}

Having introduced critical FK percolation in the previous subsection, we are now
in the position to define the fuzzy Potts model. Let $G=(V,E)$ be a finite subgraph of
$\Z^2$. We consider the space $\{0,1\}^E \times \{R,B\}^V$ consisting
of elements $(\omega,\sigma)$ with $\omega$ representing a (bond) percolation
configuration and $\sigma$ representing a (site) coloring.  We say that a vertex
$v \in V$ is \emph{red} if $\sigma_v=R$ and \emph{blue} if $\sigma_v=B$. Fix a
cluster weight $q \in [1,4)$, a coloring parameter $r \in [0,1]$, and a boundary
condition $\xi$. The probability measure $P_{G,q,r}^\xi$ on $\{0,1\}^E \times
\{R,B\}^V$ is constructed in two steps:
\begin{enumerate}[(i)]
    \item Sample $\omega \in \{0,1\}^E$ according to the critical FK
        percolation measure $\phi_{G,q}^\xi$.
    \item Color every cluster $C \in \mathcal{C}(\omega^\xi)$ in red with
        probability $r$ and in blue with probability $1-r$, independently of the
        other clusters. Here, coloring a cluster $C$ in red (resp.\ blue) means
        to set $\sigma_v=R$ (resp.\ $\sigma_v=B$) for all vertices $v \in C$.
\end{enumerate}
In the above, recall that $\mathcal{C}(\omega^\xi)$ denotes the set of clusters
of the graph $\omega^\xi$ that is obtained from $\omega$ by adding additional
edges between vertices belonging to the same partition element of $\xi$.

The second marginal of $P^\xi_{G,q,r}$, denoted by $\mu_{G,q,r}^\xi$, is called
the fuzzy Potts measure.
Similarly, we obtain $P_{\Z^2,q,r}$ (resp.\ $P_{\Z\times \Z_+,q,r}^0$) and its
second marginal $\mu_{\Z^2,q,r}$ (resp.\ $\mu_{\Z\times\Z_+,r,q}^0$) from
$\phi_{\Z^2,q}$ (resp.\ $\smash{\phi^0_{\Z\times \Z_+,q}}$) using the analogous
construction.

To describe the geometry of the fuzzy Potts model, we again want to define the
notions of red (resp.\ blue) arms and clusters. Compared with FK percolation,
which is a measure on the status of edges of $\Z^2$, the fuzzy Potts model is a
measure on the color of vertices of $\Z^2$ and therefore, it is a priori not
clear which adjacency relation should be chosen to define arms and clusters.
Recall that we have naturally embedded $\Z^2$ in the plane $\R^2$ and denote by
the $d_1$ (resp.\ $d_\infty$) the metric induced by the 1-norm (resp.\ by the
$\infty$-norm) on $\R^2$. We call a finite sequence of vertices $\gamma =
(\gamma_i)_{i=0}^n$ a \emph{strong path} (resp.\ \emph{weak path}) if for all $1
\le i \le n$, $d_1(\gamma_{i-1},\gamma_i) = 1$ (resp.\
$d_\infty(\gamma_{i-1},\gamma_i) = 1$). Note that the notion of strong path
coincides with a nearest-neighbor path on the square lattice, whereas a weak
path is allowed to move along diagonals.

The reason why we are also introducing the notion of weak paths here (rather
than only working with strong ones) is the following duality statement (which
for conceptual clarity we are phrasing in the particular case of box crossings):
There is a weak blue path from top to bottom in $\Lambda_n$ if and only if there
is no strong red path from left to right in $\Lambda_n$. The analogous statement
holds when `red' and `blue' are interchanged. In Section
\ref{sec:discrete-fuzzy}, it will be convenient to pick the convention that two
points are in the same fuzzy Potts cluster if they are connected by a strong red
path or if they are connected by a weak blue path. Picking one particular
convention also appears in Section \ref{sec:loop-encoding} when we are encoding
fuzzy Potts configurations as collections of loops. As we will see in Section
\ref{sec:convergence} which convention we pick does not affect the scaling limit
and therefore also does not affect the arm exponents. This is essentially due to
the fact that there are no
pivotals in the scaling limit.

When studying arm events, we nonetheless need to specify whether arms correspond
to strong or weak paths. In the definition below, a color sequence
$\tau=\tau_1\cdots\tau_k$ is a finite length word with letters
$\tau_1,\dots,\tau_k\in \{R,B\}$.

\begin{defn}\label{def:arm-event-fuzzy-Potts}
    Let $1 \le m \le n$ and let $\tau$ be a color sequence. The arm event
    $A^{s}_\tau(m,n)$ denotes the existence of $\abs{\tau}$
    counterclockwise-ordered, disjoint strong paths
    $\gamma^1,\ldots,\gamma^{\abs{\tau}}$ from $\partial \Lambda_m$ to $\partial
    \Lambda_n$ in $\Lambda_{m,n}$ such that $\gamma^i$ has color $\tau_i$ for
    all $1\le i \le \abs{\tau}$. The arm event $A_\tau(m,n)$ is defined in the
    same way except that $\gamma^i$ is allowed to be a weak path if $\tau_i=B$.

    Similarly, we define the arm events $A^+_\tau(m,n)$ and $A^{+s}_\tau(m,n)$
    by requiring that the arms stay in the upper halfplane $\Z\times\Z_+$ and
    that $\gamma^1,\dots,\gamma^{\abs{\tau}}$ starts with the rightmost arm.
\end{defn}

Hence, the superscript $s$ indicates that all arms correspond to strong paths.
We do not introduce the arm event in which all arms correspond to weak paths
since in that case red and blue weak paths can cross each other. As in the case
of arm events for FK percolation, we remark that assuming $m \ge \abs{\tau}$ is
sufficient to guarantee that $A_\tau(m,n)$, $A^+_\tau(m,n)$, $A^{s}_\tau(m,n)$
and $A^{+s}_\tau(m,n)$ are non-empty.

Let us recall the definition of the number of `interfaces' associated to arm
events: Setting $\tau_{k+1}:=\tau_1$, it
was defined by
\begin{align*}
	I(\tau) = \#\{1\le i\le k\colon \tau_i\neq \tau_{i+1}\}\quad\text{and}\quad
	I^+(\tau) = 1+ \#\{1\le i< k\colon \tau_i\neq \tau_{i+1}\}\;.
\end{align*}
A color sequence $\tau$ is called \emph{alternating} if $\abs{\tau}=1$ or
$\abs{\tau} = I(\tau)$, and \emph{alternating for the halfplane} if $I^+(\tau)=
\abs{\tau}$. In both cases, it means that no color occurs twice subsequently
(with the only difference that the last and the first letter of $\tau$ are not
viewed as subsequent in the halfplane). For example, the color sequence $\tau =
RBR$ is not alternating but it is alternating for the halfplane.

\subsection{Loop (ensemble) topologies}

Since we will investigate the convergence of collections of loops in planar
domains, we will need to define the sense of convergence carefully and we use
the setup from \cite{benoist-hongler-cle3} (see this work for more details).

Let $C^s(\partial \D,\C)$ be the closure in the uniform topology of all
injective elements of $C(\partial \D, \C)$, i.e. closed curves that may
touch but not cross themselves. When we talk of loops in $\C$ in this work,
we refer to elements of $\mathcal{C} := C^s(\partial\D,\C)/\sim$, where the
equivalence relation $\sim$ is defined by $\eta\sim \eta\circ \phi$ whenever
$\phi$ is a homeomorphism from $\partial \D$ to itself. We define a metric
$d_{\mathcal{C}}$ on $\mathcal{C}$ by
\begin{align*}
	d_{\mathcal{C}}([\eta],[\eta']) = \inf_\phi \| \eta - \eta'\circ
		\phi\|_\infty
\end{align*}
where $[\eta]$ denotes the equivalence class associated to $\eta$ and the
infimum is taken over all homeomorphisms $\phi$ from $\partial \D$ to itself.
This turns $(\mathcal{C}, d_{\mathcal{C}})$ into a Polish space. By a small
abuse of notation, we will write $\eta$ instead of $[\eta]$ throughout.

We write $\diam(\eta)$ for the diameter of the image of $\eta$ and we also let
$\eta^o$ denote all the points in $\C$ around which are surrounded by $\eta$ or
more precisely all the points around which $\eta$ has winding $\pm 1$. We say
that $\eta$ surrounds $\eta'$ if $\eta'$ is contained in the closure of
$\eta^o$.

We will also want to consider collections of loops: Let $\mathcal{L}$ be the set
of countable subsets $\Gamma$ of $\mathcal{C}$ with the property that
$\Gamma_\epsilon := \{ \eta\in \Gamma\colon \diam(\eta)>\epsilon \}$ is finite
for all $\epsilon>0$ and the property that $\Gamma=\cup_{\epsilon > 0}
\Gamma_\epsilon$ (this is called local finiteness). We define
\begin{align*}
	d_{\mathcal{L}}(\Gamma, \Gamma') = \inf_{G,G',\pi} \sup_{\eta\in G}
		d_{\mathcal{C}}(\eta, \pi(\eta)) \vee
		\sup_{\eta\in \Gamma\setminus G} \diam(\eta) \vee
		\sup_{\eta'\in \Gamma'\setminus G'} \diam(\eta')
\end{align*}
where the infimum is taken over all $G\subset \Gamma$, $G'\subset \Gamma'$ and
all bijections $\pi\colon G\to G'$. Again, this definition turns $\mathcal{L}$ with the topology induced by $d_{\mathcal{L}}$ into a Polish space.

When $\Gamma\in \mathcal{L}$ has the property that for any two loops $\eta,\eta'\in
\Gamma$ with $\eta^o\cap (\eta')^o\neq\emptyset$, $\eta$ surrounds $\eta'$ or
$\eta'$ surrounds $\eta$, we can associate a nesting level $N_{\Gamma,\eta}$
to each loop $\eta\in \Gamma$. Indeed, we let $N_{\Gamma,\eta}=n+1$ where $n$ is
the number of distinct loops surrounding $\eta$ (not counting $\eta$ itself). In
particular, the outermost loops have nesting level $1$.

We will also introduce the space $\mathcal{C}'$ of curves. We let
$C^s([0,1],\C)$ be the closure in the uniform topology of all injective elements
of $C([0,1], \C)$. We call the space $\mathcal{C}'= C^s([0,1],\C)/\sim$ the
space of curves where we identify $\gamma\sim \gamma\circ \phi$ whenever
$\phi\colon [0,1]\to [0,1]$ is an increasing homeomorphism. We let
\begin{align*}
	d_{\mathcal{C}'}([\gamma],[\gamma']) = \inf_\phi \| \gamma - \gamma'\circ
	\phi\|_\infty
\end{align*}
where the infimum is taken over all increasing homeomorphisms $\phi$ from
$[0,1]$ to itself. Also in this case we obtain a Polish space
$(\mathcal{C}',d_{\mathcal{C}'})$. As in the case of loops we will write
$\gamma$ instead of $[\gamma]$ to avoid visual clutter.

When working with curves and collections of loops, it will always be understood
that we fix some arbitrary parametrization for them.

\subsection{Encoding percolation models as loop configurations}
\label{sec:loop-encoding}

In this section, we explain how both a bond and a site percolation can be
encoded as a collection of loops (in the sense of the previous section). Since
we will use these notions to talk about scaling limits in this paper, we will
consider subgraphs of $\epsilon\Z^2$ for $\epsilon>0$. Let $\mathcal{D}=(V,E)$
be a discrete domain in $\epsilon\Z^2$, so $\mathcal{D}$ is therefore a finite subgraph
of $\epsilon\Z^2$. We can view the boundary of $\mathcal{D}$ as a piecewise
linear simple loop $\lambda$ and write $\lambda^o$ for the open set of points
it surrounds.

To each bond percolation $\omega \in \{0,1\}^E$ and a cluster $C\in
\mathcal{C}(\omega)$ we can associate the set
\begin{align*}
    O^C_\omega = \bigcup_{v,w\in C\colon \omega_{vw}=1}\,R_{vw} \cup
	\bigcup_{v\in V} \left(v + [-\epsilon/4,\epsilon/4]^2\right)
\end{align*}
where $R_{vw} = \{tv + (1-t)w\colon t\in [0,1]\}+[-\epsilon/4,\epsilon/4]^2$ is
a rectangle with width $\epsilon/2$ and length $3\epsilon/2$ centered around the
edge $vw$.

\begin{figure}
	\centering
	\def\svgwidth{0.9\columnwidth}
    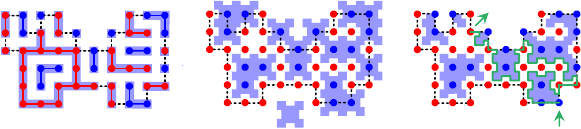
    \caption{\emph{Left.} The set $\cup_{C\in \mathcal{C}(\omega)} O^C_\omega$
        associated to a bond percolation configuration $\omega$ is shaded in
        blue and the clusters $\mathcal{C}(\omega)$ are colored to obtain a
        configuration $\sigma$. Edges which are part of the boundary of the
        domain but are not open with respect to $\omega$ are drawn as a dashed
        line. \emph{Center.} We consider a site percolation configuration
        $\sigma$ and draw a tile $X^+$ (as shown at the bottom) over each blue
        vertex and take the union to obtain the area shaded in blue.
        \emph{Right.} The set in the center figure is intersected with the
        closure of the domain to obtain $O^+_\sigma$. This step corresponds to
        the definition of blue clusters in terms of weak paths which stay within
        the discrete domain. The curve in green shows the curve
        $\gamma^+_{\sigma,a,b}$ from the boundary point $a$ (at the bottom) to
        $b$ (at the top).}
	\label{fig:loop-encoding}
\end{figure}

We write $\eta^C_\omega$ for the boundary of the unbounded connected component
of $\C\setminus O^C_\omega$ and $\Gamma^C_\omega$ for the collection of
boundaries of the bounded connected component of $\C\setminus O^C_\omega$. We
call the loop $\eta^C_\omega$ the outer boundary and view it as an element of
$\mathcal{C}$ and $\Gamma^C_\omega$ the collection of inner boundaries of $C$,
viewed as an element of $\mathcal{L}$ (see Figure \ref{fig:loop-encoding}).

We now define the loop encoding of the bond percolation configuration $\omega$ by
$\Gamma_\omega := \Gamma^O_\omega\cup \Gamma^I_\omega$ where
\begin{align*}
    \Gamma^O_\omega = \{\eta^C_\omega\colon C\in
    \mathcal{C}(\omega)\}\quad\text{and}\quad \Gamma^I_\omega =
    \bigcup_{C\in \mathcal{C}(\omega)} \Gamma^C_\omega
\end{align*}
are the sets of outer and inner boundaries respectively. Note that
$\Gamma^O_\omega$ (resp.\ $\Gamma^I_\omega$) is the set of loops in
$\Gamma_\omega$ with odd (resp.\ even) nesting level in $\Gamma_\omega$.

The constant $1/4$ in the definition here is completely arbitrary and any
constant $<1/2$ would work. In much of the literature, the loop encoding is
defined slightly differently but for statements about scaling limit results, the
differing definitions of the loop encodings do not matter.

We write $\Gamma^\partial_\omega$ for the loops in $\Gamma_\omega$ which do not
stay within $\lambda^o$ (these are the outer boundaries of clusters of $\omega$
intersecting $\partial\mathcal{D}$).

Let us now suppose that $\omega\in \{0,1\}^E$ and that $\sigma\in \{R,B\}^V$ is
obtained from $\omega$ by assigning colors to its clusters $\mathcal{C}(\omega)$
(we are therefore working implicitly with free boundary conditions). Let us
first make the following definitions:
\begin{align*}
    \Gamma^{OR}_{\omega,\sigma} &= \{ \eta^C_\omega\colon C\in
    \mathcal{C}(\omega)\,, \sigma_v=R\,,\ \forall v\in C\}\;,\\
    \Gamma^{IR}_{\omega,\sigma} &= \bigcup_{C\in \mathcal{C}(\omega)\colon
    \sigma_v=R\,\forall v\in C} \Gamma^C_\omega \;.
\end{align*}
So $\Gamma^{OR}_{\omega,\sigma}$ (resp.\ $\Gamma^{IR}_{\omega,\sigma}$) is the set of
outer (resp.\ inner) boundaries of red clusters. We analogously define
$\Gamma^{OB}_{\omega,\sigma}$ and $\Gamma^{IB}_{\omega,\sigma}$ as the outer and
inner boundaries of blue clusters. Now we encode the site percolation
configuration $\sigma$ in terms of a collection as loops as well. To this end,
let
\begin{align*}
    O^\pm_\sigma = \left(\,\bigcup_{v\in V\colon \sigma_v=B}\,(v+X^\pm)
    \right)\cap \overline{\lambda^o}
\end{align*}
where $X^+$ (resp.\ $X^-$) is a square with some boxes superimposed on (resp.\
removed from) its corners; more precisely, we define
\begin{align*}
    X^+ = [-\epsilon/2,\epsilon/2]^2 \cup
    (\{(\pm\epsilon/2,\pm\epsilon/2)\}+[-\epsilon/4,\epsilon/4]^2)\;,\\
    X^- = [-\epsilon/2,\epsilon/2]^2 \setminus
    (\{(\pm\epsilon/2,\pm\epsilon/2)\}+(-\epsilon/4,\epsilon/4)^2)\;.
\end{align*}
The boundary of $O^\pm_\sigma$ can be written as the disjoint union of simple
loops and we denote this set of simple loops by $\Sigma^\pm_\sigma$.

The convention in the definition of $\Sigma_\sigma^+$ corresponds to the
convention of defining blue fuzzy Potts clusters in terms of weak paths and
red fuzzy Potts clusters in terms of strong paths while the definition of
$\Sigma_\sigma^-$ arises from considering strong blue and weak red paths. It
will turn out that this choice does not matter when we are determining the
scaling limit of the critical fuzzy Potts measure with $q\in [1,4)$ since the
loops in the scaling limit will all be disjoint.

If $a,b\in \partial\mathcal{D}$ are distinct we also want to associate an
interface from $a$ to $b$ which is blue on its right and red on its left. We
make the following definition: The interface $\gamma^\pm_{\sigma,a,b}$ is given
by the unique simple curve from $a$ to $b$ which traces the counterclockwise
boundary arc of $\lambda$ from $a$ to $b$ except that it follows the boundary
of $O^\pm_\sigma$ in clockwise order whenever it hits $O^\pm_\sigma$. This
definition is illustrated in Figure \ref{fig:loop-encoding}.

Let us now quickly explain what we mean by the convergence of discrete domains
to continuum ones. This will appear in the formulation of the scaling limit
conjectures and results. Consider a Jordan domain $D\subset \C$ and write
$\lambda_\infty\in \mathcal{C}$ for its boundary. If $\epsilon_n\to 0$ we
consider discrete domains $\mathcal{D}_n = (V_n,E_n)$ in $\epsilon_n\Z^2$ and
associate to $\mathcal{D}_n$ its boundary curve $\lambda_n$ which we view as an
element of $\mathcal{C}$. We say that $\mathcal{D}_n$ converges to $D$ as
$n\to \infty$ if $d_\mathcal{C}(\lambda_n,\lambda_\infty)\to 0$ as $n\to \infty$.

\subsection{(Boundary) conformal loop ensembles}
\label{sec:cle-bcle}

We assume familiarity with Schramm-Loewner evolutions (SLE), see
\cite{schramm0, werner-notes, lawler-book}, conformal loop ensembles (CLE), see
\cite{shef-cle, shef-werner-cle, cle-percolations}, and boundary conformal loop
ensembles (BCLE), see \cite{cle-percolations}. However, we will briefly review
some of the key elements of the theory of CLEs and BCLEs for the reader's
convenience.

Let us begin by recalling the definition of $\SLE_\kappa(\rho_-,\rho_+)$ started
from $\xi_0$ with initial force points at $O_0^-\le \xi_0$ (on the left) and
$O_0^+\ge \xi_0$ (on the right) when $\kappa>0$ and $\rho_\pm>-2$.

It is the curve generated by the chordal Loewner chain with driving function
$\xi$ where $\xi$ is the unique weak solution to the SDE
\begin{align*}
	d\xi_t &= \sqrt{\kappa}\,dB_t + \frac{\rho_-\,dt}{\xi_t-O^-_t}
		+ \frac{\rho_+\,dt}{\xi_t-O^+_t}\;,\\
	dO^\pm_t &= \frac{2\,dt}{O^\pm_t-\xi_t}\;,
\end{align*}
with initial values $\xi_0$, $O^\pm_0$ and $O^-\le \xi\le O^+$ where $B$ is a
standard Brownian motion. The fact that a solution exists, uniqueness in law
holds and that the Loewner chain is generated by a continuous curve from $\xi_0$
to $\infty$ is not trivial, see \cite{ig1}.

If $\xi_0=O^\pm_0=0$ we obtain a scale invariant law on curves from $0$ to
$\infty$ which we simply call $\SLE_\kappa(\rho_-,\rho_+)$ without reference to
force or initial points. By applying a conformal transformation from $\H$ to any
Jordan domain, we can define $\SLE_\kappa(\rho_-,\rho_+)$ in any Jordan domain.
This is also well-defined in the case when the force points and the starting
point are all the same (and so the conformal transformation is not unique) by
the scale-invariance of the curve in this case.

\begin{figure}
	\centering
	\def\svgwidth{0.8\columnwidth}
	%% Creator: Inkscape 1.1.2 (0a00cf5339, 2022-02-04), www.inkscape.org
%% PDF/EPS/PS + LaTeX output extension by Johan Engelen, 2010
%% Accompanies image file 'bcle-def.pdf' (pdf, eps, ps)
%%
%% To include the image in your LaTeX document, write
%%   \input{<filename>.pdf_tex}
%%  instead of
%%   \includegraphics{<filename>.pdf}
%% To scale the image, write
%%   \def\svgwidth{<desired width>}
%%   \input{<filename>.pdf_tex}
%%  instead of
%%   \includegraphics[width=<desired width>]{<filename>.pdf}
%%
%% Images with a different path to the parent latex file can
%% be accessed with the `import' package (which may need to be
%% installed) using
%%   \usepackage{import}
%% in the preamble, and then including the image with
%%   \import{<path to file>}{<filename>.pdf_tex}
%% Alternatively, one can specify
%%   \graphicspath{{<path to file>/}}
%% 
%% For more information, please see info/svg-inkscape on CTAN:
%%   http://tug.ctan.org/tex-archive/info/svg-inkscape
%%
\begingroup%
  \makeatletter%
  \providecommand\color[2][]{%
    \errmessage{(Inkscape) Color is used for the text in Inkscape, but the package 'color.sty' is not loaded}%
    \renewcommand\color[2][]{}%
  }%
  \providecommand\transparent[1]{%
    \errmessage{(Inkscape) Transparency is used (non-zero) for the text in Inkscape, but the package 'transparent.sty' is not loaded}%
    \renewcommand\transparent[1]{}%
  }%
  \providecommand\rotatebox[2]{#2}%
  \newcommand*\fsize{\dimexpr\f@size pt\relax}%
  \newcommand*\lineheight[1]{\fontsize{\fsize}{#1\fsize}\selectfont}%
  \ifx\svgwidth\undefined%
    \setlength{\unitlength}{302.57253409bp}%
    \ifx\svgscale\undefined%
      \relax%
    \else%
      \setlength{\unitlength}{\unitlength * \real{\svgscale}}%
    \fi%
  \else%
    \setlength{\unitlength}{\svgwidth}%
  \fi%
  \global\let\svgwidth\undefined%
  \global\let\svgscale\undefined%
  \makeatother%
  \begin{picture}(1,0.59415438)%
    \lineheight{1}%
    \setlength\tabcolsep{0pt}%
    \put(0,0){\includegraphics[width=\unitlength,page=1]{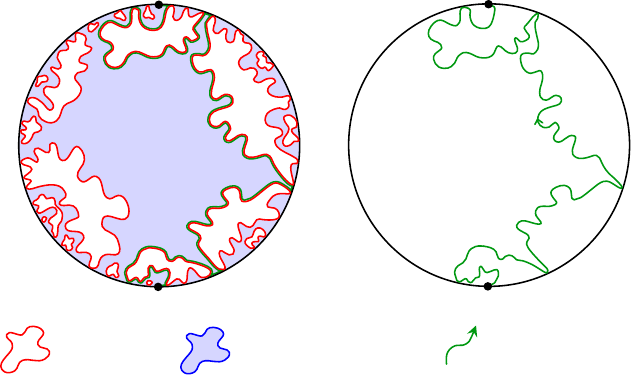}}%
    \put(0.11101951,0.0364732){\makebox(0,0)[lt]{\lineheight{1.25}\smash{\begin{tabular}[t]{l}\textcolor{red}{$\BCLE_\kappa(\rho)$}\end{tabular}}}}%
    \put(0.39855387,0.0364732){\makebox(0,0)[lt]{\lineheight{1.25}\smash{\begin{tabular}[t]{l}\textcolor{blue}{$\BCLE_\kappa(\kappa-6-\rho)$}\end{tabular}}}}%
    \put(0.78523804,0.0364732){\makebox(0,0)[lt]{\lineheight{1.25}\smash{\begin{tabular}[t]{l}\textcolor{mygreen}{$\SLE_\kappa(\rho,\kappa-6-\rho)$}\end{tabular}}}}%
  \end{picture}%
\endgroup%

	\caption{This figure illustrates the relation between $\Xi\sim
	\BCLE_\kappa(\rho)$ (in red), its false loops
	$\Xi^*\sim \BCLE_\kappa(\kappa-6-\rho)$ (the boundaries of the regions
	shaded in light blue) and the curve
		$\gamma\sim \SLE_\kappa(\rho,\kappa-6-\rho)$ (in green).}
	\label{fig:bcle-def}
\end{figure}

Boundary conformal loop ensembles are conformally invariant laws on
$\mathcal{L}$ in Jordan domains such that each loop intersects the
boundary of the domain. BCLEs consist either of simple or non-simple curves:
\begin{itemize}
	\item $\BCLE_\kappa(\rho)$ is defined for $\kappa\in (2,4]$ and
		$\rho\in (-2,\kappa-4)$ and is a conformally invariant law on
		collections of simple loops in the unit disk $\D$.
	\item $\BCLE_{\kappa'}(\rho')$ is defined for $\kappa'\in (4,8)$ and
		$\rho' \in [\kappa'/2-4,\kappa'/2-2]$ and is a conformally invariant law
		on collections of non-simple loops in $\D$.
\end{itemize}
The conformal invariance property implies that by applying conformal
transformations we obtain a well-defined notion of BCLE in any Jordan domain; we
restrict to Jordan domains here since this ensures that the BCLE loops in the
new domain are again continuous curves in the Euclidean topology (this fails for
domains with more pathological boundaries).

The definition of BCLEs further extends to domains which are disjoint unions of
Jordan domains by sampling independent BCLEs within each connected component
and the notion of false loops introduced in the paragraph below extends to this
setting as well.

If $\Xi\sim \BCLE_\kappa(\rho)$ then we can consider all the
`boundary to boundary' segments of the loops (i.e.\ loop segments which touch
the boundary at exactly two points) and observe that these loop segments in fact
encode another collection of loops which we will denote by $\Xi^*$ with the same
`boundary to boundary' loop segments (see Figure \ref{fig:bcle-def}). The
collection $\Xi^*$ is called the collection of false loops of $\Xi$ and
$\Xi^*\sim \BCLE_\kappa(\kappa-6-\rho)$. The analogous construction can be
performed in the case of non-simple BCLE.

We will not explain the full construction of BCLE but we will make use of the
following key characterizing property: Consider $\Xi\sim \BCLE_\kappa(\rho)$.
For any two distinct boundary points $a$ and $b$ there is a unique curve
$\gamma$ (generated by a Loewner chain) from $a$ to $b$ such that all the loops
in $\Xi$ touching the counterclockwise boundary arc from $a$ to $b$ are on one
side of $\gamma$ and all loops in $\Xi^*$ touching the clockwise boundary arc
from $a$ to $b$ are on the other side of $\gamma$. We call $\gamma$ the
interface from $a$ to $b$ associated to $\Xi$. The defining property is that
$\gamma\sim \SLE_\kappa(\rho,\kappa-6-\rho)$. Similarly, in the case of
non-simple BCLEs, one obtains $\SLE_{\kappa'}(\rho',\kappa'-6-\rho')$ curves
from a $\BCLE_{\kappa'}(\rho')$.

We will momentarily want to sample BCLE within the complementary components of
the loops of BCLE so we need to mention that the complementary components of
BCLE loops are again Jordan domains. More precisely, suppose that $\eta$ is a
loop within a BCLE. Then $\C\setminus \eta(\partial\D)$ can be decomposed into
its connected components. It turns out that by SLE duality (as established in
the required generality in \cite{ig1}) all the bounded connected components are
Jordan domains. Moreover, the boundary of the unbounded component of
$\C\setminus \eta(\partial\D)$ is a simple curve which we call the boundary of
the filling of $\eta$.

The object $\CLE_{\kappa'}$ (which is a law on $\mathcal{L}$ supported on
collections of
non-simple loops) can now be described as follows: If $\Gamma$ is a nested
$\CLE_{\kappa'}$ then the law of its boundary touching loops $\Xi$ is a
$\BCLE_{\kappa'}(0)$ and the conditional law of $\Gamma$ given $\Xi$ is given by
the union of $\Xi$ together with an independent nested $\CLE_{\kappa'}$ within
each loop of $\Xi$ and within each false loop of $\Xi^*$. This formulation can
also be phrased as an iterative construction. Just as for BCLEs, CLEs are
defined in any disjoint union of Jordan domains by conformal invariance and
taking independent samples in the connected components.

Recall from the previous Section \ref{sec:loop-encoding} that in the discrete
setting, a percolation configuration can be encoded as a collection of nested
loops. The following classical convergence conjecture states that nested CLE
arise in the limit from these collections of discrete loops; this is so far only
known for $q=2$, i.e.\ $\kappa'=16/3$ by
\cite{smirnov-ising, kemp-smirnov-fk-bdy, kemp-smirnov-fk-full}. We stress that
the encoding of percolation configurations in terms of loops is convenient
precisely because this description is explicit using SLE tools in the continuum.

\begin{conj}
    \label{conj:fk-to-cle} Consider $q\in (0,4]$ and let
    $\kappa'=4\pi/\arccos(-\sqrt{q}/2) \in [4,8)$. Suppose that $D$ is a Jordan
    domain, $\epsilon_n\to 0$ as $n\to \infty$ and that $\mathcal{D}_n$ is a
    discrete domain in $\epsilon_n\Z^2$ for each $n\ge 1$ such that
    $\mathcal{D}_n$ converges to $D$ as $n\to \infty$. Let $\omega^n \sim
    \phi^0_{\mathcal{D}_n,q}$, let $\Gamma$ be a nested $\CLE_{\kappa'}$ in $D$
    and let $\Gamma^\partial$ be the loops in $\Gamma$ intersecting $\partial
    D$. Then $(\Gamma_{\omega^n}\setminus
    \Gamma^\partial_{\omega^n},\Gamma^\partial_{\omega^n})$ converges in
    distribution to $(\Gamma\setminus \Gamma^\partial,\Gamma^\partial)$ with
    respect to the metric $d_\mathcal{L}$.
\end{conj}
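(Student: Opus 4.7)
The plan is to approach the convergence in three stages: first, tightness of the loop ensembles under the discrete measures; second, identification of the subsequential limits as $\CLE_{\kappa'}$; and third, upgrading convergence of interfaces to convergence of the full nested ensemble in the topology induced by $d_\mathcal{L}$.

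For tightness, I would rely on the strong RSW estimates of Theorem \ref{thm:strongRSW} together with the six-arm bound of Corollary \ref{cor:six-arms-FK-with-bc}. The latter gives a polynomial-in-$m/n$ upper bound that controls, uniformly in boundary conditions, the number of macroscopic loops and the probability that any loop has macroscopic diameter together with microscopic ``bottlenecks''. This is the key input needed to show that every subsequential limit is supported on locally finite collections of simple curves (which may touch but not cross themselves) in the sense of the topology on $\mathcal{L}$ introduced in the paper, and to rule out pathological boundary behaviour of the loops. The mixing statement of Corollary \ref{cor:mixing} ensures that the tightness does not depend on the choice of boundary conditions of $\phi^0_{\mathcal{D}_n,q}$.

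The decisive step is the identification of the subsequential limit. The only route that has succeeded, namely for $q=2$ in the works cited, is to exhibit a discrete observable which is approximately discrete holomorphic, to identify its scaling limit with an explicit conformally covariant function via a discrete Riemann–Hilbert argument, and to use it to compute the driving function of the Loewner chain encoding a chosen interface. One would first run this on a single interface in a Dobrushin-type domain with one wired and one free boundary arc to obtain convergence to $\SLE_{\kappa'}(\kappa'-6)$. Once single-interface convergence is in place, I would extend to the full ensemble by iterative exploration: peel off the boundary-touching loops by a branching $\SLE_{\kappa'}(\kappa'-6)$ exploration — this is exactly the $\BCLE_{\kappa'}(0)$ description of the boundary-touching loops of $\CLE_{\kappa'}$ recalled in Section \ref{sec:cle-bcle} — then use the domain Markov property of FK percolation together with Corollary \ref{cor:mixing} to show that conditionally on these interfaces, the configuration in each complementary Jordan component is again a critical FK percolation with effectively free boundary on the relevant arcs, and iterate inside each component. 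Combined with the tightness from the previous step, this identifies any subsequential limit of $(\Gamma_{\omega^n}\setminus\Gamma^\partial_{\omega^n},\Gamma^\partial_{\omega^n})$ with $(\Gamma\setminus\Gamma^\partial,\Gamma^\partial)$.

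The obstruction that makes this a conjecture rather than a theorem for general $q\in(0,4]\setminus\{2\}$ lies entirely in the identification step: for $q\neq 2$ no discrete observable whose scaling limit can be rigorously computed is known on $\Z^2$. The known candidates (parafermionic observables of appropriate spin) are only discrete holomorphic to leading order or fail to have enough information to pin down boundary conditions in the limit, and extracting from them the convergence of a Loewner driving function remains a fundamental open problem. In contrast, the tightness, the domain Markov reduction, and the passage from single-interface to full-ensemble convergence via the $\BCLE_{\kappa'}(0)$ exploration are all within reach using the technology of \cite{duminil2021planar}; consequently the rest of the present paper can be read as a conditional statement that takes this identification step as a black box.
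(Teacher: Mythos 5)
This statement is labelled a \emph{conjecture} in the paper (Conjecture~\ref{conj:fk-to-cle}), the paper offers no proof, and the authors explicitly say it is only known for $q=2$ via \cite{smirnov-ising, kemp-smirnov-fk-bdy, kemp-smirnov-fk-full}. Your proposal correctly recognises this and does not pretend to prove it: what you have written is a faithful summary of the known strategy (tightness from RSW/six-arm, discrete-holomorphic observable to identify the single-interface limit, iterated $\BCLE_{\kappa'}(0)$ exploration plus the domain Markov property to pass to the full nested ensemble) together with an accurate diagnosis of where it breaks down for $q\neq 2$. Since the paper simply states the conjecture and conditions on it, there is no paper proof to compare against, and your write-up is essentially the right thing to produce here. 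One minor imprecision worth noting: in a Dobrushin domain with one free and one wired arc, the single FK interface is expected (and known for $q=2$) to converge to chordal $\SLE_{\kappa'}$, not to $\SLE_{\kappa'}(\kappa'-6)$; the $\SLE_{\kappa'}(\kappa'-6)$ process is what drives the branching exploration tree that encodes the full (B)CLE, which is a distinct (though closely related) ingredient.
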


This conjecture has a long history going back to
\cite{nienhuis-coulomb} in the physics literature (see also \cite{nienhuis-sle}
where the convergence conjecture for SLE was first stated) and the conjecture in
the form here appears first in \cite{shef-cle}.

\subsection{CLE Percolations}
\label{sec:cle-perco}

We now define the divide and color model in the continuum setting. This is based
on the seminal work \cite{cle-percolations}. Throughout this text, we will refer
to the construction performed in this section as the continuum fuzzy Potts
model.

In the discrete setting, we assigned colors to the percolation clusters and then
agglomerated them into divide and color clusters. In the continuum setting, we
will construct the coupling of divide and color cluster boundaries and
percolation cluster boundaries in one step. Fix $\kappa'\in (4,8)$ and $r\in
(0,1)$. We will first define all the relevant parameters for the continuum
construction. Let
\begin{align}
	\label{eq:numerical-rel}
	\begin{split}
	\kappa &= 16/\kappa' \in (2,4)\;,\\
	q(\kappa') &= 4\cos^2(4\pi/\kappa') \in (0,4)\;,\\
	\rho_B(\kappa',r) &= \frac{2}{\pi}\,\arctan\left( \frac{\sin(\pi\kappa/2)}{1
		+\cos(\pi\kappa/2)-1/r}\right) -2 \in (-2,\kappa-4)\;,\\
	\rho_R(\kappa',r) &= \kappa-6-\rho_B(\kappa',r) \\
		&= \frac{2}{\pi}\,\arctan\left( \frac{\sin(\pi\kappa/2)}{1
		+\cos(\pi\kappa/2)-1/(1-r)}\right) -2 \in (-2,\kappa-4)\;,\\
	\rho'_B(\kappa',r) &= \frac{-\kappa'}{4}(\rho_B+2) \in
	(\kappa'/2-4,0)\;,\\
	\rho'_R(\kappa',r) &= \frac{-\kappa'}{4}(\rho_R + 2) \in
	(\kappa'/2-4,0)\;.
	\end{split}
\end{align}
When $\kappa'$ and $r$ are clear from the context, we will sometimes drop them
from the notation. In the coupling constructed below, $\Gamma^{OR}$ (resp.\
$\Gamma^{IR}$) correspond to the outer (resp.\ inner) boundaries of percolation
clusters which have been colored red (similarly in the case when we consider
blue clusters) and $\Sigma$ will be the collection of continuum fuzzy Potts
interfaces.

We will now perform a rather involved construction. See Figure
\ref{fig:continuum-dac} for an illustration.

\begin{figure}
	\centering
	\def\svgwidth{0.85\columnwidth}
	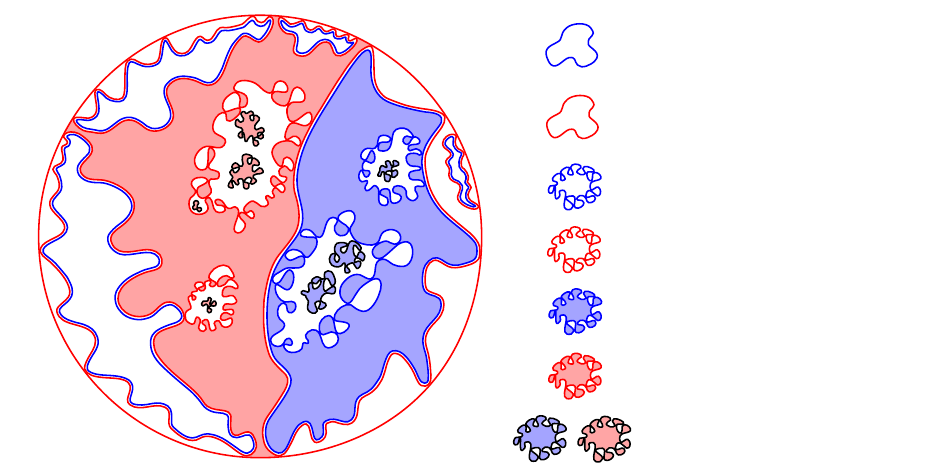
	\caption{The picture illustrates the constructions relevant for Theorems
		\ref{thm:msw-free} and \ref{thm:msw-wired}. The collection of $\Xi_B$ should be
		interpreted as the divide and color interfaces touching the boundary
		which are blue on the inside and red on the outside. $\Xi_B'$ are then
		the outer boundaries and $\Xi_B''$ the inner boundaries of the blue
		percolation clusters touching the divide and color interfaces from the
		inside; similarly, $\Xi_R'$ are the
		outer boundaries and $\Xi_R''$ are the inner boundaries of the
		percolation clusters touching the outside of the divide and color
		interfaces (and are hence contained in $\Xi_R:=\Xi_B^*$). The situation
		is symmetric when `red' is interchanged with `blue'. In each of the
		areas shaded in red (resp.\ blue), we now iterate with red (resp.\ blue)
		boundary conditions on the outside.}
	\label{fig:continuum-dac}
\end{figure}

Let $\Xi_B\sim \BCLE_\kappa(\rho_B)$ in the unit disk $\D$ and let $\Xi_R:=
\Xi_B^*\sim \BCLE_\kappa(\rho_R)$ be the collection of its false loops. Within
$\Xi_B$  we make the following definition.
\begin{itemize}
	\item Let $\Xi'_{B}\sim \BCLE_{\kappa'}(\rho'_B)$ in $\cup_{\eta\in
		\Xi_B} \eta^o$ and let $\Xi_B'^*$ be its false loops which then forms a
		$\BCLE_{\kappa'}(\kappa'-6-\rho'_B)$ in $\cup_{\eta\in \Xi_B} \eta^o$.
	\item Moreover, let $\Xi''_{B}$ be an non-nested $\CLE_{\kappa'}$ in
		$\cup_{\eta\in \Xi'_B} \eta^o$.
\end{itemize}
We make the analogous definition with $(\Xi_B,\rho'_B)$ replaced by
$(\Xi_R,\rho'_R)$ to obtain $(\Xi'_R,\Xi_R'^*,\Xi_R'')$. Using conformal
invariance and by taking independent samples in connected components, we can
sample the tuple $\Xi :=(\Xi_B,\Xi_B',\Xi_B'^*,\Xi_B'',
\Xi_R,\Xi_R',\Xi_R'^*,\Xi_R'')$ in any domain which is a disjoint union of
Jordan domains.

Fix a Jordan domain $D$ and write $\eta_\partial$ for the loop tracing the
boundary. To perform the iteration, we first let
$\Upsilon^B_0=\emptyset$ and $\Upsilon^R_0=\{\eta_\partial\}$. Suppose that we
have already constructed
$((\Sigma_i,\Upsilon^B_i,\Upsilon^R_i,\Gamma_i^{OB},\Gamma_i^{OR},\Gamma_i^{IB},
\Gamma_i^{IR})\colon 1\le i\le n-1)$ for $n\ge 1$. Then we proceed as follows:
\begin{itemize}
	\item Sample a copy of $\Xi$ within $\cup_{\eta\in \Upsilon^R_{n-1}}
        \eta^o$ and call it $\Xi_{R\to B}^n$. We also sample an independent copy
        of $\Xi$ in $\cup_{\eta\in \Upsilon^B_{n-1}} \eta^o$ and call the
        resulting tuple $\Xi_{B\to R}^n$. We write
		\begin{align*}
			\Xi_{R\to B}^n &=(\Xi_{RB}^n,\Xi_{RB}'^n,\Xi_{RB}'^{*n}, \Xi_{RB}''^{n},
			\Xi_{RR}^n,\Xi_{RR}'^n,\Xi_{RR}'^{*n},\Xi_{RR}''^n)\;,\\
			\Xi_{B\to R}^n &=(\Xi_{BB}^n,\Xi_{BB}'^n,\Xi_{BB}'^{*n}, \Xi_{BB}''^{n},
			\Xi_{BR}^n,\Xi_{BR}'^n,\Xi_{BR}'^{*n},\Xi_{BR}''^n)\;.
		\end{align*}
    \item Define $\Sigma_{n}$, $\Upsilon^B_{n}$, $\Upsilon^R_{n}$,
        $\Gamma^{OB}_n$, $\Gamma^{OR}_n$, $\Gamma^{IB}_n$ and $\Gamma^{IR}_{n}$
        by
		\begin{gather*}
			\Sigma_n = \Xi^n_{RB}\cup \Xi^n_{BR} \;, \\
			\Upsilon^B_n = \Xi_{RB}'^{*n} \cup \Xi_{RB}''^n \cup \Xi_{BB}'^{*n}
			    \cup \Xi_{BB}''^n \;, \quad
			\Upsilon^R_n = \Xi_{BR}'^{*n} \cup \Xi_{BR}''^n \cup \Xi_{RR}'^{*n}
		    	\cup \Xi_{RR}''^n \;, \\
            \Gamma_n^{OB} = \Xi_{BB}'^n\cup\Xi_{RB}'^n \;,\quad
            \Gamma_n^{OR} = \Xi_{BR}'^n \cup \Xi_{RR}'^n\;,\\
            \Gamma_n^{IB} = \Xi_{BB}''^n\cup \Xi_{RB}''^n \quad
            \Gamma_n^{IR} = \Xi_{BR}''^n \cup \Xi_{RR}''^n \;.
		\end{gather*}
\end{itemize}
The reader is encouraged to look at Figure \ref{fig:continuum-dac} where the
first step of this iteration is displayed and the inductive definition
mentioned. Lastly, we define $\Sigma$, $\Gamma^{OR}$, $\Gamma^{OB}$,
$\Gamma^{IR}$, $\Gamma^{IB}$ as the unions of the collection $(\Sigma_n)$,
$(\Gamma^{OR}_n)$, $(\Gamma^{OB}_n)$,
$(\Gamma^{IR}_n)$, $(\Gamma^{IB}_n)$ respectively. We also set
\begin{gather*}
    \Gamma^O = \Gamma^{OR}\cup \Gamma^{OB}\;,\quad
    \Gamma^I = \Gamma^{IR}\cup \Gamma^{IB}\;,\quad \Gamma = \Gamma^O \cup
    \Gamma^I \;.
\end{gather*}
From the construction, we see that $\Gamma^O$ (resp.\ $\Gamma^I$) is the
collection of loops in $\Gamma$ with odd (resp.\ even) nesting level. Moreover,
any loop in $\Gamma^I$ is in $\Gamma^{IR}$ (resp.\ $\Gamma^{IB}$) if and only if
its parent (i.e.\ the loop in $\Gamma^O$ surrounding it with maximal
nesting level) is in $\Gamma^{OR}$ (resp.\ $\Gamma^{OB}$). The interpretation of
$\Sigma$ as the set of continuum fuzzy Potts interfaces is justified by the fact
that in the construction above there are always blue loops on one side and red
loops on the other side of the interface.

The key is the following theorem, the proof of which is given in \cite[Theorem
7.2]{cle-percolations} in combination with \cite[Theorem 1.3]{msw-non-simple}.

\begin{thm}[\cite{cle-percolations, msw-non-simple}]
	\label{thm:msw-free}
    Suppose that $\kappa'\in (4,8)$ and $r\in (0,1)$. The collection $\Gamma$ is
    a nested $\CLE_{\kappa'}$ in $D$. The conditional law of
    $(\Gamma^{OR},\Gamma^{OB})$ given $\Gamma$ is given by adding each loop in
    $\Gamma^O$ independently to $\Gamma^{OR}$ with probability $r$ and to
    $\Gamma^{OB}$ otherwise.
\end{thm}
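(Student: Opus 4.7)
The statement essentially packages together two earlier results: Theorem 7.2 of \cite{cle-percolations} establishes the one-generation coupling between $\BCLE_\kappa$ and $\BCLE_{\kappa'}$ in terms of the parameters $\rho_B,\rho_B'$, while Theorem 1.3 of \cite{msw-non-simple} identifies, using Liouville Quantum Gravity, the relation between the coloring probability $r$ and these parameters recorded in \eqref{eq:numerical-rel}. The plan is to apply these at the outermost level and then induct on the nesting generation via the iterative characterization of nested $\CLE_{\kappa'}$ (sample $\BCLE_{\kappa'}(0)$ at the boundary and then place an independent nested $\CLE_{\kappa'}$ in each complementary component).

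For the base step, because $\Upsilon_0^R=\{\eta_\partial\}$ and $\Upsilon_0^B=\emptyset$, the first iteration samples a single copy of $\Xi$ in $D$. By \cite[Theorem 7.2]{cle-percolations}, the union $\Xi_B'\cup\Xi_R'$ of the non-simple BCLEs sampled inside the true and false loops of $\Xi_B\sim\BCLE_\kappa(\rho_B)$ is a $\BCLE_{\kappa'}(0)$ in $D$, i.e.\ the collection of boundary-touching outermost loops of a non-nested $\CLE_{\kappa'}$; conditionally on this union, each loop is independently labelled ``blue'' (meaning it sits inside a true loop of $\Xi_B$) with some probability $p\in(0,1)$ depending only on $(\rho_B,\rho_B')$. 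The content of \cite[Theorem 1.3]{msw-non-simple} is that with the parameters of \eqref{eq:numerical-rel} one has $p=1-r$. Appending $\Gamma_1^I=\Xi_B''\cup\Xi_R''$, an independent non-nested $\CLE_{\kappa'}$ inside each loop of $\Gamma_1^O$, then produces the first two nesting levels of a nested $\CLE_{\kappa'}$ with the desired i.i.d.\ coloring of $\Gamma_1^O$.

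For the inductive step, the pockets making up $\bigcup_{\eta\in\Upsilon_1^B\sqcup\Upsilon_1^R}\eta^o$ are exactly the Jordan domains lying below the second nesting level (interiors of the false loops of $\Xi_B',\Xi_R'$ and of the non-nested CLE loops $\Xi_B'',\Xi_R''$). By the recursive definition of nested $\CLE_{\kappa'}$, its conditional law given the first two layers is that of an independent nested $\CLE_{\kappa'}$ in each such pocket, which is precisely what the iteration produces by re-sampling $\Xi$ in each pocket with the appropriate surrounding color. The main obstacle is purely bookkeeping: one must verify that the asymmetric definitions of $\Xi_{R\to B}^n$ versus $\Xi_{B\to R}^n$ combine to give a symmetric distributional statement on $\Gamma$, and that loops of $\Gamma^{IR}$ (resp.\ $\Gamma^{IB}$) always have a parent in $\Gamma^{OR}$ (resp.\ $\Gamma^{OB}$) so that the iterative red/blue boundary choice is consistent with an i.i.d.\ coloring of the \emph{entire} collection $\Gamma^O$. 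The only genuinely deep analytic input is the LQG argument of \cite{msw-non-simple}; once it is granted, the rest amounts to carefully unfolding the iterative construction of nested CLE.
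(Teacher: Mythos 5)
Your proposal is correct and follows essentially the same route as the paper, which gives no independent argument of its own for this theorem and simply attributes it to the combination of Theorem~7.2 of \cite{cle-percolations} (the one-generation coupling between simple BCLEs and the nested $\CLE_{\kappa'}$, with the i.i.d.\ coloring of boundary-touching loops) and Theorem~1.3 of \cite{msw-non-simple} (the explicit identification of the coloring probability as $r$ in terms of $\rho_B$). Your unfolding of the base case and the induction on the nesting generation is an accurate reading of what those two references prove, and you correctly flag that the only genuine analytic inputs are those two external theorems while the remaining work is verifying that the iterated $\Xi$-sampling matches the recursive characterization of nested $\CLE_{\kappa'}$.
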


\begin{remark}
    \label{rk:interface-def}
    For $a,b\in \partial D$ we can define the interface $\gamma^{a,b}$ from $a$
    to $b$ associated to $\Xi^1_{RB}=\Sigma_1$. By definition, we marginally have
	$\gamma^{a,b}\sim \SLE_\kappa(\rho_B,\kappa-6-\rho_B)$.
\end{remark}

The next result is the continuum version of the classical Edwards-Sokal coupling
with wired boundary conditions. Indeed, the appearance of $\Gamma_0$ in the
statement corresponds on the discrete side to the inner boundaries of the
boundary cluster (when considering wired boundary conditions). The result was
proved in \cite[Theorem 7.7]{cle-percolations} with the inexplicit parameter $\rho = \rho(\beta,\kappa')$ 
there having been determined in \cite[Theorem 1.2]{msw-non-simple}.

\begin{thm}[\cite{cle-percolations, msw-non-simple}]
	\label{thm:msw-wired}
    Suppose that $\kappa'\in (4,6)$ and let $r=1/q(\kappa')$. Let $\Gamma_0$ be
    a non-nested $\CLE_{\kappa'}$ in a Jordan domain $D$ and for each connected
    component $C$ of $\eta^o$ where $\eta\in \Gamma_0$ we take an independent
    copy of $\Sigma$ and conformally map it into $C$. Let $\Sigma'$ be the union
    of all these collections of loops. Then $\Sigma'$ forms a nested
    $\CLE_\kappa$ in $D$.
\end{thm}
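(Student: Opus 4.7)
The plan is to verify the Markovian characterization of nested $\CLE_\kappa$ for $\Sigma'$: its outermost loops (viewed as loops in $D$) should form a non-nested $\CLE_\kappa$ in $D$, and conditionally on them, the loops of $\Sigma'$ strictly inside each outermost loop should form an independent nested $\CLE_\kappa$ in the enclosed Jordan subdomain. An inductive unrolling on nesting depth then yields the theorem.

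To identify the outermost loops, I would unwind the definition of $\Sigma$: the wired setup forces $\Upsilon^R_0 = \{\eta\}$ and $\Upsilon^B_0 = \emptyset$, so inside each $\eta \in \Gamma_0$ the first-generation interfaces reduce to $\Sigma_1 = \Xi_B \sim \BCLE_\kappa(\rho_B(\kappa',1/q))$, with no level-one $\Xi_{BR}$ contribution. Higher generations $\Sigma_n$ with $n \ge 2$ sit strictly inside percolation-cluster regions $\Upsilon^R_{n-1} \cup \Upsilon^B_{n-1}$, which are themselves bounded by loops of $\BCLE_{\kappa'}$ or $\CLE_{\kappa'}$ (and hence are not in $\Sigma'$). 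Consequently, whenever such a region is of red type $\Upsilon^R$ and is not contained in any $\Xi_B$ loop of lower level, the first $\Xi_B$ loop appearing in it is also an outermost loop of $\Sigma'$; the full outermost layer of $\Sigma'$ is therefore the aggregate of such $\BCLE_\kappa(\rho_B)$ loops across all iteration depths and across all $\eta \in \Gamma_0$.

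The main claim is that this aggregate reassembles in law into a non-nested $\CLE_\kappa$ in $D$. The crucial input is the precise value of $\rho_B$ corresponding to $r = 1/q(\kappa')$, which is determined in \cite[Theorem 1.2]{msw-non-simple} via a Liouville Quantum Gravity matching argument: both $\CLE_\kappa$ and $\CLE_{\kappa'}$ are realised on a common LQG quantum disk and the boundary-length laws of the associated quantum wedges are equated, pinning down $\rho_B$. I would take this LQG identification as a black box. The inductive step is then obtained from the Markov property of the construction of $\Sigma$ and conformal invariance of BCLE and CLE: conditionally on an outermost loop $\lambda$ of $\Sigma'$, the iteration restarts inside $\lambda$ with the roles of red and blue swapped (blue-wired boundary conditions), but since nested $\CLE_\kappa$ is insensitive to the red/blue labelling and the LQG matching is symmetric under the swap, the induction hypothesis produces an independent nested $\CLE_\kappa$ inside $\lambda$. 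The main obstacle throughout is the LQG parameter calculation, which is the substantive analytic content of \cite{msw-non-simple}.
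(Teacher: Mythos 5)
The paper cites this result from \cite[Theorem 7.8]{cle-percolations} and \cite[Theorem 1.2]{msw-non-simple}; there is no in-paper proof to compare against. On your sketch: the decomposition of the outermost loops of $\Sigma'$ into $\Xi^n_{RB}$ loops across iteration depths is correct, and verifying the Markov recursion for nested $\CLE_\kappa$ is a reasonable framing. The key step, however, is misattributed. The LQG matching in \cite{msw-non-simple} only determines the numerical value $\rho_B(\kappa',1/q)=-\kappa/2$; it does not show that the aggregate of $\BCLE_\kappa(\rho_B)$ loops across depths and components reassembles into a non-nested $\CLE_\kappa$ in $D$. That structural assertion --- that for \emph{some} value of the parameter the wired assembly $\Sigma'$ is a nested $\CLE_\kappa$ --- is precisely what \cite[Theorem 7.8]{cle-percolations} proves, via the coupling between nested $\BCLE$ explorations and the branching $\SLE_\kappa(\kappa-6)$ tree generating $\CLE_\kappa$. ``The Markov property of the construction and conformal invariance'' do not yield this on their own: they give conformal invariance and an iterated-Markov structure for the aggregate, but not a match to the Sheffield--Werner characterization of $\CLE_\kappa$. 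The LQG black box tells you \emph{which} parameter once the structural theorem is in hand; it cannot substitute for it.

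There is also a gap in the proposed induction. Inside an outermost loop $\lambda\in\Xi^1_{RB}$ of $\Sigma'$, the iteration does not immediately restart with a color-swapped copy of $\Sigma$: the construction first lays down $\Xi'_B\sim\BCLE_{\kappa'}(\rho'_B)$ and $\Xi''_B\sim\CLE_{\kappa'}$ (FK cluster boundaries, which are not in $\Sigma'$), and only inside the regions $\Upsilon^B_1$ cut out by the false loops of $\Xi'_B$ and the interiors of $\Xi''_B$ does the color-swapped iteration restart. The $\Xi''_B$ components fit the theorem's hypothesis (with $R\leftrightarrow B$ swapped), but the false-loop components of $\Xi_B'^*$ come from a $\BCLE_{\kappa'}$ rather than a non-nested $\CLE_{\kappa'}$ and are not covered by the statement you are inducting on. You would need to strengthen the induction hypothesis to one stated for $\BCLE$-wired boundary data before the recursion would close.
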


So far, these definitions are rather different from the ones made in the
discrete setting. In particular, nowhere in the constructions above are we
agglomerating CLE clusters into the continuum fuzzy Potts clusters. As part of
our proof of the convergence of the discrete fuzzy Potts clusters to the
continuum counterpart, we will need such a description. The type of input we
need is an approximation of the continuum fuzzy Potts interfaces by finite
chains of blue CLE loops on one side and finite chains of red CLE loops on the
other side. This is achieved by the result below which is a direct consequence
of the proof of \cite[Proposition 6.1]{cle-percolations} (by formalizing the
definition of $c_\epsilon^\pm$ appearing there). Below $\gamma^{a,b}$ is defined
as in Remark \ref{rk:interface-def}.

\begin{prop}[\cite{cle-percolations}]
    \label{prop:msw-interface-approx}
    Consider $D=\D$ and let $\Gamma_f^{OB}$ be the collection of boundaries of
    the fillings of the outermost loops in $\Gamma^{OB}$ (all oriented clockwise
    here by convention). For any distinct $a,b\in \partial\D$ the following
    statement holds almost surely:

    Consider $0\le s<t\le 1$ satisfying $\gamma^{a,b}_s,\gamma^{a,b}_t\in
    (\!(a,b)\!)$ and $\gamma^{a,b}((s,t))\cap (\!(a,b)\!)=\emptyset$. Then for
    all $\epsilon>0$ there are $\eta^1,\dots,\eta^n\in \Gamma_f^{OB}$ and
    $s^\pm_1,\dots,s^\pm_n\in \partial\D$ such that the concatenation of
	$\eta^1\lvert_{(\!(s^-_1,s^+_1)\!)},\dots,\eta^n\rvert_{(\!(s^-_n,s^+_n)\!)}$ defines
    a simple continuous curve $\widetilde{\gamma}$, viewed as a function on
    $[0,1]$, with
    \begin{align*}
        \widetilde{\gamma}_0\in B_\epsilon(\gamma^{a,b}_s)\cap \partial\D\;,\quad
        \widetilde{\gamma}_1\in B_\epsilon(\gamma^{a,b}_t)\cap\partial\D \;,\quad
        \widetilde{\gamma}([0,1])\subset \gamma^{a,b}([s,t])+B_\epsilon(0)
    \end{align*}
    and which lies right of the curve $\gamma^{a,b}$.
\end{prop}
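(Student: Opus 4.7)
The plan is to extract the desired curve $\widetilde{\gamma}$ from the explicit construction of chains of outer blue cluster boundaries that is carried out in the proof of \cite[Proposition 6.1]{cle-percolations}. In that proof, Miller, Sheffield and Werner define, for each $\epsilon>0$, a simple curve $c_\epsilon^+$ from a point close to $a$ to a point close to $b$, lying to the right of $\gamma^{a,b}$, which is obtained by concatenating arcs of loops of $\Gamma_f^{OB}$ of diameter at least $\epsilon$ encountered in order as one traces $\gamma^{a,b}$ from its right side. They show that $d_{\mathcal{C}'}(c_\epsilon^+,\gamma^{a,b})\to 0$ almost surely as $\epsilon\to 0$. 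The proposition here essentially repackages this fact, restricted to a single excursion $\gamma^{a,b}([s,t])$ away from the counterclockwise boundary arc, and the work consists in naming the parameters $s_i^\pm$ and $\eta^i$ explicitly.

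The key steps are as follows. First, enumerate those outer blue cluster loops $\eta \in \Gamma_f^{OB}$ whose filling touches the right side of $\gamma^{a,b}$ and which have diameter at least $\epsilon$; consecutive loops share a boundary contact on $\partial\D$, so following an arc $\eta|_{(\!(s^-,s^+)\!)}$ of each of their fillings between two such hits and concatenating yields the simple curve $c_\epsilon^+$ of \cite{cle-percolations}. Second, given the excursion with $\gamma^{a,b}_s,\gamma^{a,b}_t\in (\!(a,b)\!)$ and $\gamma^{a,b}((s,t))\cap (\!(a,b)\!)=\emptyset$, select the maximal sub-chain $\eta^1,\dots,\eta^n$ of the global enumeration whose arcs are contained in $\gamma^{a,b}([s,t])+B_\epsilon(0)$ and whose first (resp.\ last) boundary contact lies in $B_\epsilon(\gamma^{a,b}_s)\cap \partial\D$ (resp.\ $B_\epsilon(\gamma^{a,b}_t)\cap\partial\D$); define $\widetilde{\gamma}$ as the concatenation of the corresponding arcs $\eta^i|_{(\!(s_i^-,s_i^+)\!)}$. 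Third, check the properties: simplicity of $\widetilde{\gamma}$ and the inclusion $\widetilde{\gamma}([0,1])\subset \gamma^{a,b}([s,t])+B_\epsilon(0)$ are inherited from $c_\epsilon^+$ and the uniform bound $d_{\mathcal{C}'}(c_\epsilon^+,\gamma^{a,b})< \epsilon$ (passing if necessary to a smaller parameter in the construction), while $\widetilde{\gamma}$ lies to the right of $\gamma^{a,b}$ by construction.

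The main obstacle will be the localization step in the middle: one has to show that for $\epsilon$ small enough, the sub-chain of $c_\epsilon^+$ corresponding to the prescribed excursion $[s,t]$ is well defined and is cleanly separated from the portions of $c_\epsilon^+$ approximating other excursions of $\gamma^{a,b}$ into the complement of $(\!(a,b)\!)$. This follows from the simplicity of $\gamma^{a,b}$ together with the sup-norm convergence $c_\epsilon^+\to \gamma^{a,b}$: these imply that, for some $\delta>0$ depending on $s,t$ and on the realization of $\gamma^{a,b}$, the compact set $\gamma^{a,b}([s,t])$ is at Hausdorff distance at least $3\delta$ from the image of $\gamma^{a,b}$ outside a small neighborhood of $[s,t]$, and, for $\epsilon<\delta$, the boundary hits of $c_\epsilon^+$ tracking the excursion must then lie in the prescribed $\epsilon$-balls about $\gamma^{a,b}_s$ and $\gamma^{a,b}_t$. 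Once this separation is in hand, the required arcs of $\Gamma_f^{OB}$ are read directly off from the construction of $c_\epsilon^+$ in \cite{cle-percolations} and yield the curve $\widetilde{\gamma}$.
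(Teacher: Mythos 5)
Your approach matches the paper's: the proposition is obtained by formalizing the definition of the approximating curves $c_\epsilon^\pm$ from the proof of \cite[Proposition 6.1]{cle-percolations}, restricting to a single excursion of $\gamma^{a,b}$ away from the boundary arc, and reading off the chain of cluster boundary arcs. That is precisely the plan you outline, and your localization argument (using simplicity of $\gamma^{a,b}$, compactness and the sup-norm convergence $c_\epsilon^+\to\gamma^{a,b}$) is sound.

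One factual inaccuracy in your description is worth correcting, since if taken literally it would make the construction impossible. You write that consecutive loops in the chain ``share a boundary contact on $\partial\D$.'' This is not so: by Remark \ref{rk:small-boundary-clusters}, the contact points $\eta^i_{s_i^+}=\eta^{i+1}_{s_{i+1}^-}$ between consecutive loops are interior points of $\D$, and \emph{only} the two endpoints $\widetilde{\gamma}_0=\eta^1_{s_1^-}$ and $\widetilde{\gamma}_1=\eta^n_{s_n^+}$ lie on $\partial\D$ (in the prescribed $\epsilon$-balls). This is essential: a chain anchored to $\partial\D$ at every junction could never track $\gamma^{a,b}$ through the interior of the disk along the excursion. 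The remainder of your argument does not depend on this misstatement, so once it is fixed the write-up is correct.
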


The analogous result holds for approximating the interface $\gamma^{a,b}$ from
the left by red CLE cluster chains and its formulation is obtained by replacing
$\Gamma^{OB}$, $\Gamma^{OB}_f$, `clockwise', `right' and $(\!(a,b)\!)$ by
$\Gamma^{OR}$, $\Gamma^{OR}_f$, `counterclockwise', `left' and $(\!(b,a)\!)$
respectively.

\begin{remark}
    \label{rk:small-boundary-clusters}
    Let us make two additional observations about this proposition here: It is a
    consequence of the definition of $\CLE_{\kappa'}$ that a.s.\ no two distinct
    loops in $\Gamma$ contain the same boundary point and that no loop in
    $\Gamma$ hits a boundary point twice. Hence in the proposition above we have
    \begin{align*}
        \eta^i_{s^+_i}=\eta^{i+1}_{s^-_{i+1}}\notin\partial\D\quad
        \text{for all $i<n$}\;.
    \end{align*}
    This comment will also equally apply to Corollary \ref{cor:msw-loop-approx}.
    Furthermore, there is no loop in $\Gamma$ that lies right of $\gamma^{a,b}$ and which intersects the set
    $\gamma^{a,b}([0,1])\cap (\!(a,b)\!)$; this is a consequence of the inductive
    construction of $\Gamma$ in this section in terms of iterated BCLEs. Hence,
	by the local finiteness of $\Gamma$, for any given $\delta>0$, there exists $\epsilon >0$ such that $\eta^1$ and $\eta^n$ in Proposition \ref{prop:msw-interface-approx} also satisfy 
    $\diam(\eta^1),\diam(\eta^n)<\delta$.
\end{remark}

\begin{figure}
	\centering
	\def\svgwidth{0.8\columnwidth}
    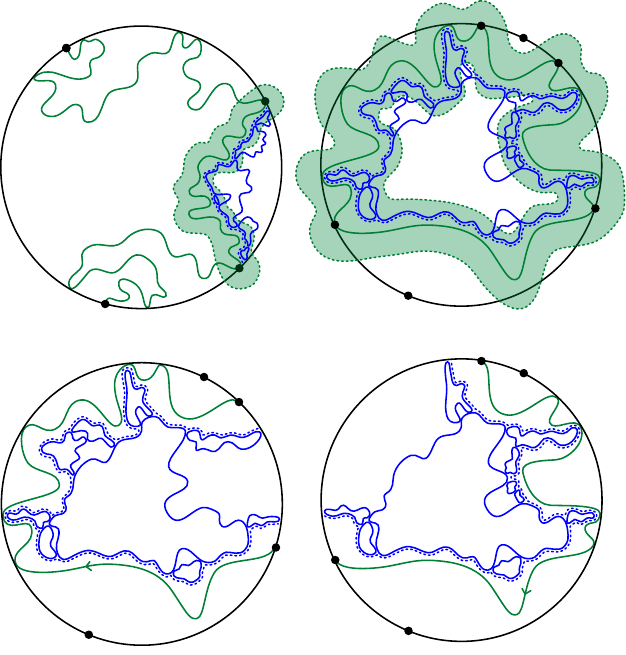
    \caption{\emph{Top left.} This figure illustrates Proposition \ref{prop:msw-interface-approx}; the green curve is $\gamma^{a,b}$, the area shaded in green is $\gamma^{a,b}([s,t])+B_\epsilon(0)$ and the dashed blue curve is $\widetilde{\gamma}$. \emph{Top right.} This graphic explains Corollary \ref{cor:msw-loop-approx}. The green loop is $\eta$, the shaded are in green is $\eta(\partial\D)+B_\epsilon(0)$ and the dashed blue loop is $\widetilde{\eta}$. The bottom row illustrates the argument which can be used to derive Corollary \ref{cor:msw-loop-approx} from Proposition \ref{prop:msw-interface-approx}. \emph{Bottom left.} The green curve is $\gamma^{a,b}$ restricted to $[s,t]$ and the dashed blue curve is the approximating curve appearing in the proposition (see also the top left part of this figure). Crucially, the restriction of $\gamma^{a,b}$ restricted to $[s,t]$ yields a segment of the loop $\eta$. \emph{Bottom right.} The green curve is $\gamma^{b,a}$ restricted to $[s',t']$ and the dashed blue curve is again the approximating curve as in the proposition. Again this curve segment forms part of $\eta$. The key is that the approximating curves appearing in the two bottom figures intersect which readily implies the corollary.}
	\label{fig:approximate-continuum}
\end{figure}

There is also a version of the result above in the case of the actual BCLE loops
rather than the interfaces. Indeed, still in the setting $D=\D$, for any
$\eta\in \Xi^1_{RB}$ we can take $a,b\in \mathcal{Q}:=\{e^{i\theta}\colon
\theta\in \Q\}$ such that $\eta(\partial\D)$ intersects both $(\!(a,b)\!)$ and
$(\!(b,a)\!)$.

Then $\eta(\partial\D)=\gamma^{a,b}([s,t])\cup
\gamma^{b,a}([s',t'])$ where
\begin{gather*}
    s = \inf\{u\ge 0\colon \gamma^{a,b}_u\in (\!(a,b)\!)\cap \eta(\partial\D)\}\;,\quad
    t = \sup\{u\ge 0\colon \gamma^{a,b}_u\in (\!(a,b)\!)\cap \eta(\partial\D)\}\;,\\
    s' = \inf\{u\ge 0\colon \gamma^{b,a}_u\in (\!(b,a)\!)\cap \eta(\partial\D)\}\;,\quad
    t' = \sup\{u\ge 0\colon \gamma^{b,a}_u\in (\!(b,a)\!)\cap \eta(\partial\D)\}\;.
\end{gather*}
By applying Proposition \ref{prop:msw-interface-approx} to the segments
$\gamma^{a,b}([s,t])$ and $\gamma^{b,a}([s',t'])$ we obtain curves
$\widetilde{\gamma}^{a,b}$ and $\widetilde{\gamma}^{b,a}$ (for any prespecified
$\epsilon>0$). By noting that $\widetilde{\gamma}^{a,b}$ and
$\widetilde{\gamma}^{b,a}$ intersect for $\epsilon>0$ sufficiently small, we
readily obtain the following corollary. See also Figure \ref{fig:approximate-continuum} where this is illustrated.

\begin{cor}
    \label{cor:msw-loop-approx}
    Consider $D=\D$ and let $\Gamma^{OB}_f$ be defined as in Proposition
    \ref{prop:msw-interface-approx}. Then the following statement is almost
    surely true: Suppose that $\eta\in \Xi^1_{RB}$ surrounds a point $z\in \D$
    and fix $\epsilon>0$. Then there are $\eta^1,\dots,\eta^n\in \Gamma^{OB}_f$
    (all oriented counterclockwise say) and $s_1^\pm,\dots,s_n^\pm\in
    \partial\D$ such that the concatenation of the curves
	$\eta^1\lvert_{(\!(s^-_1,s^+_1)\!)},\dots,\eta^n\rvert_{(\!(s^-_n,s^+_n)\!)}$ defines
    a simple loop $\widetilde{\eta}$, viewed as a function on $\partial\D$, such
    that $\eta$ surrounds the curve $\widetilde{\eta}$, $\widetilde{\eta}$
    surrounds $z$ and $\widetilde{\eta}(\partial\D)\subset \eta(\partial\D) +
    B_\epsilon(0)$.
\end{cor}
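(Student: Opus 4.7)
The plan is to obtain $\widetilde{\eta}$ by approximating each of the two interface arcs that jointly trace $\eta$ and splicing the resulting two chains of $\Gamma^{OB}_f$ loops into a single simple loop. Starting from the setup stated in the paragraph preceding the corollary, I would use the decomposition
\[
    \eta(\partial\D) = \gamma^{a,b}([s,t]) \cup \gamma^{b,a}([s',t']),
\]
where $a,b \in \mathcal{Q}$ are chosen so that $\eta(\partial\D)$ intersects both arcs $(\!(a,b)\!)$ and $(\!(b,a)\!)$ and $s<t$, $s'<t'$ are the times defined there. The two segments trace $\eta$ in opposite orientations and their boundary endpoints, $\gamma^{a,b}_s,\gamma^{a,b}_t \in (\!(a,b)\!)$ and $\gamma^{b,a}_{s'},\gamma^{b,a}_{t'} \in (\!(b,a)\!)$, are the four points at which $\eta$ transitions between being traced by $\gamma^{a,b}$ and by $\gamma^{b,a}$.

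For a small $\epsilon' > 0$ I would then invoke Proposition \ref{prop:msw-interface-approx} twice---once for $\gamma^{a,b}$ on $[s,t]$ and once for $\gamma^{b,a}$ on $[s',t']$---to obtain simple curves $\widetilde{\gamma}^{a,b}$ and $\widetilde{\gamma}^{b,a}$, each a concatenation of arcs of finitely many loops of $\Gamma^{OB}_f$, each contained in $\eta(\partial\D) + B_{\epsilon'}(0)$, and each with endpoints in $B_{\epsilon'}$-neighborhoods of the four boundary points above. Since both interfaces have the interior of $\eta$ on their right when tracing $\eta$ (this is a consequence of the BCLE convention in which loops of $\Xi_B$---and hence of $\Gamma^{OB}_f$---lie on the right of the associated interfaces), the constituent $\Gamma^{OB}_f$ loops lie on the interior side of $\eta$, i.e.\ the side containing $z$. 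By Remark \ref{rk:small-boundary-clusters}, for $\epsilon'$ sufficiently small I can additionally force the two extreme loops of each chain to have arbitrarily small diameter.

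To splice the two chains, I would observe that together $\widetilde{\gamma}^{a,b}$ and $\widetilde{\gamma}^{b,a}$ trace, up to $\epsilon'$, nearly the entire boundary of the Jordan region bounded by $\eta$, from the interior side. Planarity then forces the two curves to cross near each pair of matched boundary endpoints once $\epsilon'$ is small enough; cutting at a first such crossing on each side and concatenating the resulting pieces yields a simple loop $\widetilde{\eta}$ made of arcs of loops of $\Gamma^{OB}_f$. Taking $\epsilon' \le \min\{\epsilon, \tfrac{1}{2}\dist(z, \eta(\partial\D))\}$, the loop $\widetilde{\eta}$ is contained in $\eta(\partial\D) + B_\epsilon(0)$, is surrounded by $\eta$ (it lies in the interior region bounded by $\eta$), and still surrounds $z$ by a winding-number comparison with $\eta$.

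The main obstacle I foresee is the splicing step: ensuring that $\widetilde{\gamma}^{a,b}$ and $\widetilde{\gamma}^{b,a}$ genuinely cross each other (rather than just approaching a common limit on $\partial\D$) and that the resulting loop is simple. This relies on Remark \ref{rk:small-boundary-clusters} to make the boundary-touching parts of both chains microscopic, together with the observation that the two chains both sit in a $B_{\epsilon'}$-neighborhood of $\eta$ on the same (interior) side and collectively nearly exhaust this neighborhood, leaving them no topological room to avoid each other.
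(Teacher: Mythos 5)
Your proof follows essentially the same route as the paper's (the paragraph immediately preceding the corollary): decompose $\eta(\partial\D)$ as $\gamma^{a,b}([s,t])\cup\gamma^{b,a}([s',t'])$, apply Proposition~\ref{prop:msw-interface-approx} to each piece, observe that the resulting chains $\widetilde\gamma^{a,b}$, $\widetilde\gamma^{b,a}$ intersect, and splice. One small internal inconsistency to note: you first say the two segments ``trace $\eta$ in opposite orientations,'' but---as you correctly state in the very next sentence---both interfaces keep $\eta^o$ on their right while tracing $\eta$, so they in fact trace it in the \emph{same} orientation and the two segments overlap rather than abut; this slip is harmless, since the conclusion you actually use (both chains lie inside $\eta^o$ and are therefore forced to meet) depends only on the ``same side'' fact.
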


\subsection{Imaginary geometry results}
\label{sec:ig-lemmas}

In this short section, we collect some results from the theory of imaginary
geometry which will be used in Section \ref{sec:continuum-exp}. We encourage the
reader to skip this section and refer back to it whenever necessary when reading
Section \ref{sec:continuum-exp}.

\begin{figure}
	\centering
	\def\svgwidth{0.8\columnwidth}
	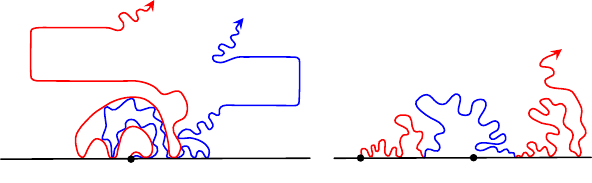
	\caption{\emph{Left.} This figure illustrates Lemma
        \ref{lem:imaginary-map-in}. The relevant imaginary geometry boundary
        conditions have been drawn in for the reader's convenience where
        $\lambda = \pi/\sqrt{\kappa}$. \emph{Right.} Lemma
        \ref{lem:sle-excursion} is illustrated: The conditional law of the blue
		curve given the red curves is an $\SLE_\kappa(0,\rho')$ in the
        complementary domain (up to time reparametrization). Both figures are
        drawn in the upper halfplane rather than the unit disk for
        illustrational purposes.}
	\label{fig:imaginary-lemmas}
\end{figure}

The following two results appear as part of \cite{ig1} (see also
\cite[Section 8]{cle-percolations}) and in \cite[Theorem 5.6]{wedges}
respectively. They are illustrated in Figure \ref{fig:imaginary-lemmas}.

\begin{lemma}[\cite{ig1}]
	\label{lem:imaginary-map-in}
	Let $\kappa\in (0,4)$, $\rho_-,\rho_+>-2$ and $\bar{\rho}>0$. Then one can
	couple two curves $\gamma_- \sim \SLE_\kappa(\bar{\rho}-2,2+\rho_-+\rho_+)$
	and $\gamma_+ \sim \SLE_\kappa(\rho_-+\bar{\rho},\rho_+)$ right of
    $\gamma_-$ (both from $-i$ to $i$ in $\D$) such that conditionally on
    $\gamma_-$, the restrictions of $\gamma_+$ to the components right of
	$\gamma_-$ are independent $\SLE_\kappa(\rho_-,\rho_+)$ curves.
\end{lemma}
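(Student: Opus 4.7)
The approach is imaginary geometry: realise $\gamma_-$ and $\gamma_+$ as flow lines, at two distinct angles, of a single Gaussian free field. After conformally mapping $\D$ onto $\H$ (sending $-i \mapsto 0$ and $i \mapsto \infty$), take a GFF $h$ with piecewise constant boundary data $-\lambda(1+\rho_-+\bar\rho)$ on $\R_-$ and $\lambda(1+\rho_+)$ on $\R_+$, where $\lambda = \pi/\sqrt{\kappa}$ and $\chi = 2/\sqrt{\kappa} - \sqrt{\kappa}/2$. By the fundamental coupling theorem of Miller--Sheffield (IG1, Theorem 1.1), the zero-angle flow line of $h$ from $0$ to $\infty$ has the required law $\SLE_\kappa(\rho_- + \bar\rho, \rho_+)$; call it $\gamma_+$.

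Next, for any angle $\theta$, the angle-$\theta$ flow line of $h$ is, by definition, the zero-angle flow line of the shifted field $h + \theta\chi$, and so is an $\SLE_\kappa$ with the two force weights both shifted by $\theta\chi/\lambda$. Choosing $\theta$ so that $\theta\chi/\lambda = 2 + \rho_-$, the resulting curve has law $\SLE_\kappa(\bar\rho - 2, 2 + \rho_- + \rho_+)$; declare this to be $\gamma_-$. Since $\bar\rho > 0$, the angle shift is strictly positive, and the monotonicity of flow lines at distinct angles (IG1) implies that $\gamma_-$ lies to the left of $\gamma_+$.

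It remains to determine the conditional law of $\gamma_+$ given $\gamma_-$, and this is exactly the content of the flow-line interaction theorem (IG1, Theorem 1.5). Conditionally on $\gamma_-$, inside each connected component $U$ of $\H \setminus \gamma_-$ lying to the right of $\gamma_-$, the curve $\gamma_+$ is the zero-angle flow line of $h|_U$. One reads off its marginal from the boundary data of $h$ on $\partial U$: along the right side of $\gamma_-$, the harmonic extension of $h$ equals $-\lambda - \theta\chi$, while on the portion of $\R_+$ along $\partial U$ it remains $\lambda(1+\rho_+)$. Applying the usual $\chi$-winding correction when conformally uniformising $U$ back to $\H$ at the tip of $\gamma_-$ contributes an additive $+2\lambda$ to the left boundary data at the starting point of $\gamma_+|_U$; combining this with the shift $-\theta\chi = -\lambda(2+\rho_-)$ gives effective boundary data $-\lambda(1+\rho_-)$ on the left and $\lambda(1+\rho_+)$ on the right. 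Hence $\gamma_+|_U$ is an $\SLE_\kappa(\rho_-, \rho_+)$ from the tip of $\gamma_-$ to $\infty$ in $U$, and independence across distinct components $U$ follows from the conditional independence structure of the GFF given $\gamma_-$. Transferring back from $\H$ to $\D$ concludes the proof.

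\textbf{Main obstacle.} The bulk of the work is the boundary-data bookkeeping in the last paragraph: the whole point of the statement — that the conditional law of $\gamma_+$ in each component loses its $\bar\rho$-dependence — hinges on the exact cancellation between the $-\theta\chi$ contribution picked up from the angle shift and the $+2\lambda$ contribution from the $\chi$-winding correction at the tip of $\gamma_-$. Verifying this cancellation (and arguing carefully that the restriction of $\gamma_+$ to the open components to the right of $\gamma_-$ really is captured by the boundary-data prescription, rather than any interaction along $\gamma_-$ itself) is the delicate point.
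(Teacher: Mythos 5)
The paper does not actually prove this lemma: it is cited from Miller--Sheffield's imaginary geometry work, and Figure~\ref{fig:imaginary-lemmas} simply records the GFF boundary data for the coupling. Your construction recovers exactly that data (boundary values $-\lambda(1+\rho_-+\bar{\rho})$ and $\lambda(1+\rho_+)$; $\gamma_+$ the zero-angle flow line; $\gamma_-$ the flow line at angle $\theta$ with $\theta\chi=\lambda(2+\rho_-)$, producing data $-\lambda(3+\rho_-)$ and $-\lambda(1+\rho_-)$ on the two sides of $\gamma_-$ as in the figure), and the bookkeeping yielding $\SLE_\kappa(\rho_-,\rho_+)$ in the components right of $\gamma_-$ is correct. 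Two small points to tighten. First, the positivity of the angle shift follows from $\rho_->-2$ alone (which gives $\theta\chi/\lambda=2+\rho_->0$, together with $\chi>0$ for $\kappa<4$), not from $\bar{\rho}>0$; the hypothesis $\bar{\rho}>0$ is instead what keeps the left force weight $\bar{\rho}-2$ strictly above $-2$, so that $\gamma_-$ is a non-degenerate $\SLE_\kappa$ variant. Second, the $+2\lambda$ winding correction at the opening corner of each component, which you correctly flag as the crux, is asserted rather than derived; making it rigorous requires tracking the flow-line boundary-data convention and the uniformizer's corner behaviour, which is exactly the content of the flow-line interaction theorem in the cited reference.
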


\begin{lemma}[\cite{wedges}]
	\label{lem:sle-excursion}
	Fix $\kappa\in (0,4)$, $\rho \in (-2,\kappa/2-2)$ and define
	$\rho'=\kappa-4-\rho$. Let $\gamma\sim \SLE_\kappa(0,\rho)$ from $-i$ to $i$
    in $\D$, write $\zeta_1$
    \begin{align*}
        \zeta_1 = \inf\{t\ge 0\colon \gamma_t \in (\!(1,i)\!) \}\;,\quad
	    \zeta_{1-}=\sup\{t<\zeta_1\colon \gamma_t\in (\!(-i,1)\!) \}\;.
    \end{align*}
    Then the conditional law of $\gamma([\zeta_{1-},\zeta_1])$ given
    $\gamma([0,\zeta_{1-}]\cup [\zeta_1,1])$ is that of the image of a
	$\SLE_\kappa(0,\rho')$ from $\gamma_{\zeta_{1-}}$ to $\gamma_{\zeta_1}$ in
    the component of $\D\setminus \gamma([0,\zeta_{1-}]\cup [\zeta_1,1])$
    with $1$ on its boundary.
\end{lemma}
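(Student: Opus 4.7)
My approach is to pass to $\H$ by conformal mapping, use the imaginary geometry description of $\gamma$ to determine the conditional law up to an $h$-transform, and identify that transform via the classical Bessel-process duality $\delta\leftrightarrow 4-\delta$.

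\emph{Step 1 (Reduction to $\H$).} Apply the conformal map $\D\to\H$ sending $-i,1,i\mapsto 0,1,\infty$, so that $\gamma$ becomes $\SLE_\kappa(0,\rho)$ in $\H$ from $0$ to $\infty$ with force point at $0^+$, and the arcs $(\!(-i,1)\!)$, $(\!(1,i)\!)$ become $(0,1)$, $(1,\infty)$ respectively. Writing $\gamma_1=\gamma|_{[0,\zeta_{1-}]}$ and $\gamma_2=\gamma|_{[\zeta_1,1]}$, the component of $\D\setminus(\gamma_1\cup\gamma_2)$ containing $1$ on its boundary maps to a simply connected domain $\mathcal{U}\subset\H$ whose boundary is the union of the open interval $(\gamma_{\zeta_{1-}},\gamma_{\zeta_1})\subset\R_+$ (containing the image of $1$), the $\mathcal U$-facing sides of $\gamma_1$ and $\gamma_2$, and the remaining arc of $\partial\H$ (including $\R_-$ and a neighborhood of $\infty$).

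\emph{Step 2 (Imaginary geometry boundary data).} Set $\lambda=\pi/\sqrt\kappa$. By the imaginary-geometry coupling, $\gamma$ is the flow line of a GFF $h$ on $\H$ with boundary data $-\lambda$ on $\R_-$ and $\lambda(1+\rho)$ on $\R_+$, carrying flow-line values $\pm\lambda$ on its two sides. Conditional on $\gamma_1,\gamma_2$, the restriction of $h$ to $\mathcal U$ is a GFF with boundary values $\lambda(1+\rho)$ on the arc $(\gamma_{\zeta_{1-}},\gamma_{\zeta_1})$ and $-\lambda$ on the rest of $\partial\mathcal U$: the $\mathcal U$-facing sides of $\gamma_1,\gamma_2$ are the left sides of these flow-line segments, and the values on $\R_-$ are the original $-\lambda$. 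Near the starting prime end $\gamma_{\zeta_{1-}}$, one therefore has $-\lambda$ on the left and $\lambda(1+\rho)$ on the right.

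\emph{Step 3 ($h$-transform via Bessel duality).} A flow line from $\gamma_{\zeta_{1-}}$ in $\mathcal U$ with these boundary data would naively be an $\SLE_\kappa(0,\rho)$, but because $\rho<\kappa/2-2$ such a curve a.s.\ touches the arc $(\gamma_{\zeta_{1-}},\gamma_{\zeta_1})$ at interior times, whereas by the definitions of $\zeta_{1-},\zeta_1$ the excursion does not. The correct conditional law is therefore the $h$-transform of $\SLE_\kappa(0,\rho)$ corresponding to the event of reaching the target without touching this arc. In the driving-function formulation from Section~\ref{sec:cle-bcle}, $X_t:=\xi_t-O^+_t$ is a Bessel process of dimension $\delta=1+2(\rho+2)/\kappa\in(1,2)$ whose zeros encode the touches of $\gamma$ with $\R_+$, so the avoiding event is precisely $\{X>0\}$ on the excursion interval. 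The classical duality that the $h$-transform of $\mathrm{Bes}(\delta)$ by $X^{2-\delta}$ yields $\mathrm{Bes}(4-\delta)$ therefore converts the driving process into that of $\SLE_\kappa(0,\rho')$, since $4-\delta=1+2(\rho'+2)/\kappa$ for $\rho'=\kappa-4-\rho$. Transferring back via the inverse conformal map yields the claim.

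The main obstacle is that $\zeta_{1-}$ is a last-passage time rather than a stopping time, so the strong Markov property cannot be applied at $\zeta_{1-}$ directly. The standard remedy is to approximate $\zeta_{1-}$ by genuine stopping times — for instance, by considering the last zero of $X$ before $\gamma$ enters a shrinking Euclidean neighborhood of the image of $1$, or by working backwards from a stopping time located after $\zeta_1$ — then performing the Bessel $h$-transform computation in the approximation, and passing to the limit via Williams-type last-exit decompositions together with continuity of the Loewner chain to identify the limiting driving SDE as that of $\SLE_\kappa(0,\rho')$.
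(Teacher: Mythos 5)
The paper does not prove this lemma: it is cited from \cite{wedges} (Theorem~5.6 there), followed only by a one-sentence remark recording the Bessel heuristic — that the driving pair of an $\SLE_\kappa(0,\rho)$ encodes a $\BES(\delta)$ process with $\delta = 1+2(\rho+2)/\kappa$, and that the excursion measure of $\BES(\delta)$ is described in terms of $\BES(4-\delta) = \BES(1+2(\rho'+2)/\kappa)$. Your Step~3 is precisely this heuristic, so the core idea matches the paper's; the paper itself makes no attempt at a proof.

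As a proof, however, the sketch has genuine gaps. In Step~2 you assign constant boundary data $-\lambda$ on the $\mathcal{U}$-facing sides of $\gamma_1,\gamma_2$, but flow-line boundary data carries a $\chi\cdot(\text{winding})$ correction, and the winding accumulated by $\gamma$ as it touches and leaves $\R_+$ is exactly what shifts the effective force-point weight. One cannot first ignore this and then repair the discrepancy via the $h$-transform of Step~3 — those are not the same correction — nor do you explain why the harmonic function implementing \emph{avoid $(\gamma_{\zeta_{1-}},\gamma_{\zeta_1})$} is the Bessel scale function once the relevant change of coordinates is made. Indeed, $X_t=\xi_t-O^+_t$ is a $\BES(\delta)$ for the $\H$-Loewner chain targeting $\infty$, while the conclusion is a statement about a Loewner chain in $\mathcal{U}$ from $\gamma_{\zeta_{1-}}$ to $\gamma_{\zeta_1}$; the SLE change-of-coordinates and capacity reparametrization are where the actual computation lives, and your argument does not carry the $\BES(4-\delta)$ identification through that step. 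Finally, you correctly flag that $\zeta_{1-}$ is a last-exit time and propose approximation plus a Williams-type decomposition; that is the right instinct, but it is precisely where the technical content of \cite{wedges} is concentrated, and the proposal defers it entirely. None of this says the strategy is wrong — it is the intended one — but as written it is a heuristic outline rather than a proof, and the paper treats the statement as a black-box citation.
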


Let us remark that the final lemma makes sense from the Loewner chain
description as well since an $\SLE_\kappa(0,\rho)$ can be encoded in terms of a
Bessel process of dimension $\delta = 1+2(\rho+2)/\kappa$ and the excursion
measure of such a Bessel process can be constructed from Bessel processes of
dimension $4-\delta=1+2(\rho'+2)/\kappa$ (i.e., the dimension is reflected
around $2$).

Another input on relations between different SLE curves that we will state here
is the following change of coordinates property of
$\SLE_\kappa(\rho,\kappa-6-\rho)$ curves (see \cite[Section 7]{cle-percolations}
and \cite{sw-coord}). This is related to the target invariance property of
$\SLE_\kappa(\rho,\kappa-6-\rho)$ curves which has been instrumental in
\cite{cle-percolations}.

\begin{lemma}[\cite{cle-percolations}]
	\label{lem:coordinate-change}
    Consider $\kappa\in (2,4)$ and $\rho\in (-2,\kappa-4)$. One can couple
	$\gamma\sim \SLE_\kappa(\rho,\kappa-6-\rho)$ from $-i$ to $i$ in $\D$ with
    initial force points at $-i$ (on the left) and $x\in (\!(-i,i)\!)$ with
	$\gamma'\sim \SLE_\kappa(\rho,0)$ from $-i$ to $x$ in $\D$ such that the
    following is true: If $\zeta_x=\inf\{t\ge 0\colon \gamma_t\in (\!(x,i)\!)\}$
    and $\zeta'_x =\inf\{t\ge 0\colon \gamma'_t\in (\!(x,i)\!)\}$ then
    $\gamma([0,\zeta_1])=\gamma'([0,\zeta'_1])$.
\end{lemma}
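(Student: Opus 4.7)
The lemma is an instance of the \emph{target-invariance} of $\SLE_\kappa(\rho,\kappa-6-\rho)$, and the approach I would take is to reduce it to the Schramm--Wilson change-of-target formula \cite{sw-coord}, in the form already invoked in \cite[Section 7]{cle-percolations}. The plan is to conformally transport the setup to the upper half-plane, perform an It\^o-level computation on the Loewner driving function to identify the two laws up to a time change, and then transport back.

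First, fix a M\"obius map $\varphi\colon \D\to \H$ with $\varphi(-i)=0$, $\varphi(i)=\infty$ and $\varphi(x)=y\in (0,\infty)$. Under $\varphi$, the curve $\gamma$ becomes an $\SLE_\kappa(\rho;\kappa-6-\rho)$ in $\H$ from $0$ to $\infty$ with force points at $0^-$ and $y$, while $\gamma'$ becomes an $\SLE_\kappa(\rho;0)$ from $0$ to $y$ with a single force point at $0^-$. Since $\varphi$ maps the arc $(\!(x,i)\!)$ onto $(y,\infty)\subset \R$, the stopping times $\zeta_x$ and $\zeta'_x$ correspond exactly to the first time each respective curve enters $(y,\infty)$.

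Second, I would apply the change-of-target identity. Letting $W_t$ be the driving function of the $\H$-image of $\gamma$, with Loewner maps $g_t$ and force-point processes $O^-_t=g_t(0^-)$ and $V_t=g_t(y)$, the SDE reads
\begin{align*}
dW_t = \sqrt{\kappa}\,dB_t + \frac{\rho\,dt}{W_t-O^-_t} + \frac{(\kappa-6-\rho)\,dt}{W_t-V_t}.
\end{align*}
The Schramm--Wilson calculation (It\^o's formula applied to a judiciously chosen product of powers of $W_t-V_t$, $W_t-O^-_t$ and $g'_t(y)$) shows that after the natural time change $t\mapsto \int_0^t g'_s(y)^{-2}\,ds$, which converts capacity seen from $\infty$ into capacity seen from $y$, the driving process agrees in law with that of $\SLE_\kappa(\rho;0)$ from $0$ to $y$. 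The crucial point is that the specific weight $\kappa-6-\rho$ at the force point $V$ is exactly what makes the associated Radon--Nikodym derivative (on the $\sigma$-algebra generated by the curve up to hitting $(y,\infty)$) identically equal to $1$; this is the content of the target-change identity of \cite{sw-coord}, reformulated in the language of weighted SLEs.

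Third, I conclude by running the two driving processes with the same Brownian motion after the above time change. The Loewner chains, and hence the curves, then coincide up to the first time each hits $(y,\infty)$; pulling back via $\varphi^{-1}$ yields $\gamma([0,\zeta_x])=\gamma'([0,\zeta'_x])$ as subsets of $\D$. The main obstacle is the algebraic verification that the Radon--Nikodym density simplifies to $1$ precisely when $\rho_R=\kappa-6-\rho_L$, together with the careful bookkeeping of the time change on the force-point SDEs; since this calculation is standard and spelled out explicitly in \cite{sw-coord} and \cite[Section 7]{cle-percolations}, my proof would simply cite those identities rather than reproduce the It\^o computation.
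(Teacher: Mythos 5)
Your proof is correct and takes the same approach as the paper, which states this lemma by citing the Schramm--Wilson target-invariance for $\SLE_\kappa(\rho,\kappa-6-\rho)$ from \cite{sw-coord} and \cite[Section 7]{cle-percolations}. The M\"obius transport to $\H$ sending $-i\mapsto 0$, $i\mapsto\infty$, $x\mapsto y>0$ together with the observation that the force-point weights sum to $\kappa-6$ (so that the Radon--Nikodym derivative between the time-changed driving processes is identically $1$) is exactly the standard route.
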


The final two lemmas we state here say that SLE curves stay within prespecified
tubes with positive probability.

The proofs of both results are based on imaginary geometry (IG) techniques. The
first result is a special case of \cite[Lemma 2.5]{miller-wu-dim} and the second
one follows from a straightforward adaptation of the proof of
\cite[Lemma 2.3]{miller-wu-dim}.\footnote{\,In the
setting of \cite{miller-wu-dim}, one considers a GFF $h$ in $\D$ with IG
boundary conditions $-\lambda(1+\rho_-)$ on $(\!(i,x_-)\!)$, $-\lambda$ on
$(\!(x_-,-i)\!)$, $\lambda$ on $(\!(-i,x_+)\!)$ and $\lambda(1+\rho_+)$ on
$(\!(x_+,i)\!)$, and the flow line $\gamma$ from $-i$ to $i$. One also considers
a GFF $h'$ in the domain between $\nu_-$ and $\nu_+$ which has the same IG
boundary conditions on $(\!(\nu^-_0,\nu^+_0)\!)\cup (\!(\nu^+_1,\nu^-_1)\!)$,
$-\lambda$ on $\nu^-([0,1])$ and $\lambda$ on $\nu^+([0,1])$. The flow line
$\gamma'$ from $-i$ to $i$ associated to $h'$ does not hit
$\nu^-([0,1])\cup\nu^+([0,1])$ a.s.\ and the proof proceeds as in
\cite{miller-wu-dim} by showing absolute continuity of $h$ and $h'$ when both
are restricted to a connected set which is a positive distance away from
$\nu^-([0,1])\cup\nu^+([0,1])$.}

\begin{lemma}[\cite{miller-wu-dim}]
    \label{lem:positive-hit}
	Consider $\kappa\in (0,4)$, $\rho_-\in (-2,\kappa/2-2)$ and $\rho_+>-2$. Let
	$\gamma\sim \SLE_\kappa(\rho_-,\rho_+)$ in $\D$ from $-i$ to $i$ with
    initial force points at $x_-\in (\!(i,-i)\!)\cup\{-i\}$ and $x_+ \in
    (\!(-i,i)\!)\cup \{-i\}$. Let $\nu^\pm\colon [0,1]\to \overline{\D}$ be two
    simple curves with disjoint images such that only their endpoints are on
    $\partial\D$. Also assume that $\nu^-_0\in (\!(i,-i)\!)$, $\nu^-_1 \in
    (\!(i,\nu^-_0)\!)\cap (\!(i,x_-)\!)$, $\nu^+_0 \in (\!(-i,i)\!)$ and
    $\nu^+_1 \in (\!(i,\nu^-_1)\!)$. Then with positive probability, $\gamma$
    hits $(\!(\nu^+_1, \nu^-_1)\!)$ before $\nu^-([0,1])\cup \nu^+([0,1])$.
\end{lemma}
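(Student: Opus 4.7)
The plan is to follow the imaginary geometry strategy of \cite[Lemma 2.3]{miller-wu-dim}, comparing via Radon-Nikodym two GFFs whose zero-angle flow lines realize $\gamma$ and an auxiliary curve that by construction avoids $\nu^-\cup\nu^+$. Set $\lambda=\pi/\sqrt{\kappa}$ and realize $\gamma$ as the flow line from $-i$ to $i$ of a GFF $h$ on $\D$ with boundary data $-\lambda(1+\rho_-)$ on $(\!(i,x_-)\!)$, $-\lambda$ on $(\!(x_-,-i)\!)$, $\lambda$ on $(\!(-i,x_+)\!)$ and $\lambda(1+\rho_+)$ on $(\!(x_+,i)\!)$. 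The assumption $\rho_-\in(-2,\kappa/2-2)$ is exactly what ensures that the flow line is allowed to touch the left boundary arc $(\!(i,x_-)\!)$, which is crucial for the event in the statement to be nontrivial.

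Next we introduce the Jordan sub-domain $U\subset\D$ whose boundary consists of $\nu^-$, $\nu^+$ and the two boundary arcs $(\!(\nu^-_0,\nu^+_0)\!)$ (running through $-i$) and $(\!(\nu^+_1,\nu^-_1)\!)$ (the target arc). We consider an auxiliary GFF $h'$ on $U$ whose boundary data agrees with that of $h$ on the two $\partial\D$-arcs, equals $-\lambda$ along $\nu^-$ (with $U$ on the right, plus the usual winding correction) and equals $\lambda$ along $\nu^+$ (with $U$ on the left, plus winding correction). These boundary values are the neutral ones for the zero-angle flow line, so by the flow-line hitting criteria of \cite{ig1} the flow line $\gamma'$ of $h'$ from $-i$ to $i$ inside $U$ almost surely avoids $\nu^-\cup\nu^+$; combined with the fact that $\rho_-\in(-2,\kappa/2-2)$ still forces $\gamma'$ to hit the portion of the left arc $(\!(i,x_-)\!)$ lying in $\partial U$ before reaching $i$, this shows that $\gamma'$ almost surely exits $U$ through the target arc $(\!(\nu^+_1,\nu^-_1)\!)$.

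To conclude we perform a Radon-Nikodym transfer. Choose a compact set $K\subset U$ that contains a topological tube inside $U$ from a small neighborhood of $-i$ to the target arc, has positive distance to $\nu^-\cup\nu^+$, and stays away from the force points $x_\pm$; by the previous paragraph the event that $\gamma'$ stays in $K$ until hitting the target arc has positive probability. On $K$ the field $h-h'$ is a bounded harmonic function, so the restrictions $h|_K$ and $h'|_K$ have mutually absolutely continuous laws via the standard Cameron-Martin / Girsanov argument for the GFF, exactly as in the proof of \cite[Lemma 2.3]{miller-wu-dim}. Since the flow line up to its first exit from $K$ is measurable with respect to the field in an arbitrarily small neighborhood of its trace, the positive-probability event for $\gamma'$ transfers directly to $\gamma$. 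The main obstacle we anticipate is the careful bookkeeping of the winding corrections for the GFF boundary data along the simple (but not necessarily smooth) curves $\nu^\pm$ and at the four corners of $\partial U$, together with the matching of boundary data near the initial force points $x_\pm$; this is precisely the technical point handled in \cite[Lemma 2.3]{miller-wu-dim} as described in the footnote above.
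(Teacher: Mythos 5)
Your overall imaginary-geometry strategy -- realize $\gamma$ as a zero-angle flow line of a GFF $h$ on $\D$, set up an auxiliary GFF $h'$ on the Jordan sub-domain $U$ cut out by $\nu^\pm$ with flow-line boundary data $\mp\lambda$ (plus winding) along $\nu^\pm$, and transfer via GFF absolute continuity on a compact set $K\subset U$ -- is the argument the paper attributes to \cite{miller-wu-dim} (and sketches in its footnote for the sibling Lemma~\ref{lem:positive-whole}), so the route is correct.

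There is, however, a concrete geometric slip that suggests you are picturing Lemma~\ref{lem:positive-whole} rather than this lemma. Here $\nu^+_1 \in (\!(i,\nu^-_1)\!)$ lies on the \emph{left} arc, so $\nu^+$ runs from the right boundary $(\!(-i,i)\!)$ across the disk to the left boundary near $i$; in particular $i$ is separated from $U$ by $\nu^+$ and $i\notin\overline{U}$. Your auxiliary flow line ``from $-i$ to $i$ inside $U$'' is therefore not well-defined as written. Instead you should take the flow line of $h'$ from $-i$ targeted at a point of the arc $(\!(\nu^+_1,\nu^-_1)\!)$ (or use target invariance of flow lines). With the prescribed data, the effective $\rho$ along $\nu^\pm$ is $0>\kappa/2-2$, so the auxiliary curve a.s.\ avoids $\nu^\pm$, while the boundary data $-\lambda(1+\rho_-)$ on the target arc together with $\rho_-<\kappa/2-2$ lets it hit there -- this is the precise version of the point you are after. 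One further point worth making explicit: the mutual absolute continuity of $h|_K$ and $h'|_K$ only holds for $K$ compactly contained in $U$ (hence bounded away from $\partial\D$), so the event you actually transfer has to be a bulk event (the flow line traverses a tube in $K$ and exits into a prescribed neighborhood of the target arc), with the final boundary-hitting step handled separately; your description of a $K$ going ``to the target arc'' conflates these two steps.
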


\begin{lemma}[\cite{miller-wu-dim}]
    \label{lem:positive-whole}
	Consider $\kappa\in (0,4)$ and $\rho_\pm > -2$. Let $\gamma\sim
	\SLE_\kappa(\rho_-,\rho_+)$ from $-i$ to $i$ in $\D$ with initial force
    points at $x_- \in (\!(i,-i)\!)\cup\{-i\}$ and $x_+\in
    (\!(-i,i)\!)\cup\{-i\}$. Let $\nu^\pm \colon
	[0,1]\to \overline{\D}$ be disjoint simple curves with only their endpoints
	on $\partial\D$. Assume that $\nu^-_0\in (\!(i,-i)\!)$, $\nu^-_1 \in
	(\!(i,\nu^-_0)\!)$, $\nu^+_0 \in (\!(-i,i)\!)$ and $\nu^+_1\in
	(\!(\nu^+_1,i)\!)$. Then with positive probability,
	$\gamma$ does not hit $\nu^-([0,1])\cup \nu^+([0,1])$.
\end{lemma}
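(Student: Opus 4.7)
The plan is to adapt the imaginary geometry (IG) argument outlined in the footnote to Lemma \ref{lem:positive-hit}, which is itself based on \cite{miller-wu-dim}. Let $h$ be a Gaussian free field on $\D$ with the IG boundary data $-\lambda(1+\rho_-)$ on $(\!(i,x_-)\!)$, $-\lambda$ on $(\!(x_-,-i)\!)$, $\lambda$ on $(\!(-i,x_+)\!)$ and $\lambda(1+\rho_+)$ on $(\!(x_+,i)\!)$, where $\lambda=\pi/\sqrt{\kappa}$, so that its zero-angle flow line from $-i$ to $i$ realizes $\gamma\sim\SLE_\kappa(\rho_-,\rho_+)$.

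Let $\tilde D$ denote the unique connected component of $\D\setminus(\nu^-([0,1])\cup\nu^+([0,1]))$ whose boundary contains both $-i$ and $i$; its existence and uniqueness follow from the positions of the endpoints of $\nu^\pm$. I then introduce a second GFF $h'$ on $\tilde D$ whose boundary data agrees with that of $h$ on the two subarcs of $\partial\D\cap\partial\tilde D$, and which is constant equal to $-\lambda$ on $\nu^-([0,1])$ and $\lambda$ on $\nu^+([0,1])$ (viewed from within $\tilde D$). The flow line $\gamma'$ of $h'$ from $-i$ to $i$ is then almost surely disjoint from $\nu^-([0,1])\cup\nu^+([0,1])$: the boundary data on these interior curves matches the height that a zero-angle flow line sees immediately on its left and right, so the standard IG hitting criterion (analogous to the $\rho_-\in(-2,\kappa/2-2)$ condition for hitting the left boundary of $\D$, which fails since $\kappa<4$) rules out contact.

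Using the almost sure continuity of $\gamma'$ and that it connects $-i$ to $i$ inside $\tilde D$, one can select a compact set $K\subset\tilde D$ bounded away from $\nu^-([0,1])\cup\nu^+([0,1])$ with $\P(\gamma'\subset K)>0$. Choose an open neighborhood $U$ of $K$ whose closure lies in $\tilde D$ and which is still bounded away from $\nu^\pm$. The difference between the harmonic extensions of the boundary data of $h$ and of $h'$ restricts to a smooth function on $U$ of finite Dirichlet energy, so by Cameron-Martin the laws of $h|_U$ and $h'|_U$ are mutually absolutely continuous. Localizing the flow line coupling on the event that the curve stays in $K$, exactly as in \cite{miller-wu-dim}, transfers the positive probability of $\{\gamma'\subset K\}$ to $\{\gamma\subset K\}$, which in particular yields $\P(\gamma\cap(\nu^-([0,1])\cup\nu^+([0,1]))=\emptyset)>0$.

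The principal technical hurdle is the transfer step: since flow lines are a priori global functionals of the field, one needs to argue that on the event $\{\gamma\subset K\}$ the curve is determined, up to a null set, by the restriction $h|_U$. This is precisely the content of the absolute-continuity argument in \cite{miller-wu-dim} in a closely analogous setting, and the adaptation here is routine once the GFF $h'$ on $\tilde D$ and its flow line from $-i$ to $i$ are given rigorous meaning via standard IG constructions for domains with interior curve boundary.
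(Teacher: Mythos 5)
Your proof follows exactly the strategy the paper outlines in the footnote to Lemma \ref{lem:positive-hit}: introduce an auxiliary GFF $h'$ on the subdomain between $\nu^-$ and $\nu^+$ with matching IG boundary data on the arcs of $\partial\D$ and with $\mp\lambda$ on $\nu^\mp$, observe that the zero-angle flow line of $h'$ avoids $\nu^\pm$ since the implicit weight $\rho=0$ lies outside the hitting range $(-2,\kappa/2-2)$ when $\kappa<4$, and transfer positivity to $\gamma$ via absolute continuity of GFF restrictions and local determination of flow lines, as in \cite{miller-wu-dim}. The only imprecision is the appeal to ``Cameron--Martin'': the harmonic projection of a GFF onto $U$ is a \emph{random} harmonic function, so the comparison of $h|_U$ and $h'|_U$ is a comparison of Gaussian measures with different (random) harmonic parts, not merely a deterministic mean shift; this is exactly the content of the GFF absolute-continuity lemma of Miller--Sheffield invoked in \cite{miller-wu-dim}, which you correctly cite, so the conclusion is sound even if the phrasing undersells the required input.
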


\section{Fuzzy Potts model: Quasi-multiplicativity and other discrete ingredients}
\label{sec:discrete-fuzzy}

In this section, we study the fuzzy Potts model based on the precise
understanding of critical FK percolation, summarized in Section
\ref{sec:FK-percolation}, that relies to a large extend on
\cite{duminil2021planar}. Throughout the section, the cluster weight $q\in
[1,4)$ and the coloring parameter $r\in (0,1)$ are fixed and we drop them from
the notation. We will prove the `discrete' ingredients needed to establish our
main results. First, we will show separation, extension and localization
properties of arms and these lead to quasi-multiplicativity for arm event
probabilities. This part follows the strategy known for Bernoulli percolation
(see \cite{Kesten1987,Nolin2008}) and FK percolation (see
\cite{Chelkak2016,duminil2020scaling,duminil2021planar}). Secondly, we will bound the alternating
six-arm exponent for the fuzzy Potts model and explain how to reduce the study
of arm exponents in the fuzzy Potts model to the case of alternating color
sequences.

In comparison to the study of FK percolation, there are several difficulties
arising due to the additional layer of randomness. To explain them and to
motivate our approach, let us describe a natural first attempt: One could try to
establish an analogue of the strong crossing estimates from Theorem
\ref{thm:strongRSW} for the fuzzy Potts model, in order to then follow in much
the same way the arguments used for FK percolation. In particular, one would
show that conditionally on any color configuration on the boundary of the
rectangle, the probability of a red path crossing a rectangle from left to right
is bounded away from $0$ and $1$, with the bounds only depending on the
rectangle's aspect ratio. This turns out to be wrong: It is expected (and known
in the case of the Ising model -- this can be deduced from \cite{benoist-hongler-cle3}, more specifically from the fact that $\CLE_{3}$, the scaling limit of the Ising interfaces, is simple) that if we condition on red boundary conditions,
then the crossing probability will converge to $1$ along a sequence of
increasing rectangles with the same aspect ratio. Another aspect which makes the
study of the fuzzy Potts model more challenging in general is the lack of a
domain Markov property (except in special cases like Potts models).

Let us mention that softer Russo-Seymour-Welsh arguments (as proven in
\cite{rsw-general-kst}) can be applied directly to the fuzzy Potts model using
its positive association (see \cite[Theorem 4.2]{haggstrom-color-percolation} for the Bernoulli case $q=1$ due to Häggström and Schramm and \cite{haggstrom-fuzzy-positive,kahn-fuzzy-fkg} for the general case $q\ge 1$)
but they yield no control of boundary effects and so we will not make use of
them here.

Our guiding principle in the study of the fuzzy Potts model will be the
following: Whenever we want to condition on the color configuration on a subset
$S\subset \Z^2$, we instead condition on the percolation configuration in $S$
and the color of clusters in the interior of $S$. Importantly, we never
condition on the color of a cluster that intersects the boundary $\partial S$.
In this way, we only obtain information about the induced FK boundary condition,
i.e.\ the partition of $\partial S$, and we can apply the domain Markov property
for FK percolation, the mixing property for FK percolation, as well as the crossing estimates of Theorem
\ref{thm:strongRSW} to extend (resp.\ block) clusters using open (resp.\ dual
open) paths.

In Section \ref{sec:fuzzy-potts}, we have introduced two closely related
versions of arm events, $A_\tau^s(m,n)$ and $A_\tau(m,n)$, which denote the
existence of $\abs{\tau}$ arms in the annulus $\Lambda_{m,n}$ with colors
prescribed by $\tau$. In the case of $A_\tau^s(m,n)$, all arms correspond to
strong paths, whereas for $A_\tau(m,n)$, red arms correspond to strong paths and
blue arms to weak paths. In this section, we choose to present most proofs for
the latter version but it will be clear that the arguments also apply to the
first version (which involves only strong paths).

\subsection{Almost-arm events: Definitions and first results} \label{sec:almost-arms}

The occurrence of $A_\tau(m,n)$ provides information about the colors at the
inner boundary $\partial \Lambda_m$ and the outer boundary $\partial \Lambda_n$,
and it is therefore convenient to introduce a variant of the arm event
$A_\tau(m,n)$ that does not include such information.

Let us consider the subgraph induced by $S \subset \R^2$ and denote its edge set
by $E(S)$. For a vertex $v \in S$ and a percolation configuration $\omega$, we
denote by $C_{v,S}(\omega)$ the $\omega$-cluster of $v$ in $S$, i.e.\ the
connected component of $v$ in the subgraph with edge set $\{e\in
E(S):\omega(e)=1\}$. For a path $\gamma \subset S$, we define its \emph{cluster
hull}
\begin{equation*}
	H_{\gamma,S}(\omega):= \bigcup_{v \in \gamma} C_{v,S}(\omega)
\end{equation*}
as the union of all $\omega$-clusters intersecting $\gamma$. We will write it
simply $H_{\gamma}(\omega)$ when no confusion can arise.

\begin{defn}[Almost-arm]
    Let $S \subset \Z^2$ and $A,B \subset \partial S$. A finite sequence of
    vertices $\gamma = (\gamma_i)_{i=0}^\ell \subset S$ with $\gamma_0 \in A$
    and $\gamma_\ell \in B$ is called a \emph{red (resp.\ blue) almost-arm} from
    $A$ to $B$ in $S$ with respect to $\Z^2$ if there exist $0 \le k_A \le k_B
    \le \ell$ such that
    \begin{enumerate}[(i)]
        \item $(\gamma_0,\ldots,\gamma_{k_A})$ is an $\omega$-open path,
            $(\gamma_{k_A},\ldots,\gamma_{k_B})$ is a strong (resp.\ weak) path,
            and $(\gamma_{k_B},\ldots,\gamma_\ell)$ is an $\omega$-open path;
        \item the vertices in
            $\widetilde{\gamma}:=(\gamma_{k_A+1},\ldots,\gamma_{k_B-1})$ are
            colored in red (resp.\ blue);
        \item the $\omega$-clusters of the colored vertices in
            $\widetilde{\gamma}$ do not intersect the boundary of $S$ with
            respect to $\Z^2$, i.e.\ $$H_{\widetilde{\gamma},S}(\omega) \cap
            \partial S = \emptyset.$$
	\end{enumerate}
    For $S \subset \Z\times \Z_+$ and $A,B \subset \partial_+ S$, we analogously
    define a \emph{red (resp.\ blue) almost-arm} from $A$ to $B$ in $S$ with
    respect to $\Z\times\Z_+$ by requiring that the $\omega$-clusters of the
    colored vertices in $\widetilde{\gamma}$ do not intersect the boundary of
    $S$ with respect to $\Z\times\Z_+$, i.e.\ $H_{\widetilde{\gamma},S}(\omega)
    \cap \partial_+ S = \emptyset$.
\end{defn}

In other words, a red almost-arm is a concatenation of an (FK) arm of type 1
starting at $A$, a (fuzzy Potts) red arm using only interior clusters, and an
(FK) arm of type 1 ending at $B$. We refer to Figure \ref{fig:almost-arm} for an
illustration.  Our definition is inspired by the notion of almost-crossing
introduced in \cite[Chapter 3]{tassion2014planarity} and the idea of considering
cluster hulls was already present in \cite{balint-dac-sharp}.

\begin{figure}
	\centering
	\def\svgwidth{0.4\columnwidth}
	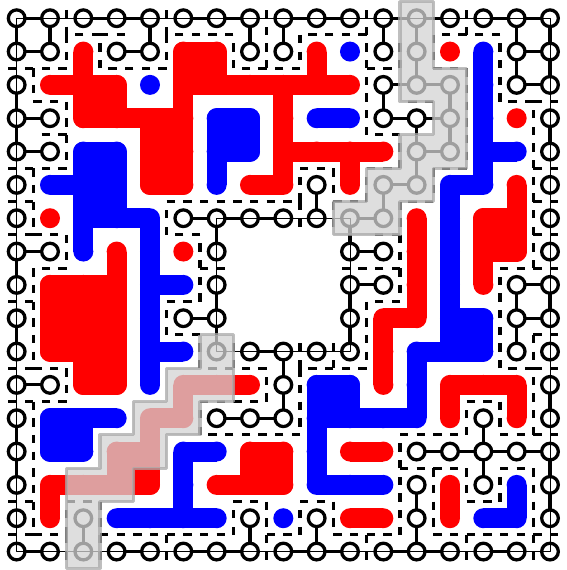
    \caption{Two examples of red almost-arms from $\partial \Lambda_m$ to
    $\partial \Lambda_n$ in $\Lambda_{m,n}$ (with respect to $\Z^2$) are
    highlighted in gray.}
	\label{fig:almost-arm}
\end{figure}

Let us describe a simple way to check if a finite sequence of vertices $\gamma = (\gamma_i)_{i=0}^\ell$ is actually a red (resp.\ blue) almost-arm:
\begin{enumerate}[(i)]
    \item Starting from $\gamma_0 \in A$, explore $\gamma$ until the first $\gamma_i\gamma_{i+1}$ that is \emph{not} an $\omega$-open edge (either because it is closed or a `diagonal step'). If $\gamma_i\gamma_{i+1}$ does not exist, $\gamma$ is an $\omega$-open path from $A$ to $B$, hence an almost-arm. Otherwise, set $k_A := i$.
	\item Starting from $\gamma_\ell \in B$, explore $\gamma$ backwards until the first $\gamma_j\gamma_{j+1}$ that is \emph{not} an $\omega$-open edge, and set $k_B := j+1$. If $(\gamma_{k_A},\ldots,\gamma_{k_B})$ is not a strong (resp.\ weak) path, $\gamma$ is not an almost-arm.
	\item For every $ v \in \widetilde{\gamma}=(\gamma_{k_A+1},\ldots,\gamma_{k_B-1})$, explore the $\omega$-cluster of $v$ in $S$. If  $C_{v,S}(\omega) \cap \partial S \neq \emptyset$ for some vertex $v \in \widetilde{\gamma}$, $\gamma$ is not an almost-arm. Otherwise, explore the colors of these clusters. If they are all red (resp.\ blue), $\gamma$ is an almost-arm.
\end{enumerate}
Consequently, the event `$\gamma$ is an almost-arm' depends only on the status of the edges in $\gamma$, the edges incident to clusters visited by $\gamma$ (except the clusters of $\gamma_0$ and $\gamma_\ell$), and the colors of the interior clusters among them. The fact that it does not depend on the color of boundary-touching clusters is the main advantage of working with almost-arms compared to arms.

In Sections \ref{sec:quasi-multiplicatity} and \ref{sec:arm-separation}, we will naturally encounter almost-arms with respect to $\Z^2$ when studying arm events under the measures $\mu_{\Z^2}$ and $P_{\Z^2}$. In Section \ref{sec:arm-separation-halfplane}, we study arm events under the halfplane measures $\mu_{\Z\times\Z_+}^0$ and $P_{\Z\times\Z_+}^0$ with free boundary conditions and in this context, we will also encounter almost-arms with respect to $\Z\times\Z^+$. Therefore, we also make the following definitions.

\begin{defn}[Almost-arm event in the plane] \label{def:almost-arm-plane}
	Let $1 \le m \le n$ and let $\tau$ be a color sequence. The almost-arm event $B_\tau(m,n)$ denotes the existence of $\abs{\tau}$ counterclockwise-ordered, disjoint almost-arms $\gamma^1,\ldots,\gamma^{\abs{\tau}}$ from $\partial \Lambda_m$ to $\partial \Lambda_n$ in $\Lambda_{m,n}$ (with respect to $\Z^2$) such that
    \begin{enumerate}[(i)]
		\item the almost-arm $\gamma^i$ has color $\tau_i$, for every $1\le i \le \abs{\tau}$;
		\item the cluster hulls $H_{\gamma^1,\Lambda_{m,n}}(\omega),\ldots, H_{\gamma^{\abs{\tau}},\Lambda_{m,n}}(\omega)$ are pairwise disjoint.
	\end{enumerate}
\end{defn}

\begin{defn}[Almost-arm events in the halfplane] \label{def:almost-arm-halfplane}
	Let $1 \le m \le n$ and let $\tau$ be a color sequence. The almost-arm event $B^{+}_\tau(m,n)$ (resp.\ $B^{++}_\tau(m,n)$) denotes the existence of $\abs{\tau}$ counterclockwise-ordered (starting with the rightmost), disjoint almost-arms $\gamma^1,\ldots,\gamma^{\abs{\tau}}$ from $\partial \Lambda_m$ to $\partial \Lambda_n$ in $\Lambda^{+}_{m,n}$ with respect to $\Z \times \Z_+$ (resp.\ $\Z^2$) such that
    \begin{enumerate}[(i)]
	\item the almost-arm $\gamma^i$ has color $\tau_i$, for every $1\le i \le \abs{\tau}$;
	\item the cluster hulls $H_{\gamma^1,\Lambda^+_{m,n}}(\omega),\ldots, H_{\gamma^{\abs{\tau}},\Lambda^+_{m,n}}(\omega)$ are pairwise disjoint.
\end{enumerate}
\end{defn}
We emphasize the difference between the two almost-arm events in the halfplane: $B^{++}_\tau(m,n)$ involves almost-arms with respect to $\Z^2$ which implies that clusters intersecting the segments $[-n,-m]\times\{0\}$ and $[m,n]\times\{0\}$ are considered as boundary-touching clusters. This is not the case for $B^{+}_\tau(m,n)$, but clearly $B^{++}_\tau(m,n)$ is a subevent of $B^{+}_\tau(m,n)$.
Furthermore, it is important to note that two almost-arms are required to have disjoint cluster hulls (even if they have the same color). As before, we remark that assuming $m \ge \abs{\tau}$ is sufficient to guarantee that  $B_\tau(m,n)$, $B^+_\tau(m,n)$ and $B^{++}_\tau(m,n)$ are non-empty.
We begin our study of almost-arms with an upper bound on the probability that there exist a red and a blue almost-arm with respect to $\Z^2$ in the upper halfplane.

\begin{prop}\label{prop:two-arm-fuzzy-potts}
For all $1 \le m \le n$,
	\begin{equation*}
		 P_{\Z^2}(B^{++}_{RB}(m,n)) \lesssim  \frac{m}{n},
	\end{equation*}
	where the bound is uniform in $r$ and $m,n$.
\end{prop}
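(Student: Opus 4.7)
The plan is to bound $P_{\Z^2}(B^{++}_{RB}(m,n))$ by an FK-percolation arm event in the halfplane annulus, and then to invoke the universal arm exponent from Corollary \ref{cor:six-arms-FK-with-bc}. First, I would condition on the FK configuration $\omega$: since the coloring is independent of $\omega$ and enters $B^{++}_{RB}(m,n)$ only through the constraint that the interior clusters of the almost-arms be colored appropriately, the conditional probability is trivially at most $1$. This gives $P_{\Z^2}(B^{++}_{RB}(m,n)) \le \phi_{\Z^2}(E)$, where $E$ is the FK event that there exist in $\Lambda^+_{m,n}$ two disjoint subsets $H^R, H^B$, each a union of FK clusters and each containing a lattice path from $\partial\Lambda_m$ to $\partial\Lambda_n$, with all interior clusters avoiding the $\Z^2$-boundary of $\Lambda^+_{m,n}$ --- in particular, the x-axis segments $[-n,-m]\times\{0\}$ and $[m,n]\times\{0\}$.

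Next, I would argue that on $E$ the FK halfplane 3-arm event $A^+_{010}(m,n)$ (alternating dual-open-dual) is forced. Since the two cluster hulls are disjoint unions of FK clusters both spanning the half-annulus, planar duality provides a dual-open arm from $\partial\Lambda_m$ to $\partial\Lambda_n$ in $\Lambda^+_{m,n}$ separating $H^R$ from $H^B$. The outer dual boundaries of the two hulls, on the sides facing away from each other, provide additional dual-open arms; crucially, because interior clusters avoid the x-axis, these dual boundaries cannot escape onto the x-axis and must run from $\partial\Lambda_m$ to $\partial\Lambda_n$. The middle type-1 arm can be extracted from the $\omega$-open initial or final segment of one of the almost-arms, routed inside a cluster of the corresponding hull. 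After relabeling the three resulting arms in counterclockwise order one obtains $A^+_{010}(m,n)$.

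Applying Corollary \ref{cor:six-arms-FK-with-bc} then yields $\phi_{\Z^2}(A^+_{010}(m,n)) \lesssim (m/n)^{1+\beta_1}$, which is at most a constant multiple of $m/n$ since $\beta_1 > 0$. The main obstacle lies in the topological argument of the second step: the cluster hulls are in general multi-cluster chains rather than single FK clusters, so extracting a primal open arm that actually spans from $\partial\Lambda_m$ to $\partial\Lambda_n$ requires some care (and one may in fact have to use a slightly different FK arm event when the hulls are not single clusters, exploiting the extra dual-open structure created by the ``hops'' between consecutive clusters in the chain). The choice of $B^{++}$ rather than $B^+$ is essential here, since excluding x-axis touches for interior clusters is what prevents the dual boundaries of the hulls from short-circuiting to the x-axis and destroying the extracted arm structure.
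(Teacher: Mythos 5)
Your high-level idea — condition on $\omega$ and reduce to an FK arm event in the half-annulus — is sound, and the paper also works entirely through the FK marginal. But the specific topological reduction you propose is incorrect, and the proof in the paper takes a genuinely different (counting-based) route that sidesteps exactly the obstacle your argument runs into.

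The central problem is your claim that the outer dual boundaries of the two cluster hulls must run from $\partial\Lambda_m$ to $\partial\Lambda_n$ because ``interior clusters avoid the $x$-axis.'' The definition of an almost-arm only constrains the interior clusters $H_{\widetilde{\gamma},S}(\omega)$ to avoid $\partial S$. The two \emph{endpoint} clusters (those of $\gamma_0$ and $\gamma_\ell$, reached by the initial and final $\omega$-open segments) are part of the cluster hull $H_{\gamma,S}(\omega)$ and are not constrained at all: they may freely touch $[m,n]\times\{0\}$ and $[-n,-m]\times\{0\}$. When they do, the outer dual boundary of the hull terminates on the $x$-axis rather than connecting $\partial\Lambda_m$ to $\partial\Lambda_n$, and the two ``outer'' dual arms you need for $A^+_{010}(m,n)$ simply do not exist. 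In other words, the safeguard you attribute to $B^{++}$ versus $B^+$ protects only against interior-cluster short-circuits, not against endpoint-cluster short-circuits, and the latter are the real danger here. A secondary but equally fatal gap: there is in general no primal open arm spanning $\Lambda^+_{m,n}$ to serve as your middle type-1 arm. The $\omega$-open initial and final segments of an almost-arm each run only from a boundary segment to some interior vertex; when the hull is a chain of several clusters joined through dual-open ``hops,'' no single cluster need span, so there is nothing to route a spanning open path through. This is not a technicality to be patched by choosing ``a slightly different FK arm event'' --- it is precisely the difficulty that makes almost-arms harder than arms, and the paper explicitly flags it (and even remarks that the \emph{lower} bound fails to follow from the usual argument for the same reason).

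The paper's proof is therefore structured very differently. It first shows $P(B(m,n))\lesssim P(B'(4m,n))$, where $B'$ is a localized version with both almost-arms emanating from a small boundary segment; this is an exploration plus RSW gluing step. It then translates $B'(m,n)$ along the $x$-axis at scale $m$ and proves the pigeonhole inequality $\sum_x 1_{(x,0)+B'(m,n)} \le 2 N_{n'}$, where $N_{n'}$ counts disjoint dual-open crossings of a semi-annulus $\Lambda^+_{n',2n'}$. The factor $2$ is exactly the price one pays for the possibility that uncolored $\omega$-open endpoint segments of distinct almost-arms cross each other, which is the same phenomenon that breaks your dual-arm extraction. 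Since $\phi_{\Z^2}(N_{n'})$ is bounded (domain Markov plus Theorem \ref{thm:strongRSW}), taking expectations yields $P(B'(m,n))\lesssim m/n$. Incidentally, note that the exponent your argument would produce, $1+\beta_1$, is strictly larger than the stated exponent $1$; since the universal half-plane two-arm behavior for this model is expected to be exactly $1$, a bound of the form $(m/n)^{1+\beta_1}$ would be surprising and should itself have been a warning sign that the extraction of a genuine three-arm event cannot be correct.
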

The principal idea of the proof is standard and well-known in the case of Bernoulli percolation, where it is one part of showing that the two-arm exponent in the halfplane is universal and equal to 1. We remark that these arguments (see, e.g., \cite[p.\ 13--14]{werner-perc-notes} for an outline) can also be used to deduce
certain universal arm exponents for the fuzzy Potts model.
\begin{remark}\label{rk:universal-exponents-fuzzy-potts} Uniformly in $n \ge 1$, it holds that
		\begin{equation*}
		\mu_{\Z^2}(A^+_{RB}(1,n)) \asymp \mu_{\Z\times\Z_+}^0(A^+_{RB}(1,n)) \asymp  n^{-1} \quad \text{and} \quad \mu_{\Z^2}(A^+_{RBR}(1,n)) \asymp  n^{-2}.
	\end{equation*}
\end{remark}
Let us emphasize that the three-arm exponent ($RBR$) in the halfplane is only universal under the measure $\mu_{\Z^2}$ and so there is no contradiction with Theorem \ref{thm:main-boundary}.
We include the proof of the proposition since working with almost-arms requires additional care compared with arms. One indication for this is the (maybe surprising) observation that the standard proof of the corresponding lower bound does not apply to the almost-arm event $B^{++}_{RB}(m,n)$. In any case, the upper bound stated above will be sufficient for our purposes.
\begin{figure}
	\centering
	\def\svgwidth{0.9\columnwidth}
	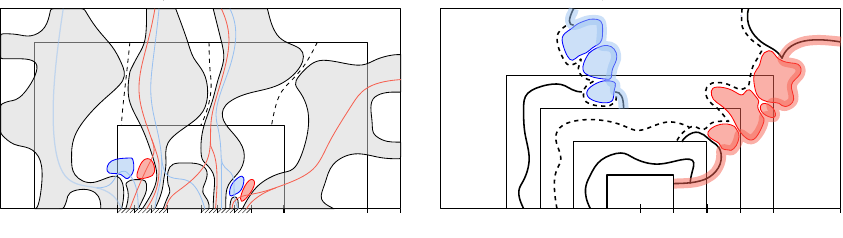
    \caption{Illustrations of the proof of Proposition \ref{prop:two-arm-fuzzy-potts}. \emph{Left.} Six translated versions of the event $B'(m,n)$ occur ($\abs{X}=6$); inducing three disjoint dual open paths that cross the semi-annulus $\Lambda^+_{n',2n'}$. \emph{Right.} Construction of the event $B'(4m,n)$ from the event $B(m,n)$. The almost-arms $\Gamma_R$ and $\Gamma_B$ are highlighted. }
    \label{fig:illustration-2-almost-arm}
\end{figure}

\begin{proof}
	Throughout the proof, we abbreviate $P_{\Z^2}$ by  $P$ and $B^{++}_{RB}(m,n)$ by $B(m,n)$.
    
    Let us define a variant of the event $B(m,n)$ by considering $\Lambda_n^+$ instead of $\Lambda_{m,n}^+$ and by requiring the two almost-arms (with respect to $\Z^2$) to start from vertices on the line segment $[-m,m]\times\{0\}$, and  denote it by $B'(m,n)$.

    \emph{Step 1: $P(B'(m,n)) \lesssim m/n$.}

	We set $n':=\lfloor n/3 \rfloor$ and we may assume $n' \ge 4m$ without loss of generality. Let $N_{n'}$ be the number of disjoint dual open paths that cross the semi-annulus $\Lambda_{n',2n'}^+$ from $\partial \Lambda_{n'}$ to $\partial \Lambda_{2n'}$.

    First, let us argue that
	\begin{equation}\label{eq:almost-arm-upper-bound}
		\sum_{x \in 2m\Z\, :\,  \abs{x}\le n'-m } 1_{(x,0) + B'(m,n)} \le 2 \cdot N_{n'},
	\end{equation}
	where we use the notation $(x,y)+F$ to denote the translate of the event $F$ by the vector $(x,y) \in \Z^2$.
	In words, it says that if there are many pairs of $RB$ almost-arms in $\Lambda_{2n'}^+$ from $[-n',n']\times \{0\}$ to $\partial \Lambda_{2n'}$, then this induces many dual open paths crossing the semi-annulus $\Lambda_{n',2n'}^+$ from the inner to the outer boundary. For the reader familiar with the proof in the case of Bernoulli percolation, we mention that the factor $2$ on the right-hand side is necessary since we work with almost-arms (see the left of Figure \ref{fig:illustration-2-almost-arm}).

	Let $X \subset \{x \in 2m\Z\, :\,  \abs{x}\le n'-m \}$ be the set of points for which the event  $(x,0) + B'(m,n)$ occurs . For $x \in X$, let $\gamma_{B,x}$ denote the leftmost blue almost-arm  in $\Lambda_{2n'}^+$ from $[x-m,x+m]\times\{0\}$ to $\partial \Lambda_{2n'}$. Clearly, there exists a red almost-arm $\gamma_{R,x}$ to the right of $\gamma_{B,x}$ that also crosses $\Lambda_{2n'}^+$ and so that the cluster hulls are disjoint. Hence, the dual open path $\gamma_{d,x}$ which follows the right boundary of the cluster hull of $\gamma_{B,x}$ is sandwiched between $\gamma_{B,x}$ and $\gamma_{R,x}$.
	While arms of different colors cannot cross each other, this might happen for almost-arms. Indeed, the uncolored segments at the beginning (resp.\ the end) which correspond to $\omega$-open paths might intersect (this is exactly why we have insisted on disjoint cluster hulls in Definitions \ref{def:almost-arm-plane} and \ref{def:almost-arm-halfplane}).
	Therefore, $x<y \in X$ does not imply that $\gamma_{R,x}$ stays to the left of $\gamma_{B,y}$ (if true, it would have allowed us to deduce directly that $\gamma_{d,x}$ is strictly to the left of $\gamma_{d,y}$). 
	Instead, we will show that for all $x<y<z \in X$, the dual open path $\gamma_{d,x}$ is strictly to the left of $\gamma_{d,z}$, which then readily implies $N_{n'} \ge \abs{X}/2$, and so equation \eqref{eq:almost-arm-upper-bound} follows.
	
	It suffices to argue that the cluster hulls of $\gamma_{B,x}$ and $\gamma_{B,z}$ must be disjoint. To this end, denote by $C_{B,x}^1,\ldots,C_{B,x}^i$ the clusters of $\gamma_{B,x}$, ordered from the one touching $[x-m,x+m]\times\{0\}$ to the one touching $\partial \Lambda_{2n'}$. Analogously, denote by $C_{R,x}^1,\ldots,C_{R,x}^j$ the clusters of $\gamma_{R,x}$ and by $C_{B,z}^1,\ldots,C_{B,z}^k$ the clusters of $\gamma_{B,z}$. First, we observe that the existence of $\gamma_{d,y}$ implies that $C_{B,x}^1$ cannot touch $[z-m,z+m]\times\{0\}$ and $C_{B,z}^1$ cannot touch $[x-m,x+m]\times\{0\}$. Consequently, the boundary-touching clusters $C_{B,x}^1$, $C_{R,x}^1$, and $C_{B,z}^1$ are disjoint. Second, the existence of $\gamma_{R,x}$, whose cluster hull lies to the right and is disjoint from the one of $\gamma_{B,x}$, implies that the clusters  $C_{B,x}^i$ and  $C_{B,z}^k$ must be disjoint. Indeed, if not, the clusters $C_{B,x}^1,\ldots,C_{B,x}^i = C_{B,z}^k, \ldots, C_{B,z}^1$ would form a semicircuit from $[x-m,x+m]$ to $[z-m,z+m]$ separating $C_{R,x}^1$ from $\partial \Lambda_{2n'}$. The same argument also implies that the intermediate clusters of $\gamma_{B,x}$ and $\gamma_{B,z}$ must be disjoint.

	Taking expectation on both sides in equation \eqref{eq:almost-arm-upper-bound}, we get
	\begin{equation}
		\frac{n'}{2m} \cdot  P \left(B'(m,n)\right) \le \left(\frac{n'}{m}-2\right)\cdot  P \left(B'(m,n)\right) \le 2 \cdot \phi_{\Z^2} \left(N_{n'}\right).
	\end{equation}
	To complete Step 1, it is now sufficient to observe $\phi_{\Z^2} (N_{n'}) \le C < \infty$, which is a standard consequence of the domain Markov property and the crossing estimates in Theorem \ref{thm:strongRSW}.
	
    \emph{Step 2: $P(B(m,n)) \lesssim  P(B'(4m,n))$.}

	In this step, we show that the probabilities of $B(m,n)$ and $B'(m,n)$ are comparable. One direction follows readily from the inclusion $B'(m,n) \subset B(m,n)$ and we will now establish the other direction.
	Let us work on the event $B(m,n)$. In this case, we can introduce the following notation (see also the right of Figure \ref{fig:illustration-2-almost-arm}):
    \begin{enumerate}[(i)]
		\item Let $\Gamma_R$ be the rightmost red almost-arm in $\Lambda_{m,n}^+$ from $\partial\Lambda_{m}$ to $\partial\Lambda_{n}$, and let $E(\Gamma_R)$ denote the set of edges with respect to which $\Gamma_R$ is measurable, i.e.\ all edges in $\Lambda_{m,n}^+$ that are on or to the right of $\Gamma_R$ and all edges that are
		incident to interior clusters visited by $\Gamma_R$.
		\item Let $\Gamma_{d}$ be the rightmost dual open simple path in $\Lambda_{2m,n}^+$ from $\partial\Lambda_{2m}$ to $\partial\Lambda_{n}$ that is to the left of $\Gamma_R$, and let $E(\Gamma_{d})$ denote all primal edges in $\Lambda_{2m,n}^+$ that intersect or are to the right of $\Gamma_{d}$.
		\item Let $\Gamma_B$ be the rightmost blue almost-arm in $\Lambda_{3m,n}^+$ from $\partial\Lambda_{3m}$ to $\partial\Lambda_{n}$ that is to the left of $\Gamma_{d}$, and let $E(\Gamma_B)$ denote the set of edges with respect to which $\Gamma_B$ is measurable, i.e.\ all edges in $\Lambda_{3m,n}^+$ that are on or to the right of $\Gamma_B$ and all edges that are incident to interior clusters visited by $\Gamma_B$.
	\end{enumerate}
	Let us condition on $\Gamma_R, \Gamma_d, \Gamma_B$, the status of the edges in $E(\Gamma_R)\cup E(\Gamma_{d}) \cup E(\Gamma_B)$ and the colors of the interior clusters in the subgraph induced by this edge set.

    As shown in right of Figure \ref{fig:illustration-2-almost-arm}, the occurrence of the event $B'(4m,n)$ can now be guaranteed by the existence of an open path in $\Lambda_{m,2m}^+ \setminus E(\Gamma_R)$, a dual open path in $\Lambda_{2m,3m}^+ \setminus E(\Gamma_d)$ and an open path in $\Lambda_{3m,4m}^+ \setminus E(\Gamma_B)$, and we denote by $F(m,4m)$ the event that such paths exist.

    Furthermore, the domain Markov property implies that the configuration outside of $E(\Gamma_R)\cup E(\Gamma_{d}) \cup E(\Gamma_B)$ depends on the conditioning only through the induced FK boundary conditions. Therefore, using that the extremal distances of the considered discrete domains
    are bounded below by the extremal distance of $\Lambda_{3m,4m}^+$ (from $[-4m,-3m]\times\{0\}$ to $[3m,4m]\times\{0\}$), the strong crossing estimates of Theorem \ref{thm:strongRSW} imply	that the conditional probability of $F(m,4m)$ is bounded below by a constant $\epsilon>0$ which does not depend on $m,n$, on $r$, or on the configuration we have conditioned on.\footnote{\,More precisely, let $\mathcal{D}=(V,E)$ be the largest discrete domain in $\Lambda_{m,2m}^+ \setminus E(\Gamma_R)$ (where we allow $\partial \mathcal{D}$ to use edges of $\gamma_R$ that correspond to the open paths touching the boundary of $\Lambda_{m,n}^+$). We then consider the approximate discrete domain $\mathcal{D}'$ obtained from it by adding all edges in $\Lambda_{m,2m}^+ \setminus E(\Gamma_R)$ with exactly one endpoint in $V$. One can then check that an open path from $[-2m,-m]\times\{0\}$ to the segment of $\partial \mathcal{D}'$ neighboring $E(\Gamma_R)$ guarantees the existence of a red almost-arm from $[-2m,-m]\times\{0\}$ to $\partial \Lambda_n$. The approximate discrete domains in $\Lambda_{2m,3m}$ and $\Lambda_{3m,4m}$ can be defined similarly.} By plugging this lower bound into the previous equation, we get $P(B'(4m,n)) \ge \epsilon \cdot P(B(m,n))$ which completes the second step and thereby the proof.
\end{proof}

The following useful corollary is a standard consequence of the previous
proposition. We refer to Figure \ref{fig:3-almost-arm-halfplane} for an illustration of the main idea of the proof.

\begin{cor}\label{cor:three-arm-fuzzy-potts}
	Let $\tau  \notin \{RRR,BBB\}$ be a color sequence of length $3$. There exists a constant $\beta_2=\beta_2(q)>0$  such that for all $1 \le m \le n$,
	\begin{equation*}
		 P_{\Z^2}(B^{++}_{\tau}(m,n)) \lesssim  \left(\frac{m}{n}\right)^{1+\beta_2},
	\end{equation*}
	where the bound is uniform in $r$ and $m,n$.
\end{cor}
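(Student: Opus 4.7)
The plan is to mimic the strategy of Proposition~\ref{prop:two-arm-fuzzy-potts}, with the extra ingredient that a third polychromatic almost-arm forces two disjoint dual-open crossings of $\Lambda^{+}_{m,n}$ (as opposed to one), together with the middle primal cluster chain producing an FK $010$-type structure; the halfplane FK estimate in Corollary~\ref{cor:six-arms-FK-with-bc} then delivers the polynomial gain over the two-arm bound. First, since $\tau \notin \{RRR, BBB\}$, two consecutive almost-arms $\gamma^{i}, \gamma^{i+1}$ have distinct colors; their cluster hulls being disjoint and differently colored, the right boundary of $H_{\gamma^{i},\Lambda^{+}_{m,n}}(\omega)$ yields a dual-open crossing $\gamma^{*}$ of $\Lambda^{+}_{m,n}$ from $\partial\Lambda_m$ to $\partial\Lambda_n$. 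Applying the same reasoning to the remaining consecutive pair produces a second dual-open crossing $\gamma^{**}$, disjoint from $\gamma^{*}$; the middle almost-arm is sandwiched between $\gamma^{*}$ and $\gamma^{**}$ as a primal cluster chain.

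Second, I would mirror Step~1 of Proposition~\ref{prop:two-arm-fuzzy-potts}: let $\widetilde{B}_{\tau}(m,n)$ be the variant where the three almost-arms emerge from $[-m,m]\times\{0\}$, set $n'=\lfloor n/3\rfloor$, and sum the indicators over translates $(x,0)+\widetilde{B}_{\tau}(m,n)$ for $x\in 2m\Z$ with $|x|\le n'-m$. Each occurring translate produces an FK $010$-type crossing of $\Lambda^{+}_{n',2n'}$ whose boundary feet lie in a window of length $O(m)$ around $(x,0)\in\partial\Lambda_{n'}$, and the disjointness of cluster hulls across distinct translates forces these structures to be pairwise disjoint. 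A union bound over the $O(n'/m)$ windows, combined with Corollary~\ref{cor:six-arms-FK-with-bc} at scales $m$ and $n'$ together with the mixing property from Corollary~\ref{cor:mixing-half}, yields
\begin{equation*}
    \tfrac{n'}{2m}\,P_{\Z^{2}}(\widetilde{B}_{\tau}(m,n)) \;\le\; \phi_{\Z^{2}}\!\left(\widetilde{N}_{n'}\right) \;\lesssim\; \tfrac{n'}{m}\cdot \left(\tfrac{m}{n'}\right)^{1+\beta_{1}} \;=\; \left(\tfrac{m}{n'}\right)^{\beta_{1}},
\end{equation*}
hence $P_{\Z^{2}}(\widetilde{B}_{\tau}(m,n)) \lesssim (m/n)^{1+\beta_{1}}$. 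An RSW extension argument analogous to Step~2 of Proposition~\ref{prop:two-arm-fuzzy-potts} then transfers this bound to $P_{\Z^{2}}(B^{++}_{\tau}(m,n))$, giving the corollary with $\beta_{2}=\beta_{1}$.

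The most delicate point will be making Step 1's reduction quantitatively rigorous: identifying a \emph{genuine} FK $A^{+}_{010}$ event to which Corollary~\ref{cor:six-arms-FK-with-bc} directly applies. Since the middle almost-arm is generally a cluster chain rather than a single FK-open path, the primal arm of the invoked $010$-configuration requires bridging across several clusters. The strategy would be to anchor the configuration at the starting cluster of the middle almost-arm (a single FK cluster touching $\partial\Lambda_{m}$) and to use Theorem~\ref{thm:strongRSW}, in the spirit of the guiding principle of this section (conditioning only on FK data and interior-cluster colors), in order to promote the primal cluster chain to a full FK-open crossing of the annulus between $\gamma^{*}$ and $\gamma^{**}$ at the cost of a bounded multiplicative factor.
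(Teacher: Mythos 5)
Your argument has a genuine gap at exactly the step you flag, and a second one in the counting. First, the ``promotion'' of the middle cluster chain to an FK-open arm cannot be done at bounded cost. You need $P(\widetilde{B}_{\tau}(m,n))\lesssim \phi(A^+_{010})$, which would require a uniform lower bound on the conditional probability, given the three almost-arms with separating dual arcs $\gamma^{*},\gamma^{**}$, of a primal FK-open crossing of the strip between $\gamma^{*}$ and $\gamma^{**}$. But when no single FK cluster crosses that strip (which is exactly the interesting case, where the middle almost-arm genuinely uses a chain of several clusters), a dual-open crossing from $\gamma^{*}$ to $\gamma^{**}$ is forced by planar duality, so the conditional probability of a primal crossing is literally zero. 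There is no uniform application of Theorem~\ref{thm:strongRSW} that rescues this, because the sub-domains across which the chain would have to be bridged carry no aspect-ratio control, and an edge-by-edge finite-energy modification pays a factor per closed edge, of which there is no a priori bound. Second, the translated event $(x,0)+\widetilde{B}_{\tau}(m,n)$ only fixes the feet of the arms on $[x-m,x+m]\times\{0\}$, which lies on the real axis inside $\Lambda_{n'}$ (and $(x,0)$ is not a point of $\partial\Lambda_{n'}$); it does not constrain where the arms or the dual arcs first hit $\partial\Lambda_{n'}$, so the inner feet of the alleged $010$-structure can spread over the whole inner boundary of $\Lambda^{+}_{n',2n'}$ rather than an $O(m)$-window. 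Without that localization the structures across translates need not be disjoint, the pointwise inequality behind $\frac{n'}{2m}P(\widetilde{B}_\tau(m,n))\le\phi_{\Z^2}(\widetilde{N}_{n'})$ is not established, and the evaluation of $\phi_{\Z^2}(\widetilde{N}_{n'})$ as $(n'/m)\cdot(m/n')^{1+\beta_1}$ has no justification.

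The paper's proof takes a different, conditional route that sidesteps both issues: one gets the factor $m/n$ from Proposition~\ref{prop:two-arm-fuzzy-potts} by exploring the rightmost red almost-arm and the rightmost blue almost-arm to its left, and then shows that conditionally on this exploration, the probability of a third almost-arm in the remaining unexplored domain is $\lesssim(m/n)^{\beta_2}$. The key is that an almost-arm cannot use a boundary-touching FK cluster, so a boundary-touching FK cluster separating inner from outer boundary in the remaining domain blocks the third arm outright; Theorem~\ref{thm:strongRSW} gives a uniform lower bound for such blocking in each of the $\asymp\log(n/m)$ dyadic semi-annuli, and Corollary~\ref{cor:mixing} chains the estimates across scales. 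This never requires identifying a genuine $A^{+}_{010}$ event, and it yields a $\beta_2=\beta_2(q)$ that is automatically uniform in $r$, as claimed; see the caption of Figure~\ref{fig:3-almost-arm-halfplane}.
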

\begin{figure}
	\centering
	\def\svgwidth{0.4\columnwidth}
    %% Creator: Inkscape 1.2 (dc2aedaf03, 2022-05-15), www.inkscape.org
%% PDF/EPS/PS + LaTeX output extension by Johan Engelen, 2010
%% Accompanies image file 'three-almost-half.pdf' (pdf, eps, ps)
%%
%% To include the image in your LaTeX document, write
%%   \input{<filename>.pdf_tex}
%%  instead of
%%   \includegraphics{<filename>.pdf}
%% To scale the image, write
%%   \def\svgwidth{<desired width>}
%%   \input{<filename>.pdf_tex}
%%  instead of
%%   \includegraphics[width=<desired width>]{<filename>.pdf}
%%
%% Images with a different path to the parent latex file can
%% be accessed with the `import' package (which may need to be
%% installed) using
%%   \usepackage{import}
%% in the preamble, and then including the image with
%%   \import{<path to file>}{<filename>.pdf_tex}
%% Alternatively, one can specify
%%   \graphicspath{{<path to file>/}}
%% 
%% For more information, please see info/svg-inkscape on CTAN:
%%   http://tug.ctan.org/tex-archive/info/svg-inkscape
%%
\begingroup%
  \makeatletter%
  \providecommand\color[2][]{%
    \errmessage{(Inkscape) Color is used for the text in Inkscape, but the package 'color.sty' is not loaded}%
    \renewcommand\color[2][]{}%
  }%
  \providecommand\transparent[1]{%
    \errmessage{(Inkscape) Transparency is used (non-zero) for the text in Inkscape, but the package 'transparent.sty' is not loaded}%
    \renewcommand\transparent[1]{}%
  }%
  \providecommand\rotatebox[2]{#2}%
  \newcommand*\fsize{\dimexpr\f@size pt\relax}%
  \newcommand*\lineheight[1]{\fontsize{\fsize}{#1\fsize}\selectfont}%
  \ifx\svgwidth\undefined%
    \setlength{\unitlength}{385.50161438bp}%
    \ifx\svgscale\undefined%
      \relax%
    \else%
      \setlength{\unitlength}{\unitlength * \real{\svgscale}}%
    \fi%
  \else%
    \setlength{\unitlength}{\svgwidth}%
  \fi%
  \global\let\svgwidth\undefined%
  \global\let\svgscale\undefined%
  \makeatother%
  \begin{picture}(1,0.58445751)%
    \lineheight{1}%
    \setlength\tabcolsep{0pt}%
    \put(0,0){\includegraphics[width=\unitlength,page=1]{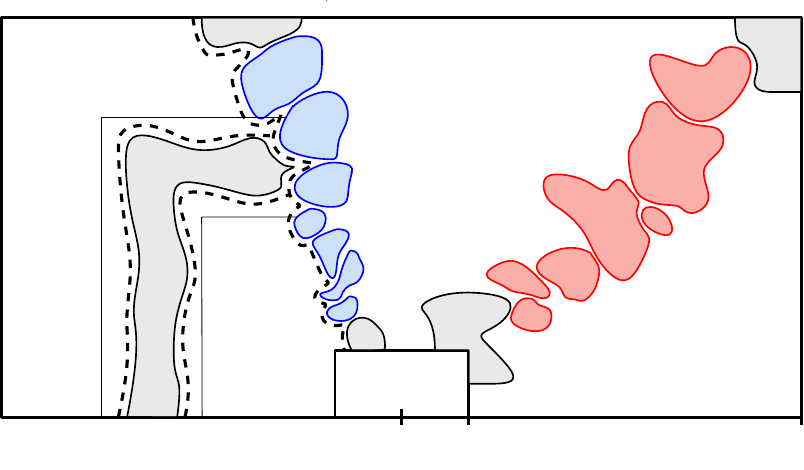}}%
    \put(0.48723121,0.00030091){\makebox(0,0)[lt]{\lineheight{1.25}\smash{\begin{tabular}[t]{l}$0$\end{tabular}}}}%
    \put(0.56680706,0.00309457){\makebox(0,0)[lt]{\lineheight{1.25}\smash{\begin{tabular}[t]{l}$m$\end{tabular}}}}%
    \put(0.98151663,0.00248774){\makebox(0,0)[lt]{\lineheight{1.25}\smash{\begin{tabular}[t]{l}$n$\end{tabular}}}}%
  \end{picture}%
\endgroup%

	\caption{Conditional on the exploration of the rightmost red almost-arm and the rightmost blue almost-arm to its left,  the probability of an additional almost-arm in the remaining domain is polynomially small (using the strong crossing estimates of Theorem \ref{thm:strongRSW} and the mixing property for FK percolation of Corollary \ref{cor:mixing} in dyadic semi-annuli). The bound is uniform in $r$ since the existence of a boundary-touching FK cluster (as drawn) suffices to prevent an additional almost-arm.}
	\label{fig:3-almost-arm-halfplane}
\end{figure}

\begin{remark}
	To obtain the same upper bound for the almost-arm event corresponding to $A_\tau^{+s}(m,n)$, denoted by $B^{++s}_{\tau}(m,n)$, it suffices to note that $B^{++s}_{\tau}(m,n) \subset B^{++}_{\tau}(m,n)$.
\end{remark}

\subsection{Quasi-multiplicativity} \label{sec:quasi-multiplicatity}

In this subsection, we prove quasi-multiplicativity for the fuzzy Potts model, which is the main `discrete' ingredient needed to establish our main results.

\begin{thm}[Quasi-multiplicativity] \label{thm:quasi-multiplicativity}
	Let $\tau$ be an alternating color sequence. For all $\abs{\tau} \le \ell \le m \le n$ we have
	\begin{equation*}
	\mu_{\Z^2}(A_\tau(\ell,n)) \asymp \mu_{\Z^2}(A_\tau(\ell,m)) \cdot  \mu_{\Z^2}(A_\tau(m,n)),
	\end{equation*}
	where the bounds are uniform in $\ell,m,n$. The same holds true for the arm event version $A^{s}_\tau$.
\end{thm}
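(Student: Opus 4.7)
The overall strategy is to reduce the statement to quasi-multiplicativity for the almost-arm events of Section \ref{sec:almost-arms}, and then to use the strong RSW estimates of Theorem \ref{thm:strongRSW} together with the mixing property of Corollary \ref{cor:mixing}. The key advantage of working with almost-arm events $B_\tau$ under the joint measure $P_{\Z^2}$ is that whether a given path is an almost-arm only depends on the FK configuration and on the colors of \emph{interior} clusters, so the colors of clusters intersecting the inner and outer boundaries of an annulus remain unconditioned. This circumvents the absence of a domain Markov property for the fuzzy Potts measure, which is the core reason why one cannot simply transpose the Kesten-type arguments of \cite{Kesten1987, Nolin2008}.

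For the upper bound $\mu_{\Z^2}(A_\tau(\ell,n)) \lesssim \mu_{\Z^2}(A_\tau(\ell,m)) \cdot \mu_{\Z^2}(A_\tau(m,n))$, I would first establish that for alternating $\tau$ the events $A_\tau$ and $B_\tau$ have probabilities comparable up to constants at every scale: the inclusion $A_\tau \subset B_\tau$ is automatic, and the reverse is obtained by extending the two FK-open endpieces of each almost-arm into monochromatic strong colored arms using Theorem \ref{thm:strongRSW} in thin annular strips at the inner and outer boundaries, then asking for the corresponding clusters to receive the prescribed color (which happens with probability bounded below). Once the two formulations are interchangeable, I would exploit the inclusion $B_\tau(\ell,n)\subset B_\tau(\ell,m)\cap B_\tau(m,n)$ and condition on the FK configuration inside $\Lambda_{(1+c)m}$ together with the colors of clusters entirely contained in $\Lambda_{(1+c)m}$. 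Under this conditioning, $B_\tau(\ell,m)$ is determined, and the outer event $B_\tau((1+c)m,n)$ (which differs from $B_\tau(m,n)$ by a factor bounded by RSW) has conditional probability at most a universal constant times its unconditional probability by Corollary \ref{cor:mixing} applied to the FK measure on $\Lambda_{m,n}$ with induced boundary conditions.

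For the lower bound $\mu_{\Z^2}(A_\tau(\ell,n)) \gtrsim \mu_{\Z^2}(A_\tau(\ell,m)) \cdot \mu_{\Z^2}(A_\tau(m,n))$, the approach is the classical Kesten separation-and-gluing one, adapted via almost-arms. I would use as input an arm separation statement (developed in parallel in the section): conditional on $B_\tau(\ell,m)$, with probability uniformly bounded below, the almost-arms can be chosen so that their endpoints on $\partial\Lambda_m$ lie in disjoint `landing boxes' of diameter $cm$ in well-prescribed counterclockwise positions, and such that their cluster hulls are each confined to a disjoint macroscopic tube near $\partial\Lambda_m$; the symmetric statement holds at scale $m$ for $B_\tau(m,n)$. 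Conditional on two such well-separated configurations, I would then glue them in $\Lambda_{m,2m}$ by constructing, between each consecutive pair of landing boxes, either an FK-open arc when the two adjacent arms share a color (forced by Theorem \ref{thm:strongRSW} with probability bounded below uniformly in the inherited boundary condition) or a dual-open arc separating them when the colors differ, and finally by asking each freshly built FK cluster to receive the required color (which happens with probability at least $\min(r,1-r)^{|\tau|}$ and is independent of the rest by the almost-arm framework). Summing over the finitely many separation configurations yields the lower bound for $B_\tau(\ell,n)$, and hence for $A_\tau(\ell,n)$ by the equivalence of the two formulations.

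The main obstacle is this gluing step for the lower bound: one must simultaneously produce primal (FK-open) arcs of the correct induced colors and dual (FK-closed) separating arcs within the glue annulus $\Lambda_{m,2m}$, while the boundary condition on $\partial\Lambda_m$ inherited from the inner conditioning is arbitrary and potentially unfavorable. The fact that this can nonetheless be done uniformly is precisely what the boundary-condition-uniform crossing estimates of Theorem \ref{thm:strongRSW} (valid for $q\in[1,4)$ only) buy us, combined with the almost-arm formulation that keeps the colors of all newly created boundary-touching clusters free. The argument for $A_\tau^s$ is identical, one only needs to restrict every path produced by the RSW step to nearest-neighbor (strong) steps, which costs at most a constant factor in the crossing probabilities.
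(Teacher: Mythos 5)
Your proposal is correct and follows essentially the same strategy as the paper: reduce $A_\tau$ to the almost-arm events $B_\tau$ (and the well-separated-with-extensions variant $\hat{B}_\tau^{\delta,I,J}$), use arm separation and extendability to interchange these freely, apply the FK mixing property (Corollary \ref{cor:mixing}) to decouple the inner and outer annuli, and glue well-separated almost-arms via the boundary-uniform crossing estimates of Theorem \ref{thm:strongRSW} followed by a coloring of the finitely many newly created clusters. One small stylistic difference: for the upper bound the paper starts from the inclusion $A_\tau(\ell,n)\subset B_\tau(\ell,m/2)\cap B_\tau(m,n)$ (with $A_\tau$ on the left-hand side, so that the restricted objects are colored paths and the cluster-chain decomposition into an almost-arm is immediate), whereas you begin from $B_\tau(\ell,n)\subset B_\tau(\ell,m)\cap B_\tau(m,n)$; this is still true but the verification that the restriction of an almost-arm to a sub-annulus is again an almost-arm (with the correct treatment of the boundary-touching clusters $C_0$, $C_{p+1}$ whose colors are unconstrained) is less transparent, and since you have anyway established $\mu(A_\tau)\asymp P(B_\tau)$ it is cleaner to start from $A_\tau$ as the paper does. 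Apart from this the two arguments are the same.
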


We present all proofs in this section for the arm event version $A_\tau$ but the same reasoning also applies to the arm event version $A_\tau^s$. We will always work with the infinite-volume measures $P_{\Z^2}$ and $\mu_{\Z^2}$, so we also drop $\Z^2$ from the notation and simply write $P$ and $\mu$.

The quasi-multiplicativity of arm events is usually obtained by first proving that arms can be separated, chosen to land at prescribed boundary segments and extended.

This strategy has been developed for Bernoulli percolation in \cite{Kesten1987} and it has been extended to FK percolation in \cite{Chelkak2016} and \cite{duminil2021planar}. As we will see in the following subsection, arm separation in itself is a useful tool, e.g.\ to compare arm exponents in the plane.
Our proof follows closely the well-known approach of Kesten \cite{Kesten1987} (see also \cite{Nolin2008,Manolescu2012}) and we have also been inspired by the presentation in \cite{DuminilCopin2021-scaling-without}. The main novelty is the use of almost-arm events, which allows to extend this approach to the fuzzy Potts model.

We begin with introducing some notation. Fix $\beta := \beta_1\wedge \beta_2\wedge
(1/2)$ according to Corollaries \ref{cor:six-arms-FK-with-bc} and  \ref{cor:three-arm-fuzzy-potts}.  We  say that a finite sequence $I^{(s)} \subset \R^2$ of counterclockwise-ordered points with $\norm{x}_\infty = s$, i.e.\ on the boundary of the square $[-s,s]^2$, is \emph{$\delta$-separated} for $\delta >0$ if the points in $I^{(s)}$ are at distance at least $2\delta^{\beta/2}s$ from each other and at distance at least $\delta^{\beta/4}s$ from the four corners of the square.

Let $x \in \R^2$  with $\norm{x}_\infty = s$. For $t >0$, we denote by $R^{in}_{t}(x)$ the rectangle $x + [-t,0] \times [0,2t]$ (resp.\ $x + [-2t,0] \times [-t,0]$, $x + [0,t] \times [-2t,0]$, $x + [0,2t] \times [0,t]$) if $x$ is on the right (resp.\ top, left, bottom) side of the boundary of the square $[-s,s]^2$. These rectangles are contained in $\Lambda_s$, have one corner at $x$ and `stretch' from there in counterclockwise order.
We denote by $R^{out}_{t}(x)$ the analogous rectangle that is positioned outside of $\Lambda_s$. It is obtained by interchanging  $[0,t]$ and $[-t,0]$ in the previous definition.
Finally, we denote by $S^{in}_{t}(x)$ and $S^{out}_{t}(x)$ the slightly translated versions of $R^{in}_{t}(x)$ and $R^{out}_{t}(x)$ that are obtained if we replace $[0,2t]$ by $[2t,4t]$ and $[-2t,0]$ by $[-4t,-2t]$ in the previous definitions.

\begin{figure}
	\centering
	\def\svgwidth{0.9\columnwidth}
    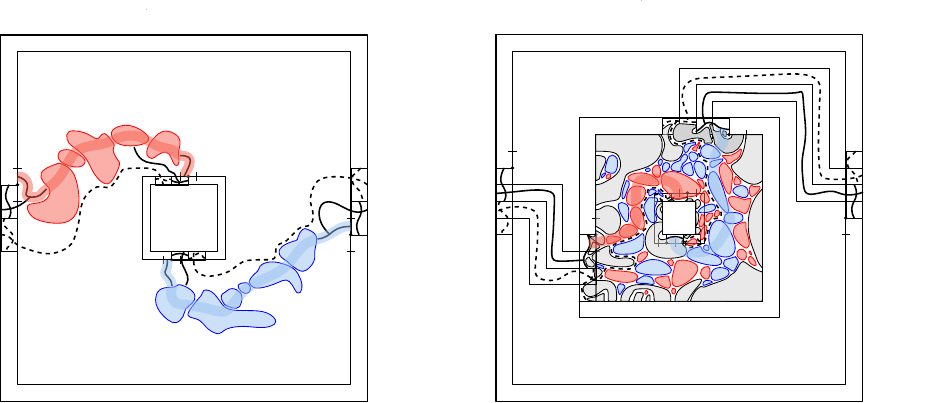
    \caption{\emph{Left.} The well-separated almost-arm event with extensions $\hat{B}^{\delta,I,J}_{RB}(m,n)$. The chosen almost-arms $\gamma^1$ and $\gamma^2$ are highlighted. \emph{Right.} Illustrations of the proof of Proposition \ref{prop:extendability}. The extensions $\hat{\gamma}^1$ and $\hat{\gamma}^2$ are highlighted. }
    \label{fig:separation-extension-arm}
\end{figure}

\begin{defn}\label{def:well-seperated-almost-arm-extensions}
	Let $1 \le m \le n$ and $\tau=\tau_1\cdots\tau_k$ be a color sequence. Let $\delta >0$ and let $I^{(m)}=(I_1,\ldots,I_k)$, $J^{(n)}=(J_1,\ldots,J_k)$ be $\delta$-separated. The \emph{well-separated almost-arm event with extensions} $\hat{B}^{\delta,I,J}_\tau(m,n)$ denotes the subset of $B_\tau (m,n)$ for which the almost-arms $\gamma^i$ for $1\le i \le k$ can be chosen as follows:
    \begin{enumerate}[(i)]
		\item The almost-arm $\gamma^i$ starts at some $x_i \in \partial \Lambda_m$ with $d_\infty(x_i,I_i) \le \delta m$ and ends at some  $y_i \in \partial \Lambda_n$ with $d_\infty(y_i,J_i) \le \delta n$.
		\item There is an almost-arm $\hat{\gamma}^i$ in $\Lambda_{m,n} \cup R^{in}_{\delta m}(I_i) \cup R^{out}_{\delta n}(J_i)$ from $\partial \Lambda_{m - \delta m}$ to $\partial \Lambda_{n + \delta n}$ whose colored vertices belong to the interior clusters of $\gamma^i$ (we call $\hat{\gamma}^i$ an extension of $\gamma^i$) and the rectangles $ R^{in}_{\delta m}(I_i)$ and $ R^{out}_{\delta n} (J_i)$ are crossed in the long direction.
		\item There is a dual open simple path $\gamma_d^i$ in $\Lambda_{m,n} \cup  S^{in}_{\delta m}(I_i) \cup S^{out}_{\delta n}(J_i)$ from $\partial \Lambda_{m - \delta m}$ to $\partial \Lambda_{n + \delta n}$ that separates $\hat{\gamma}^i$ from $\hat{\gamma}^{i+1}$ and the rectangles $ S^{in}_{\delta m}(I_i)$ and $ S^{out}_{\delta n}(J_i)$ are dual crossed in the long direction.
	\end{enumerate}
\end{defn}
We refer to the left of Figure \ref{fig:separation-extension-arm} for an illustration. Importantly, extensions of almost-arms use only $\omega$-open edges in $\Lambda_{m-\delta m,m}$ and $\Lambda_{n, n + \delta n}$, and this allows for the relevant gluing constructions.

\begin{prop}[Extendability for $\hat{B}_\tau$] \label{prop:extendability}
	Let $\tau$ be a color sequence and let $\delta>0$. For all $\abs{\tau} \le m \le n$ and for all $I'^{(m/2)}$, $I^{(m)}$, $J^{(n)}$, $J'^{(2n)}$ $\delta$-separated,
	\begin{align*}
		&P\left(\hat{B}_\tau^{\delta,I,J}(m,n)\right) \lesssim P\left(\hat{B}_\tau^{\delta,I',J}(m/2,n)\right), \\
		&P\left(\hat{B}_\tau^{\delta,I,J}(m,n)\right) \lesssim P\left(\hat{B}_\tau^{\delta,I,J'}(m,2 n)\right),
	\end{align*}
	uniformly in $I', I, J, J'$ and in $m,n$.
\end{prop}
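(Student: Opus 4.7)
The plan is to follow the classical Kesten-style extension strategy, exploiting the key feature that the extensions $\hat{\gamma}^i$ in Definition~\ref{def:well-seperated-almost-arm-extensions} involve only $\omega$-open edges outside $\Lambda_m$ and inside $\Lambda_n$, so that no additional coloring constraint arises when pushing them further outward or inward. I focus on the outward extension $P(\hat{B}_\tau^{\delta,I,J}(m,n)) \lesssim P(\hat{B}_\tau^{\delta,I,J'}(m,2n))$; the inward case is entirely analogous by swapping the roles of $R^{in}$ and $R^{out}$. First I would reveal the edges, the interior cluster colors, and the cluster hulls inside $\Lambda_{n+\delta n}$, together with the parts of the extensions $\hat{\gamma}^i$ and of the dual separators $\gamma_d^i$ that cross $R^{out}_{\delta n}(J_i)$ and $S^{out}_{\delta n}(J_i)$. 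On the event $\hat{B}_\tau^{\delta,I,J}(m,n)$, this exploration witnesses the event and produces $k = |\tau|$ primal-open arms and $k$ dual-open arms reaching $\partial \Lambda_{n+\delta n}$ in the prescribed counterclockwise order, anchored in $\delta$-separated windows near the $J_i$'s.

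In the outer annulus $A := \Lambda_{n+\delta n, 2n+2\delta n}$ intersected with the still-unexplored region, I would then exhibit, with conditional probability bounded below by a constant $\varepsilon = \varepsilon(\delta,k)>0$, the following configuration: for each $1 \le i \le k$, an $\omega$-open path from the arrival point of $\hat{\gamma}^i$ to some point within $2\delta n$ of $J'_i$, extended through $R^{out}_{2\delta n}(J'_i)$ to $\partial \Lambda_{2n+2\delta n}$; and a dual-open simple path separating extension $i$ from extension $i+1$, with a long-direction dual crossing of $S^{out}_{2\delta n}(J'_i)$. Each of these is realized as a boundary-to-boundary primal or dual crossing in one of $2k$ disjoint Jordan sub-domains of $A$ whose extremal distances depend only on $\delta$ and $k$. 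The strong RSW estimates of Theorem~\ref{thm:strongRSW}, which are uniform in boundary conditions, combined with the FKG inequality for concatenating these crossings, yield the lower bound $\varepsilon$. Gluing the newly constructed open and dual pieces with the already-explored $\hat{\gamma}^i$ and $\gamma_d^i$ realizes $\hat{B}_\tau^{\delta,I,J'}(m,2n)$, and integrating over the conditioning gives the claimed inequality.

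The main obstacle is handling the conditioning cleanly. Although we are working with the two-layer fuzzy Potts measure $P$, the point is that the added pieces are purely FK objects (open and dual-open paths) and do not require any colored vertex in $A$: the cluster hulls created in $A$ are allowed to touch the boundary of $\Lambda_{2n+2\delta n}$ since the coloring of boundary-touching clusters does not enter the almost-arm definition. Consequently the conditional probability of the gluing configuration is controlled by FK percolation estimates alone, uniformly in the (potentially highly unfavorable) induced FK boundary partition on $\partial \Lambda_{n+\delta n}$, which is precisely why the version of RSW valid for arbitrary boundary conditions from \cite{duminil2021planar} is essential here rather than the weaker estimates of \cite{duminil2017continuity}. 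A secondary technical point, handled as in the footnote of Proposition~\ref{prop:two-arm-fuzzy-potts}, is to set up the $2k$ Jordan sub-domains of $A$ as approximate discrete domains so that the primal and dual crossings produced really extend to $\partial \Lambda_{2n+2\delta n}$ through the prescribed rectangles near $J'$.
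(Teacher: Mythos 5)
Your argument is correct and matches the paper's proof in all essentials: condition on the explored FK configuration and interior cluster colors, then use the strong RSW estimates of Theorem~\ref{thm:strongRSW} (uniform in boundary conditions) to place $2|\tau|$ primal/dual crossings in tubes of bounded extremal distance that extend the $\hat\gamma^i$ and $\gamma_d^i$ onward to the prescribed windows near $J'$, the key point being (as you correctly emphasize) that the new pieces are purely FK open/dual paths and carry no extra coloring constraint since boundary-touching clusters need not be colored for the almost-arm event. The only cosmetic difference is that you reveal the full configuration in $\Lambda_{n+\delta n}$, whereas the paper conditions on $\omega$ in $\Lambda_{m-\delta m,n}$ together with only the leftmost primal/dual crossings of each $R^{out}_{\delta n}(J_i)$ and $S^{out}_{\delta n}(J_i)$; since your unexplored region $\Lambda_{n+\delta n,2n+2\delta n}$ is still macroscopic and RSW holds for arbitrary boundary partitions, this changes nothing in the argument.
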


\begin{proof}
	The two bounds can be proven in the same way and we only present the argument for the second bound here.
	The idea is to use the extensions of the almost-arms in $\Lambda_{n, n + \delta n}$ to further extend them to the boundary segments of $\partial \Lambda_{2n}$ prescribed by $J'^{(2n)}$. To this end, we assume that the event $ \hat{B}_\tau^{\delta,I,J}(m,n)$ occurs and we condition on the percolation configuration $\omega$ in $\Lambda_{m-\delta m,n}$ as well as the colors of the interior clusters. For each $1 \le i \le \abs{\tau}$, we now want to explore a suitable extension of $\gamma^i$. We refer to the right of Figure \ref{fig:separation-extension-arm} for an illustration. For simplicity, assume that $J_i$ is a point on the right side of $[-n,n]^2$. In the rectangle $R_i :=  R^{out}_{\delta n}(J_i)$, we explore the percolation configuration $\omega$ to find the leftmost crossing $\lambda$ from bottom to top. In this way, the edges to the right of $\lambda$ remain unexplored, and by the definition of $\hat{B}_\tau^{\delta,I,J}(m,n)$, the extension $\hat{\gamma}^i$ must intersect $\lambda$.  Analogously, we can explore the leftmost dual crossing $\lambda_d$ in $S_i := S_{\delta n}^{out}(J_i)$ which by definition intersects the dual open path $\gamma_d^i$. It becomes clear from the right of Figure \ref{fig:separation-extension-arm} that  $2\abs{\tau}$ suitable tubes of thickness $\delta n$ can be placed in $\Lambda_{n,2n+2\delta n}$ such that for each $1 \le i \le \abs{{\tau}}$, the almost-arm $\hat{\gamma}^i$ and the dual open path $\gamma_d^i$ can be extended inside the tubes to the boundary segment nearby $J'_i$ in order to guarantee the occurrence of the event $\hat{B}_\tau^{\delta,I,J'}(m,2n)$. Under the previous conditioning, the probability of the open resp.\ dual open crossings inside the tubes can be bounded from below using Theorem \ref{thm:strongRSW}. This shows for some $\epsilon'=\epsilon'(\delta,\tau)>0$,
	\begin{equation*}
		P\left(\left.\hat{B}_\tau^{\delta,I,J'}(m,2n) \right\rvert \hat{B}_\tau^{\delta,I,J}(m,n)\right) \ge \epsilon' ,
	\end{equation*}
	and thereby concludes the proof.
\end{proof}

The bounds corresponding to the other direction in Proposition \ref{prop:extendability} can also be proven. In fact, much more is true: $\delta$  can be chosen so that the probabilities of  arm events and well-separated almost-arm events with extensions are comparable. This is the most technical result in this section and we postpone the proof to Section \ref{sec:arm-separation}.

\begin{thm}[Arm separation] \label{thm:arm-separation}
	Let $\tau$ be an alternating color sequence. There exists $\delta_0 >0$ such that for all $\abs{\tau} \le m \le n$, for all $\delta \in (0,\delta_0)$
	 and for all $I^{(m)}$, $J^{(n)}$ $\delta$-separated,
	\begin{equation*}
		P\left(\hat{B}_\tau^{\delta,I,J}(m,n)\right) \asymp  \mu\left(A_\tau(m,n)\right) \asymp P\left(B_\tau(m,n)\right),
	\end{equation*}
	uniformly in $I, J$ and in $m,n$.
\end{thm}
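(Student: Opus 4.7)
The trivial inclusion $\hat B_\tau^{\delta,I,J}(m,n) \subseteq B_\tau(m,n)$ yields one direction for free, so my plan is to establish the chain $\mu(A_\tau) \asymp P(\hat B_\tau^{\delta,I,J}) \asymp P(B_\tau)$ via two further blocks: a Kesten-style separation lemma linking $B$ and $\hat B$, and a coloring argument linking $\hat B$ and $A$.

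For the separation $P(B_\tau) \lesssim P(\hat B_\tau^{\delta,I,J})$, I would start from an occurrence of $B_\tau(m,n)$ and implement the classical Kesten--Nolin dyadic renormalization, adapted to almost-arms. Near $\partial\Lambda_m$ and $\partial\Lambda_n$, one progressively conditions on the extremal almost-arms and uses the strong RSW of Theorem~\ref{thm:strongRSW} to (i) push endpoints into $\delta$-neighborhoods of prescribed $I_i, J_i$, (ii) build open extensions $\hat\gamma^i$ crossing $R^{in}_{\delta m}(I_i)$ and $R^{out}_{\delta n}(J_i)$ (essentially the content of Proposition~\ref{prop:extendability}), and (iii) construct the dual separators $\gamma^i_d$ in $S^{in}_{\delta m}(I_i)$ and $S^{out}_{\delta n}(J_i)$. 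Configurations where several almost-arms bunch up near a common point or corner are ruled out by the universal polynomial bounds of Corollary~\ref{cor:six-arms-FK-with-bc}, Proposition~\ref{prop:two-arm-fuzzy-potts}, and Corollary~\ref{cor:three-arm-fuzzy-potts}, enabling the renormalization with a uniform constant loss at each scale.

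For the coloring equivalence $\mu(A_\tau) \asymp P(\hat B_\tau^{\delta,I,J})$ I argue in both directions. In one direction, on $\hat B_\tau^{\delta,I,J}$ each extension $\hat\gamma^i$ has initial and final $\omega$-open segments lying in two FK clusters $C^i_{\mathrm{in}}, C^i_{\mathrm{out}}$ which, by the disjoint-cluster-hulls condition built into Definitions~\ref{def:almost-arm-plane} and~\ref{def:well-seperated-almost-arm-extensions}, are pairwise distinct across $i$ and disjoint from the interior colored clusters carried by the middle of each $\hat\gamma^i$; conditioning on $\omega$ and on the colors of every other cluster, the probability that each of these $2k$ boundary clusters receives the prescribed color $\tau_i$ is bounded below by $(r \wedge (1-r))^{2k}$, and on that event the restriction of $\hat\gamma^i$ to $\Lambda_{m,n}$ is a $\tau_i$-monochromatic strong (or weak) path realising $A_\tau$. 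In the reverse direction, I run the same Kesten-style separation as in the first block but with colored arms as inputs: the FK structure underlying each $\gamma^i$ together with Theorem~\ref{thm:strongRSW} produces $\omega$-open extensions through fresh collar rectangles, and the maximal $\omega$-open segments at the endpoints of $\gamma^i$ furnish the initial and final segments of the resulting almost-arm; the interior-cluster condition is then automatic because the extensions are built through fresh regions, and disjoint cluster hulls follow from the separation of the arms combined with Corollary~\ref{cor:six-arms-FK-with-bc}, which suppresses overlapping FK clusters among same-colored arms.

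The hard part will be the separation lemma in the first block. The fuzzy Potts model has no domain Markov property, so the usual conditional-independence arguments for Bernoulli or FK percolation must be replaced by the explicit convention ``reveal the FK configuration and the colors of interior clusters, keep the colors of boundary-touching clusters free'', as laid out at the start of Section~\ref{sec:discrete-fuzzy}. The almost-arm formalism is engineered precisely so that $B_\tau$ respects this convention; once the setup is in place, the dyadic surgery uses only the FK strong RSW of Theorem~\ref{thm:strongRSW} together with the universal exponent bounds above, and therefore mirrors the FK case closely, modulo careful bookkeeping of the interior-cluster constraint.
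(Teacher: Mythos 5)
Your overall architecture is sound and mirrors the paper: the trivial inclusion $\hat B_\tau^{\delta,I,J}\subseteq B_\tau$, a Kesten-style renormalization for $P(B_\tau)\lesssim P(\hat B_\tau^{\delta,I,J})$, and a coloring argument to bring $A_\tau$ into the chain. However, there is a real gap in the coloring step that deduces $P(\hat B_\tau^{\delta,I,J})\lesssim \mu(A_\tau)$.

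You claim that the $2k$ boundary-touching FK clusters at the ends of the extensions $\hat\gamma^i$ are pairwise distinct ``by the disjoint-cluster-hulls condition.'' But the cluster hulls $H_{\gamma^i,\Lambda_{m,n}}$ (and the separating dual paths $\gamma_d^i$) are defined entirely inside the annulus $\Lambda_{m,n}$ (or at most the collar $\Lambda_{m-\delta m,n+\delta n}$), whereas the coloring in $P=P_{\Z^2,q,r}$ is assigned to full $\Z^2$-clusters. Two boundary-touching pieces that are disjoint inside the annulus can perfectly well be reconnected by open edges in $\Lambda_{m-\delta m}$ or in $\Z^2\setminus\Lambda_{n+\delta n}$, merging into one $\Z^2$-cluster. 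In that case, when $\tau$ is alternating, the adjacent arms demand opposite colors for the same cluster, and the conditional probability of the target coloring is $0$, not $(r\wedge(1-r))^{2k}$. To repair this you need an extra construction that topologically isolates the boundary-touching clusters before coloring: after conditioning on $\hat B_\tau^{\delta,I,J}(m,n)$, glue the dual paths $\gamma_d^i$ (via Theorem~\ref{thm:strongRSW} in the collar rectangles) into a dual-open \emph{circuit} in $\Lambda_{m-2\delta m,m-\delta m}$ and another in $\Lambda_{n+\delta n,n+2\delta n}$; these circuits guarantee the $2k$ boundary-touching clusters are genuinely distinct $\Z^2$-clusters, and only then is the $(r\wedge(1-r))^{2k}$ bound valid. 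Your renormalization sketch for $P(B_\tau)\lesssim P(\hat B_\tau^{\delta,I,J})$ is at the right level of generality, though to actually close it one must isolate the intermediate almost-arm events (endpoints landing on a $\delta$-separated $J^{(n)}$, and arms avoiding each other's endpoints) so that a failure at a given scale costs a three-almost-arm event whose probability vanishes with $\delta$ uniformly in $n$, enabling the dyadic iteration.
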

\begin{remark}\label{rk:arm-separation-non-alternating}
	The theorem is stated under the assumption that $\tau$ is alternating but this is only to ensure that on the event $A_\tau(m,n)$, all arms have disjoint cluster hulls. More precisely
	\begin{equation*}
		P\left(\hat{B}_\tau^{\delta,I,J}(m,n)\right) \asymp P\left(B_\tau(m,n)\right)
	\end{equation*}
    for any color sequence $\tau$ as the cluster hulls of almost-arms are disjoint by definition.
\end{remark}
\begin{remark}
	As for every other result in this section, arm separation also holds true for the arm event version $A^s_\tau(m,n)$ and the corresponding versions of the almost-arm events.
\end{remark}

By combining the previous two results, we obtain the following direct corollary.

\begin{cor}[Extendability for $A_\tau$ and $B_\tau$] \label{cor:extendability}
	Let $\tau$ be an alternating color sequence. For all $\abs{\tau} \le m \le n$,
	\begin{align*}
		\mu\left(A_\tau(m,n)\right) \asymp \mu\left(A_\tau(m/2,2n)\right) \quad \text{and}
		\quad P\left(B_\tau(m,n)\right) \asymp P\left(B_\tau(m/2,2n)\right),
	\end{align*}
	uniformly in $m,n$.
\end{cor}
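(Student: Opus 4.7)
The plan is to prove the $A_\tau$ statement first and then deduce the $B_\tau$ statement. For the $A_\tau$ statement, the upper bound $\mu(A_\tau(m/2, 2n)) \le \mu(A_\tau(m, n))$ is immediate from the trivial inclusion $A_\tau(m/2, 2n) \subset A_\tau(m, n)$: any family of $|\tau|$ disjoint, counterclockwise-ordered, color-$\tau$ (possibly weak) arms from $\partial \Lambda_{m/2}$ to $\partial \Lambda_{2n}$ in $\Lambda_{m/2, 2n}$ restricts, between its last entry into $\Lambda_n$ and its first entry into $\Lambda_m$, to such a family in $\Lambda_{m, n}$.

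For the matching lower bound $\mu(A_\tau(m, n)) \lesssim \mu(A_\tau(m/2, 2n))$, fix any $\delta \in (0, \delta_0)$ with $\delta_0$ as in Theorem~\ref{thm:arm-separation}, and choose arbitrary $\delta$-separated tuples $I^{(m)}, I'^{(m/2)}, J^{(n)}, J'^{(2n)}$. Since $\tau$ is alternating, arm separation (Theorem~\ref{thm:arm-separation}) yields
\begin{align*}
	\mu(A_\tau(m, n)) \asymp P\bigl(\hat{B}_\tau^{\delta, I, J}(m, n)\bigr), \qquad
	\mu(A_\tau(m/2, 2n)) \asymp P\bigl(\hat{B}_\tau^{\delta, I', J'}(m/2, 2n)\bigr).
\end{align*}
Applying Proposition~\ref{prop:extendability} once in the inner direction and once in the outer direction gives
\begin{align*}
	P\bigl(\hat{B}_\tau^{\delta, I, J}(m, n)\bigr)
	\lesssim P\bigl(\hat{B}_\tau^{\delta, I', J}(m/2, n)\bigr)
	\lesssim P\bigl(\hat{B}_\tau^{\delta, I', J'}(m/2, 2n)\bigr),
\end{align*}
and chaining the three displays above yields $\mu(A_\tau(m, n)) \lesssim \mu(A_\tau(m/2, 2n))$.

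The almost-arm statement $P(B_\tau(m, n)) \asymp P(B_\tau(m/2, 2n))$ is then immediate: arm separation (Theorem~\ref{thm:arm-separation}) also gives $P(B_\tau(\ell, N)) \asymp \mu(A_\tau(\ell, N))$ for all $|\tau| \le \ell \le N$, so the equivalence at the level of $B_\tau$ follows from the one at the level of $\mu(A_\tau(\cdot, \cdot))$ just established.

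There is no serious obstacle here since all of the work has already been performed in Theorem~\ref{thm:arm-separation} (which is deferred to Section~\ref{sec:arm-separation}) and Proposition~\ref{prop:extendability}; the only thing to verify is that the constants in Proposition~\ref{prop:extendability} do not depend on the choice of $\delta$-separated tuples, which is indeed how the proposition is stated. The same argument applies verbatim to the arm-event version $A_\tau^s$ and its associated almost-arm event, since both arm separation and extendability hold in that setting as well.
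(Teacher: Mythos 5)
Your proof is correct and matches the route the paper intends when it calls this a ``direct corollary'': the nontrivial direction follows by converting $\mu(A_\tau(m,n))$ to $P(\hat{B}_\tau^{\delta,I,J}(m,n))$ via Theorem~\ref{thm:arm-separation}, applying Proposition~\ref{prop:extendability} once inward and once outward, and converting back, while the easy direction is the trivial inclusion $A_\tau(m/2,2n)\subset A_\tau(m,n)$; the $B_\tau$ statement then follows by one more use of arm separation. (The only slight blemish is the phrasing ``between its last entry into $\Lambda_n$ and its first entry into $\Lambda_m$'' — the intended sub-arc is the one from the last exit of $\Lambda_{m-1}$ to the first subsequent hit of $\partial\Lambda_n$ — but the inclusion itself is standard and correct.)
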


We are now ready to prove quasi-multiplicativity.

\begin{proof}[Proof of Theorem \ref{thm:quasi-multiplicativity}]
Fix $\delta > 0$ for which Theorem \ref{thm:arm-separation} applies.
First, we prove the upper bound
\begin{equation*}
	\mu(A_\tau(\ell,n)) \lesssim \mu(A_\tau(\ell,m)) \cdot  \mu(A_\tau(m,n)).
\end{equation*}
In the case of Bernoulli percolation, it follows trivially from independence. Here, we apply the mixing property for FK percolation to almost-arm events, which is possible since conditioning on an almost-arm event provides no information about the colors on the boundary but only about the FK boundary condition. More precisely,
\begin{align*}
	\mu\left(A_\tau(\ell,n)\right) &\le P\left(B_\tau(\ell,m/2) \cap B_\tau(m,n)\right) \asymp  P\left(B_\tau(\ell,m/2)  \right)\cdot  P\left(B_\tau(m,n)\right) \\
	& \asymp \mu\left(A_\tau(\ell,m/2)\right) \cdot \mu\left(A_\tau(m,n)\right)  \asymp \mu\left(A_\tau(\ell,m)\right) \cdot \mu\left(A_\tau(m,n)\right),
\end{align*}
where we have used the mixing property (Corollary \ref{cor:mixing}) in the second comparison as well as arm separation and extendability in the third and fourth comparison.

To prove the lower bound, we fix any  $I^{(\ell)}$, $J^{(m/2)}$, $I'^{(m)}$, $J'^{(n)}$ $\delta$-separated. Then,
\begin{align*}
	\mu\left(A_\tau(\ell,m)\right) \cdot \mu\left(A_\tau(m,n)\right) &\asymp
	P\left(\hat{B}_\tau^{\delta,I,J}(\ell,m/2)\right) \cdot 	P\left(\hat{B}_\tau^{\delta,I',J'}(m,n)\right) \\
	&\asymp P\left(\hat{B}_\tau^{\delta,I,J}(\ell,m/2) \cap  \hat{B}_\tau^{\delta,I',J'}(m,n)\right),
\end{align*}
where we have applied arm separation and extendability in the first comparison and the mixing property (Corollary \ref{cor:mixing}) in the second comparison. Now, one can use the similar gluing constructions as in the proof of Proposition \ref{prop:extendability} to connect the almost-arms (resp.\ dual open paths) in the inner annulus $\Lambda_{\ell,m/2}$ with the corresponding almost-arms (resp.\ dual open paths) in the outer annulus $\Lambda_{m,n}$. By coloring in red resp.\ blue the $\abs{\tau}$ interior clusters intersecting the intermediate annulus $\Lambda_{m/2,m}$ that have been created to connect the almost-arms, one can guarantee the occurrence of the event $\hat{B}_\tau^{\delta,I,J'}(\ell,n)$. We note that the coloring step has probability at least $(r(1-r))^{\abs{\tau}}$. Hence, we have just argued that
\begin{equation*}
	P\left(\hat{B}_\tau^{\delta,I,J}(\ell,m/2) \cap  \hat{B}_\tau^{\delta,I',J'}(m,n)\right) \lesssim P\left(\hat{B}_\tau^{\delta,I,J}(\ell,n)\right).
\end{equation*}
By plugging this bound into the previous comparison, we conclude  that
\begin{equation*}
	\mu\left(A_\tau(\ell,m)\right) \cdot \mu\left(A_\tau(m,n)\right) \lesssim  P\left(\hat{B}_\tau^{\delta,I,J}(\ell,n)\right) \asymp \mu\left(A_\tau(\ell,n)\right),
\end{equation*}
where we have once more used arm separation in the last comparison.
\end{proof}

\subsection{Arm separation}
\label{sec:arm-separation}

In this section, we present the proof of Theorem \ref{thm:arm-separation}. The
very rough idea of the proof is to apply the a priori bound on the three-arm event
appearing in Corollary \ref{cor:three-arm-fuzzy-potts} at each scale to control the
probability of two arms landing close to each other. By iterating over the scales, it is possible to deduce that almost-arm events with extensions are comparable up to constants with arm events.

\begin{proof}[Proof of Theorem \ref{thm:arm-separation}]
First, we note that the occurrence of the arm event $A_\tau(m,n)$ implies the occurrence of the almost-arm event $B_\tau(m,n)$ since the color sequence $\tau$ is alternating. Second, if the event $\hat{B}_\tau^{\delta,I,J}(m,n)$ occurs, then one can use a similar gluing construction as in the proof of Proposition \ref{prop:extendability} to connect the dual open paths $\gamma_d^i$, $1 \le i \le \abs{\tau}$, to a dual open circuit in $\Lambda_{m-2\delta m, m-\delta m}$ and to a dual open circuit in $\Lambda_{n+ \delta n, n + 2\delta n}$. Conditional on the event $\hat{B}_\tau^{\delta,I,J}(m,n)$, this construction has probability bounded below by some $\epsilon'=\epsilon'(\delta,\tau)>0$ according to Theorem \ref{thm:strongRSW} and it ensures that the boundary-touching clusters (in $\Lambda_{m,n}$) of different almost-arms $\gamma^i, 1 \le i \le \abs{\tau}$,  are disjoint.
By coloring the boundary-touching clusters of $\gamma^i$ with color $\tau_i$ for each $1 \le i \le \abs{\tau}$, one can guarantee the occurrence of the arm event $A_\tau(m,n)$. This coloring step has probability at least $(r(1-r))^{2\abs{\tau}}$. In summary, we have obtained
	\begin{equation*}
		P\left(\hat{B}_\tau^{\delta,I,J}(m,n)\right) \lesssim \mu\left(A_\tau(m,n)\right) \le P\left(B_\tau(m,n)\right).
	\end{equation*}

	It remains to prove that there exists $\delta_0 >0$ such that for every $\delta \in (0,\delta_0)$ and every  $I^{(m)}$, $J^{(n)}$ $\delta$-separated,
	\begin{equation}\label{eq:arm-separation-main-step}
		P\left(B_\tau(m,n)\right) \lesssim P\left(\hat{B}_\tau^{\delta,I,J}(m,n)\right).
	\end{equation}
	Since the separation and extension of the almost-arms at the inner boundary $\partial \Lambda_m$ resp.\ the outer boundary $\partial \Lambda_n$ follows from the same arguments, we focus on the latter. More precisely, we want to show that for every $\delta$-separated $J^{(n)}$,
	\begin{equation}\label{eq:arm-separation-reduced}
		P\left(B_\tau(m,n)\right) \lesssim P\left(\hat{B}_\tau^{\delta,J}(m,n)\right),
	\end{equation}
	where we write $\hat{B}_\tau^{\delta,J}(m,n)$ for the well-separated almost-arm event with extensions, which only requires separation and extensions at the outer boundary $\partial \Lambda_n$.

	Throughout the rest of the proof, we assume that $n$ is of the form $n = 2^{k} m$ for some $k\ge 1$. Once we have established the result for $n$ of this form, the general case follows from the gluing constructions as in the proof of Proposition \ref{prop:extendability}.
	From now onward, it will be important to keep track which constants depend on $\delta$. Therefore, we write $\epsilon_i = \epsilon_i(\delta,\tau), i\ge 1$ for constants depending on $\delta$ and $c_i=c_i(\tau),i\ge 1$ for constants not depending on $\delta$.

	We begin with the following generalization of Proposition \ref{prop:extendability} (see Fig.\ 9 and the accompanying text in \cite{Kesten1987} for the first appearance of this type of inequality): There exist $\epsilon_1>0$ and $c_1>0$  such that for every $1 \le i \le\log(n/m)$ and for any $\delta$-separated $J,J'$,
	\begin{equation}\label{eq:arm-separation-apriori-bound}
		P\left(\hat{B}_\tau^{\delta,J}(m,n)\right) \ge \epsilon_1  \cdot (c_1)^{i} \cdot   P\left(\hat{B}_\tau^{\delta,J'}(m,n/2^i)\right).
	\end{equation}
	This lower bound can be obtained by iterative extension of the almost-arms as in the proof of Proposition \ref{prop:extendability}. Importantly, the dependence on $\delta$ is only via the constant $\epsilon_1$ because only the first and the last extension require tubes of size $\delta$ and for the intermediate extensions, it is possible to work with tubes of some fixed size $\delta'$ (only depending on $\tau$). We note that equation \eqref{eq:arm-separation-apriori-bound} implies equation \eqref{eq:arm-separation-reduced} when the ratio $n/m$ is bounded by a constant.

	Let us introduce two intermediate types of almost-arm events.
	First, $B^{\delta,J}_\tau(m,n)$ denotes the subset of $B_\tau (m,n)$ such that for every $1\le i \le \abs{\tau}$, the almost-arm $\gamma^i$ can be chosen to end at some  $y_i \in \partial \Lambda_n$ with $d_\infty(y_i,J_i) \le \delta n$ and there is a dual open simple path $\gamma_d^i$ in $\Lambda_{m,n}$ from $\partial \Lambda_{m}$ to $\partial \Lambda_{n}$ that ends next to $y_i$ (in counterclockwise order). We refer to the left part of Figure \ref{fig:almost-arm-events} for an illustration.
	Second, for an almost-arm $\gamma$ in $\Lambda_{m,n}$, let us denote by $\tilde{H}_\gamma$ the cluster hull of its interior clusters. The event $\tilde{B}^{\delta,J}_\tau(m,n)$ is the subset of $B^{\delta,J}_\tau(m,n)$ such that for any $1 \le i \neq j \le \abs{\tau}$,
	\begin{equation*}
		(\gamma^i \cup \tilde{H}_{\gamma^i}) \cap (J_j + \Lambda_{4\delta n}) = \emptyset,
	\end{equation*}
	where $\gamma^i$ is the almost-arm ending at $y_i$. In words, the almost-arms (and their interior clusters) stay away from the endpoints of the other almost-arms (see the center of Figure \ref{fig:almost-arm-events} for an illustration). While the event $\hat{B}^{\delta,J}_\tau(m,n)$ was chosen so that the arms can be extended, the event $\tilde{B}^{\delta,J}_\tau(m,n)$ is chosen in such a way that an analogue of the three-arm event in the halfplane occurs whenever the event $B_\tau(m,n)$, but none of the events $\tilde{B}^{\delta,J}_\tau(m,n), J^{(n)} \subset \mathcal{J}^{(n)}$ holds, where $\mathcal{J}^{(n)} \subset \R^2$  denotes a sequence of counterclockwise-ordered points on the boundary of $[-n,n]^2$ that are at distance $2\delta n$ from each other.
	This will be explained in more detail in Steps 2.2 and 2.3. To begin with, let
	us show how to construct the well-separated almost-arm event with extensions $\hat{B}^{\delta,J}_\tau(m,n)$ from the event $\tilde{B}^{\delta,J}_\tau(m,n)$.

	\begin{figure}
		\centering
	    \def\svgwidth{0.95\columnwidth}
        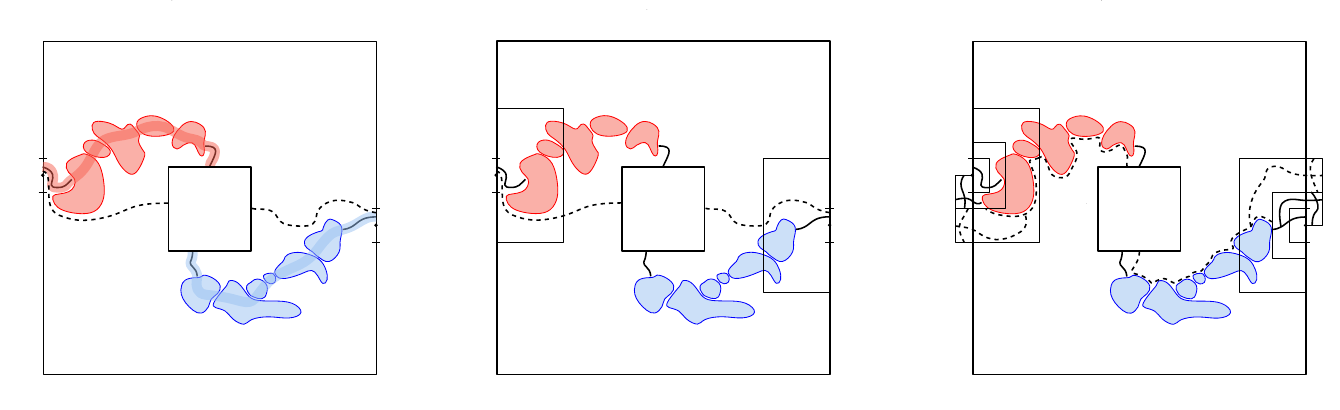
        \caption{\emph{Left.} The event $B^{\delta,J}_{RB}(m,n)$. \emph{Center.} The event $\tilde{B}^{\delta,J}_{RB}(m,n)$. \emph{Right.} This shows the construction of the event $\hat{B}^{\delta,J}_\tau(m,n)$.}
        \label{fig:almost-arm-events}
	\end{figure}

	\emph{Step 1: $P(\hat{B}^{\delta,J}_\tau(m,n)) \ge \epsilon_2 \cdot P(\tilde{B}^{\delta,J}_\tau(m,n))$ for some $\epsilon_2>0$.}

	The condition on the event $\tilde{B}^{\delta,J}_\tau(m,n)$ ensures that every almost-arm $\gamma^i$, $1\le i \le \abs{\tau}$, can be chosen so that it is measurable with respect to the percolation configuration $\omega$ restricted to $\Lambda_{m,n} \setminus \cup_{j \neq i} (J_j + \Lambda_{4\delta n})$ and the colors of the interior clusters of $\omega$. We proceed as follows.

	Assume that the event $\tilde{B}^{\delta,J}_\tau(m,n)$ occurs and condition on the percolation configuration $\omega$ and the colors of the interior clusters in $ \Lambda_{m,n} \setminus \cup_i (J_i + \Lambda_{4\delta n})$. We now construct the extensions for each $i$ separately and refer to the right of Figure \ref{fig:almost-arm-events} for an illustration. Let $1 \le i \le \abs{\tau}$ and assume for simplicity that $J_i$ belongs to the right side of the boundary of $[-n,n]^2$. We explore the lowest almost-arm $\gamma^i$ of color $\sigma_i$ from $\partial \Lambda_m$ to the boundary segment of length $2\delta n$ centered at $J_i$, for which $\gamma^i \cup \tilde{H}_{\gamma^i}$ is contained in $\Lambda_{m,n} \setminus \cup_{j \neq i} (J_j + \Lambda_{4\delta n})$. This can be done as follows: First, we explore the lowest almost-arm (i.e.\ the first in clockwise order)  of color $\sigma_i$ in $(J_i + \Lambda_{4\delta n}) \cap \Lambda_{m,n}$ from the boundary segment of length $2\delta n$ centered at $J_i$ to the boundary of $J_i + \Lambda_{4\delta n}$. After this exploration, we might have fully explored some clusters, which touch the boundary of $J_i + \Lambda_{4\delta n}$, and for these interior clusters of $\Lambda_{m,n} \setminus \cup_{j \neq i} (J_j + \Lambda_{4\delta n})$, we then explore their colors. Now, we have either found the desired almost-arm $\gamma^i$ from $\partial \Lambda_m$ to $\partial \Lambda_n$ or we repeat the procedure by exploring the next almost-arm (in clockwise order)  of color $\sigma_i$ in $(J_i + \Lambda_{4\delta n}) \cap \Lambda_{m,n}$.
	
	Finally, we explore the lowest dual open simple path from $\partial \Lambda_m$ to $\partial(J_i + \Lambda_{2\delta n})$ that is above $\gamma^i$. As shown on the right in Figure \ref{fig:almost-arm-events}, it is now easy to construct an $\omega$-open path inside $J_i + \Lambda_{\delta n, 2\delta n}$ to $\partial \Lambda_{n+\delta n}$ together with an $\omega$-open path inside $ R^{out}_{\delta n}(J_i)$ from top to bottom. In particular, this creates an extension of $\gamma^i$ and a crossing of $R^{out}_{\delta n}(J_i)$ in the long direction. In the same way, we
	create a dual open path inside $J_i + \Lambda_{2\delta n, 4\delta n}$ to $\partial \Lambda_{n+\delta n}$ and a crossing of $S_{\delta n}^{out}(J_i)$ in the long direction. Under the conditioning, the probability of these constructions are bounded below by $\epsilon_2>0$ thanks to the crossing estimates of Theorem \ref{thm:strongRSW} and the FKG inequality. This completes Step 1.

	In the following Steps 2.1, 2.2 and 2.3, we argue that when the event $B_\tau(m,n) \setminus \tilde{B}_\tau^{\delta,J}(m,n)$ occurs, then an event of small probability (going to $0$ as $\delta \to 0$) must occur in the outermost dyadic annulus $\Lambda_{n/2,n}$.

	\emph{Step 2.1: Almost-arms stay away from corners.}

	We denote by $E_1^\delta(n)$ the event that there exists an almost-arm in $\Lambda_{3n/4,n}$ from the inner to the outer boundary that ends at distance $< \delta^{\beta/4}n$ from a corner of $\Lambda_n$.
    
    Let us argue that the probability of $E_1^\delta(n)$ goes to $0$ as $\delta \to 0$.
	For a corner $z$ of $\Lambda_n$, we consider the quarter-annulus $(z + \Lambda_{\delta^{\beta/4}n,n/4}) \cap \Lambda_{n}$. If there is an $\omega$-open path $\lambda$ which crosses the quarter-annulus from one boundary segment of $\partial \Lambda_n$ to the other and if the cluster hull $H_{\lambda,\Lambda_n}$ is entirely contained in the quarter-annulus (which is the case if $\lambda$ lies between two dual-open paths), then there cannot exist an almost-arm from $\partial \Lambda_{3n/4}$ to $\partial \Lambda_n$ that ends at distance $< \delta^{\beta/4}n$ from $z$. Indeed, this would mean that the almost-arm intersects $\lambda$ but does not connect to the boundary using $H_{\lambda,\Lambda_{n}}$, contradicting the definition of an almost-arm. By the crossing estimates of Theorem \ref{thm:strongRSW}, the probability of an $\omega$-open (resp.\ dual-open) circuit in a dyadic annulus $\Lambda_{n/2^{i+1}, n/2^{i}}$ is uniformly bounded from below. Hence, $P(E_1^\delta(n))\to 0$ follows since the number of dyadic annuli in $\Lambda_{\delta^{\beta/4}n,n/4}$ goes to $\infty$ as $\delta \to 0$.

	\emph{Step 2.2: Almost-arms end at some $\delta$-separated $J^{(n)}$.}

	To partition $\partial \Lambda_n$ into boundary segments of length $2\delta n$, we fix a sequence of counterclockwise-ordered points $\mathcal{J}^{(n)} \subset \R^2$ on the boundary of $[-n,n]^2$ that are at distance $2\delta n$ from each other. In addition, we denote by $X \subset \R^2$ a fixed sequence of counterclockwise-ordered points  on the boundary of $[-n,n]^2$ that are at distance $2\delta^{\beta/2} n$ from each other and from which we have removed all points that are at distance $< \delta^{\beta/4}n$ from a corner.
	We define
	\begin{equation*}
		E_2^\delta(n):= B_\tau(m,n) \setminus \left( \bigcup_{J^{(n)} \subset  \mathcal{J}^{(n)}\, \delta\text{-sep.}} B_\tau^{\delta,J}(m,n) \cup E_1^\delta(n)\right).
	\end{equation*}
	We now argue that $P(E_2^\delta(n)) \to 0$ as $\delta \to 0$. The idea is as follows: Either the almost-arms can be chosen to go to well-separated boundary segments,  there exists an almost-arm that ends close to a corner, or there occurs a variant of an alternating three-almost-arm event somewhere at the boundary $\partial \Lambda_n$.  We refer to the left of Figure \ref{fig:almost-arm-steps} for an illustration. We also point out that we always choose $\gamma^i$ to end at the furthest point of its boundary-touching cluster (in counterclockwise-order), thereby guaranteeing  the existence of a dual open path $\gamma^i_d$ ending next to the endpoint of $\gamma^i$.

	Let us explain why the occurrence of the event $E_2^\delta(n)$ implies that for some $x \in X$, there occurs a variant of an alternating three-almost-arm event in the semi-annulus $(x + \Lambda_{3 \delta^{\beta/2} n,n/4}) \cap \Lambda_n$: Assume that the first $i-1$ almost-arms were chosen to end at well-separated boundary segments around the points $J_1,\ldots,J_{i-1} \in \mathcal{J}^{(n)}$. If the almost-arm $\gamma^i$ \emph{must} be chosen to end at the boundary segment of some $J_i$ that is at distance less than $2\delta^{\beta/2}n$ from the previously chosen points, we denote the point in $X$ that is closest to $J_i$ by $x_i$ and observe that for some $n'=2^\ell 3\delta^{\beta/2} n$ with $3\delta^{\beta/2} n \le n' < n/4$, there occurs an alternating three-almost-arm event $B^{++}_{\tau'}( 2n',n/4)$ in the semi-annulus $(J_i + \Lambda_{2n', n/4}) \cap \Lambda_{n}$ (with $\tau' \in \{RBR,BRB\}$ depending on $\tau_i$) and an alternating FK three-arm event $A_{010}^{+}(3\delta^{\beta/2} n, n')$ in the semi-annulus $(J_i + \Lambda_{3\delta^{\beta/2} n,n'}) \cap \Lambda_{n}$. Here, $\ell = 0$ corresponds to the case that the boundary-touching cluster of $\gamma^i$ is contained in $(J_i + \Lambda_{3\delta^{\beta/2} n}) \cap \Lambda_{n}$.
	In summary, we obtain
	\begin{align*}
		&P(E^{\delta}_2(n)) \\
		&\le \sum_{x \in X} \sum_{\ell = 0}^{\log(\delta^{-\beta/2}/12)} C \cdot  \underbrace{P\left(A_{010}^{+}(3\delta^{\beta/2}n, 2^\ell 3\delta^{\beta/2} n)\right)}_{\le c_2^{-1} (1/2^{\ell})^{1+\beta_1}} \cdot \underbrace{P\left(B^{++}_{RBR}( 2^{\ell+1} 3\delta^{\beta/2} n,n/4)\right)}_{\le c_2^{-1} (2^{\ell+1}/2^{\log(\delta^{-\beta/2}/12)})^{1+\beta_2}} \\
		&\le c_3^{-1} \cdot \abs{X} \cdot \log(\delta^{-\beta/2}) \cdot \delta^{(\beta/2)(1+\beta)}\\
		& \le c_4^{-1} \cdot \log(\delta^{-\beta/2}) \cdot \delta^{\beta^2/2} \to 0 \quad \text{as} \; \delta \to 0,
	\end{align*}
	where we have used the mixing property of Corollary \ref{cor:mixing-half} in the first inequality, and then the upper bounds from Corollaries \ref{cor:six-arms-FK-with-bc} and \ref{cor:three-arm-fuzzy-potts} and also $\abs{X} = O(\delta^{-\beta/2})$.

	\begin{figure}
		\centering
	    \def\svgwidth{0.9\columnwidth}
        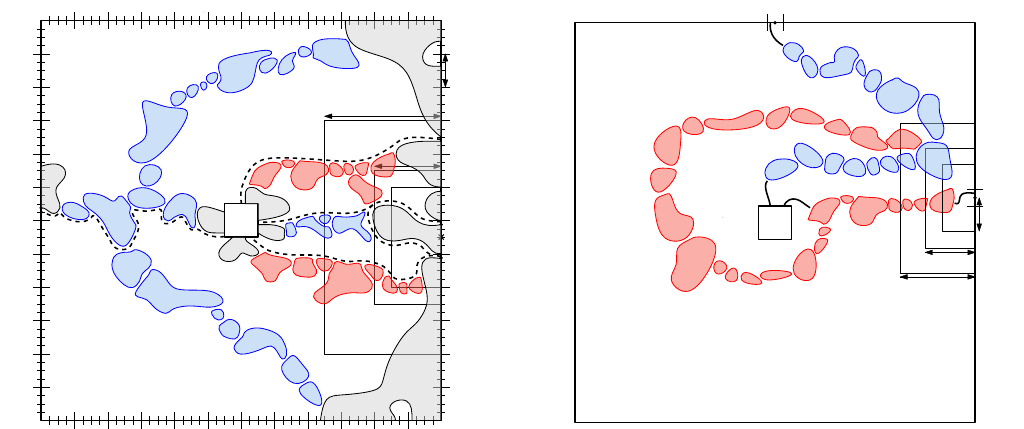
        \caption{\emph{Left.} If the almost-arms cannot be chosen to end at some $\delta$-separated $J^{(n)} \subset \mathcal{J}^{(n)}$, then for some $x \in X$, a variant of an alternating three-almost-arm event occurs in the semi-annulus $(x + \Lambda_{3 \delta^{\beta/2} n,n/4}) \cap \Lambda_n$.  \emph{Right.} If the almost-arms cannot be chosen to stay away from each others endpoint, then for some $J_i \in \mathcal{J}^{(n)}$, a variant of an alternating three-almost-arm event occurs in the semi-annulus $(J_i +  \Lambda_{4\delta n, \delta^{\beta/2}n}) \cap \Lambda_n $.}
        \label{fig:almost-arm-steps}
	\end{figure}

	\emph{Step 2.3: Almost-arms stay away from endpoints of the other almost-arms.}

	For $J^{(n)}=(J_1,\ldots,J_ {\abs{\tau}})$ $\delta$-separated, we denote
	\begin{equation*}
		E_3^{\delta,J}(n) := B_\tau^{\delta,J}(m,n) \setminus \tilde{B}_\tau^{\delta,J}(m,n)  \quad \text{and} \quad E_3^\delta(n):= \bigcup_{J^{(n)} \subset \mathcal{J}^{(n)}\, \delta\text{-sep.}} E^{\delta,J}_3(n).
	\end{equation*}
	Let us now argue that $P( E^{\delta}_3(n) ) \to 0$ as $\delta \to 0$. If the event $E_3^{\delta,J}(n)$ occurs, then there exists $1 \le i \le \abs{\tau}$ such that for any choice of the almost-arm $\gamma^i$,
	\begin{equation*}
	 (\gamma^i \cup \tilde{H}_{\gamma^i}) \cap 	\bigcup_{j \neq i}  (J_j + \Lambda_{4\delta n}) \neq \emptyset.
	\end{equation*}
	As before, this implies a variant of an alternating three-almost-arm event in the semi-annulus $(J_j \cap \Lambda_{4\delta n, \delta^{\beta/2}n})\cap \Lambda_n$ for some $1\le j \le \abs{\tau}$ (see the right of Figure \ref{fig:almost-arm-steps} for an illustration). If the almost-arm $\gamma^i$ must be chosen to intersect $J_j + \Lambda_{4\delta n}$, then we simply observe an alternating three-almost-arm event $B^{++}_{\tau'}(4\delta n, \delta^{\beta/2}n)$ (with $\tau' \in \{RBR,BRB\}$ depending on $\tau_i$) in the semi-annulus. But actually, the intersection of $\gamma^i \cup \tilde{H}_{\gamma^i}$ with $J_j + \Lambda_{4\delta n}$ might also be due to an interior cluster of $\gamma^i$. In any case, one can show that for some $n'=2^\ell 4\delta n$ with $4\delta n \le n' < \delta^{\beta/2} n$, there occurs an alternating three-almost-arm event $B^{++}_{\tau'}( 2n',\delta^{\beta/2}n)$  in the semi-annulus $(J_j + \Lambda_{2n',\delta^{\beta/2} n}) \cap \Lambda_{n}$ and an alternating FK three-arm event $A_{010}^{+}(4\delta n, n')$ in the semi-annulus $(J_j + \Lambda_{4\delta n,n'}) \cap \Lambda_{n}$.
	In summary, we obtain
	\begin{align*}
		&P(E^{\delta}_3(n)) \\
		&\le \sum_{x \in \mathcal{J}^{(n)}} \sum_{\ell = 0}^{\log(\delta^{\beta/2 -1}/8)} C \cdot  \underbrace{P\left(A_{010}^{+}(4\delta n, 2^\ell 4\delta n)\right)}_{\le c_5^{-1} (1/2^{\ell})^{1+\beta_1}} \cdot \underbrace{P\left(B^{++}_{RBR}( 2^{\ell+1} 4\delta n, \delta^{\beta/2} n )\right)}_{\le c_5^{-1} (2^{\ell+1}/2^{\log(\delta^{\beta/2 -1}/8)})^{1+\beta_2}} \\
		&\le c_6^{-1} \cdot \abs{\mathcal{J}^{(n)}} \cdot \log(\delta^{\beta/2 -1}) \cdot \delta^{(1-\beta/2)(1+\beta)}\\
		& \le c_7^{-1} \cdot \log(\delta^{\beta/2 -1}) \cdot \delta^{(1-\beta)\beta/2} \to 0 \quad \text{as} \; \delta \to 0,
	\end{align*}
	where we have used the mixing property of Corollary \ref{cor:mixing-half} in the first inequality, and then the upper bounds from Corollaries \ref{cor:six-arms-FK-with-bc} and \ref{cor:three-arm-fuzzy-potts} and also $\abs{\mathcal{J}^{(n)}} = O(\delta^{-1})$.

	\emph{Step 3: Conclusion.}

	In Steps 2.1, 2.2 and 2.3, we have shown that for any $n\ge 2m\ge 1$,
	\begin{align*}
		B_\tau(m,n) \subset & \bigcup_{J^{(n)} \subset  \mathcal{J}^{(n)}\, \delta\text{-sep.}} \tilde{B}_\tau^{\delta,J}(m,n)  \\ &\cup \left( B_\tau(m,n/2) \cap \left(E_1^\delta(n) \cup E_2^\delta(n) \cup E_3^\delta(n)\right)\right)
	\end{align*}
	with $P(E_1^\delta(n) \cup E_2^\delta(n) \cup E_3^\delta(n)) \to 0$ as $\delta \to 0$.
	Hence, the mixing property for FK percolation (which is applicable since we consider almost-arm events instead of arm events) and the bound of Step 1 imply
	\begin{equation} \label{eq:arm-separation-bound}
		P\left(B_\tau(m,n)\right) \le \sum_{J^{(n)} \subset  \mathcal{J}^{(n)}\, \delta\text{-sep.}} \epsilon_2^{-1} \cdot P\left(\hat{B}_\tau^{\delta,J}(m,n)\right)  +  P\left( B_\tau(m,n/2)\right) \cdot  f(\delta),
	\end{equation}
	where the function $f$ is chosen such that $C \cdot P(E_1^\delta(n) \cup E_2^\delta(n) \cup E_3^\delta(n))\le f(\delta) \to 0$ uniformly in $n$ as $\delta\to 0$ (here $C$ is the constant from Corollary \ref{cor:mixing}).

	We would now like to iteratively apply equation \eqref{eq:arm-separation-bound}. Fix any $\delta$-separated $J_\ast^{(n)} \subset \mathcal{J}^{(n)}$. By applying equation \eqref{eq:arm-separation-bound} to $(m,n/2)$, we obtain
	\begin{align*}
		P\left(B_\tau(m,n)\right) &\le P\left(B_\tau(m,n/2)\right) \\
		&\le \sum_{J^{(n/2)} \subset  \mathcal{J}^{(n/2)}\, \delta\text{-sep.}} \epsilon_2^{-1} \cdot P\left(\hat{B}_\tau^{\delta,J}(m,n/2)\right)  +  P\left( B_\tau(m,n/4)\right) \cdot  f(\delta) \\
		&\le (\epsilon_1\epsilon_2\epsilon_3)^{-1} c_1^{-1} \cdot P\left(\hat{B}_\tau^{\delta,J_\ast}(m,n)\right)  +  P\left( B_\tau(m,n/4)\right) \cdot  f(\delta)\;,
	\end{align*}
	where we have applied equation \eqref{eq:arm-separation-apriori-bound} for each $J^{(n/2)}$ in the second inequality and the constant $\epsilon_3$ accounts for the number of terms in the sum. By iteration, we get
	\begin{equation*}
			P\left(B_\tau(m,n)\right) \le   (\epsilon_1\epsilon_2\epsilon_3)^{-1} P\left( \hat{B}_\tau^{\delta,J_\ast}(m,n)\right) \cdot \sum_{i=0}^{\log(n/m)-1} c_1^{-(i+1)}f(\delta)^i
	\end{equation*}
	Finally, we fix $\delta_0>0$ sufficiently small such that $f(\delta_0)\le c_1/2$. Then for $\delta < \delta_0$, this implies
		\begin{equation*}
			P\left(B_\tau(m,n)\right) \le  2c_1^{-1} (\epsilon_1\epsilon_2\epsilon_3)^{-1} P\left( \hat{B}_\tau^{\delta,J_\ast}(m,n)\right),
	\end{equation*}
	and thereby completes the proof of equation \eqref{eq:arm-separation-reduced}.
	In the same way, one can show that for any $\delta$-separated $I_\ast^{(m)}$,
	\begin{equation}
		 P\left(\hat{B}_\tau^{\delta,J_\ast}(m,n)\right) \lesssim P\left(\hat{B}_\tau^{\delta,I_\ast,J_\ast}(m,n)\right),
	\end{equation}
	which implies equation \eqref{eq:arm-separation-main-step} when combined with equation \eqref{eq:arm-separation-reduced} and thereby concludes the proof of the the theorem.
\end{proof}

\subsection{Quasi-multiplicativity and arm separation for the halfplane}
	\label{sec:arm-separation-halfplane}

	In this section, we briefly explain how to establish the following theorem.

	\begin{thm}[Quasi-multiplicativity for the halfplane] \label{thm:quasi-multiplicativity-halfplane}
		Let $\tau$ be an alternating color sequence for the halfplane.  For all $\abs{\tau} \le \ell \le m \le n$,
		\begin{equation*}
			\mu_{\Z\times\Z_+}^0(A^+_\tau(\ell,n)) \asymp \mu_{\Z\times\Z_+}^0(A^+_\tau(\ell,m)) \cdot  \mu_{\Z\times\Z_+}^0(A^+_\tau(m,n)),
		\end{equation*}
		where the bounds are uniform in $\ell,m,n$. The same holds true for the arm event version $A^{s}_\tau$.
	\end{thm}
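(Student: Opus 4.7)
The plan is to mirror the bulk argument developed in Sections 3.2 and 3.3, now working with the halfplane measure $\mu^0_{\Z\times \Z_+}$, the halfplane mixing statement (Corollary \ref{cor:mixing-half}), the strong crossing estimates of Theorem \ref{thm:strongRSW} (which remain valid with free boundary conditions on the real axis, as long as $q\in [1,4)$), and the halfplane three-arm bound of Corollary \ref{cor:three-arm-fuzzy-potts}. The convention that arms are ordered counterclockwise starting from the rightmost one means that the first and last arms are anchored at the bottom boundary, and this is the only substantial new geometric feature we must handle.

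Concretely, first I would introduce the halfplane versions $\hat{B}^{+,\delta,I,J}_\tau(m,n)$ of the well-separated almost-arm event with extensions: the landing points $I=(I_1,\dots,I_k)$ on $\partial\Lambda_m\cap(\Z\times \Z_+)$ and $J=(J_1,\dots,J_k)$ on $\partial\Lambda_n\cap(\Z\times\Z_+)$ are required to be $\delta$-separated from one another, from the two corners $(\pm m,0)$, $(\pm n,0)$ touching the real axis, and from the top corners $(\pm m,m)$, $(\pm n,n)$; one also prescribes extension rectangles and parallel dual open crossings as in Definition \ref{def:well-seperated-almost-arm-extensions}, but all rectangles are positioned so as to stay within the upper halfplane. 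A direct copy of the proof of Proposition \ref{prop:extendability}, using Theorem \ref{thm:strongRSW} in halfplane rectangles and the FKG inequality, gives extendability
\begin{align*}
    P^0_{\Z\times\Z_+}\left(\hat{B}^{+,\delta,I,J}_\tau(m,n)\right)
    &\lesssim P^0_{\Z\times\Z_+}\left(\hat{B}^{+,\delta,I',J}_\tau(m/2,n)\right),\\
    P^0_{\Z\times\Z_+}\left(\hat{B}^{+,\delta,I,J}_\tau(m,n)\right)
    &\lesssim P^0_{\Z\times\Z_+}\left(\hat{B}^{+,\delta,I,J'}_\tau(m,2n)\right).
\end{align*}

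Next I would prove the halfplane analog of the arm separation Theorem \ref{thm:arm-separation}, namely that for $\tau$ alternating for the halfplane and all $\delta$ sufficiently small,
\begin{equation*}
    P^0_{\Z\times\Z_+}\left(\hat{B}^{+,\delta,I,J}_\tau(m,n)\right)
    \asymp \mu^0_{\Z\times\Z_+}\left(A^+_\tau(m,n)\right)
    \asymp P^0_{\Z\times\Z_+}\left(B^+_\tau(m,n)\right).
\end{equation*}
The strategy is identical to the proof of Theorem \ref{thm:arm-separation}: almost-arms are first pushed away from corners using open and dual crossings in quarter-annuli (here including the two corners on the real line, where one uses crossings of thin boxes adjacent to the axis, which are controlled by Theorem \ref{thm:strongRSW} uniformly in boundary conditions); then they are forced to land on well-separated boundary segments via the usual iteration that charges each `collision' to a three-almost-arm event $B^{++}_{\tau'}$ with $\tau'\in\{RBR,BRB\}$ in a semi-annulus centered at the collision point, which is polynomially rare by Corollary \ref{cor:three-arm-fuzzy-potts}; finally one shows that almost-arms stay away from each other's endpoints by the same iteration. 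The outputs of these three steps are then combined exactly as in Step 3 of the proof of Theorem \ref{thm:arm-separation}, using Corollary \ref{cor:mixing-half} in place of Corollary \ref{cor:mixing}.

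With extendability and arm separation established in the halfplane, quasi-multiplicativity follows verbatim from the argument used to prove Theorem \ref{thm:quasi-multiplicativity}: the upper bound comes from the inclusion $A^+_\tau(\ell,n)\subset B^+_\tau(\ell,m/2)\cap B^+_\tau(m,n)$ together with the halfplane mixing property, and the lower bound is obtained by gluing two well-separated almost-arm events across the intermediate annulus $\Lambda^+_{m/2,m}$ using crossings from Theorem \ref{thm:strongRSW} and then coloring the $|\tau|$ interior clusters involved in the junction, which has probability at least $(r(1-r))^{|\tau|}$. The same argument also applies to the strong version $A^{+s}_\tau$, since, as stressed in Section \ref{sec:discrete-fuzzy}, the almost-arm proofs only use open or dual open crossings for the gluing steps and never rely on the distinction between weak and strong paths in a nontrivial way. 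The main subtlety relative to the bulk case is the careful treatment of the two corners of the semi-annulus lying on the real axis, where one must verify that the three-almost-arm estimate $P^0_{\Z\times\Z_+}(B^{++}_{RBR}(4\delta n,\delta^{\beta/2}n))\lesssim \delta^{(1-\beta/2)(1+\beta_2)}$ remains available uniformly in the free boundary conditions on the axis; this is indeed guaranteed by the formulation of Corollary \ref{cor:three-arm-fuzzy-potts} and the fact that $B^{++}$ was defined with respect to $\Z^2$, so the real axis plays no special role for that event.
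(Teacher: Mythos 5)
Your proposal follows the same route the paper takes: its own treatment of this theorem (Section~\ref{sec:arm-separation-halfplane}) is deliberately brief and reduces to ``define the halfplane well-separated almost-arm event with extensions, prove halfplane arm separation by repeating Sections~\ref{sec:quasi-multiplicatity} and~\ref{sec:arm-separation}, and rerun the gluing argument.'' Your outline of extendability, the three separation sub-steps (corners, landing zones, endpoints), the use of $B^{++}$ events with Corollary~\ref{cor:three-arm-fuzzy-potts}, halfplane mixing, and the final gluing at coloring cost $(r(1-r))^{|\tau|}$ are all in line with that.

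There is one structural detail you should make explicit, which the paper singles out as the defining difference between the bulk and halfplane definitions. In the halfplane version of $\hat{B}^{+,\delta,I,J}_\tau(m,n)$ one must \emph{drop} the dual open path $\gamma_d^{|\tau|}$ that in Definition~\ref{def:well-seperated-almost-arm-extensions} separates $\hat{\gamma}^{|\tau|}$ from $\hat{\gamma}^{1}$. In the semi-annulus $\Lambda^+_{m,n}$ the first and last almost-arms are anchored at the two opposite bottom segments $[m,n]\times\{0\}$ and $[-n,-m]\times\{0\}$; they are separated for free by the boundary condition on the real axis, and there is no room on the ``other side'' of the semi-annulus for a dual crossing to run between them. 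Your phrasing (``parallel dual open crossings as in Definition~\ref{def:well-seperated-almost-arm-extensions}, but all rectangles positioned so as to stay within the upper halfplane'') reads as though all $|\tau|$ dual paths are retained, which would make the event empty. A second, minor point worth noting is that the halfplane proof is in fact \emph{simpler} than the bulk one, as the paper remarks: almost-arms in a semi-annulus can be explored one at a time in counterclockwise order starting from the rightmost (as in Step~2 of the proof of Proposition~\ref{prop:two-arm-fuzzy-potts}), which lets you sidestep a portion of the bulk separation machinery rather than just transporting it wholesale.
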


	In Sections \ref{sec:quasi-multiplicatity} and \ref{sec:arm-separation}, we always worked in the plane and we have therefore only considered almost-arms with respect to $\Z^2$. In the halfplane, it is natural to consider almost-arms with respect to $\Z\times\Z_+$, i.e. the almost-arm event $B^+_\tau(m,n)$. The first step is then to define the
	well-separated almost-arm event with extensions $\hat{B}^{+ \,\delta,I,J}_\tau(m,n)$ in the halfplane as a subset of $B^+_\tau (m,n)$. The definition is identical to Definition \ref{def:well-seperated-almost-arm-extensions}, except that no dual open simple path is needed to separate the last almost-arm $\gamma^{\abs{\tau}}$ from the first almost-arm $\gamma^{1}$  since this separation will be guaranteed by the free boundary condition. Using the same reasoning as in the previous subsections, one then obtains:

	\begin{thm}[Arm separation for the halfplane] \label{thm:arm-separation-halfplane}
		Let $\tau$ be an alternating color sequence for the halfplane. There exists $\delta_0 >0$ such that for all $\abs{\tau} \le m \le n$, for all $\delta \in (0,\delta_0)$
		and for all $I^{(m)}$, $J^{(n)}$ $\delta$-separated,
		\begin{equation*}
			P_{\Z\times\Z_+}^0\left(\hat{B}_\tau^{+ \,\delta,I,J}(m,n)\right) \asymp  \mu_{\Z\times\Z_+}^0\left(A^+_\tau(m,n)\right) \asymp 	P_{\Z\times\Z_+}^0\left(B^+_\tau(m,n)\right),
		\end{equation*}
		uniformly in $I, J$ and in $m,n$. The same holds true for the arm event version $A^{+s}_\tau(m,n)$ and the corresponding versions of the almost-arm events.
	\end{thm}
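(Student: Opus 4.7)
The plan is to mirror the proof of Theorem \ref{thm:arm-separation} step by step, with the modifications dictated by the halfplane geometry: the real axis carries free boundary conditions, so it automatically plays the role of a dual-open arc separating the first almost-arm from the last, and all applications of the mixing statement must be made through Corollary \ref{cor:mixing-half} rather than Corollary \ref{cor:mixing}. The inputs this requires (Theorem \ref{thm:strongRSW}, the mixing statement, the universal two-arm halfplane bound in Proposition \ref{prop:two-arm-fuzzy-potts}, and the three-arm bound in Corollary \ref{cor:three-arm-fuzzy-potts}) are already formulated in or easily specialized to the halfplane, so no new probabilistic input is needed.

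The easy direction $P^0(\hat B^{+,\delta,I,J}_\tau(m,n))\lesssim \mu^0(A^+_\tau(m,n))\le P^0(B^+_\tau(m,n))$ is obtained exactly as in the proof of Theorem \ref{thm:arm-separation}: on $\hat B^{+,\delta,I,J}_\tau$ one glues the extensions into genuine connections touching $\partial\Lambda_m$ and $\partial\Lambda_n^+$ using Theorem \ref{thm:strongRSW} in the tubes $R^{\mathrm{in}}_{\delta m}(I_i)$, $R^{\mathrm{out}}_{\delta n}(J_i)$ (now restricted to the halfplane), and then colors the $|\tau|$ boundary-touching FK clusters with the prescribed colors at cost $(r(1-r))^{2|\tau|}$. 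Since $\tau$ is alternating for the halfplane, the resulting arms are automatically disjoint; no dual-open circuit is needed in the outermost/innermost position because the free boundary condition on the real axis already disconnects the clusters of $\gamma^1$ from those of $\gamma^{|\tau|}$ outside the semi-annulus.

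The hard direction $P^0(B^+_\tau(m,n))\lesssim P^0(\hat B^{+,\delta,I,J}_\tau(m,n))$ is the one that requires carrying over the three-step decomposition (Steps 2.1--2.3 of the proof of Theorem \ref{thm:arm-separation}). I would introduce the analogous intermediate events $B^{+,\delta,J}_\tau(m,n)$ and $\tilde B^{+,\delta,J}_\tau(m,n)$ on the arc $\partial\Lambda_n^+\setminus\{\pm n\}\times\{0\}$, prove the a priori lower bound
\[
P^0(\hat B^{+,\delta,J}_\tau(m,n))\ge \epsilon_1 c_1^{\,i}\, P^0(\hat B^{+,\delta,J'}_\tau(m,n/2^i))
\]
by iterative extension in tubes of a fixed size depending only on $\tau$ (as in the full plane, only the first and last extension need size $\delta$), and prove $P^0(\hat B^{+,\delta,J}_\tau(m,n))\ge \epsilon_2 P^0(\tilde B^{+,\delta,J}_\tau(m,n))$ by exploring the lowest almost-arm of each color inside the slab $\Lambda_n^+\cap(J_i+\Lambda_{4\delta n})$ and then opening a deterministic tube; near the two corners $(\pm n,0)$ this construction still works because the free boundary acts as a dual-open arc. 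The three bad events $E^\delta_1(n)$ (almost-arm landing near a corner, now meaning near $(\pm n,0)$ as well), $E^\delta_2(n)$ (arms forced to land close to each other), $E^\delta_3(n)$ (arm interior hitting the $\delta n$-neighborhood of another endpoint) all force an alternating three-almost-arm event in a half-annulus, hence by Corollary \ref{cor:three-arm-fuzzy-potts} combined with Corollary \ref{cor:six-arms-FK-with-bc} their probability is bounded by $\mathrm{polylog}(\delta^{-1})\,\delta^{\beta^2/2}$ uniformly in $n$ (using Corollary \ref{cor:mixing-half}). The iteration argument of Step 3 then produces $\delta_0>0$ with the required property.

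The main obstacle I expect is bookkeeping around the corners $(\pm n,0)$ and $(\pm m,0)$ where $\partial\Lambda^+_n$ meets the real axis: here the first and last almost-arms $\gamma^1,\gamma^{|\tau|}$ may land, and the standard ``quarter-annulus with dual circuit'' argument from Step 2.1 has to be replaced by a ``quarter-disk bounded by the free axis on one side and a dual-open arc on the other''. Theorem \ref{thm:strongRSW} applied to an approximate discrete quad of bounded extremal distance still yields the required uniform lower bound on such dual arcs, so the corner contribution is handled in the same way; this is the one place where free boundary conditions (rather than arbitrary ones) are used essentially. Quasi-multiplicativity (Theorem \ref{thm:quasi-multiplicativity-halfplane}) then follows verbatim from the proof of Theorem \ref{thm:quasi-multiplicativity} with Corollary \ref{cor:mixing} replaced by Corollary \ref{cor:mixing-half}, and the same argument applies to the strong-arm version $A^{+s}_\tau$ since every construction used only strong-path extensions and dual arcs.
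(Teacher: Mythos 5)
Your proposal is correct and takes essentially the same approach as the paper, which for this theorem gives only a brief sketch (``using the same reasoning as in the previous subsections''): carry over the full-plane arm-separation proof with the free boundary replacing the dual-open separator between $\gamma^1$ and $\gamma^{\abs{\tau}}$, and with Corollary \ref{cor:mixing-half} replacing Corollary \ref{cor:mixing}. The paper also notes that the halfplane case can actually be \emph{simplified} by exploring the arms sequentially from the rightmost end of the semi-annulus, as in Step~2 of Proposition \ref{prop:two-arm-fuzzy-potts}; your more verbatim carry-over of Steps 2.1--2.3, together with the treatment of the endpoints $(\pm n,0)$ via the free axis, does not exploit this shortcut but is equally valid.
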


	Actually, the proof can be simplified in this case since arms can be explored in counterclockwise order starting from the right end of the semi-annulus as explained in Step 2 of the proof of Proposition \ref{prop:two-arm-fuzzy-potts}.

\subsection{Bounding and comparing arm exponents}
\label{sec:additional-discrete-results}

In this section, we apply arm separation to prove several results about arm events in the fuzzy Potts model, which will be needed in Sections \ref{sec:convergence} and \ref{sec:combine-results}. First of all, arm separation also allows us to prove a simple upper bound on the alternating six-almost-arm event that will be useful in Section \ref{sec:convergence}.

\begin{prop}\label{prop:6-almost-arm-bound}
	There exists a constant $\beta_3=\beta_3(r,q)>0$ such that for all $n \ge m
	\ge 1$,
	\begin{equation*}
		P_{\Lambda_n}^\xi\left(B_{RBRBRB}(m,n)\right) \lesssim \left(\frac{m}{n}\right)^{2+\beta_3},
	\end{equation*}
	where the bounds are uniform in $\xi$ and in $m,n$.
\end{prop}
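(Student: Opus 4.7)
The plan is to reduce the bulk six-almost-arm event to two conditionally independent alternating three-almost-arm halfplane events, each controlled by Corollary~\ref{cor:three-arm-fuzzy-potts}.

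Since $B_{RBRBRB}(m,n)\subseteq B_{RBRBRB}(m,n/2)$ and the latter event is measurable in terms of the FK configuration in $\Lambda_{n/2}$ together with the independent colours of its interior clusters, Corollary~\ref{cor:mixing} (on the FK layer) reduces the bound under $P^\xi_{\Lambda_n}$ to a bound under the infinite-volume measure $P=P_{\Z^2}$ at the cost of a multiplicative constant; from now on I work with $P$ and, after relabelling, with $n$ in place of $n/2$. Next, since $\tau=RBRBRB$ is alternating, Theorem~\ref{thm:arm-separation} replaces $P(B_{RBRBRB}(m,n))$ by $P(\hat{B}^{\delta,I,J}_{RBRBRB}(m,n))$ for fixed $\delta<\delta_0$ and a $\delta$-separated choice of $I^{(m)},J^{(n)}$ with $I_1,I_2,I_3,J_1,J_2,J_3$ strictly in the upper halfplane and $I_4,\dots,I_6,J_4,\dots,J_6$ strictly in the lower halfplane.

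I then insert, at multiplicative cost depending only on $\delta$, two dual-open arcs running along thin strips on either side of the $x$-axis, disjoint from the extension tubes supplied by the arm separation; this uses the strong crossing estimates of Theorem~\ref{thm:strongRSW} together with the FKG inequality, and the relevant extremal distances are bounded. The two dual arcs split $\Lambda_{m,n}$ into an upper and a lower half-annulus so cleanly that every primal cluster in each half lies strictly on that side of the $x$-axis. Conditioning on the two dual arcs (and the associated edge statuses), the domain Markov property yields independence of the FK configurations in the two halves, and since each interior FK cluster now sits entirely on one side, the independent-cluster colouring also factorises.

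In each half I obtain three disjoint alternating-colour almost-arms with respect to $\Z^2$ from $\partial\Lambda_m$ to $\partial\Lambda_n$, i.e.\ the event $B^{++}_{\tau}(m,n)$ with $\tau\in\{RBR,BRB\}$. Applying Corollary~\ref{cor:three-arm-fuzzy-potts} in each half (combined with Corollary~\ref{cor:mixing-half} to absorb the induced FK boundary condition on the $x$-axis uniformly in $\xi$), each of the two halfplane probabilities is bounded by $(m/n)^{1+\beta_2}$, and their (conditional) independence lets us multiply: $P(B_{RBRBRB}(m,n))\lesssim (m/n)^{2+2\beta_2}$. Hence $\beta_3:=2\beta_2>0$ works. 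The main technical obstacle will be the third paragraph: the dual-arc strips must be constructed thin enough to leave room for the three halfplane almost-arms while wide enough to guarantee that interior clusters do not touch the $x$-axis (so that the stronger event $B^{++}$, and not merely $B^+$, holds); this should be addressable by an adaptation of the gluing constructions of Section~\ref{sec:arm-separation}.
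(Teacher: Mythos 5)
Your route is genuinely different from the paper's. The paper first shows $P_{\Z^2}(B_{RBRBR}(m,n))\lesssim (m/n)^2$ by a counting argument (a configuration can realize the auxiliary event $E(C_0;2n)$ for at most one cluster $C_0$, and this event is comparable to the five-almost-arm event via arm separation), and then gains the extra power by exploring five arms and blocking the sixth in each dyadic annulus. You instead attempt to split the bulk six-arm event into two conditionally independent halfplane three-arm events, aiming for $2(1+\beta_2)$. That decomposition, if it worked, would be elegant; but the crucial third paragraph has a genuine gap.

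The insertion of the two dual-open arcs near the $x$-axis ``at multiplicative cost'' would require showing that, conditionally on $\hat{B}^{\delta,I,J}_{RBRBRB}(m,n)$, the dual arcs can be placed with probability bounded below uniformly. However, $\hat{B}^{\delta,I,J}_\tau(m,n)$ only constrains the arms near the inner and outer boundaries (through the tubes $R^{in},R^{out},S^{in},S^{out}$); it says nothing about where the arms go in the bulk of $\Lambda_{m,n}$. The colored middle segment of an almost-arm is not blocked by a dual-open path: two adjacent vertices of a strong (or weak) colored path may lie in distinct FK clusters on opposite sides of a dual arc and nonetheless share a color. So an arm whose prescribed endpoints lie in the upper halfplane is perfectly free to dip below the $x$-axis at intermediate radii, and on such configurations the dual arcs simply cannot be placed there. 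The conditional probability of the insertion is therefore not uniformly bounded below, and FKG plus Theorem~\ref{thm:strongRSW} do not rescue this because the fuzzy Potts almost-arm event is not monotone in $\omega$ and the arms may have explored the very region you want to keep free. For the same reason, the subsequent claim that the dual arcs make ``every primal cluster in each half lie strictly on that side of the $x$-axis'' is true for FK clusters but does \emph{not} yield the factorization $\hat{B}^{\delta,I,J}_{RBRBRB}\subset B^{++}_{RBR}\times B^{++}_{BRB}$: fuzzy Potts almost-arms do not respect the dual arcs, so the six-arm event does not decompose into a product event over the two half-annuli.

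To make your approach work one would need either (a) an arm \emph{localization} statement confining each of the six almost-arms to a prescribed half-annulus at only multiplicative cost, which is substantially stronger than the endpoint-separation Theorem~\ref{thm:arm-separation}; or (b) a version of Corollary~\ref{cor:three-arm-fuzzy-potts} valid in the arbitrary topological half-annuli cut out by the (already present) separating dual paths $\gamma_d^3,\gamma_d^6$, rather than in the geometric half-annulus $\Lambda_{m,n}^+$. Neither is available from the material in the paper. Note also that your own diagnosis of the ``main technical obstacle'' — the thickness of the dual strips vis-\`a-vis interior clusters touching the $x$-axis — is a secondary issue; the primary one is that the arms themselves need not stay out of the strips at all.
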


\begin{proof}
	First, we show that uniformly for all $1\le m \le n$,
	\begin{equation}\label{eq:5-arm-upper-bound}
		P_{\Z^2}\left(B_{RBRBR}(m,n)\right) \lesssim \left(\frac{m}{n}\right)^{2}.
	\end{equation}
	For some (FK) cluster $C_0 \in \mathcal{C}$ with  $C_0 \subset \Lambda_{n}$, we denote by $E(C_0;2n)$ the event that there exist five disjoint sequences of clusters $C^{(j)}=(C_i^{(j)})_{i=1}^{k_j}$, $j \in \{1,\ldots,5\}$, in $\Lambda_{2n}$ such that
	\begin{itemize}
		\item $C_0$ and all clusters in $C^{(1)}$, $C^{(3)}$ and $C^{(5)}$ are red, all clusters in $C^{(2)}$ and $C^{(4)}$ are blue,
		\item the sequences of clusters start next to $C_0$ and intersect $\partial \Lambda_{2n}$, i.e.\ for $j = 1,3,5$,
		\begin{equation*}
			d_1(C_0,C_1^{(j)}) = 1 \quad \text{and} \quad d_1(C^{(j)}_i,C_{i+1}^{(j)}) = 1, \; \forall 1 \le i < k_j,
		\end{equation*}
		and for $j=2,4$,
		\begin{equation*}
			d_\infty(C_0,C_1^{(j)}) = 1 \quad \text{and} \quad d_\infty(C^{(j)}_i,C_{i+1}^{(j)}) = 1, \; \forall 1 \le i < k_j,
		\end{equation*}
		and for each $j$, only the last cluster $C^{(j)}_{k_j}$ touches the boundary $\partial \Lambda_{2n}$,
		\item $C^{(1)}_{k_1}$ and $C^{(5)}_{k_5}$ touch the top, $C^{(2)}_{k_2}$ touches the left, $C^{(3)}_{k_3}$ touches the bottom, and $C^{(4)}_{k_4}$ touches the right of the boundary $\partial \Lambda_{2n}$.
	\end{itemize}
	It is easy to verify that the  event $E(C_0;2n)$ occurs for at most one cluster $C_0 \in \mathcal C$, i.e.\
		\begin{equation*}
		1 \ge \sum_{C_0 \subset \Lambda_{n}} 1_{E(C_0;2n)}.
	\end{equation*}
	The bound \eqref{eq:5-arm-upper-bound} for $m=1$ follows by taking expectations and in the case of Bernoulli percolation, this is a well-known step in proving that the five-arm exponent is universal and equals 2. For general $m$, we cover $\Lambda_{n}$ with boxes of size $m$ to deduce
	\begin{equation*}
		1 \ge \sum_{x \in 2m \Z^2\, :\, x + \Lambda_m \subset \Lambda_{n}} 1_{E(C_0;2n)\; \text{holds for some}\; C_0 \subset x+\Lambda_m}\,,
	\end{equation*}
	and \eqref{eq:5-arm-upper-bound} will follow by taking expectations  once we have established that for some $\delta>0$ and $I^{(m)}$, $J^{(n)}$ $\delta$-separated as in Theorem \ref{thm:arm-separation},
	\begin{align*}
        P_{\Z^2}\left(E(C_0;2n)\; \text{holds for some}\; C_0 \subset x+\Lambda_m\right) &\gtrsim 	P_{\Z^2}\left( \hat{B}_{RBRBR}^{\delta,I,J}(m,n)\right) \\
        &\asymp P_{\Z^2}\left( B_{RBRBR}(m,n)\right)\;.
	\end{align*}
	The second comparison follows from arm separation (see Remark \ref{rk:arm-separation-non-alternating}) and the first bound follows from the following construction: Assume that the event $\hat{B}_{RBRBR}^{\delta,I,J}(m,n)$ centered at $x$ occurs. We extend the five almost-arms from the boundary of $x+\Lambda_n$ to the respective sides of $\partial\Lambda_{2n}$. Inside $x + \Lambda_m$, we explore the innermost $\omega$-open circuit in $x + \Lambda_{m/8,m/4}$ (which exists with positive probability) and then its cluster, called $C_0$ (which is contained in $x + \Lambda_{m/2}$ with positive probability). We then extend the five almost-arms from the boundary of $x+\Lambda_{m}$ so that they end next to the cluster $C_0$. By coloring $C_0$ and the clusters corresponding to the extensions of the five almost-arms in red and blue, we have constructed an instance of the event where $E(C_0;2n)$ holds for some $C_0 \subset x+\Lambda_m$.

	Second, the bound \eqref{eq:5-arm-upper-bound} implies that there exists $\beta_3>0$ such that uniformly for all $1\le m \le n$,
	\begin{equation}\label{eq:6-arm-upper-bound}
		P_{\Z^2}\left(B_{RBRBRB}(m,n)\right) \lesssim \left(\frac{m}{n}\right)^{2+\beta_3}.
	\end{equation}
	The argument is standard. Indeed, using arm separation, the endpoints of the almost-arms can be specified, which allows to explore the first five almost-arms ($RBRBR$) in such a way that the sixth almost-arm ($B$) must lie in the unexplored part. The probability of the sixth almost-arm is then polynomially small in $m/n$ since it can be blocked in each dyadic annuli with positive probability.
	To deduce the result from \eqref{eq:6-arm-upper-bound}, we use the inclusion
    $B_{RBRBRB}(m,n)\subset B_{RBRBRB}(m,n/2)$ for $n>2m$ and the mixing property of FK percolation.
\end{proof}

The last proposition of this section explains how to deduce non-alternating arm
exponents from alternating ones by showing that the arm event probabilities only
changes by a constant factor when replacing a red (resp.\ blue) arm by several
subsequent red (resp.\ blue) arms. It  relies crucially on the
result that the alternating four-arm exponent is strictly larger than 2, which will be
established in Section \ref{sec:combine-results}.

Let $\tau$ be any color sequence. We formally define a reduced version $\hat{\tau}$ for the plane and a reduced version
$\hat{\tau}^{+}$ for the halfplane as follows. Whenever the letter $R$ (resp.\ $B$) occurs subsequently more than once, we replace this subsequence by a single letter. The last and the first letter are viewed as subsequent in the plane but not in the halfplane.
Clearly, $\hat{\tau}$ is alternating and $\hat{\tau}^+$ is alternating for the halfplane.

\begin{prop}\label{prop:alternating-non-alternating}
	Assume that there exist $\beta_4 >0$ such that uniformly for all $n\ge 1$,
	\begin{equation*}
       \mu_{\Z^2}\left(A_{RBRB}(1,n)\right) \lesssim n^{-(2+\beta_4)}.
	\end{equation*}
	Let $\hat{\tau}$ (resp.\ $\hat{\tau}^+$) be the reduction of $\tau$ in the plane (resp.\ halfplane). It then holds that uniformly for all $n\ge m \ge \lvert\tau\rvert$,
	\begin{equation*}
		\mu_{\Z^2}\left(A_\tau(m,n)\right) \asymp \mu_{\Z^2}\left(A_{\hat{\tau}}(m,n)\right).
	\end{equation*}
    The same holds true for the arm event version $A^{s}_\tau$.
\end{prop}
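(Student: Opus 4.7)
The upper bound $\mu_{\Z^2}(A_\tau(m,n))\le\mu_{\Z^2}(A_{\hat\tau}(m,n))$ is immediate from the inclusion $A_\tau\subset A_{\hat\tau}$, obtained by keeping only one arm from each maximal (cyclically, in the plane case) block of consecutive same-color arms in $\tau$. My main task is therefore the converse inequality $\mu_{\Z^2}(A_{\hat\tau}(m,n))\lesssim\mu_{\Z^2}(A_\tau(m,n))$, which I plan to establish by induction on the excess $|\tau|-|\hat\tau|\ge 0$, the base case being trivial.

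In the inductive step it suffices to prove the following single-arm insertion statement: whenever $\tau'$ is obtained from $\tau$ by deleting one arm inside a block of at least two consecutive same-color arms, one has $\mu_{\Z^2}(A_{\tau'}(m,n))\lesssim\mu_{\Z^2}(A_\tau(m,n))$. For this, I would fix $\delta>0$ small together with $\delta$-separated sequences $I^{(m)},J^{(n)}$ adapted to $\tau'$; using the version of arm separation valid for arbitrary color sequences (Remark~\ref{rk:arm-separation-non-alternating}), it is enough to construct a configuration in the well-separated almost-arm event $\hat B_{\tau}^{\delta,I'',J''}(m,n)$ starting from one in $\hat B_{\tau'}^{\delta,I,J}(m,n)$. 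I would first condition on the almost-arms of $\tau'$, their cluster hulls and extensions, together with the colors of their interior clusters; by the same exploration argument used in Section~\ref{sec:quasi-multiplicatity}, this reveals only FK boundary conditions on the unexplored part of $\Lambda_{m,n}$. Inside the sector between the two almost-arms of $\tau'$ in which the extra same-color arm must be inserted, I would then use the strong crossing estimates (Theorem~\ref{thm:strongRSW}) to construct, with conditional probability bounded below uniformly in $m,n$, a new FK cluster crossing a narrow annular tube disjoint from the explored region; coloring this new cluster with the required color at a further independent cost $r$ or $1-r$ produces the missing same-color almost-arm, and hence a configuration in $\hat B_\tau^{\delta,I'',J''}(m,n)$ for an updated $\delta$-separated pair.

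The role of the four-arm assumption $\mu_{\Z^2}(A_{RBRB}(1,n))\lesssim n^{-(2+\beta_4)}$ is to rule out that the insertion creates a pair of unwanted opposite-color arms that would spoil the color sequence in neighboring sectors, for example by accidentally connecting the newly constructed cluster to an existing same-color cluster through a long detour. At each dyadic scale the bad configurations for such a failure are controlled by a local alternating four-arm event of type $RBRB$, and summing the assumed polynomial decay with exponent strictly greater than $2$ over the dyadic scales inside $\Lambda_{m,n}$ produces only a bounded error that can be absorbed into the multiplicative constant. The same strategy applies to the halfplane upon replacing the tools from Sections~\ref{sec:quasi-multiplicatity} and~\ref{sec:arm-separation} by their halfplane analogues developed in Section~\ref{sec:arm-separation-halfplane}, and it carries over verbatim to the strong-arm variant $A^s_\tau$ since all the underlying ingredients are available in that setting as well.

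The step I expect to be most delicate is the cyclic bookkeeping in the full-plane case, where a block of consecutive same-color arms in $\tau$ may wrap from the last letter around to the first: the well-separated event for $\hat\tau$ must be set up so that the cyclic sector between the last and the first arm of $\hat\tau$ has enough room and the appropriate induced FK boundary conditions, after exploration, to accommodate the same-color insertions prescribed by $\tau$. Once this combinatorial bookkeeping is handled, each induction step multiplies the probability by a universal constant depending only on $|\tau|$, $r$ and the four-arm exponent $\beta_4$, so the induction closes and yields the claimed comparison $\mu_{\Z^2}(A_\tau)\asymp\mu_{\Z^2}(A_{\hat\tau})$.
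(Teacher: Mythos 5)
Your upper bound and the general framing are fine, but the core of your lower-bound step is flawed. You propose to condition on the $\tau'$ almost-arms and then, ``with conditional probability bounded below uniformly in $m,n$,'' build \emph{a new FK cluster crossing a narrow annular tube} from $\partial\Lambda_m$ to $\partial\Lambda_n$ that can be colored to furnish the missing same-color arm. This step does not hold: an FK cluster spanning the full annulus $\Lambda_{m,n}$ is precisely a one-arm FK event, whose probability decays polynomially in $m/n$, and Theorem~\ref{thm:strongRSW} only gives lower bounds that are uniform over domains of \emph{bounded extremal distance}, i.e.\ bounded aspect ratio. This is exactly why the gluing constructions of Sections~\ref{sec:quasi-multiplicatity}--\ref{sec:arm-separation} only ever build crossings inside dyadic-annulus-sized tubes, never across the whole annulus. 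Replacing ``a new cluster'' by ``a new cluster chain'' does not save the argument either, since coloring such a chain monochromatically costs $r^{\#\text{clusters}}$ with a number of clusters that is unbounded in $n/m$. In short, same-color arms cannot be \emph{inserted} at bounded cost; this is the whole difficulty the proposition has to face. Your appeal to the four-arm hypothesis to ``rule out unwanted opposite-color arms created by the insertion'' is correspondingly misplaced, because adding an extra (mono- or poly-chromatic) arm never destroys the already-found ordered arms of $A_{\hat\tau}$, so there is nothing to rule out.

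The paper's argument runs in the opposite direction and this is the key idea you are missing. Rather than promoting an $A_{\hat\tau}$ configuration to an $A_\tau$ configuration by adding arms, it shows that on $A_{\hat\tau}(m/4,4n)$ the extra same-color arms are \emph{already there} unless a low-probability obstruction occurs. Concretely, if between two consecutive alternating arms there are fewer than $d$ disjoint red arms, then by a Menger-type duality there is a blue (weak) arc with at most $d-1$ red defects separating them; the events $E_{\hat\tau}$, $E'_{\hat\tau}$ package these defected circuits/arcs. Each defect, sitting on the monochromatic circuit inside the annulus supporting $A_{\hat\tau}$, forces a local alternating four-arm event at some dyadic scale $2^k$, and by quasi-multiplicativity and mixing the cost is $\lesssim 4^k\cdot k^{d-2}\cdot 2^{-k(2+\beta_4)}$, which sums geometrically. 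Choosing $m_0$ so that this tail sum is $\le 1/4$ gives
$$
\mu\bigl(E_{\hat\tau}(m,n)\cap A_{\hat\tau}(m/4,4n)\bigr)+\mu\bigl(E'_{\hat\tau}(m,n)\cap A_{\hat\tau}(m/4,4n)\bigr)\le\tfrac12\,\mu\bigl(A_{\hat\tau}(m/4,4n)\bigr),
$$
hence $\mu(A_\tau(m,n))\ge\tfrac12\mu(A_{\hat\tau}(m/4,4n))\asymp\mu(A_{\hat\tau}(m,n))$. This is a genuinely different mechanism from your insertion scheme, and it is where the assumption that the alternating four-arm exponent exceeds $2$ is actually used: to make the per-scale failure probability summable against the factor $4^k$ coming from the union over locations of the defect.
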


\begin{remark}
    \label{rk:halfplane-non-alternating}
	The proof of Proposition \ref{prop:alternating-non-alternating} can analogously be applied in the halfplane case. It yields that uniformly for all $n\ge m \ge \lvert\tau\rvert$,
	\begin{equation*}
		\mu_{\Z\times\Z_+}^0\left(A^+_\tau(m,n)\right) \asymp \mu_{\Z\times\Z_+}^0\left(A^+_{\hat{\tau}^+}(m,n)\right),
	\end{equation*}
	and the same holds true for the arm event version $A^{+s}_\tau$.
	The only additional ingredient needed in the halfplane case is that there exist $\beta_5 >0$ such that uniformly for all $n\ge 1$,
	\begin{equation*}
		\mu_{\Z\times\Z_+}^0\left(A^+_{RBR}(1,n)\right) \lesssim n^{-(1+\beta_5)}.
	\end{equation*}
	This follows directly from the comparison with the alternating two-arm exponent in the halfplane (see Remark \ref{rk:universal-exponents-fuzzy-potts}).
    
    It is needed since the defects (see below for more details) might be close to $\Z \times \{0\}$ and thereby induce an alternating three-arm event in the halfplane instead of an alternating four-arm event in the plane.
\end{remark}

In the proof of this result, we will naturally encounter paths that look like red or blue arms except for a few defects. Therefore, we introduce $A_\tau^{(d)}(m,n)$, the arm event with $d$ defects, containing all colorings $\sigma$ for which there is some $\sigma' \in A_\tau(m,n)$ with $\abs{\{v : \sigma'_v \neq \sigma_v\}} \le d$.

Since almost-arm events depend both on the percolation configuration and on the coloring, one might consider different notions of defects. In analogy with the previous definition, the almost-arm event with $d$ defects, denoted by $B_\tau^{(d)}(m,n)$, contains all configurations $(\omega,\sigma)$ for which changing the color of at most $d$ vertices (formally by closing the four adjacent edges and coloring the isolated vertex) results in a configuration $(\omega',\sigma') \in B_\tau(m,n)$.

\begin{prop} \label{prop:arm-events-with-defects}
	Let $\tau$ be an alternating color sequence and $d\ge 1$ be the number of defects. Uniformly for all $n \ge m \ge \abs{\tau}$, it holds that
	\begin{equation*}
		\mu_{\Z^2}\left(A_\tau^{(d)}(m,n)\right) \lesssim \left(1+\log\left(\frac{n}{m}\right)\right)^d \mu_{\Z^2}\left(A_\tau(m,n)\right),
	\end{equation*}
	and
	\begin{equation*}
		P_{\Z^2}\left(B_\tau^{(d)}(m,n)\right) \lesssim \left(1+\log\left(\frac{n}{m}\right)\right)^d P_{\Z^2}\left(B_\tau(m,n)\right).
	\end{equation*}
	The same holds true for the arm (resp.\ almost-arm) event versions $A^{s}_\tau$ and $A^{+s}_\tau$.
\end{prop}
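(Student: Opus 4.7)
The plan is to prove the almost-arm bound for $B_\tau^{(d)}(m,n)$ first and then transfer to $A_\tau^{(d)}(m,n)$ via the equivalence $\mu(A_\tau(m,n)) \asymp P(B_\tau(m,n))$ coming from arm separation (Theorem~\ref{thm:arm-separation}). For the almost-arm bound, I will use a multi-scale decomposition organised by the dyadic scale of each defect vertex.

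Each defect vertex of a configuration $(\omega,\sigma) \in B_\tau^{(d)}(m,n)$ lies in some dyadic annulus $\Lambda_{2^k, 2^{k+1}}$ with $k$ in a range of size $O(1+\log(n/m))$. For a multiset $\mathcal{K}$ of at most $d$ scales from this range, let $F_\mathcal{K}$ be the event that one can find defect vertices distributed over the scales in $\mathcal{K}$ whose modification places the configuration in $B_\tau(m,n)$. A union bound gives
\[
P_{\Z^2}(B_\tau^{(d)}(m,n)) \le P_{\Z^2}(B_\tau(m,n)) + \sum_\mathcal{K} P_{\Z^2}(F_\mathcal{K}),
\]
with $O((1+\log(n/m))^d)$ multisets, so it suffices to establish the uniform bound $P_{\Z^2}(F_\mathcal{K}) \lesssim P_{\Z^2}(B_\tau(m,n))$ for each $\mathcal{K}$.

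The key observation is locality: isolating a defect vertex $v \in \Lambda_{2^k, 2^{k+1}}$ (closing its four incident edges and recolouring it) only modifies $\omega$ and $\sigma$ inside a constant neighbourhood of $v$, which is contained in the enlarged annulus $\Lambda_{2^{k-1}, 2^{k+2}}$. Crucially, by the interior-cluster condition in Definition~\ref{def:almost-arm-plane}, any almost-arm whose cluster hull lies in a disjoint sub-annulus is entirely unaffected by this local modification. Consequently, on the event $F_\mathcal{K}$ the original configuration $(\omega,\sigma)$ itself admits the required almost-arm sequence in each ``gap'' sub-annulus $G_j$ of $\Lambda_{m,n}$ carved out by removing the enlarged defect scales $\Lambda_{2^{k-1}, 2^{k+2}}$ for $k \in \mathcal{K}$. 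Hence $F_\mathcal{K} \subseteq \bigcap_j B_\tau(G_j)$.

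To bound $P_{\Z^2}(\bigcap_j B_\tau(G_j))$, I would iterate the mixing property for FK percolation (Corollary~\ref{cor:mixing}); each $B_\tau(G_j)$ is measurable with respect to the FK marginal together with independent cluster colourings, so mixing on disjoint gap annuli factors the probability up to constants to give $\prod_j P_{\Z^2}(B_\tau(G_j))$. The removed ``defect'' annuli $\Lambda_{2^{k-1}, 2^{k+2}}$ have bounded aspect ratio, so they contribute only multiplicative constants when reinserted, and quasi-multiplicativity (Theorem~\ref{thm:quasi-multiplicativity}, transferred to the almost-arm setting by Theorem~\ref{thm:arm-separation} and Remark~\ref{rk:arm-separation-non-alternating}) then collapses the product to $P_{\Z^2}(B_\tau(m,n))$. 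The bound on $A_\tau^{(d)}$ follows from the almost-arm bound together with arm separation, and the halfplane / strong-arm variants are identical after replacing $\mu_{\Z^2}$, $P_{\Z^2}$ and Corollary~\ref{cor:mixing} by their halfplane counterparts (Corollary~\ref{cor:mixing-half}) together with Theorem~\ref{thm:quasi-multiplicativity-halfplane} and Theorem~\ref{thm:arm-separation-halfplane}.

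The main obstacle is the rigorous justification of the locality claim when a defect vertex $v$ sits inside a macroscopic FK cluster straddling several annuli: closing the four edges at $v$ can in principle split that cluster and thereby change the colour configuration on vertices outside $\Lambda_{2^{k-1}, 2^{k+2}}$. The interior-cluster condition in the definition of almost-arm is precisely what rules this out for the gap sub-annuli, so almost-arms there remain present after the modification. Formalising this — especially when several defects lie at the same scale, or when the ``outermost/innermost'' gap degenerates — is the technical core of the argument, but it amounts to an application of the same exploration/conditioning philosophy used elsewhere in Section~\ref{sec:discrete-fuzzy}.
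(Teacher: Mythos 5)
Your proposal follows the same route the paper itself points to — a union bound over dyadic scales for the defect locations, followed by a reduction to gap almost-arm events and then mixing plus quasi-multiplicativity — and the decomposition, union bound, mixing/factorisation step, and the collapse of $\prod_j P_{\Z^2}(B_\tau(G_j))$ to $P_{\Z^2}(B_\tau(m,n))$ are all set up correctly.

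The one place where your justification is actively misleading is your reading of the interior-cluster condition. You assert that the interior-cluster condition in Definition~\ref{def:almost-arm-plane} ``is precisely what rules this out for the gap sub-annuli,'' but the condition in fact works \emph{against} the naive restriction $B_\tau(m,n)\Rightarrow B_\tau(G_j)$: it only says that a colored vertex's cluster in $\Lambda_{m,n}$ avoids $\partial\Lambda_{m,n}$, which says nothing about its cluster in $G_j=\Lambda_{a,b}$ avoiding $\partial G_j$ (a boundary sitting in the \emph{interior} of $\Lambda_{m,n}$). So the portion of a $\Lambda_{m,n}$-almost-arm falling in $G_j$ is generally \emph{not} a $G_j$-almost-arm as is. The missing idea is the re-choice of the open-path caps at $\partial G_j$: for the piece of each almost-arm inside $G_j$, take $v^-$ to be the last colored vertex along it whose $G_j$-cluster meets the inner boundary and $v^+$ the first after it whose $G_j$-cluster meets the outer boundary, replace the caps by open paths inside those clusters to $\partial G_j$, and keep only the colored middle between $v^-$ and $v^+$; the remaining interior colored vertices have $G_j$-clusters avoiding $\partial G_j$ by construction, and the cluster hulls only shrink, so pairwise disjointness is preserved. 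This is also the (unstated) content of the paper's own use of $B_\tau(m,n)\subset B_\tau(m,n/2)$ in the proof of Proposition~\ref{prop:6-almost-arm-bound}, so it is reasonable to treat it as a lemma, but the reason it holds is the cap re-choice, not the interior-cluster condition by itself. A related minor confusion: closing the four edges at a defect vertex never changes the colour of any \emph{other} vertex (the modification only recolours the isolated vertex), so the worry about ``changing the colour configuration on vertices outside $\Lambda_{2^{k-1},2^{k+2}}$'' does not actually arise; what the gap sub-annulus framework needs to control is exactly the deterministic inclusion above. With that corrected, the rest of your argument, including the transfer to $A_\tau^{(d)}$ via $A_\tau(m,n)\subseteq B_\tau(m,n)$ and $\mu(A_\tau)\asymp P(B_\tau)$ from Theorem~\ref{thm:arm-separation}, is sound.
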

The upper bound on (almost-)arm events with defects is a standard consequence of the arm separation techniques that were established in the previous subsections and we refer the reader to \cite[Proposition 18]{Nolin2008} for a proof in the case of Bernoulli percolation that can directly be adapted to our setting.

\begin{proof}[Proof of Proposition \ref{prop:alternating-non-alternating}]
	Let us present the proof for the events $A_\tau(m,n)$ and $A_{\hat{\tau}}(m,n)$. The other cases can be proven in exactly the same way. Hence, we work with the measures $P_{\Z^2}$ and $\mu_{\Z^2}$ throughout the proof and abbreviate them by $P$ and $\mu$.

	The upper bound $\mu(A_\tau(m,n)) \le \mu(A_{\hat{\tau}}(m,n))$ is trivial by inclusion. For the lower bound, it will be sufficient to show that
	\begin{equation}\label{eq:alternating-non-alternating-lower-bound}
		\mu\left(A_\tau(m,n)\right) \gtrsim \mu\left(A_{\hat{\tau}}(m,n)\right)
	\end{equation}
	for $m,n$ of the form $2^k$ and for $\tau$ of the form $\tau=\hat{\tau}_1\cdots\hat{\tau}_1\,\cdots\,\hat{\tau}_\ell\cdots\hat{\tau}_\ell$, where $\ell = \abs{\hat{\tau}}$ and each letter of $\hat{\tau}$ is repeated $d$ times for some fixed $d\ge 1$. Then, the result follows for general $m,n$ using the extendability for $A_{\hat{\tau}}$ and for general $\tau$ by inclusion.

	Depending on the length of $\hat{\tau}$, we define the event $E_{\hat{\tau}}(m,n)$ (resp. $E'_{\hat{\tau}}(m,n)$) as follows:
	\begin{itemize}
		\item If $\abs{\hat{\tau}} \le 2$, it denotes the existence of a red (resp.\ blue) arm from $\partial \Lambda_{m/4}$ to $\partial \Lambda_{4n}$ in $\Lambda_{m/4,4n}$ and a closed weak (resp.\ strong) path surrounding $\Lambda_m$ inside $\Lambda_{m,n}$ that is blue (resp.\ red) except for at most $d-1$ defects.
		\item If $\abs{\hat{\tau}} > 2$, it denotes the existence of three counterclockwise ordered arms $\hat{\gamma}^1$, $\hat{\gamma}^2$, $\hat{\gamma}^3$ of colors $BRB$ (resp.\ $RBR$) from $\partial \Lambda_{m/4}$ to $\partial \Lambda_{4n}$ in $\Lambda_{m/4,4n}$ and a weak (resp.\ strong) path connecting $\hat{\gamma}^1$ and  $\hat{\gamma}^3$ inside $\Lambda_{m,n}$ in counterclockwise order  that is blue (resp.\ red) except for at most $d-1$ defects.
	\end{itemize}
	It follows from Menger's theorem that if the maximal number of disjoint red (resp.\ blue) arms crossing the annulus $\Lambda_{m,n}$ from the inner to the outer boundary is $j$, then there is a blue (resp.\ red) circuit with  $j$ defects in the annulus. For $\abs{\hat{\tau}} \le 2$, we directly deduce that
	\begin{equation*}
		A_{\hat{\tau}}(m/4,4n) \subset A_\tau(m,n) \cup \left(E_{\hat{\tau}}(m,n) \cap A_{\hat{\tau}}(m/4,4n)\right) \cup \left(E'_{\hat{\tau}}(m,n) \cap A_{\hat{\tau}}(m/4,4n)\right),
	\end{equation*}
	and for $\abs{\hat{\tau}} > 2$, the same conclusion is obtained by considering the connection between the arms $\hat{\gamma}^1$ and  $\hat{\gamma}^3$ with the least number of defects.
	
	In the following, we will show that there exists some $m_0$ such that for $n\ge m \ge m_0$,
	\begin{equation}\label{eq:alternating-non-alternating-main-bound}
		\mu\left(E_{\hat{\tau}}(m,n) \cap A_{\hat{\tau}}(m/4,4n)\right) + \mu\left(E'_{\hat{\tau}}(m,n) \cap A_{\hat{\tau}}(m/4,4n)\right) \le \frac{1}{2} \mu \left(A_{\hat{\tau}}(m/4,4n)\right).
	\end{equation}
	Once we have established this inequality, it will follow that for $n\ge m \ge m_0$,
	\begin{equation*}
		\mu\left(A_\tau(m,n)\right) \ge \frac{1}{2} \mu \left(A_{\hat{\tau}}(m/4,4n)\right) \asymp \mu \left(A_{\hat{\tau}}(m,n)\right),
	\end{equation*}
	and this directly implies the lower bound \eqref{eq:alternating-non-alternating-lower-bound} for general $n\ge m \ge \abs{\tau}$ since $\mu(A_\tau(\abs{\tau},n)) \gtrsim \mu(A_\tau(m_0,n))$ can be deduced from the finite energy property of FK percolation.

	To show \eqref{eq:alternating-non-alternating-main-bound}, we note that on the event $E_{\hat{\tau}}(m,n)$, a four-arm event with at most $d-2$ defects occurs locally around at least one defect in $E_{\hat{\tau}}(m,n)$. Thus, we have
	\begin{align*}
		E_{\hat{\tau}}(m,n) \subset \bigcup_{x \in \Lambda_{m,n}} \left\{x + A^{(d-2)}_{RBRB}(1,2^{k-1})\right\}.
	\end{align*}
	Decomposing $\Lambda_{m,n}$ into dyadic annuli, it follows that
	\begin{align*}
		&E_{\hat{\tau}}(m,n) \cap A_{\hat{\tau}}(m/4,4n) \\
		&\subset \bigcup_{k=\log(m)}^{\log(n)-1} A_{\hat{\tau}}(m/4,2^{k-2})\cap \left(\bigcup_{x \in \Lambda_{2^k, 2^{k+1}}} \left\{x + A^{(d-2)}_{RBRB}(1,2^{k-1})\right\} \right) \cap A_{\hat{\tau}}(2^{k+3},4n).
	\end{align*}
	We emphasize that the intermediate event depends only on the coloring in $\Lambda_{2^{k-1},2^{k+2}}$.

    To apply the mixing property of Corollary \ref{cor:mixing}, we  consider almost-arm events instead of arm events. Thereby, we obtain
	\begin{align*}
		&\mu\left(E_{\hat{\tau}}(m,n) \cap A_{\hat{\tau}}(m/4,4n)\right) \\
		&\le C^2 \sum_{k=\log(m)}^{\log(n)-1} P\left(B_{\hat{\tau}}(m/4,2^{k-2})\right) \cdot \abs{\Lambda_{2^k, 2^{k+1}}}  \\
		& \hspace{2.75cm} \cdot P\left(B^{(d-2)}_{RBRB}(1,2^{k-1})\right) 	 \cdot P\left(B_{\hat{\tau}}(2^{k+3},4n)\right) \\
		&\lesssim  \mu\left(A_{\hat{\tau}}(m/4,4n)\right) \cdot \sum_{k=\log(m)}^{\log(n)-1} \abs{\Lambda_{2^k, 2^{k+1}}} \cdot P\left(B^{(d-2)}_{RBRB}(1,2^{k-1})\right),
	\end{align*}
	where we have used extendability and quasi-multiplicativity in the second comparison. Using Proposition \ref{prop:arm-events-with-defects} and the assumption on the alternating four-arm event, it follows that
	 \begin{align*}
	 	&\sum_{k=\log(m)}^{\log(n)-1} \abs{\Lambda_{2^k, 2^{k+1}}} \cdot P\left(B^{(d-2)}_{RBRB}(1,2^{k-1})\right) \\ &\lesssim \sum_{k=\log(m)}^{\log(n)-1} 4^k \cdot k^{d-2} \cdot \underbrace{P\left(B_{RBRB}(1,2^{k-1})\right)}_{\asymp \mu(A_{RBRB}(1,2^{k}))\lesssim (2^k)^{-(2+\beta_4)}} \lesssim  \sum_{k=\log(m)}^{\infty}   k^{d-2} \left(2^{-\beta_4}\right)^k < \infty.
	 \end{align*}
 	In particular, there exists $m_0$ sufficiently large such that the sum is smaller than $1/4$ for $m\ge m_0$. The upper bound for $E_{\hat{\tau}}(m,n) \cap A_{\hat{\tau}}(m/4,4n)$ follows analogously. This establishes \eqref{eq:alternating-non-alternating-main-bound} and completes the proof.
\end{proof}

\section{Convergence results}
\label{sec:convergence}

As announced in the introduction, the aim of this section will be to prove the
convergence of the (discrete) fuzzy Potts model to its continuum counterpart
conditional on the conformal invariance conjecture for FK percolation (stated as
Conjecture \ref{conj:fk-to-cle}). There are two types of results that we
present: Firstly, we will describe the scaling limit of a single fuzzy Potts
interface and secondly, we will determine the scaling limit of the loop encoding
of an entire discrete fuzzy Potts model. We will only make use of the former
result to transport continuum critical exponents back to the discrete setting
but emphasize that the latter result is needed to transport the one-arm exponent
back but the corresponding continuum exponent is not known at the time of writing.

Throughout this section, we fix $q\in [1,4)$ and assume Conjecture
\ref{conj:fk-to-cle} for this value $q$ and let $\kappa'\in (4,6]$ be the
parameter appearing in the statement of the conjecture. We will also fix $r\in
(0,1)$ and let
$\Gamma,\Gamma^\partial,\Gamma^O,\Gamma^I,\Gamma^{OR},\Gamma^{OB},
\Gamma^{IR}, \Gamma^{IB}, \Sigma$ and $(\gamma^{a,b})$ be as in Section
\ref{sec:cle-perco}. Recall also that analogous objects have been defined in the
discrete setting in Section \ref{sec:loop-encoding}.

We also fix a Jordan domain $D$, $\epsilon_n\to 0$ and
discrete domains $\mathcal{D}_n=(V_n,E_n)$ in $\epsilon_n\Z^2$ converging to $D$
(see Section \ref{sec:loop-encoding} for the sense of convergence). Recall that
we wrote $\partial\mathcal{D}_n$ for the set of boundary vertices of
$\mathcal{D}_n$. By Skorokhod's representation theorem and the scaling limit
conjecture we can therefore couple $\omega^n\sim \phi^0_{\mathcal{D}_n,q}$ with
$\Gamma$ such that
\begin{align*}
    (\Gamma_{\omega^n}\setminus
    \Gamma_{\omega^n}^\partial,\Gamma_{\omega^n}^\partial) \to (\Gamma\setminus
    \Gamma^\partial,\Gamma^\partial)
    \quad\text{a.s.}
\end{align*}
with respect to $d_\mathcal{L}$. It is straightforward to see that nesting
levels are preserved by limits with respect to $d_\mathcal{L}$ and in particular
$\Gamma_{\omega^n}^O\setminus \Gamma_{\omega^n}^\partial \to \Gamma^O\setminus
\Gamma^\partial$ with respect to $d_\mathcal{L}$. In this section we would like
to consider fuzzy Potts configurations $(\omega^n,\sigma^n)\sim
P^0_{\mathcal{D}_n,q}$ which are coupled together such that
\begin{align}
    \label{eq:coupling-colors}
    \begin{split}
    (\Gamma_{\omega^n,\sigma^n}^{OR}\cap
    \Gamma^\partial_{\omega^n},\Gamma^{OB}_{\omega^n,\sigma^n}\cap
    \Gamma^\partial_{\omega^n}) &\to (\Gamma^{OR}\cap \Gamma^\partial,
    \Gamma^{OB}\cap \Gamma^\partial)\;,\\
    (\Gamma_{\omega^n,\sigma^n}^{OR}\setminus
    \Gamma^\partial_{\omega^n},\Gamma^{OB}_{\omega^n,\sigma^n}\setminus
    \Gamma^\partial_{\omega^n}) &\to (\Gamma^{OR}\setminus \Gamma^\partial,
    \Gamma^{OB}\setminus \Gamma^\partial)\;,\\
    (\Gamma_{\omega^n,\sigma^n}^{IR},\Gamma^{IB}_{\omega^n,\sigma^n}) &\to
    (\Gamma^{IR}, \Gamma^{IB}) \quad\text{a.s.}
    \end{split}
\end{align}
Let us quickly explain how to construct such a coupling. By the definition of
$d_\mathcal{L}$ we may take $G_n\subset \Gamma^\partial_{\omega^n}$, $F_n\subset
\Gamma^\partial$, $G_n'\subset \Gamma^O_{\omega^n}\setminus
\Gamma^\partial_{\omega^n}$, $F_n'\subset \Gamma^O\setminus \Gamma^\partial$ and
bijections $\pi_n\colon G_n\to F_n$, $\pi'_n\colon G_n'\to F_n'$ such that
\begin{align*}
    \sup_{\eta\in G_n} d_\mathcal{C}(\eta,\pi_n(\eta))\vee
    \sup_{\eta\notin G_n} \diam(\eta)\vee
    \sup_{\eta\notin F_n} \diam(\eta) &\le
    2\,d_\mathcal{L}(\Gamma^O_{\omega^n}\setminus \Gamma^\partial_{\omega^n},
    \Gamma^O\setminus \Gamma^\partial)\;,\\
    \sup_{\eta\in G'_n} d_\mathcal{C}(\eta,\pi'_n(\eta))\vee
    \sup_{\eta\notin G'_n} \diam(\eta)\vee
    \sup_{\eta\notin F'_n} \diam(\eta) &\le
    2\,d_\mathcal{L}(\Gamma^\partial_{\omega^n}, \Gamma^\partial)
\end{align*}
and such that $G_n,F_n,G_n',F_n',\pi_n,\pi_n'$ are measurable with respect to
$\Gamma, \Gamma_{\omega^n}$. We leave the construction of these measurable
mappings to the reader.

For fixed $n\ge 1$ we assign a color to the cluster $C\in \mathcal{C}(\omega^n)$
as follows. Recall that $\eta^C_{\omega^n}$ denoted the outer boundary of $C$.
If $\eta^C_{\omega^n}\notin G_n\cup G_n'$ we color $C$ (independently) all in
red ($R$) with probability $r$ and all in blue ($B$) otherwise. If
$\eta^C_{\omega^n}\in G_n$ and $\pi_n(\eta^C_{\omega^n})\in \Gamma^{OR}$ (resp.\
$\pi_n(\eta^C_{\omega^n})\in \Gamma^{OB}$) we color $C$ in red (resp.\ blue).
Similarly, if $\eta^C_{\omega^n}\in G'_n$ and $\pi'_n(\eta^C_{\omega^n})\in
\Gamma^{OR}$ (resp.\ $\pi'_n(\eta^C_{\omega^n})\in \Gamma^{OB}$) we color $C$ in
red (resp.\ blue). We call the obtained configuration $\sigma^n\in
\{R,B\}^{V_n}$. By Theorem \ref{thm:msw-free} we have $(\omega^n,\sigma^n)\sim
P^0_{\mathcal{D}_n,q}$ and it is direct from the definitions to check that
\eqref{eq:coupling-colors} holds.

\medspace

The results we prove on the convergence of fuzzy Potts `boundary to boundary'
interfaces and interface collections are as follows. We are formulating them
here as convergence statements assuming that we are working with a coupling
\eqref{eq:coupling-colors} but we can directly deduce convergence in
distribution assuming Conjecture \ref{conj:fk-to-cle} by the discussion above.

\begin{thm}
    \label{thm:interface-convergence}
    Let $a,b\in \partial D$ be distinct and let $a_n,b_n\in \partial \mathcal{D}_n$
    be such that $a_n\to a$ and $b_n\to b$. Then assuming
    \eqref{eq:coupling-colors}, we have
    \begin{align*}
        (\gamma_{\sigma^n,a_n,b_n}^+,\gamma_{\sigma^n,a_n,b_n}^-)\to
        (\gamma^{a,b},\gamma^{a,b})
    \end{align*}
    in probability with respect to the metric $d_{\mathcal{C}'}$ in each
    coordinate.
\end{thm}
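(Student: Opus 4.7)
The strategy is to squeeze the discrete interfaces $\gamma^{\pm}_{\sigma^n,a_n,b_n}$ between approximating chains of red and blue clusters whose continuum counterparts sandwich $\gamma^{a,b}$. Fix $\epsilon > 0$ and (via conformal invariance) assume $D = \D$. By local finiteness of $\gamma^{a,b} \cap (\!(a,b)\!)$, partition the time interval $[0,1]$ into finitely many closed intervals whose union covers $[0,1]$, consisting of (i) intervals $[s_j,t_j]$ on which $\gamma^{a,b}$ has an interior excursion in the sense of Proposition \ref{prop:msw-interface-approx}, and (ii) intervals on which the curve stays within $\epsilon$ of $\partial \D$. The latter contribute at most $\epsilon$ to $d_{\mathcal{C}'}$ since $\partial \mathcal{D}_n \to \partial \D$. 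For each interior excursion, apply Proposition \ref{prop:msw-interface-approx} to produce a blue approximating chain $\widetilde{\gamma}_B$ in $\Gamma^{OB}_f$ lying to the right of $\gamma^{a,b}|_{[s_j,t_j]}$ and (by the symmetric statement) a red chain $\widetilde{\gamma}_R$ in $\Gamma^{OR}_f$ lying to its left, both in the $\epsilon$-neighborhood of the excursion. Remark \ref{rk:small-boundary-clusters} ensures that the first and last loops in each chain have diameter $<\epsilon$.

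Next I transfer these approximating chains to the discrete setting. The chains consist of finitely many loops in $\Gamma^{OB} \cup \Gamma^{OR}$, each with positive diameter. By the coupling hypothesis \eqref{eq:coupling-colors}, each such loop is the $d_{\mathcal{C}}$-limit of a discrete outer cluster boundary $\eta^{C_n}_{\omega^n}$ with $C_n$ monochromatic of the same color, and consecutive limit loops in a chain have $d_1$-neighboring discrete approximants for all large $n$ (using the convergence and the fact that consecutive continuum loops intersect). Thus with probability tending to $1$ there exist in $\mathcal{D}_n$ a chain of blue clusters and a chain of red clusters approximating $\widetilde{\gamma}_B$ and $\widetilde{\gamma}_R$ respectively, each contained in the $2\epsilon$-neighborhood of the corresponding excursion of $\gamma^{a,b}$. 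By definition of $\gamma^{+}_{\sigma^n,a_n,b_n}$ (tracing the boundary of $O^+_{\sigma^n}$ when hitting it), the discrete interface must pass between the red and blue discrete chains during the window corresponding to $[s_j,t_j]$, because those chains together with the boundary arcs form a topological corridor separating the initial and terminal endpoints of the interface. This yields $d_{\mathcal{C}'}(\gamma^{+}_{\sigma^n,a_n,b_n}, \gamma^{a,b}) < C\epsilon$ with probability tending to $1$, and similarly for $\gamma^{-}_{\sigma^n,a_n,b_n}$ with the symmetric choice of chains.

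For the comparison between the $+$ and $-$ versions, the two curves differ only at diagonal ``saddle'' configurations where the convention on weak/strong paths changes the local topology. If $\gamma^{+}_{\sigma^n,a_n,b_n}$ and $\gamma^{-}_{\sigma^n,a_n,b_n}$ disagreed by more than $\epsilon$ at some location, that location would carry at least six alternating $RBRBRB$ almost-arms in an annulus $\Lambda_{m,n}$ with $n/m$ large (essentially because both a blue and a red path separate the two interfaces from the same boundary data). A union bound over $O(\epsilon^{-2})$ such annular positions combined with Proposition \ref{prop:6-almost-arm-bound} (giving a bound of order $(m/n)^{2+\beta_3}$, which is summable beyond the trivial area count) shows that this event has probability tending to $0$, so $d_{\mathcal{C}'}(\gamma^{+}_{\sigma^n,a_n,b_n}, \gamma^{-}_{\sigma^n,a_n,b_n}) \to 0$ in probability. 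Combining both steps yields the claim.

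\textbf{Main obstacle.} The delicate point is the topological squeeze argument: one must verify carefully that once the approximating discrete red and blue chains are in place, the discrete interface (which is determined combinatorially through $O^{\pm}_{\sigma^n}$) genuinely runs in the narrow corridor between them and does not escape through some small gap formed by microscopic clusters. Addressing this cleanly requires checking that the loops encoded by $\Sigma^{\pm}_{\sigma^n}$ that intersect the corridor are themselves of small diameter, which in turn is controlled by the monochromatic arm estimate furnished by the six-arm bound in Proposition \ref{prop:6-almost-arm-bound} and the RSW inputs of Theorem \ref{thm:strongRSW}. The deferral of the two technical lemmas to Appendix \ref{sec:app-loops} is presumably aimed precisely at this topological step.
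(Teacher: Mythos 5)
Your overall sandwich/corridor strategy matches the paper's, and several of the ingredients (Proposition \ref{prop:msw-interface-approx}, Remark \ref{rk:small-boundary-clusters}, the coupling \eqref{eq:coupling-colors}) are the right ones. However, there are two genuine gaps.

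First, your claim that the corridor containment ``yields $d_{\mathcal{C}'}(\gamma^{+}_{\sigma^n,a_n,b_n}, \gamma^{a,b}) < C\epsilon$'' does not follow. Trapping the discrete interface in a thin tube around $\gamma^{a,b}$ only gives one-sided Hausdorff control; it does not rule out $\gamma^{+}_{\sigma^n}$ tracing a macroscopic segment of $\gamma^{a,b}$ back and forth several times, which would destroy any uniformly close reparametrization. Your ``main obstacle'' paragraph touches on exactly this but never resolves it, and wrongly suggests the control comes from the corridor itself. The paper resolves it \emph{before} the sandwich argument: it invokes the deterministic Lemma \ref{lem:technical-curve} (tube containment $+$ endpoint convergence $+$ no six crossings of small annuli $\Rightarrow$ $d_{\mathcal{C}'}$-convergence) together with Lemma \ref{lem:no-dac-six-arms} (the colored six-arm estimate, proved via Proposition \ref{prop:6-almost-arm-bound}, which supplies the no-six-crossings hypothesis for the loops of $\Sigma^{\pm}_{\sigma^n}$), and it also uses the a.s.\ emptiness of the interior of $\gamma^{a,b}([0,1])\cap\partial D$ to check the hypothesis of Lemma \ref{lem:technical-curve}. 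After that reduction, the sandwich argument only needs to deliver the tube-containment, which is what it does.

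Second, your assertion that ``consecutive limit loops in a chain have $d_1$-neighboring discrete approximants for all large $n$'' because ``consecutive continuum loops intersect'' is a non sequitur: two continuum loops touching at a single point does not by itself force their discrete approximants to be adjacent; they could be close but separated by a thin dual corridor. The paper closes this gap with Lemma \ref{lem:fk-clusters-touch}, a consequence of the FK six-arm bound (Corollary \ref{cor:six-arms-FK-with-bc}), which says precisely that two macroscopic clusters which come uniformly close must with high probability actually touch. You need to cite and use this. Minor further remarks: your closing paragraph comparing $\gamma^+$ and $\gamma^-$ via a fresh six-arm argument is superfluous, since once both are shown to converge to $\gamma^{a,b}$ they are automatically close to each other; and you have silently elided Lemma \ref{lem:remove-loop-exc}, which the paper uses to pass from segments of boundaries of fillings in $\Gamma^{OB}_f$ to segments of the underlying loops in $\Gamma^{OB}$.
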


\begin{thm}
    \label{thm:all-loops-converge}
    Assuming \eqref{eq:coupling-colors}, we have $(\Sigma_{\sigma^n}^+,
    \Sigma_{\sigma^n}^-)\to (\Sigma,\Sigma)$ in probability with respect to
    the metric $d_\mathcal{L}$ in each coordinate.
\end{thm}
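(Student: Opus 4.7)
The plan is to prove $d_\mathcal{L}(\Sigma_{\sigma^n}^\pm, \Sigma) \to 0$ in probability by establishing two directions of approximation and controlling local finiteness on both sides. Local finiteness of $\Sigma$ at every positive scale is inherited from the underlying CLE construction of Section \ref{sec:cle-perco}, while uniform local finiteness of $\Sigma_{\sigma^n}^\pm$ at scale $\delta$ will follow from Proposition \ref{prop:6-almost-arm-bound}, whose exponent $2+\beta_3$ is strictly larger than $2$.

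For the first direction (continuum approximated by discrete), I will fix $\eta \in \Sigma$ with $\diam(\eta) > \delta$. By the iterative construction of Section \ref{sec:cle-perco}, $\eta$ is a BCLE loop in some iterated subdomain, surrounded on one side by outermost blue outer CLE loops belonging to $\Gamma^{OB}$ and on the other by outermost red outer CLE loops belonging to $\Gamma^{OR}$. Two applications of Corollary \ref{cor:msw-loop-approx} (one on each side, after a conformal change of coordinates) yield finite chains of loops in $\Gamma^{OB}_f$ and $\Gamma^{OR}_f$ whose concatenations approximate $\eta$ from the two sides to precision $\epsilon$. Under the coupling \eqref{eq:coupling-colors}, the convergence of $(\Gamma^{OR}, \Gamma^{OB})$ matches these continuum chains, for all sufficiently large $n$, by chains of outer boundaries of red and blue FK clusters in $(\omega^n, \sigma^n)$. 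These sandwich a loop in $\Sigma_{\sigma^n}^\pm$ within distance $O(\epsilon)$ of $\eta$.

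For the second direction (discrete approximated by continuum), I will consider $\tilde{\eta}^n \in \Sigma_{\sigma^n}^\pm$ with $\diam(\tilde{\eta}^n) > \delta$; it bounds a monochromatic (say red) fuzzy Potts region. A union bound combined with Proposition \ref{prop:6-almost-arm-bound} gives that, with probability tending to $1$, every macroscopic arc of $\tilde{\eta}^n$ admits a red FK cluster of diameter at least $\delta' := \delta^{1+\beta_3/4}$ on its inner side and a blue FK cluster of the same diameter on its outer side; otherwise, a six-arm event $RBRBRB$ from scale $\delta'$ to $\delta$ would occur at some lattice point, and summing over $O(\delta^{-2})$ centers still yields a vanishing bound because $2+\beta_3 > 2$. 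This regularity produces macroscopic discrete FK cluster chains on both sides of $\tilde{\eta}^n$, whose continuum limits via \eqref{eq:coupling-colors} are chains of outer CLE loops in $\Gamma^{OR}$ and $\Gamma^{OB}$, and by the structure of $\Sigma$ (each side of a $\Sigma$-loop carries CLE loops of a single color) such alternating chains must be separated by some $\eta \in \Sigma$ close to $\tilde{\eta}^n$. The precise topological conversion between cluster chains and separating loops is handled by the technical lemmas in Appendix \ref{sec:app-loops}.

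The principal obstacle is the second direction: a macroscopic discrete monochromatic region could a priori arise from intricate microscopic color patterns with no continuum counterpart, and the strict inequality $2+\beta_3 > 2$ is precisely what excludes this pathology and simultaneously bounds the expected number of $\Sigma_{\sigma^n}^\pm$ loops of diameter greater than $\delta$ by a constant $C(\delta)$ uniformly in $n$. Finally, the fact that the two limits coincide in the two coordinates will follow by observing that $\Sigma_{\sigma^n}^+$ and $\Sigma_{\sigma^n}^-$ differ only through microscopic pivotals in the coloring convention, whose contribution to $d_\mathcal{L}$ is again $o(1)$ by the same six-arm estimate applied at microscopic scale.
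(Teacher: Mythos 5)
Your first direction (continuum loops approximated from the discrete side) tracks the paper's Step~1 reasonably well, although the paper needs two distinct tools for the two sides of~$\eta$: Corollary~\ref{cor:msw-loop-approx} on the inner side and Proposition~\ref{prop:msw-interface-approx} on the outer side, together with a case distinction on whether $\eta$ lies in~$\Xi^1_{RB}$ or in some~$\Xi^n_{RB}$ with~$n>1$; your ``two applications of Corollary~\ref{cor:msw-loop-approx} after a conformal change of coordinates'' glosses over this, but that is a minor imprecision.

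The decisive gap is in your second direction. You claim that if a macroscopic discrete fuzzy Potts loop~$\tilde{\eta}^n$ does \emph{not} touch a red and a blue FK cluster of macroscopic diameter on its two sides, then a six-arm $RBRBRB$ event must occur somewhere, and you cite Proposition~\ref{prop:6-almost-arm-bound}. This implication is false. A fuzzy Potts loop crossing an annulus produces an alternating $RBRB$ four-arm pattern (and a six-arm event only if the loop crosses the same annulus three or more times), and the absence of macroscopic FK clusters alongside the loop is simply not encoded by any alternating arm event: a macroscopic monochromatic region pieced together from microscopic FK clusters does not create six alternating arms anywhere. This is exactly the pathology the theorem is designed to exclude, and it cannot be excluded by arm-exponent considerations alone. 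The paper's Step~2 uses an entirely different, \emph{positive} ingredient: Theorem~\ref{thm:large-cluster-chains} from \cite{duminil2021planar}, which asserts that with probability close to one every annulus $B_{\rho/2}(z)\setminus B_{\rho/4}(z)$ is circled by a chain of $\omega^n$-clusters each of diameter $>\alpha$. A fuzzy Potts interface crossing such an annulus must slip between two consecutive clusters of the chain, and can only do so if those two clusters have opposite colors; that is what produces the macroscopic red and blue clusters at graph distance one. The paper also needs a preliminary estimate (the event $B^{\epsilon,\rho}_n$) showing that macroscopic interfaces leave a $\rho$-neighborhood of~$\partial D$, which you omit but which is needed before the cluster-chain argument can be applied. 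Without the cluster-chain input or an equivalent replacement, your proof does not show that large discrete fuzzy Potts loops have continuum counterparts, and the theorem is not established.
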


By combining the two theorems above with \eqref{eq:coupling-colors}, one directly obtains the joint (distributional) convergence of the FK percolation and fuzzy Potts loop encodings.

The idea of the proofs is to approximate the continuum fuzzy Potts interface
with the use of Proposition \ref{prop:msw-interface-approx} and Corollary
\ref{cor:msw-loop-approx}. It will then suffice to argue that chains of touching
continuum cluster boundaries are approximated by chains of touching discrete
cluster boundaries. The key input (also in the convergence of all discrete fuzzy
Potts interfaces in the next section) is that if clusters get very close in the
discrete then they in fact, touch each other. The following lemma is folklore
and is a standard consequence of the fact that the six-arm exponent of FK
percolation is larger than $2$ as stated in Corollary
\ref{cor:six-arms-FK-with-bc}.

\begin{lemma}
    \label{lem:fk-clusters-touch}
    For $0<\delta'<\delta$, let $G_n(\delta',\delta)$ be the event that whenever for some $z \in \C$ 
    with $B_{2\delta}(z) \subset D$, the annulus $B_\delta(z)\setminus B_{\delta'}(z)$ is crossed by two $\omega^n$-open clusters $C$ and $C'$ that are disjoint inside the annulus $B_\delta(z)\setminus B_{\delta'}(z)$, then $C$ and $C'$ contain vertices $v$ and $v'$ respectively that are nearest neighbors in $\mathcal D_n$. 
    For all $\delta>0$ we have that $\inf_n\P(G_n(\delta',\delta))\to 1$ as
    $\delta'\to 0$.
\end{lemma}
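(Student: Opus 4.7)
The strategy is a union bound on a polychromatic six-arm event, driven by Corollary \ref{cor:six-arms-FK-with-bc} and the fact that the alternating six-arm exponent strictly exceeds $2$.

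Suppose the bad event $G_n(\delta',\delta)^c$ occurs: there exist $z\in\C$ with $B_{2\delta}(z)\subset D$ and disjoint $\omega^n$-open clusters $C\neq C'$, both crossing the annulus $B_\delta(z)\setminus B_{\delta'}(z)$, with no pair of nearest-neighbor vertices $(v,v')\in C\times C'$. Since both clusters reach $\partial B_{\delta'}(z)$, they must contain vertices at mutual Euclidean distance at most $4\delta'$; pick such a pair $(v,v')$ at minimum distance $d_0$ and let $z_0$ denote its midpoint. The no-nearest-neighbor hypothesis forces $d_0\geq \sqrt{2}\,\epsilon_n$, so $d_0\in[\sqrt{2}\,\epsilon_n,\,4\delta']$.

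The central claim is that this configuration produces a polychromatic six-arm event centred at $z_0$ at scales from $\sim d_0$ up to $\sim\delta$: the two macroscopic crossings of $C$ and $C'$ contribute four primal arms (two per cluster) in the intermediate annulus around $z_0$ --- the required topological argument being that the two disjoint macro-arcs must interleave in the neighbourhood of the near-approach point, so each cluster leaves $B_{\delta/4}(z_0)$ along two distinct strands --- while the distinctness of $C,C'$ together with the buffer imposed by the no-nearest-neighbor condition gives two dual-open arms separating them. By the arm-separation tools of Section \ref{sec:arm-separation} (or the analogous FK results in \cite{duminil2021planar}), any such polychromatic six-arm event is dominated up to multiplicative constants by the alternating six-arm event $A_{100100}$. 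Combining Corollary \ref{cor:six-arms-FK-with-bc} with the mixing property (Corollary \ref{cor:mixing}) then yields the bound $\lesssim (d_0/\delta)^{2+\beta_1}$.

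A union bound over lattice positions and dyadic scales completes the proof. Covering $D$ by $O(r^{-2})$ balls of radius $r$ for each dyadic $r\in[\sqrt{2}\,\epsilon_n,\,4\delta']$, so that each $z_0$ lies within $r$ of a centre and the translated six-arm event at the centre is controlled by the same bound,
\begin{equation*}
\P(G_n(\delta',\delta)^c)\;\lesssim\; \sum_{r\text{ dyadic},\,r\leq 4\delta'} r^{-2}\,(r/\delta)^{2+\beta_1}\;=\;\delta^{-2-\beta_1}\sum_{r} r^{\beta_1}\;\lesssim\;\delta^{-2-\beta_1}(\delta')^{\beta_1},
\end{equation*}
the geometric sum being dominated by its largest term since $\beta_1>0$. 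The upper bound is uniform in $n$ and tends to $0$ as $\delta'\to 0$ with $\delta$ fixed, giving $\inf_n\P(G_n(\delta',\delta))\to 1$.

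The main obstacle is the extraction of the four primal arms in the six-arm step: each cluster has only a single global crossing of the macro-annulus, so one must argue carefully using the planar topology of two disjoint arcs sharing a near-approach point to show that each cluster contributes two primal arms (rather than one) to the annular picture at the intermediate scales around $z_0$. Beyond this, everything is a standard union-bound and arm-separation argument.
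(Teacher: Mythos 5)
Your overall strategy---extract a six-arm FK event from the bad configuration, apply Corollary~\ref{cor:six-arms-FK-with-bc}, and union-bound over positions and scales---is the right one, and your final bound agrees with the paper's. But the step you yourself flag as ``the main obstacle'' is a genuine gap, and in fact the claim there is false. Two disjoint arcs sharing a near-approach point need \emph{not} interleave: picture two roughly parallel strands of $C$ and $C'$ passing each other at minimal separation. In that configuration each cluster leaves a small neighbourhood of $z_0$ along a \emph{single} strand, so you do not get ``four primal arms (two per cluster)''; you get two primal arms. The required six arms come from the duals, not from extra primal strands: since $C$ and $C'$ are at lattice distance $\ge 2$, no primal edge joins them, so the dual boundary $\partial C$ and the dual boundary $\partial C'$ are disjoint sets of dual edges; each of $\partial C$ and $\partial C'$ contributes two dual-open arms (one on each side of the respective cluster), yielding $1,0,0,1,0,0$ going around the annulus. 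This is exactly the type $A_{100100}$ that Corollary~\ref{cor:six-arms-FK-with-bc} controls, so no arm-separation comparison between polychromatic six-arm events is needed (and in any case the tools of Section~\ref{sec:arm-separation} are for the fuzzy Potts model, not FK).

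Once you make that observation, the near-approach point $z_0$ and the dyadic multi-scale decomposition become unnecessary overhead: the $100100$ event already occurs in the \emph{original} annulus $B_\delta(z)\setminus B_{\delta'}(z)$, because $C$, $C'$ and their four dual boundary arcs all cross it. The paper's proof therefore goes directly to the inclusion $G_n(\delta',\delta)^c\subset\bigcup_z A_{100100}(z,\delta',\delta,n)$, reduces $z$ to a $\delta'$-lattice of $O((\delta')^{-2})$ centres, and applies Corollary~\ref{cor:six-arms-FK-with-bc} once at a single pair of scales, giving $(\delta')^{-2}(\delta'/\delta)^{2+\beta_1}\to 0$. Your dyadic sum $\sum_r r^{-2}(r/\delta)^{2+\beta_1}\lesssim\delta^{-2-\beta_1}(\delta')^{\beta_1}$ produces the same bound, but via an argument that is both longer and, as written, resting on a false topological assertion. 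Replace the ``interleaving/four primal arms'' claim by the ``$\partial C$ and $\partial C'$ are disjoint dual arcs'' claim and the proof becomes correct; better still, drop $z_0$ entirely and work in the original annulus.
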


\begin{proof}
    Let $A_\tau(z,\delta',\delta,n)$ be the event that we see an arm event of type
    $\tau=100100$ with respect to the percolation configuration $\omega^n$ in
    the annulus $B_\delta(z)\setminus B_{\delta'}(z)$. Then
    \begin{align*}
        G_n(\delta',\delta)^c\subset\bigcup_{z\in\C\colon B_{2\delta}(z)\subset
        D} A_\tau(z,\delta',\delta,n) \subset \bigcup_{z'\in \delta'(\Z+i\Z)\colon
        B_\delta(z')\subset D}
        A_\tau(z',2\delta',\delta/2,n)
    \end{align*}
    where for the second inclusion, we need to make the assumption
    $\delta'<\delta/4$. The first inclusion is immediate from the definition of
    $G_n(\delta',\delta)$ and the second one follows since if
    $B_{2\delta}(z)\subset D$ we can take $z'\in (\delta'(\Z+i\Z))\cap
    B_{\delta'}(z)$ and in that case $A_\tau(z,\delta',\delta,n)\subset
    A_\tau(z',2\delta',\delta/2,n)$ and $B_\delta(z')\subset D$. Therefore by
    Corollary \ref{cor:six-arms-FK-with-bc} we get
    \begin{align*}
        \P(G_n(\delta',\delta))\lesssim
        (\delta')^{-2}(\delta'/\delta)^{2+\beta_1} \to 0\quad\text{as
        $\delta'\to 0$}
    \end{align*}
    for any $\delta>0$ and the claim follows.
\end{proof}

The appearance of the condition $B_{2\delta}(z)\subset D$ is only to ensure that
the annuli are contained in the discrete domains for $n$ sufficiently large. The
next lemma is the analogous result for the fuzzy Potts model and will play a
role when we apply Lemma \ref{lem:technical-loop} and \ref{lem:technical-curve}.

\begin{lemma}
    \label{lem:no-dac-six-arms}
    For $0<\delta'<\delta$, let $G_n'(\delta',\delta)$ be the event that for
    each $z\in D$ with $B_{2\delta}(z)\subset D$, there exist at most five disjoint loop segments of loops in $\Sigma^+_{\sigma^n}$ crossing the annulus
    $B_\delta(z)\setminus B_{\delta'}(z)$. For all $\delta>0$ we
    have that $\inf_n\P(G'_n(\delta',\delta))\to 1$ as $\delta'\to 0$. The same statement holds with $\Sigma^-_{\sigma^n}$ in place of $\Sigma^+_{\sigma^n}$.
\end{lemma}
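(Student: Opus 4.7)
The plan is to mimic closely the proof of Lemma \ref{lem:fk-clusters-touch}, replacing the FK six-arm bound of Corollary \ref{cor:six-arms-FK-with-bc} by the fuzzy Potts six-almost-arm bound of Proposition \ref{prop:6-almost-arm-bound}. The key geometric observation I will need is that six disjoint loop-segment crossings of the annulus $B_\delta(z)\setminus B_{\delta'}(z)$ by loops in $\Sigma^+_{\sigma^n}$ force an alternating six-arm event for the fuzzy Potts model. Indeed, the loops of $\Sigma^+_{\sigma^n}$ bound the set $O^+_{\sigma^n}$, and the tile structure $X^+$ from Section \ref{sec:loop-encoding} encodes precisely the convention of weak blue / strong red paths. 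Six disjoint radial crossings partition the annulus into six angular sectors which alternate between being contained in $O^+_{\sigma^n}$ and its complement; inside each such sector one finds either a weak blue or a strong red arm from $\partial B_{\delta'}(z)$ to $\partial B_\delta(z)$. Rescaled to the lattice this is precisely the arm event $A_{RBRBRB}$ (or its cyclic rotation), which is contained in the almost-arm event $B_{RBRBRB}$ to which Proposition \ref{prop:6-almost-arm-bound} applies. The hypothesis $B_{2\delta}(z)\subset D$ ensures that the relevant annulus lies well inside $\mathcal{D}_n$ for $n$ large, so boundary loops play no role.

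With this reduction, the rest of the argument proceeds exactly as in Lemma \ref{lem:fk-clusters-touch}. I will discretize over the grid $\delta'(\Z+i\Z)$: for $\delta'<\delta/4$, every $z$ with $B_{2\delta}(z)\subset D$ admits a grid point $z'$ with $|z-z'|<\delta'$ such that $B_{\delta'}(z)\subset B_{2\delta'}(z')$ and $B_{\delta/2}(z')\subset B_\delta(z)$, so that failure of the event at $z$ forces its counterpart at $z'$ on the annulus $B_{\delta/2}(z')\setminus B_{2\delta'}(z')$. At each of the $O((\delta')^{-2})$ such grid points I will apply Proposition \ref{prop:6-almost-arm-bound}, transferring from the box measure to the domain $\mathcal{D}_n$ via the mixing property of Corollary \ref{cor:mixing}. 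A union bound then yields
\begin{align*}
    \P(G'_n(\delta',\delta)^c) \lesssim (\delta')^{-2}\cdot (\delta'/\delta)^{2+\beta_3} = (\delta')^{\beta_3}\,\delta^{-(2+\beta_3)}\;,
\end{align*}
which tends to $0$ as $\delta'\to 0$ for fixed $\delta>0$, uniformly in $n$.

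The corresponding statement for $\Sigma^-_{\sigma^n}$ will follow from the identical argument with the roles of strong and weak paths (equivalently of red and blue) interchanged, producing the arm event $A_{BRBRBR}$ which is again contained in an alternating six-almost-arm event (and Proposition \ref{prop:6-almost-arm-bound} is symmetric under $r\leftrightarrow 1-r$). The only point that requires some care is the geometric conversion of six loop-segment crossings into six alternating arms of the strong/weak type prescribed by Definition \ref{def:arm-event-fuzzy-Potts}; this is a routine check on the tiles $X^\pm$ and is not a substantive obstacle. The strictly positive exponent $\beta_3$ from Proposition \ref{prop:6-almost-arm-bound} provides all the quantitative slack needed to make the union bound work.
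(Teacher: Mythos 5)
Your proof is correct and takes essentially the same route as the paper: the paper's own proof of this lemma consists of a single sentence pointing to Lemma \ref{lem:fk-clusters-touch} with Proposition \ref{prop:6-almost-arm-bound} in place of Corollary \ref{cor:six-arms-FK-with-bc}, and your argument simply fills in the details (the geometric reduction of six disjoint loop-segment crossings to an $A_{RBRBRB}\subset B_{RBRBRB}$ almost-arm event, the discretization onto $\delta'(\Z+i\Z)$, and the union bound) faithfully and accurately. The one point you rightly flag as needing care — translating loop-segment crossings of the boundary of $O^\pm_{\sigma^n}$ into alternating strong/weak arms per Definition \ref{def:arm-event-fuzzy-Potts} — is indeed routine, and your remark on the $r\leftrightarrow 1-r$ symmetry correctly handles the $\Sigma^-$ case.
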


\begin{proof}
    The proof is virtually identical to the one of Lemma
    \ref{lem:fk-clusters-touch} except that we make use of Proposition
    \ref{prop:6-almost-arm-bound} instead of Corollary
    \ref{cor:six-arms-FK-with-bc}. We therefore omit the details here.
\end{proof}

We will also make use of a small deterministic lemma, the purpose of which is to
remove the need to consider boundaries of fillings of loops in Proposition
\ref{prop:msw-interface-approx} and Corollary \ref{cor:msw-loop-approx} so that
we can work with the loops directly.

\begin{lemma}
    \label{lem:remove-loop-exc}
    Consider $\eta\in C^s(\partial\D,\C)$ and suppose that the boundary of the
    unbounded connected component of $\C\setminus \eta(\partial\D)$ is a simple
    curve $\eta'$ which surrounds a point $z\in\C$. Consider $\epsilon>0$, then
    there exist distinct $s_0^-,s_0^+,\dots,s_{n-1}^-,s_{n-1}^+\in \partial\D$
    ordered in a counterclockwise way such that
    \begin{align*}
        \eta_{s_i^+}=\eta_{s_{i+1}^-}\in \eta'(\partial\D)\quad\text{for all
        $i< n$ (addition modulo $n$)}\;,
    \end{align*}
    $\widetilde{\eta}(\partial\D)\subset
    \eta'(\partial\D)+B_\epsilon(0)$ and such that $\widetilde{\eta}$ surrounds
    $z$ where $\widetilde{\eta}$ is the
	concatenation of $\eta\lvert_{(\!(s_0^-,s_0^+)\!)},\dots,
	\eta\lvert_{(\!(s_{n-1}^-,s_{n-1}^+)\!)}$ parametrized to be a function on
    $\partial\D$.
\end{lemma}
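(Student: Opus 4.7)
The plan is to decompose $\eta$ into an outer-boundary part tracing $\eta'$ together with a countable collection of excursions into the open filling, and then form $\widetilde{\eta}$ by deleting the large excursions. To set up: since $\eta'$ is a simple closed curve, by the Jordan curve theorem it bounds a topological closed disk $\overline{F}$ containing $z$, and $\eta(\partial\D) \subset \overline{F}$ because $\eta(\partial\D)$ is disjoint from the unbounded complementary component of itself. Fix a parametrization of $\eta$ and let $T = \eta^{-1}(\eta'(\partial\D))$, a closed subset of $\partial\D$. Its complement decomposes into a countable disjoint union of open arcs $(a_j,b_j)$, the excursions into the open filling. Note also that $\eta|_T$ surjects onto $\eta'(\partial\D)$, since $\eta'(\partial\D)\subset \eta(\partial\D)$.

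The heart of the argument is the topological claim that $\eta(a_j)=\eta(b_j)\in \eta'(\partial\D)$ for every excursion. If instead $\eta(a_j)\neq \eta(b_j)$, then $\eta([a_j,b_j])$ would contain a simple sub-arc $\alpha$ from $\eta(a_j)$ to $\eta(b_j)$ whose interior lies in the open filling, splitting $\overline{F}$ into two Jordan subdomains $\overline{F_1},\overline{F_2}$ whose boundaries each consist of $\alpha$ together with one of the two arcs of $\eta'$ between $\eta(a_j)$ and $\eta(b_j)$. The non-crossing property of $\eta$ (inherited from its definition as a uniform limit of simple closed curves) would then confine $\eta$ after the excursion to one subdomain, preventing it from covering the arc of $\eta'$ on the boundary of the other and contradicting the surjectivity of $\eta|_T$. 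I expect making this non-crossing and confinement argument rigorous to be the main obstacle, since $\eta$ can touch $\alpha$ at isolated points without crossing; handling this will require delicate topological analysis of $C^s$, likely leveraging the approximating simple curves together with a careful combinatorial tracking of which side of $\alpha$ the approximations lie on.

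A compactness argument then shows the set $S=\{j:\diam(\eta([a_j,b_j]))\geq \epsilon\}$ is finite: the arcs $(a_j,b_j)$ being disjoint in $\partial\D$ forces their lengths to tend to zero along any subsequence, and uniform continuity of $\eta$ then forces the corresponding image diameters to vanish, contradicting the lower bound $\epsilon$. Enumerate $S$ cyclically as $(a_{j_1},b_{j_1}),\ldots,(a_{j_n},b_{j_n})$ in counterclockwise order and set $s_i^-=b_{j_i}$, $s_i^+=a_{j_{i+1}}$ (indices modulo $n$). By the topological claim $\eta(s_i^+)=\eta(s_{i+1}^-)\in \eta'(\partial\D)$, and on each preserved arc the curve $\eta$ either lies on $\eta'$ or makes small excursions of diameter $<\epsilon$ starting on $\eta'$, giving $\widetilde{\eta}(\partial\D)\subset \eta'(\partial\D)+B_\epsilon(0)$. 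After shrinking $\epsilon$ if necessary so that $\epsilon<\dist(z,\eta'(\partial\D))$ (the conclusion for smaller $\epsilon$ implies it for larger), $\widetilde{\eta}$ lies in a neighborhood of $\eta'$ that avoids $z$ and is homotopic there to $\eta'$ by straight-line homotopies contracting each small excursion back to its basepoint, hence has the same nonzero winding number around $z$.
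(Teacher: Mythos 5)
Your plan hinges on the claim that every excursion $(a_j,b_j)$ of $\eta$ into the open filling satisfies $\eta(a_j)=\eta(b_j)$, and this claim is false for general $\eta\in C^s(\partial\D,\C)$. Take $\eta'=\partial\D$ (the paper also reduces to this case conformally) and let $\eta$ be the non-crossing closed curve that, starting at $1$, runs along the segment to $0$ and then to $i$, next traces the first-quadrant arc of $\partial\D$ clockwise from $i$ back to $1$, and finally traces all of $\partial\D$ counterclockwise from $1$ back to $1$. This is a uniform limit of simple loops (the clockwise-then-counterclockwise retrace of the first-quadrant arc is the usual ``there and back'' degeneration, and the remaining self-incidences at $1$ and $i$ are tangential touches), its winding number is $0$ or $1$ everywhere off its image, and since $\eta(\partial\D)\supset\partial\D$ the outer boundary is $\partial\D$. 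Yet the unique maximal excursion into the open disk has $\eta(a)=1\neq i=\eta(b)$. Your confinement argument fails here because after exiting the filling at $i$ the curve passes between $\overline{F_1}$ and $\overline{F_2}$ through the endpoints $1,i$ of the chord $\alpha$ while remaining on $\partial\D$; non-crossing does not confine a curve to one side of a chord whose endpoints lie on the domain boundary. Consequently your $\widetilde{\eta}$ need not close up: deleting the one large excursion in the example leaves a single arc of $\eta$ from $i$ to $1$ with mismatched endpoints.

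The paper's proof takes a genuinely different route precisely to avoid this. After reducing to $\eta'=\partial\D$, it produces a pair $t_-,t_+$ with $\eta_{t_-}=\eta_{t_+}$ such that $\eta$ restricted to the complementary closed arc $\{t_\pm\}\cup(\!(t_+,t_-)\!)$ still maps onto all of $\partial\D$, then deletes the arc $(\!(t_-,t_+)\!)$ and iterates finitely often. In the example this removes the excursion \emph{together with} the clockwise retrace, cutting at the next return of $\eta$ to $1$ rather than at the exit point $i$, and leaving the final full counterclockwise traversal as $\widetilde{\eta}$. The existence of such a cuttable self-touching is established by a quantitative combinatorial statement for genuinely injective loops visiting a finite family of disjoint boundary balls, and is then passed to the limit using the $C^s$ approximation; the non-crossing structure is exploited through the approximating simple curves, not by a topological confinement argument on the limit curve. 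So the correct arcs to excise are not the filling-excursions of $T=\eta^{-1}(\eta'(\partial\D))$, and the endpoint matching you need has to come from the combinatorics of the approximations rather than being asserted directly. The compactness step and the concluding winding-number argument are sound ideas, but both sit downstream of the missing claim.
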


\begin{proof}
    By applying a conformal transformation to the domain enclosed by $\eta'$
    (which extends continuously to the boundary of the domain) we may assume
    that $\eta'$ parametrizes $\partial\D$. Therefore $\eta(\partial\D)\subset
    \overline{\D}$ and $\eta(\partial\D)\cap\partial\D =\partial\D$.

	Suppose without loss of generality that $\lvert\eta_1\rvert\le 1-\epsilon$. It suffices
    to find $t_-,t_+\in \partial\D$ such that $t_-,1,t_+$ are oriented
    counterclockwise and such that $\eta_{t_-}=\eta_{t_+}$ as well as $\eta(
    \{t_\pm\}\cup (\!(t_+,t_-)\!) )\cap\partial\D=\partial\D$. We can therefore
    cut out an excursion from $\eta$ without changing the outer boundary.
    Iterating this procedure a finite number of times yields the lemma.

    By the definition of $C^s(\partial\D,\C)$ as uniform limits of simple
    curves, it suffices to prove the following quantitative version for simple
    curves: Consider disjoint balls $B_i := B_{r_i}(x_i)$ for $i=0,\dots,N-1$ centered
    on points $x_0,\dots,x_{N-1}\in \partial \D$ (ordered counterclockwise), let
    $\eta$ be a simple counterclockwise oriented curve in $\D$ intersecting
    $B_{r_i}(x_i)$ for all $i$, and let $I_0,\dots,I_{m-1}\in \{0,\dots,N-1\}$
	be an order in which the balls are visited by $\eta$, i.e.\ there are $s_0,\ldots,s_{m-1} \in \partial \D$ (distinct and ordered counterclockwise) such that $\eta_{s_j} \in B_{I_j}$ for all $j$ and such that $\eta((\!(s_j,s_{j+1})\!))$ intersects $B_i$ only if $i \in \{I_j,I_{j+1}\}$. Then there exist $1\le k \le m-1$ such
	that $I_k = I_0 + 1$ (modulo $N$) and such that $\{ I_0, I_k, I_{k+1}, \dots, I_{m-1} \} = \{ 0, \dots, N-1 \}$. Showing this is a straightforward combinatorial exercise and omitted here.
	
	To obtain the claim about functions in $C^s(\partial\D, \C)$, let $\eta$ be as
	before and suppose that $\eta^n$ are simple curves uniformly approximating
	$\eta$. Let $B_i$ be arbitrary (as above) and write $s^n_0,\dots,s^n_{m_n - 1}$ and
	$I^n_0,\dots,I^n_{m_n - 1}$ for the points and indices associated to $\eta^n$.
	Without loss of generality, we may assume that $1 \in (\!(s^n_0,s^n_1)\!)$ for
	all $n\ge 1$ (dropping some initial elements of the sequence $(\eta^n)$ if
	necessary). We let $k_n$ be then associated to $\eta^n$ as before. Then
	\begin{align*}
		\eta^n_{s^n_0} \in B_{I^n_0} \; , \quad \eta^n_{s^n_{k_n}} \in B_{I^n_0 + 1} \quad
		\text{and} \quad \eta^n( (\!( s^n_{k_n}, s^n_0 )\!) ) \cap B_i \neq \emptyset 
		\ \forall i \; .
	\end{align*}
	The sequences $(I^n_0)$, $(s^n_0)$ and $(s^n_{k_n})$ have convergent
	subsequences so this property passes to the limit as $n \to \infty$ to ensure that there are
	distinct $s,s' \in \partial\D$ and $I_0 \le N-1$ such that
	\begin{align*}
		\eta_s \in \overline{B}_{I_0} \; , \quad \eta_{s'} \in \overline{B}_{I_0 + 1}
		\quad \text{and} \quad \eta^n( (\!( s',s )\!) ) \cap \overline{B}_i \ \forall
		i \; .
	\end{align*}
	The claim now follows by using this result with balls $B_{1/(n')^2} (e^{2 \pi i
		\cdot j / n'})$ for $j=0,\dots,n'-1$ and using another compactness argument in the
	$n' \to \infty$ limit.
\end{proof}

\subsection{Convergence of divide and color interfaces}

The proof of Theorem \ref{thm:interface-convergence} is now rather
straightforward by combining all the ingredients we have collected in the lemmas
above. In the proofs, the reason why certain inclusions only hold when $n$ is
sufficiently large (rather than for all values of $n$) is that the outer and
inner boundaries of FK clusters are themselves not open paths but are a distance given
by at most the mesh size of the lattice away from an open path in the
percolation configuration. Another effect which results in some inclusions only
holding for $n$ sufficiently large is the need to consider approximations where
the boundary curve of the discrete domain is sufficiently close to the continuum
one.

Let us point out that the assumption that boundary clusters converge
to boundary clusters in Conjecture \ref{conj:fk-to-cle} is needed in the proof
below to ensure that certain discrete paths end on, rather than only close to
the discrete boundary. In the case of domains with piecewise linear boundaries,
this property can also be deduced from the fact that  the alternating boundary three-arm event with
type sequence $010$ for FK percolation has exponent $>1$ (see Corollary \ref{cor:six-arms-FK-with-bc}).

\begin{proof}[Proof of Theorem \ref{thm:interface-convergence}]
	Fix $\epsilon>0$. By the SDE description of the marginal law of $\gamma^{a,b}$ (see Remark \ref{rk:interface-def}), the interior of $\gamma^{a,b}([0,1])\cap \partial D$ is almost surely empty. Therefore, by Lemma \ref{lem:technical-curve} and
    \ref{lem:no-dac-six-arms}, it suffices to show that the probability of the
    event
    \begin{align*}
        A_n :=\{\gamma^+_{\sigma^n,a_n,b_n}([0,1])
        \subset \gamma^{a,b}([0,1])+B_\epsilon(0) ,
        \gamma^-_{\sigma^n,a_n,b_n}([0,1])
        \subset \gamma^{a,b}([0,1])+B_\epsilon(0) \}
    \end{align*}
    tends to $1$ as $n\to \infty$. Let $I_B$ (resp.\ $I_R$) be the open
    counterclockwise (resp.\ clockwise) boundary segment from $a$ to $b$ along
    $\partial D$. By combining Proposition \ref{prop:msw-interface-approx}
    (together with the observations in Remark \ref{rk:small-boundary-clusters})
    and Lemma \ref{lem:remove-loop-exc} we get the following almost sure
    statement for any fixed $\epsilon'>0$.

    Suppose that $0\le s<t\le 1$ is such that $\gamma^{a,b}_s,\gamma^{a,b}_t\in
	I_B$ and $\gamma^{a,b}( (s,t))\cap I_B=\emptyset$; we call $\gamma^{a,b}\rvert_{(s,t)}$ a right boundary excursion of $\gamma^{a,b}$.
    
    Then there exists
    $\eta^1,\dots,\eta^m\in \Gamma^{OB}$ (oriented clockwise by convention) and
    $s_1^\pm,\dots,s_m^\pm\in \partial\D$ such that the concatenation of
	$\eta^1\rvert_{(\!(s_1^-,s_1^+)\!)},\dots,\eta^m\rvert_{(\!(s_m^-,s_m^+)\!)}$ defines
    a continuous curve $\widetilde{\gamma}$, viewed as a function on the
    interval $[0,1]$, with the following properties.
    \begin{itemize}
        \item The curve $\widetilde{\gamma}$ lies right of $\gamma^{a,b}$.
        \item We have $\widetilde{\gamma}_0 \in B_{\epsilon'}(\gamma^{a,b}_s)$,
            $\widetilde{\gamma}_1 \in B_{\epsilon'}(\gamma^{a,b}_t)$ and
            $\widetilde{\gamma}([0,1]) \subset
            \gamma^{a,b}([s,t])+B_{\epsilon'}(0)$.
        \item It holds that $\diam(\eta^1)<\epsilon'$,
            $\diam(\eta^m)<\epsilon'$.
        \item For all $i<m$ we have $\eta^i_{s^+_i}=\eta^{i+1}_{s^-_{i+1}}\notin
            \partial D$ and $\eta^1_{s^-_1},\eta^m_{s^+_m}\in \partial D$.
    \end{itemize}
    Note that Proposition \ref{prop:msw-interface-approx} was stated in the case
    $D=\D$ but the above statements follow since conformal transformations from
    the unit disk to Jordan domains extend to homeomorphisms on the closures of
	the domains. Let $0<s_1<t_1<\cdots<s_k<t_k<1$ be the times such that $\gamma^{a,b}\rvert_{(s_i,t_i)}$ are all the right boundary excursions for which
    $\gamma^{a,b}( (s_i,t_i))+B_{\epsilon/2}(0)$ does not contain the
    connected component of $D\setminus \gamma^{a,b}( (s_i,t_i) )$ which lies
    right of $\gamma^{a,b}$. From Lemma \ref{lem:fk-clusters-touch} and
    \eqref{eq:coupling-colors},
    it follows that for each
    $\epsilon''>0$, the probability of the following event tends to $1$ as $n\to
    \infty$: For each $1\le i\le k$ there exists a blue strong path
    $\nu^{n,i}_B$ in $\mathcal{D}_n$ from $u^{n,i}_B\in \partial\mathcal{D}_n
    \cap B_{\epsilon''/2}(\gamma^{a,b}_{s_i})$ to $v^{n,i}_B\in
    \partial\mathcal{D}_n\cap B_{\epsilon''/2}(\gamma^{a,b}_{t_i})$ such that its
    image is contained in $\gamma^{a,b}( (s_i,t_i) )+ B_{\epsilon''/2}(0)$. In
    fact, by construction, the path $\nu^{n,i}_B$ is a concatenation of a finite number
    of blue percolation cluster outer boundary curves.

    When $1\le i\le k+1$, let us write $\widetilde{\nu}^{n,i}_B$ for
    the simple curve parametrizing the counterclockwise boundary segment of
    $\mathcal{D}_n$ from $v^{n,i-1}_B$ to $u^{n,i}_B$ where $v^{n,0}_B=a_n$ and
    $u^{n,k+1}_B=b_n$.
    
    Then $\widetilde{\nu}^{n,i}_B$ is at distance
    $<\epsilon''/2$ with respect to $d_{\mathcal{C}'}$ to the counterclockwise
    boundary segment from $v^{i-1}_B$ to $u^i_B$ in $\partial D$ for $n$
    sufficiently large where
    \begin{align*}
        v^0_B=a\;,\quad u^i_B = \gamma^{a,b}_{s_i}\;,\  v^i_B =
        \gamma^{a,b}_{t_i}\quad\text{for $1\le i\le k$}\;,\quad v^{k+1}_B
        = b\;.
    \end{align*}
    The analogous statement holds when $B$ and $R$, `clockwise' and
    `counterclockwise', `blue' and `red', as well as `left' and `right' are
    interchanged in the statements above. Let us define $\nu^n_B$ to be the
    concatenation of the paths
    \begin{align*}
        \widetilde{\nu}^{n,1}_B, \nu^{n,1}_B, \dots
        \widetilde{\nu}^{n,k}_B, \nu^{n,k}_B, \widetilde{\nu}^{n,k+1}
    \end{align*}
    and define $\nu^n_R$ analogously. The claim now follows from the fact that
    $\gamma^\pm_{\sigma^n,a_n,b_n}$ is `sandwiched' in between $\nu^n_R$ and
    $\nu^n_B$, i.e.\ $\gamma^\pm_{\sigma^n,a_n,b_n}$ has to lie left of
	$\nu^n_B$ and right of $\nu^n_R$.
\end{proof}

\subsection{Limit of divide and color loops}

The proof of Theorem \ref{thm:all-loops-converge} will be rather similar to the
one in the previous section. The significant difference is that we need to
exclude the possibility of `small' FK clusters (which disappear in the scaling
limit) agglomerating into large fuzzy Potts clusters which need to be considered
when describing the scaling limit of the fuzzy Potts model. This will be
achieved by arguing that each discrete fuzzy Potts interface passes in between
two macroscopic FK clusters of differing colors and hence has to approximate the
continuum fuzzy Potts interface passing in between the scaling limit of the
macroscopic clusters.

\begin{proof}[Proof of Theorem \ref{thm:all-loops-converge}]
    We present the proof for the collection $(\Sigma_{\sigma^n}^+)$. The proof
    for $(\Sigma_{\sigma^n}^-)$ is analogous. 
    
	\emph{Step 1:  Let $A^\epsilon _n$ be the event that for each $\eta\in \Sigma$ with diameter
		$>\epsilon$, there exists $\eta'\in \Sigma_{\sigma^n}^+$ such that
		$d_\mathcal{C}(\eta,\eta')<\epsilon$. Then for every $\epsilon >0$, we have  $\P(A^\epsilon_n)\to 1$  as $n\to \infty$.}
	 
	By the construction in Section \ref{sec:cle-perco}, the following statement
    holds almost surely for any $\epsilon'>0$. Suppose that $\eta\in \Xi^n_{RB}$
    surrounds a point $z\in D$ (the case of $\eta\in \Xi^n_{BR}$ is analogous).
    Then by Corollary \ref{cor:msw-loop-approx} and Lemma
    \ref{lem:remove-loop-exc} there exists $\eta^1,\dots,\eta^m\in \Gamma^{OB}$
    oriented counterclockwise and $s_1^\pm,\dots,s^\pm_m\in \partial\D$ such
	that the concatenation of $\eta^1\rvert_{(\!(s_1^-,s_1^+)\!)},\dots,
	\eta^m\rvert_{(\!(s_m^-,s_m^+)\!)}$ defines a loop $\widetilde{\eta}_B$ surrounding
    $z$ such that $\widetilde{\eta}_B(\partial\D)\subset
    \eta(\partial\D)+B_{\epsilon'}(0)$, the loop $\eta$ surrounds $\widetilde{\eta}_B$, and such that we have
    \begin{align*}
        \eta^i_{s_i^+}=\eta^{i+1}_{s_{i+1}^-}\notin \partial D\quad \text{for
        all $1\le i\le m$}
    \end{align*}
    where addition in the indices is understood modulo $m$. To approximate
    $\eta$ from the outside, we need to consider two distinct cases. If $\eta$
    is an outermost continuum fuzzy Potts loop, i.e.\ $\eta\in \Xi^1_{RB}$
    (oriented counterclockwise) and $s,t\in \partial\D$ are such that
    $\eta_s,\eta_t\in \partial D$ and $\eta( (\!(s,t)\!) )\subset D$ then one
	can approximate $\eta\lvert_{(\!(s,t)\!)}$ from the outside just like in
    the proof of Theorem \ref{thm:interface-convergence} by making use of
    Proposition \ref{prop:msw-interface-approx}. Suppose now that $\eta\in
    \Xi^n_{RB}$ for $n>1$ (again, the case $\eta\in \Xi^n_{BR}$ is the same).
    Let $\eta'$ denote the curve parametrizing the connected component of
    $\cup_{\eta\in \Upsilon^{n-1}_R} \eta^o$ containing $\eta$. There are two
    cases which are possible in the construction in Section \ref{sec:cle-perco}.
    \begin{itemize}
        \item The curve $\eta'$ is the boundary of a bounded connected
            component of a loop $\eta''\in \Xi''^{(n-1)}_{BR}\cup
            \Xi''^{(n-1)}_{RR}\subset \Gamma^{IR}$. We write $S=\{\eta''\}$ in
            this case.
        \item The curve $\eta'$ is the boundary of a bounded connected component
            of a loop in $\Xi'^{*(n-1)}_{BR}\cup \Xi'^{*(n-1)}_{RR}$. Note that
            these are the false loops of $S:=\Xi'^{(n-1)}_{BR}\cup
            \Xi'^{(n-1)}_{RR}\subset \Gamma^{OR}$ and so a path which is a
            concatenation of segments of false loops also gives a path which is
            a concatenation of segments of true loops.
    \end{itemize}
    Lemma \ref{lem:remove-loop-exc} was stated in the case of boundaries of
    unbounded connected components but (by applying an inversion in a point) it
    extends to the case of bounded connected complementary components. Combining
    the lemma with Proposition \ref{prop:msw-interface-approx} yields that there
    exists $\eta^1,\dots,\eta^m\in \Gamma^{OR}\cup S\subset \Gamma^R$ and
    $s_1^\pm,\cdots,s_m^\pm$ such that the concatenation of
	$\eta^1\lvert_{(\!(s_1^-,s_1^+)\!)},\dots, \eta^m\rvert_{(\!(s_m^-,s_m^+)\!)}$ defines
    a loop $\widetilde{\eta}_R$ surrounding $\eta$ such that
    $\widetilde{\eta}_R(\partial\D)\subset \eta(\partial\D)+B_{\epsilon'}(0)$.
    Therefore the continuum interface is `sandwiched' in between
    $\widetilde{\eta}_R$ and $\widetilde{\eta}_B$ and one concludes that
    $\P(A^\epsilon_n)\to 1$ as $n\to \infty$ in the same way as in the proof of
    Theorem \ref{thm:interface-convergence}.

	\emph{Step 2: Let $G_n^{\epsilon,\alpha}$ be the event that each $\eta\in \Sigma^+_{\sigma^n}$ of 
	diameter $>\epsilon$ touches a red cluster with diameter $>\alpha$ and a blue cluster with 
	diameter $>\alpha$ with respect to the underlying FK percolation configuration $\omega^n$ 
	such that these clusters have graph distance $1$. Then for every $\epsilon >0$, we have 
	$\liminf_{n\to\infty} \P(G_n^{\epsilon,\alpha})\to 1$ as $\alpha\to 0$. }
    
    First, we show that fuzzy Potts interfaces do not stay near the boundary.
    For $\rho>0$ we define $D_{\rho}=\{z\in D\colon d(z,\partial D)>\rho\}$ and
    let $B^{\epsilon,\rho}_n$ denote the event that each $\eta\in \Sigma_{\sigma^n}^+$ with
    diameter $>\epsilon$ intersects $D_\rho$. We claim that
    $\liminf_{n\to\infty}\P(B^{\epsilon,\rho}_n)\to 1$ as $\rho\to 0$. One can prove
    this using the crossing estimates from Theorem \ref{thm:strongRSW} but the
    details are involved and we choose to derive it directly from the scaling
    limit assumption \eqref{eq:coupling-colors} here.
	 To this end, fix a conformal transformation $\varphi\colon D\to \D$ which then extends
    continuously to the boundary. Then since $\Gamma$ is locally finite and the
    set of points in $\partial D$ which lie on a loop in $\Gamma$ is dense in
    $\partial D$, the following statement holds almost surely: For each
    $\epsilon'>0$ there exist $0<\rho'<\rho''<1$ such that for each boundary
    segment of $\partial\D$ of length $\epsilon'$ there exists $\eta\in
    \Gamma^{OR}\cap \Gamma^\partial$ such that $\diam (\varphi\circ\eta)<\rho''$
    with the property that $\eta$ intersects $\varphi^{-1}((1-\rho')\partial\D)$.
    By \eqref{eq:coupling-colors}, these loops in $\Gamma^{OR}\cap
    \Gamma^\partial$ are limits of loops in $\Gamma^{OR}_{\omega^n,\sigma^n}\cap
    \Gamma_{\omega^n}^\partial$. The analogous statement holds for $\Gamma^{OB}\cap
    \Gamma^\partial$ and $\Gamma^{OB}_{\omega^n,\sigma^n}\cap
    \Gamma_{\omega^n}^\partial$. The claim now follows from the continuity of
    the extension of $\varphi$ and the fact that fuzzy Potts interfaces do not
    cross any loop in $\Gamma^{OR}_{\omega^n,\sigma^n}\cap
    \Gamma_{\omega^n}^\partial$ or in $\Gamma^{OB}_{\omega^n,\sigma^n}\cap
    \Gamma_{\omega^n}^\partial$. 
	
	Second, we show that paths cannot avoid macroscopic FK clusters.
    Let $\rho$ be as before and by decreasing it further, assume that
    $\rho<\epsilon$.
	We define the event $H^{\rho,\alpha}_n(z)$ as follows: There is an
    $\omega^n$-clusters chain, each cluster having diameter $>\alpha$, such that
    there is a loop following the annulus $B_{\rho/2}(z)\setminus B_{\rho/4}(z)$
    which only uses vertices in the aforementioned percolation clusters. From
    Theorem \ref{thm:large-cluster-chains}, we deduce that the event
    $H^{\rho,\alpha}_n(z)$ has probability tending to $1$ as $\alpha\to 0$
    uniformly in $n$ and in $B_\rho(z)\subset D$.
    Let us now take $z_1,\dots,z_m\in D_\rho$ such that $\cup_i
    B_{\rho/4}(z_i)\supset D_\rho$. Then the probability of the event
    $H^{\rho,\alpha}_n :=\cap_i H^{\rho,\alpha}_n(z_i)$ also tends to $1$ as
    $\alpha\to 0$ uniformly in $n$. 

	Finally, note that on the event $B^{\epsilon,\rho}_n \cap
    H^{\rho,\alpha}_n$, any $\eta\in \Sigma_{\sigma^n}^+$ of diameter
    $>\epsilon$ intersects $D_\rho$, therefore intersects $B_{\rho/4}(z_i)$
    for some $i\le m$ and hence crosses the annulus $B_{\rho/2}(z_i)\setminus
    B_{\rho/4}(z_i)$ since we assumed that $\rho<\epsilon$. By combining the 
    previous two arguments, we therefore deduce the claim that
    $\liminf_{n\to\infty}\P(G_n^{\epsilon,\alpha})\to 1$ as $\alpha\to 0$. 
    
    \emph{Step 3: Let $\eta_1,\dots,\eta_m$ denote the loops in $\Gamma_{\omega^n}$ of diameter $>\alpha$. Define $F_n^{\alpha,\alpha'}$ to be the event that there exist distinct loops $\eta_1',\dots,\eta_m'\in \Gamma$ of
    	diameter $>\alpha$ with the following properties: 
    	\begin{itemize}
		        \item It holds that $d_\mathcal{C}(\eta_i,\eta_i')<\alpha'$ for every $i\le m$.
		        \item We have $\eta_i\in \Gamma^{OR}_{\omega^n,\sigma^n}$ if and only
		            $\eta'_i\in \Gamma^{OR}$, and similarly for $\Gamma^{OB}$,
		            $\Gamma^{IR}$ and $\Gamma^{IB}$.
		        \item If the loops $\eta_i$ and $\eta_j$ are at distance $\le
		            \epsilon_n$ from each other then $\eta_i'$ and $\eta_j'$ touch
		            each other at a point.
		\end{itemize}
	    Then for every $\alpha,\alpha'>0$, $\P(F^{\alpha,\alpha'}_n) \to 1$ as as $n\to \infty$.}
    
    	This follows readily from \eqref{eq:coupling-colors}. 

    \emph{Step 4: Conclusion}
    
    Fix $\epsilon >0$. As a consequence of the previous three steps, the probability of the event $G_n^{\epsilon,\alpha} \cap F_n^{\alpha,\alpha'} \cap A_n^{\alpha'}$ can be made arbitrarily close to $1$ by choosing $\alpha, \alpha', n$ in this order. We conclude by arguing that on this event, $\Sigma_{\sigma_n}^+$ and $\Sigma$ are close with respect to the metric $d_{\mathcal L}$. 
    
    Let $\eta$ be a loop in $\Sigma_{\sigma_n}^+$ of diameter $>\epsilon$ and consider $\eta_i \in \Gamma_{\omega^n,\sigma^n}^{OR}\cup\Gamma_{\omega^n,\sigma^n}^{IR}$ and $\eta_j \in \Gamma_{\omega^n,\sigma^n}^{OB}\cup\Gamma_{\omega^n,\sigma^n}^{IB}$ of diameter $>\alpha$ such that $\eta_i$ and $\eta_j$ are at distance $\le \epsilon_n$ from each other and such that one is on the inside and the other one is on the outside of $\eta$. The corresponding loops $\eta'_i \in \Gamma^{OR}\cup\Gamma^{IR}$ $\eta'_j \in \Gamma^{OB}\cup\Gamma^{IB}$ touch each other and so there is a unique loop $\eta' \in \Sigma$ passing between them. We note that $\eta'$ must have diameter $>\alpha$ and we set $\pi(\eta)=\eta'$. This defines a map from the loops in $\Sigma_{\sigma_n}^+$ of diameter $>\epsilon$ into $\Sigma$ and it is now sufficient to check that $d_{\mathcal{C}}(\eta,\pi(\eta)) < \alpha'$ for every loop $\eta \in \Sigma_{\sigma_n}^+$, that $\pi$ is injective and that every loop in $\Sigma$ of diameter $>\epsilon + \alpha'$ is contained in the image of $\pi$. Indeed, this follows from the definition of the events $G_n^{\epsilon,\alpha}$, $F_n^{\alpha,\alpha'}$ and $A_n^{\alpha'}$ together with the key observation that whenever one considers two FK percolation
    clusters of opposite colors that are graph distance $1$ away from each
    other, there is a unique discrete fuzzy Potts loop passing between the
    two. 
\end{proof}

\begin{remark}
    When $r=1/q$, we can also prove a version of the theorem above in the case
    when we condition on each vertex in $\partial\mathcal{D}_n$ being red.
    Indeed, let us sample $\sigma^n\sim \mu^0_{\mathcal{D}_n,q,r}$ conditioned
    on $\sigma^n_v=R$ for all $v\in \partial\mathcal{D}_n$. Then, assuming
    Conjecture \ref{conj:fk-to-cle} for the value $q$, we have
    $(\Sigma^+_{\sigma^n},\Sigma^-_{\sigma^n})\to (\Sigma',\Sigma')$ in
    distribution where $\Sigma'$ is a nested $\CLE_\kappa$ in $D$.

    The key observation is that one can equivalently sample $\sigma^n$ by
    starting with $\omega^n \sim \mu^1_{\mathcal{D}_n,q}$ (i.e.\ we consider a
    measure with wired boundary conditions) and by coloring each boundary cluster
    in red and all others independently in red or blue with respective
    probabilities $r$ and $1-r$ to obtain the configuration $\sigma^n$. The
    proof of the result then goes through without changes and by making use of
    Theorem \ref{thm:msw-wired}.

    In the particular case of the FK-Ising model where $q=2$ and hence
    $\kappa=3$ this statement is unconditional by \cite{smirnov-ising,
    kemp-smirnov-fk-bdy,kemp-smirnov-fk-full} and it describes the scaling limit
    of the Ising model with `plus boundary conditions'. This result has already
    been established in \cite{benoist-hongler-cle3} using rather different
    techniques (in particular without going via the Edwards-Sokal coupling) and
    building on the convergence results
    \cite{chelkak-ising-sle,chelkak-smirnov-ising-universal} for Ising
    interfaces.
\end{remark}

\section{Continuum exponents via exploration paths}
\label{sec:continuum-exp}

In this section, we start with results on SLE interior and boundary arm exponents
for $\SLE_\kappa(0,\rho)$ and $\SLE_\kappa$ curves respectively as derived in
\cite{wu-ising-arm} and obtain the interior and boundary arm exponents for
$\SLE_\kappa(\kappa-6-\rho,\rho)$ curves. The novelty of the argument in this
section lies in the use of imaginary geometry (see Section \ref{sec:ig-lemmas})
to show that the additional force points do not affect the value of the
exponent.

We begin by recalling the relevant results on SLE exponents appearing in
\cite{wu-ising-arm}. In this paper, several exponents are computed using
stochastic analysis tools. Other papers where such SLE exponents are computed
using suitable SLE martingales are \cite{lsw-bm-exponents1,
lsw-bm-exponents2, lsw-bm-exponents3, beffara-dim, miller-wu-dim,
wu-poly-arm-exponent, wu-zhan-arm-exponent} and \cite{lsw-mono-arm, ssw-radii}.

We state the results in the unit disk rather than the upper halfplane since it
will be more convenient to transport the results in this setting.

For $r\in (0,2)$ we let $I_r\in \partial\D$ be such that $B_r(1)\cap \partial\D
= (\!(1/I_r,I_r)\!)$. Consider a random curve $\gamma$ from $-i$ to $i$ in $\D$
and consider parameters satisfying
\begin{gather*}
	x\in (\!(-i,i)\!)\;,R\in ( \abs{x + i} ,2)\;,\quad y\in (\!(i,-i)\!)\;,\quad
	0< r<\abs{y-i}\wedge (\abs{y+i}-R)\;,\\
    c>1\;,\quad c_0\in (0,1)\;.
\end{gather*}
Whenever we work with such parameters, we assume that the above relations are
satisfied. Write $\zeta_x= \inf\{t\ge 0\colon
\gamma_t\in (\!(i,x)\!)\}$ for the swallowing time of $x$. Set $\sigma_0=0$ and
for $\epsilon>0$ sufficiently small and $j\ge 1$ we make the following inductive
definition:
\begin{itemize}
	\item Let $\tau_j$  be the first time after $\sigma_{j-1}$ where $\gamma$
		hits the connected component of $\partial B_\epsilon(x)\setminus
		\gamma([0,\sigma_{j-1}])$ containing $xI_\epsilon$.
    \item Let $J$ be the connected component of $(\partial B_r(y)\cap
        \D)\setminus \gamma([0,\tau_1])$ containing $y(1-r)$; the curve $J$
        inherits the counterclockwise orientation of $\partial B_r(y)$. Let
        $\sigma_j$ be the first time after $\tau_j$ when $\gamma$ hits the most
        counterclockwise connected component of $J\setminus \gamma([0,\tau_j])$.
\end{itemize}
For $j\ge 1$ and $k\ge 1$ we define the events
\begin{align*}
	&B_{2j-1}(\gamma,x,\epsilon,y,r) = \{\tau_j<\zeta_x\}\;, \\
	& B_{2j}(\gamma,x,\epsilon,r,y) = \{ \sigma_j < \zeta_x \}\;,\\
	&G_B(\gamma,x,\epsilon,c,c_0,R) \hspace{5cm} \\ &\quad = \{\gamma([0,\tau_1])\subset B_{R}(-i)
    \,, \;
	\gamma([0,\tau_1])\cap ((\!(1/I_{c\epsilon},I_{c\epsilon})\!)
    +B_{c_0\epsilon}(0)) =\emptyset\}\;,\\
	&B'_k(\gamma,x,\epsilon,y,r,c,c_0,R) = B_k(\gamma,x,\epsilon,y,r) \cap
    G_B(\gamma,x,\epsilon,c,c_0,R)\;.
\end{align*}
We can now state
\cite[Proposition 3.3]{wu-ising-arm} on the exponents in the boundary case; the
result we state here is slightly weaker but will suffice for our purposes. We
note that on the event $B_k'(\gamma,x,\epsilon,y,r,c,c_0,R)$ we automatically
have $J=\partial B_r(y)\cap \D$ in agreement with the statement of the
proposition in \cite{wu-ising-arm}.

\begin{thm}[\cite{wu-ising-arm}]
    \label{thm:wu-boundary}
	Let $\kappa\in (0,4)$, $\rho \in (-2,\kappa/2-2)$ and for $j\ge 1$ define
	\begin{align*}
		\alpha^+_{2j-1} &= (2j+\rho)(2j+\rho+2-\kappa/2)/\kappa \;,\\
		\alpha^+_{2j} &= 2j(2j+\kappa/2-2)/\kappa \;.
	\end{align*}
	Consider $\gamma \sim \SLE_\kappa(0,\rho)$ from $-i$ to $i$ in $\D$. For
    $k\ge 1$ there exist constants $y$, $r$, $c$, $c_0$ and $R$ such that
	\begin{align*}
        \epsilon^{\alpha^+_k}\asymp \P( B'_k(\gamma,1,\epsilon,y,r,c,c_0,R) )
	\end{align*}
    for all $\epsilon >0$ sufficiently small.
\end{thm}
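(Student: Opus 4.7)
The plan is to prove this via the standard SLE martingale/Girsanov technique, working with the Loewner chain for $\gamma$. Via a conformal map sending $(-i,i,1,y)$ to $(0,\infty,x_0,y_0)$ in the upper halfplane, we may assume $\gamma\sim\SLE_\kappa(0,\rho)$ from $0$ to $\infty$ in $\H$ with force point at $0^+$, and target boundary point $x_0>0$. Let $W_t$ be the driving process, $V_t$ the force point image under $g_t$, and $O_t=g_t(x_0)$. Write $X_t=O_t-W_t$ and $Z_t=V_t-W_t$; these satisfy explicit SDEs driven by a standard Brownian motion $B_t$.

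For the odd case $k=2j-1$, I would search for a local martingale of the form
\begin{equation*}
    M_t \;=\; X_t^{a}\, Z_t^{b}\, (O_t-V_t)^{c}\, g_t'(x_0)^{h},
\end{equation*}
choose $(a,b,c,h)$ by equating the Itô drift to zero, and verify that the correct choice recovers $a+c+2h=\alpha_{2j-1}^+$. Weighting $\mathbb{P}$ by $M_{t\wedge \tau_j\wedge \zeta_{x_0}}/M_0$ produces (via Girsanov) an $\SLE$-type process whose driving term is pulled toward $O_t$; one then shows that under this weighted law, the curve hits the $\epsilon$-neighborhood of $x_0$ exactly $j$ times before swallowing it, with uniformly bounded Radon--Nikodym derivative on the event $B'_{2j-1}$. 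Evaluating $M_{\tau_j}\asymp \epsilon^{a+c+2h}$ on this event (and matching this against the weighted-probability lower bound of constant order) yields $\mathbb{P}(B_{2j-1}')\asymp\epsilon^{\alpha_{2j-1}^+}$.

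For the even case $k=2j$, the key input is Lemma \ref{lem:sle-excursion}: between a hit of $\partial B_\epsilon(x_0)$ and the subsequent return to the far arc near $y$, the curve performs an $\SLE_\kappa(0,\rho')$ excursion with $\rho'=\kappa-4-\rho$, in a new domain where the roles of the force point and the target are exchanged. Iterating, I obtain an alternation between $\SLE_\kappa(0,\rho)$ and $\SLE_\kappa(0,\rho')$ pieces. Applying the odd-case estimate to each half-arm and composing (with the mixing/Markov property controlling the transitions between pieces) produces
\begin{equation*}
    \alpha_{2j}^+ \;=\; \alpha_{1}^+(\rho) + \alpha_{1}^+(\rho') + \text{iterated corrections} \;=\; \frac{2j(2j+\kappa/2-2)}{\kappa},
\end{equation*}
where $\rho$-independence arises because $(2j+\rho)+(2j+\rho')=4j+\kappa-4$ and $(2j+\rho)(\cdot)+(2j+\rho')(\cdot)$ collapses after symmetrization.

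The hardest step will be handling the tube/avoidance constraint $G_B$ simultaneously with the exponent computation: the weighted (Girsanov) curve must be shown to satisfy $G_B$ with probability bounded below by a constant independent of $\epsilon$, otherwise only an upper bound on $\mathbb{P}(B_k')$ is obtained. For this, I would invoke Lemma \ref{lem:positive-whole}, applied conditionally at the inductive step between consecutive hits, combined with an absolute-continuity argument comparing the law of the weighted curve (restricted to a piece staying a positive distance from the forbidden boundary arcs) with a standard $\SLE_\kappa(\rho_-,\rho_+)$. A secondary technical point is to uniformize the multiplicative constants across the iteration; this is where arm-separation-style `quasi-independence' on the SLE side must be proved, using the Markov property of the driving process together with classical radius estimates for SLE to ensure the $\epsilon$-balls are entered with well-separated prescribed boundary data.
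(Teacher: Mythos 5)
This statement is imported wholesale from \cite{wu-ising-arm} (Proposition 3.3 there); the present paper cites it but gives no proof, so there is no in-paper argument to measure your sketch against. Taken on its own terms, your outline has the right general shape: Wu's boundary-exponent argument is indeed a martingale/Girsanov computation layered over an induction on the number of visits, with an excursion-reversal step in the same spirit as Lemma \ref{lem:sle-excursion}. Two concrete gaps stand out.

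First, the even-arm step does not work as you describe it. You propose $\alpha^+_{2j}=\alpha^+_1(\rho)+\alpha^+_1(\rho')+\text{corrections}$ with $\rho'=\kappa-4-\rho$, and assert that the $\rho$-dependence collapses because $(2j+\rho)+(2j+\rho')=4j+\kappa-4$. But the odd exponents are \emph{quadratic} in $\rho$, not linear, and the cancellation fails: a direct computation gives
\begin{equation*}
  \alpha^+_1(\rho)+\alpha^+_1(\rho')
  \;=\; \frac{1}{\kappa}\Bigl( 2\rho^2 + (8-2\kappa)\rho + \kappa^2/2 - 2\kappa + 8 \Bigr),
\end{equation*}
which depends on $\rho$, and in fact $\alpha^+_1(\rho)+\alpha^+_1(\rho')-\alpha^+_2$ has negative discriminant ($-8\kappa$) and so never vanishes. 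The $\rho$-independence of the even exponents is therefore not a combinatorial by-product of the pairing $(\rho,\rho')$; in Wu's proof it comes out of a dedicated martingale estimate for the forward--backward excursion, not from summing two one-arm exponents and hoping the cross-terms cancel.

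Second, the martingale ansatz $M_t = X_t^a Z_t^b (O_t-V_t)^c g_t'(x_0)^h$ and the claim that the exponent is $a+c+2h$ are asserted rather than verified, and the bookkeeping is not automatic: $X_t$, $O_t-V_t$, and $g_t'(x_0)$ degenerate at different rates as the curve approaches $x_0$, so one must actually compute what $M$ evaluates to on the target event rather than add the exponents naively. The remaining difficulties you flag --- uniform control of the Radon--Nikodym derivative, arranging $G_B$ under the tilted law via Lemma \ref{lem:positive-whole}, and quasi-independence across the iteration --- are exactly the right ones, and are indeed handled in \cite{wu-ising-arm} by absolute-continuity and separation arguments of the sort you describe.
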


The story is similar in the case of interior exponents. Again, we consider a
random curve $\gamma$ in $\D$ from $-i$ to $i$ and parameters
\begin{gather*}
	c\in (0,1)\;,\quad \abs{z}<c\;,\quad R\in (1+c,2)\;,\quad y\in (\!(i,-i)\!)\;, \\
	 0<r<\abs{y-i}\wedge (\abs{y+i}-R)\;.
\end{gather*}
Again, we assume that these relations are satisfied whenever we work with these
parameters. Let $\sigma'_0=0$. For $\epsilon>0$ sufficiently small and $j\ge 1$
we make the following inductive definition.
\begin{itemize}
	\item Let $C_j$ denote the connected component of $\D \setminus
		(\gamma([0,\sigma'_{j-1}])\cup \partial B_r(y))$ with $i$ on its
        boundary. If $z\notin C_j$ we
		let $\tau'_j=\infty$.
        Otherwise (i.e., if $z\in C_j$), we write $C_j'$ for the connected
		component of $C_j\cap \partial B_\epsilon(z)$ which is connected to
		$i$ within $C_j\setminus B_\epsilon(z)$ and we let
        $\tau'_j$ be the first time after $\sigma'_{j-1}$ when $\gamma$ hits
        $C_j'$.
    \item Let $J$ be the connected component of $(\partial B_r(y)\cap
        \D)\setminus \gamma([0,\tau'_1])$ containing $y(1-r)$; the curve $J$
        inherits the counterclockwise orientation of $\partial B_r(y)$. Let
        $\sigma'_j$ be the first time after $\tau'_j$ when $\gamma$ hits the
        most counterclockwise connected component of $J\setminus
        \gamma([0,\tau'_j])$.
\end{itemize}
For $j\ge 1$ we now define the events
\begin{gather*}
    I_{2j}(\gamma,z,\epsilon,y,r)=\{\tau'_j<\zeta_z\}\;,\quad
    G_I(\gamma,z,\epsilon,R)=\{\gamma([0,\tau'_1])\subset B_R(-i)\}\;,\\
    I_{2j}'(\gamma,z,\epsilon,y,r,R) = I_{2j}(\gamma,z,\epsilon,y,r) \cap
    G_I(\gamma,z,\epsilon,R)
\end{gather*}
where $\zeta_z$ is the swallowing time of $z$. With all this notation set up, we
can now state the result on interior exponents
\cite[Proposition 4.1]{wu-ising-arm}; this result is stated for a fixed interior
point $z$, however, the proof also implies the stronger result as stated
below.\footnote{\,Indeed, the only place where the exact location of the
interior point plays a role is in \cite[Lemma 4.2]{wu-ising-arm} which makes use
of \cite[Lemma 4.1]{miller-wu-dim} and the latter is stated uniformly in the
location of the interior point.}

\begin{thm}[\cite{wu-ising-arm}]
    \label{thm:wu-interior}
	Let $\kappa\in (0,4)$ and for $j\ge 1$ define
	\begin{align*}
		\alpha_{2j} = (16j^2-(\kappa-4)^2)/(8\kappa) \;.
	\end{align*}
	Let $\gamma\sim \SLE_\kappa$ from $-i$ to $i$ in $\D$. For $j\ge 1$ there
    exist constants $y$, $r$, $c$ and $R$ such that
	\begin{align*}
        \P( I'_{2j}(\gamma,z,\epsilon,y,r,R) ) = \epsilon^{\alpha_{2j} + o(1)}
        \quad\text{as $\epsilon\to 0$}
	\end{align*}
	uniformly in $\abs{z}<c$.
\end{thm}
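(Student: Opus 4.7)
The plan is to follow the proof of \cite[Proposition 4.1]{wu-ising-arm}, adapted so that the estimate is uniform in $z$ on the compact set $\{|z|<c\}\subset\D$. The starting point is to decompose the event $I'_{2j}(\gamma,z,\epsilon,y,r,R)$ into an alternating structure of $j$ approach excursions (each reaching $\partial B_\epsilon(z)$) and $j-1$ far excursions (each crossing the arc on $\partial B_r(y)$). One then re-parametrises $\gamma$ by the radial Loewner equation centred at $z$, so that $-\log \mathrm{cr}(z;\D\setminus\gamma([0,t]))$ evolves as a time-changed drifted Brownian motion driven by a Bessel-like process coming from the SDE for the $\SLE_\kappa$ driving function. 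The number of approaches of $\gamma$ to $\partial B_\epsilon(z)$ translates, up to bounded multiplicative errors, into the number of up-crossings of the level $\log(1/\epsilon)$ by this process. A direct stochastic analysis computation (identical to the one in \cite{wu-ising-arm}) yields the polynomial decay rate $\epsilon^{\alpha_{2j}}$ with $\alpha_{2j}=(16j^2-(\kappa-4)^2)/(8\kappa)$.

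To incorporate the localisation constraints encoded by $G_I$ and the alternation with $\partial B_r(y)$, I would use the imaginary geometry absolute continuity arguments underlying Lemmas \ref{lem:positive-hit} and \ref{lem:positive-whole}: the GFF whose flow line is $\gamma$ has a Radon–Nikodym derivative with finite moments of all orders when restricted to regions that stay a positive distance from the marked boundary points $\pm i$. This lets one insert the forcing events $\{\gamma([0,\tau_1'])\subset B_R(-i)\}$ and the return excursions near $y$ at a cost of only $\epsilon^{o(1)}$, matching the subpolynomial corrections in the statement. The choice of $y$, $r$ and $R$ is dictated exactly as in \cite{wu-ising-arm} so that the relevant excursion events have positive probability uniformly in $\epsilon$.

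The main new point is the uniformity of the estimate in $z\in B_c(0)$. For this I would invoke \cite[Lemma 4.1]{miller-wu-dim} as the excerpt already suggests: this lemma compares the laws of two GFFs on compact subsets of $\D$ with Radon–Nikodym derivative whose moments are bounded uniformly in the location of the interior point, provided that point stays a definite distance from $\partial\D$. Since every conformal radius estimate, Bessel comparison, and GFF absolute continuity bound used above depends on $z$ only through $\mathrm{dist}(z,\partial\D)\ge 1-c$, the implicit constants in $\epsilon^{\alpha_{2j}+o(1)}$ can be chosen uniformly on $\{|z|<c\}$. I expect the only nontrivial work beyond quoting \cite{wu-ising-arm} to be the careful bookkeeping showing that each multiplicative constant in the proof truly depends only on $(\kappa,j,y,r,c,R)$ and not on the specific position of $z$; once this is verified, the result follows at once.
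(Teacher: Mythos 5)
Your approach is essentially the same as the paper's: the result is quoted from \cite[Proposition 4.1]{wu-ising-arm}, and the only new ingredient is the uniformity in $z$, which you correctly trace to \cite[Lemma 4.1]{miller-wu-dim} being stated uniformly in the location of the interior point. The paper's footnote is slightly more surgical — it notes that the \emph{only} place in Wu's proof where the exact position of $z$ matters is \cite[Lemma 4.2]{wu-ising-arm}, which is precisely where \cite[Lemma 4.1]{miller-wu-dim} is invoked — whereas you assert more broadly that every estimate in the argument depends on $z$ only through $\dist(z,\partial\D)$; the narrower observation is cleaner and is all that is needed, but your version is not wrong.
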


The goal is now to transport these results to the setting of
$\SLE_\kappa(\rho,\kappa-6-\rho)$. Because of the proof strategy, the results
are slightly weaker than in the theorems stated above, but this is not relevant
for applications to the discrete fuzzy Potts models.

\begin{prop}
    \label{prop:main-exp-boundary}
    Let $\kappa\in (2,4)$, $\rho\in (-2,\kappa-4)$ and suppose that $\gamma\sim
	\SLE_\kappa(\kappa-6-\rho,\rho)$ from $-i$ to $i$ in $\D$. For $k\ge 1$
    there are constants $y$, $r$, $r'$, $c'$, $c_0'$ and $R'$ such that
	\begin{gather*}
        \epsilon^{\alpha^+_k} \lesssim
        \P( B_k(\gamma,1,\epsilon,y,r))\;,\quad
        \P( B'_k(\gamma,1,\epsilon,y,r',c',c_0',R') )
        \lesssim \epsilon^{\alpha^+_k}
	\end{gather*}
    for all $\epsilon >0$ sufficiently small.
\end{prop}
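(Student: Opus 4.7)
The plan is to reduce the computation to Theorem~\ref{thm:wu-boundary}, which gives the boundary exponents $\alpha_k^+$ for $\SLE_\kappa(0,\rho)$. The novelty, as advertised, is to show via imaginary geometry that the extra left force point of weight $\kappa-6-\rho \in (-2,0)$, placed at $-i$, has no effect on the exponent at $1$. Intuitively, this force point lives on the opposite side of the domain from the arm region near $1$, and by the flow-line description of SLE it affects only the GFF boundary condition on the left arc of $\partial\D$.

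The main device is Lemma~\ref{lem:imaginary-map-in}. I would apply it with $\rho_+=\rho$, choosing $\rho_- \in (-2,\kappa-6-\rho)$ and $\bar\rho = \kappa-6-\rho-\rho_- > 0$, so that $\gamma$ plays the role of $\gamma_+$: marginally $\gamma\sim\SLE_\kappa(\kappa-6-\rho,\rho)$, and conditionally on the auxiliary curve $\gamma_-$ drawn to its left, the restriction of $\gamma$ to each component right of $\gamma_-$ is an independent $\SLE_\kappa(\rho_-,\rho)$. To ensure the full arm-event region near $1$ (as well as the target ball $B_r(y)$ about some $y\in(\!(i,-i)\!)$) lies in a single right component, I would use Lemma~\ref{lem:positive-whole} to force $\gamma_-$ into a prescribed tube in the left half of $\D$, bounded away from $1$; this happens with a positive probability depending only on the geometry of the tube.

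Once this reduction is in place, the arm event for $\gamma$ translates into an arm event at $1$ for the conditional $\SLE_\kappa(\rho_-,\rho)$ in the right component. Here I would invoke the extension of Theorem~\ref{thm:wu-boundary} from $\SLE_\kappa(0,\rho)$ to $\SLE_\kappa(\rho_-,\rho)$ for $\rho_- \in (-2,\kappa/2-2)$: the SDE-based arguments of \cite{wu-ising-arm} only use the driving function and the image of the right force point under the Loewner flow, which remain governed by the same SDE near $1$. The insensitivity to the left force point is precisely the IG statement that the two flow-line GFFs differ only by a harmonic function bounded on subdomains away from the left arc, yielding a bounded Radon--Nikodym derivative for the restrictions relevant to the arm event. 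For the lower bound, combine the positive probability of the tube confinement with the lower bound of the extended Wu estimate to produce arms of the correct colors near $1$ with probability $\gtrsim \epsilon^{\alpha_k^+}$. For the upper bound, the good event $G_B$ forces $\gamma([0,\tau_1])\subset B_{R'}(-i)$ and excludes $\gamma$ from a tubular neighborhood of $1$ except through the arm itself; this constrains both $\gamma_-$ (compelling the tube configuration) and the local geometry near $1$, so that the conditional $\SLE_\kappa(\rho_-,\rho)$ arm event upper bound $\epsilon^{\alpha_k^+}$ applies.

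The main obstacle is making the extension of Theorem~\ref{thm:wu-boundary} to $\SLE_\kappa(\rho_-,\rho)$ quantitative, i.e.\ carrying out the absolute-continuity comparison with $\SLE_\kappa(0,\rho)$ uniformly along the exploration. This is where $G_B$ is essential: on $G_B$, both the driving function and the image of $-i$ under the Loewner chain stay away from a neighborhood of $1$, so the Radon--Nikodym derivative between the two GFFs restricted to the region where the arms occur is uniformly bounded. A parallel positive-probability confinement argument underpins the lower bound, and the constants $y$, $r$, $r'$, $c'$, $c_0'$, $R'$ in the statement are chosen precisely to make both comparisons work simultaneously.
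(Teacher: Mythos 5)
There are genuine gaps in the proposal, and the key one is the direction of the imaginary-geometry coupling.

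You apply Lemma~\ref{lem:imaginary-map-in} so that the target curve $\gamma\sim\SLE_\kappa(\kappa-6-\rho,\rho)$ plays the role of $\gamma_+$, choosing $\rho_-\in(-2,\kappa-6-\rho)$ and $\bar\rho=\kappa-6-\rho-\rho_-$. Conditionally on $\gamma_-$, this makes the right component curve an $\SLE_\kappa(\rho_-,\rho)$ with a left weight \emph{even further} from $0$ than $\kappa-6-\rho$. That is a strictly harder problem: Theorem~\ref{thm:wu-boundary} only covers $\SLE_\kappa(0,\rho)$, so you are forced to invoke an unproved ``extension'' of it to $\SLE_\kappa(\rho_-,\rho)$ with $\rho_-\neq 0$. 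Proving such an extension is essentially equivalent to (indeed more general than) the proposition itself, so the argument is circular. The paper's Lemma~\ref{lem:exp-map-in} runs the coupling in the opposite direction: there $\gamma_+\sim\SLE_\kappa(0,\rho)$ is the outer curve (with the \emph{larger} left weight, reached by choosing $\bar\rho_-=-(\kappa-6-\rho)$), and the target $\SLE_\kappa(\kappa-6-\rho,\rho)$ appears as the conditional law inside the right component; the built-in independence of $\gamma_-$ and $\psi\circ\gamma_+$ then yields the one-sided inequality $\P(B'_k(\gamma_{\kappa-6-\rho,\rho}))\lesssim\P(B'_k(\gamma_{0,\rho}))$, which is the upper bound with no extension of Wu's theorem needed.

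Your sketch of the Radon--Nikodym / absolute-continuity comparison between GFFs is not only unproved but genuinely delicate here: both force points sit at the start point $-i$, and the good set $G_B$ constrains $\gamma([0,\tau_1])\subset B_R(-i)$, i.e.\ into the region that abuts exactly the boundary arc where the two GFF boundary data differ. You cannot localize the exploration away from that arc, so the bounded-on-subdomains Radon--Nikodym argument does not directly apply; the paper's route never has to confront this.

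Finally, you miss the entire mechanism for the lower bound. The imaginary-geometry comparison only provides the one-sided estimate above. The paper obtains the lower bound from a completely different device: the coordinate-change/target-invariance property of $\SLE_\kappa(\rho,\kappa-6-\rho)$ (Lemma~\ref{lem:coordinate-change}, implemented in Lemma~\ref{lem:exp-coord} together with Lemma~\ref{lem:positive-hit}). Your proposal instead tries to get the lower bound from the same IG decomposition plus the unproved extension, but the IG decomposition you chose again reduces to a harder $\rho_-$. You also do not address the conformal distortion estimates (Lemmas~\ref{lem:kernel-result}--\ref{lem:interior-koebe}) that are needed to transfer arm events from the right component of $\D\setminus\gamma_-$ back to $\D$, uniformly as $\epsilon\to 0$.
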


\begin{prop}
    \label{prop:main-exp-interior}
    Let $\kappa\in (2,4)$, $\rho\in (-2,\kappa-4)$ and suppose that $\gamma\sim
	\SLE_\kappa(\kappa-6-\rho,\rho)$ from $-i$ to $i$ in $\D$. For $j\ge 1$
    there exists constants $y$, $r$, $r'$, $c$ and $R'$ such that
	\begin{gather*}
        \epsilon^{\alpha_{2j} + o(1)} \le \P(I_{2j}(\gamma,z,\epsilon,y,r))
        \;,\quad \P( I'_{2j}(\gamma,z,\epsilon,y,r',R') ) \le
        \epsilon^{\alpha_{2j} + o(1)}\quad\text{as $\epsilon\to 0$}
	\end{gather*}
	uniformly in $\abs{z}<c$.
\end{prop}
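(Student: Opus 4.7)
The plan is to deduce the proposition from Theorem \ref{thm:wu-interior} (the interior exponent $\alpha_{2j}$ for plain $\SLE_\kappa$) via imaginary geometry. The guiding principle is that the interior exponent should not depend on the boundary force points, because in any stopping domain bounded away from the force points at $-i$, the law of $\SLE_\kappa(\kappa-6-\rho,\rho)$ is mutually absolutely continuous with that of a plain $\SLE_\kappa$ with the same starting and target points, with uniformly bounded Radon--Nikodym derivative. Combining this GFF comparison with Theorem \ref{thm:wu-interior} should yield both bounds.

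For the upper bound, on the event $I'_{2j}(\gamma,z,\epsilon,y,r',R')$ the event $G_I$ forces $\gamma$ to stay inside $B_{R'}(-i)$ until $\tau'_1$, and the excursion structure afterwards happens in the interior away from $-i$. I would stop $\gamma$ at a time $\sigma$ just after it leaves a fixed small neighborhood of $-i$ (but while still in $B_{R'}(-i)$), and on the stopping domain beyond $\sigma$ apply the GFF comparison between the boundary data of $\SLE_\kappa(\kappa-6-\rho,\rho)$ and plain $\SLE_\kappa$. The continuation event $I'_{2j}$ is then bounded by a constant times the probability of the analogous event for plain $\SLE_\kappa$ in $\D$, which Theorem \ref{thm:wu-interior} controls by $\epsilon^{\alpha_{2j}+o(1)}$.

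For the lower bound, I would combine the imaginary geometry comparison with Lemma \ref{lem:imaginary-map-in} (and its left--right mirror) to set up a coupling of $\gamma$ with two auxiliary curves $\eta_-,\eta_+$. Taking $\rho_+=\rho$, $\rho_-\in(-2,\kappa-6-\rho)$ and $\bar\rho=\kappa-6-\rho-\rho_->0$ in the first application (this range is non-empty since $\rho<\kappa-4$), and matching parameters in the mirrored application, the conditional law of $\gamma$ in each component of $\D\setminus(\eta_-\cup\eta_+)$ becomes an $\SLE_\kappa(\rho_-,\rho_+')$ with $\rho_-,\rho_+'\in(-2,0)$. By Lemma \ref{lem:positive-whole}, one can choose the parameters and two tubes $\nu^\pm$ so that with positive probability $p>0$ (uniform in $|z|\le c$ and in $\epsilon$), both $\eta_\pm$ stay inside the tubes, which are arranged to avoid a fixed open set $U\subset\D$ containing $\{|w|\le c\}\cup B_r(y)$. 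On this positive-probability event, the restriction of $\gamma$ to the component of $U$ is an $\SLE_\kappa(\rho_-,\rho_+')$ in a sub-domain; a conformal map combined with the imaginary geometry comparison then reduces $I_{2j}(\gamma,z,\epsilon,y,r)$ to the corresponding event for plain $\SLE_\kappa$, to which Theorem \ref{thm:wu-interior} applies and gives the lower bound.

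The main obstacle I expect is to ensure that all absolute-continuity constants (both for the GFF comparison and for the conformal transport into $\D$) are uniform in $|z|\le c$ and in $\epsilon$. This is resolved by fixing the parameters and all geometric objects (tubes, comparison domains, cut-off neighborhood of $-i$) once and for all in terms of the compact set $\{|z|\le c\}$ and the constants $y,r,r',R'$, independently of $z$ and $\epsilon$, so that the resulting comparison constants are absolute; the $o(1)$ in the exponents comes entirely from the corresponding $o(1)$ in Theorem \ref{thm:wu-interior}.
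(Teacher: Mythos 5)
The upper bound sketch is roughly in the spirit of Lemma \ref{lem:exp-map-in}, but the way you phrase it (absolute continuity of $\SLE_\kappa(\kappa-6-\rho,\rho)$ against plain $\SLE_\kappa$ ``with uniformly bounded Radon--Nikodym derivative'') is not justified. With both $\kappa-6-\rho$ and $\rho$ in $(-2,0)$, the Radon--Nikodym martingale of the $\rho$-weighted law against the plain law contains factors like $|g_t(O^\pm)-\xi_t|^{\rho_\pm/\kappa}$ that blow up as the curve approaches a force point, and the excursion structure of $I'_{2j}$ after $\tau_1'$ does not prevent the curve from returning near $\partial\D$. So a genuine argument must either restrict to a sub-event where the RN derivative is controlled (and show this loses only a constant) or, as the paper does, sidestep the RN derivative entirely by coupling $\gamma$ with a flow line so that conditionally it is an $\SLE_\kappa(\rho_-,\rho_+)$ with smaller weights in a stopping domain (Lemma \ref{lem:exp-map-in}).

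The lower bound argument has a more serious gap: it goes in the wrong direction. Lemma \ref{lem:imaginary-map-in} (and hence Lemma \ref{lem:exp-map-in}) only gives ``arm probability with smaller weights $\lesssim$ arm probability with larger weights,'' because the curve with larger weights is the one whose conditional restriction has smaller weights. Applying it to $\gamma\sim\SLE_\kappa(\kappa-6-\rho,\rho)$ as you describe reduces to $\SLE_\kappa(\rho_-,\rho_+')$ with $\rho_-<\kappa-6-\rho<0$ and $\rho_+'<\rho<0$, i.e.\ you are pushed \emph{further away} from $(0,0)$, not towards plain $\SLE_\kappa$. The phrase ``imaginary geometry comparison then reduces $I_{2j}$ to the corresponding event for plain $\SLE_\kappa$'' is precisely the step that has no content: no such one-step comparison exists, and iterating Lemma \ref{lem:imaginary-map-in} only makes it worse. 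What you are missing are the two additional mechanisms the paper introduces to reverse the inequality: the Bessel-excursion reflection $\rho\mapsto\kappa-4-\rho$ (Lemma \ref{lem:exp-sle-excursion}, coming from Lemma \ref{lem:sle-excursion}), which raises a negative right force point weight to a positive one, and the target-change trick for $\SLE_\kappa(\rho,\kappa-6-\rho)$ (Lemma \ref{lem:exp-coord}, from Lemma \ref{lem:coordinate-change}), which removes the extra left force point. Only by chaining $\SLE_\kappa\to\SLE_\kappa(0,\kappa-4-\rho)\to\SLE_\kappa(0,\rho)\to\SLE_\kappa(\kappa-6-\rho,\rho)$ does one obtain the lower bound.
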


The proof of these propositions will be split into three lemmas, each of which
involves the use of imaginary results as listed in Section \ref{sec:ig-lemmas}.
The proofs of all the lemmas below follow roughly the same kind of strategy. We
start with one SLE curve and construct another one from it with the following
property: If the first one satisfies a certain arm event, so will the latter.
The results on distortion estimates for conformal maps used below appear in
Appendix \ref{sec:app-conformal}.

Throughout the remainder of this section $k$ and $2j$ are fixed and all
constants are allowed to depend on them (and we will not make this dependence
explicit everywhere). Moreover, whenever $\rho_\pm>-2$ we let
$\gamma_{\rho_-,\rho_+}\sim \SLE_\kappa(\rho_-,\rho_+)$ from $-i$ to $i$ in
$\D$. Each event we consider will only depend on one of these curves so we do
not specify a coupling.

In this first lemma, we will rely on Lemma \ref{lem:imaginary-map-in} and
\ref{lem:positive-whole} to perform the construction outlined in the previous
paragraph.

\begin{lemma}
    \label{lem:exp-map-in}
    Let $\kappa\in (0,4)$, $\rho_\pm > -2$ and $\bar{\rho}_\pm\ge 0$.
    Then for all $\lambda>1$ sufficiently close to $1$ there exists
    $C_\lambda<\infty$ such that
    \begin{align*}
        &\P(B'_k(\gamma_{\rho_-,\rho_+},1,\epsilon/\lambda,y,r/\lambda,\lambda
        c, \lambda c_0,
        R/\lambda) )\le C_\lambda
        \P(B'_k(\gamma_{\rho_-+\bar{\rho}_-,\rho_+},1,\epsilon,y,r,c,c_0,R) )
    \end{align*}
    for all $\epsilon>0$ sufficiently small. Moreover, for all $\lambda>1$
    sufficiently close to $1$ there exists $C'_\lambda<\infty$ such that
    \begin{align*}
		\inf_{\abs{z'}<\lambda c} \P(I'_{2j}(\gamma_{\rho_-,\rho_+}
        ,z',\epsilon/\lambda,y,r/\lambda,R/\lambda))
		&\le C'_\lambda \inf_{\abs{z}<c}
        \P(I'_{2j}(\gamma_{\rho_-+\bar{\rho}_-,\rho_++\bar{\rho}_+},
        z,\epsilon,y,r,R))
    \end{align*}
    for all $\epsilon>0$ sufficiently small.
\end{lemma}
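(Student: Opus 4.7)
I focus on the first (boundary) inequality; the second (interior) inequality will be handled by the same strategy applied twice, once for each shift $\bar\rho_\pm$, confining the two auxiliary curves to thin tubes along the left and right boundary arcs respectively so that the resulting middle Jordan domain contains both the interior point $z$ (for $|z|<\lambda c$) and the reference point $y$.

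I will use Lemma \ref{lem:imaginary-map-in} with $\bar\rho=\bar\rho_-$ to couple $\gamma_{\rho_-+\bar\rho_-,\rho_+}$ with an auxiliary curve $\tilde\gamma\sim\SLE_\kappa(\bar\rho_--2,2+\rho_-+\rho_+)$, so that conditionally on $\tilde\gamma$ the restrictions of $\gamma_{\rho_-+\bar\rho_-,\rho_+}$ to the components of $\D\setminus\tilde\gamma$ right of $\tilde\gamma$ are independent $\SLE_\kappa(\rho_-,\rho_+)$ curves. The next step is to confine $\tilde\gamma$ to a thin tube along the left boundary arc $(\!(i,-i)\!)$, away from the point $1$ where the arm event is centered. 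Choosing disjoint simple curves $\nu^\pm$ close to $i$ and $-i$ and applying Lemma \ref{lem:positive-whole}, the event $E$ that $\tilde\gamma$ avoids $\nu^-\cup\nu^+$ has positive probability $p>0$, and on $E$ the region $\tilde D$ right of $\tilde\gamma$ is a single Jordan domain whose boundary differs from $\partial\D$ only in a prescribed neighborhood of $\{-i,i\}$.

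On $E$, I will take $\psi\colon\D\to\tilde D$ to be the conformal map fixing $-i$, $i$ and $1$. The key input from Appendix \ref{sec:app-conformal} will be a quantitative distortion estimate: by taking the tube width sufficiently small (depending on $\lambda>1$), one arranges $|\psi'(z)|\in[1/\lambda,\lambda]$ and $|\psi(z)-z|$ arbitrarily small, uniformly in $z$ in the complement of a fixed neighborhood of $\{-i,i\}$. Under the coupling, conditional on $\tilde\gamma$, the curve $\tilde\gamma^+:=\psi^{-1}(\gamma_{\rho_-+\bar\rho_-,\rho_+}\cap\tilde D)$ has the law of $\gamma_{\rho_-,\rho_+}$. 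Translating the defining sets of $B'_k$ through $\psi$, the distortion estimate will imply that for $\lambda$ fixed and the tube sufficiently narrow,
\[
B'_k(\tilde\gamma^+,1,\epsilon/\lambda,y,r/\lambda,\lambda c,\lambda c_0,R/\lambda)\subset B'_k(\gamma_{\rho_-+\bar\rho_-,\rho_+},1,\epsilon,y,r,c,c_0,R)
\]
on $E$ for all $\epsilon>0$ small enough. Taking expectations and dividing by $p$ will yield the first inequality with $C_\lambda=p^{-1}$.

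The hard part will be the distortion estimate. Once it is in place, the translation of each ingredient in the definition of $B'_k$---the balls $B_{\epsilon/\lambda}(1)$ and $B_{r/\lambda}(y)$ used to define the $\tau_j$ and $\sigma_j$, the containment $\tilde\gamma^+([0,\tau_1])\subset B_{R/\lambda}(-i)$, and the excluded boundary neighborhood $(\!(1/I_{\lambda c\epsilon},I_{\lambda c\epsilon})\!)+B_{\lambda c_0\epsilon}(0)$---will reduce to direct verification, the slack parameter $\lambda>1$ being introduced precisely to absorb the conformal distortion.
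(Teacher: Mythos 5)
Your plan follows the same architecture as the paper's proof: use Lemma \ref{lem:imaginary-map-in} to couple $\gamma_{\rho_-+\bar\rho_-,\rho_+}$ with an auxiliary curve, use Lemma \ref{lem:positive-whole} to confine the auxiliary curve to a thin tube, conformally map the relevant component back to $\D$, invoke distortion estimates from Appendix \ref{sec:app-conformal}, and conclude by inclusion of arm events and independence. For the interior case, applying the argument twice (shifting $\rho_-$ then $\rho_+$) also matches the paper. Two steps as written, however, would not survive scrutiny.

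First, the normalization of $\psi$ is wrong. You take $\psi\colon\D\to\tilde D$ to fix $-i,i,1$, then assert that $\tilde\gamma^+:=\psi^{-1}(\gamma_{\rho_-+\bar\rho_-,\rho_+}\cap\tilde D)$ has the law of $\gamma_{\rho_-,\rho_+}$. But the $\SLE_\kappa(\rho_-,\rho_+)$ produced by Lemma \ref{lem:imaginary-map-in} runs in $\tilde D$ between the boundary points $w_0,w_\infty$ where $\gamma_{\rho_-+\bar\rho_-,\rho_+}$ enters and leaves the component of $\D\setminus\tilde\gamma$ containing $1$; these coincide with $-i,i$ only if $\tilde\gamma$ does not touch $(\!(-i,i)\!)$. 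The auxiliary curve is $\SLE_\kappa(\bar\rho_--2,2+\rho_-+\rho_+)$, whose right force-point weight $2+\rho_-+\rho_+$ is below the non-touching threshold $\kappa/2-2$ exactly when $\rho_-+\rho_+<\kappa/2-4$; in the paper's application $\rho_-+\rho_+=\kappa-6<\kappa/2-4$ for all $\kappa\in(0,4)$, so $\tilde\gamma$ genuinely can hit the right arc near $-i$ or $i$ (even on the confinement event), and $w_0,w_\infty$ differ from $-i,i$. In that case $\psi^{-1}$ of the $\SLE$ in $\tilde D$ is an $\SLE_\kappa(\rho_-,\rho_+)$ from $\psi^{-1}(w_0)$ to $\psi^{-1}(w_\infty)\neq -i,i$, not $\gamma_{\rho_-,\rho_+}$. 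The correct normalization is the one the paper uses: send $(-i,1,i)$ to $(w_0,1,w_\infty)$.

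Second, the distortion claim is too strong. You want $|\psi'(z)|\in[1/\lambda,\lambda]$ and $|\psi(z)-z|$ small \emph{uniformly on the complement of a fixed neighborhood of} $\{-i,i\}$. The tube runs along the entire left arc $(\!(i,-i)\!)$ (any curve from $-i$ to $i$ confined there necessarily traverses the whole arc), so $\partial\tilde D\neq\partial\D$ along that arc and $\psi$ has no analytic continuation across it; a uniform $C^1$ estimate there is not available. This matters because $y\in(\!(i,-i)\!)$ sits on the left arc and $\partial B_r(y)\cap\D$ is part of the arm-event geometry. The paper avoids this by a more careful statement: Lemma \ref{lem:kernel-result} gives $|\phi(w)-w|$ small only on $D_{(\delta,\delta_0)}$ (points connectable by short paths to a smaller disk, hence away from $\tilde\gamma$), and one then checks that the relevant arcs $S\subset\partial B_r(y)$ and $S'\subset\partial B_R(-i)$ lie in that set (using $3\delta<r$), and separately uses the Koebe-type Lemma \ref{lem:boundary-koebe} for balls centered at $1$. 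Your blanket derivative bound would be false and is not actually what the argument needs.
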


\begin{figure}
	\centering
	\def\svgwidth{0.8\columnwidth}
	%% Creator: Inkscape 1.1.2 (0a00cf5339, 2022-02-04, custom), www.inkscape.org
%% PDF/EPS/PS + LaTeX output extension by Johan Engelen, 2010
%% Accompanies image file 'exponent-map-in.pdf' (pdf, eps, ps)
%%
%% To include the image in your LaTeX document, write
%%   \input{<filename>.pdf_tex}
%%  instead of
%%   \includegraphics{<filename>.pdf}
%% To scale the image, write
%%   \def\svgwidth{<desired width>}
%%   \input{<filename>.pdf_tex}
%%  instead of
%%   \includegraphics[width=<desired width>]{<filename>.pdf}
%%
%% Images with a different path to the parent latex file can
%% be accessed with the `import' package (which may need to be
%% installed) using
%%   \usepackage{import}
%% in the preamble, and then including the image with
%%   \import{<path to file>}{<filename>.pdf_tex}
%% Alternatively, one can specify
%%   \graphicspath{{<path to file>/}}
%% 
%% For more information, please see info/svg-inkscape on CTAN:
%%   http://tug.ctan.org/tex-archive/info/svg-inkscape
%%
\begingroup%
  \makeatletter%
  \providecommand\color[2][]{%
    \errmessage{(Inkscape) Color is used for the text in Inkscape, but the package 'color.sty' is not loaded}%
    \renewcommand\color[2][]{}%
  }%
  \providecommand\transparent[1]{%
    \errmessage{(Inkscape) Transparency is used (non-zero) for the text in Inkscape, but the package 'transparent.sty' is not loaded}%
    \renewcommand\transparent[1]{}%
  }%
  \providecommand\rotatebox[2]{#2}%
  \newcommand*\fsize{\dimexpr\f@size pt\relax}%
  \newcommand*\lineheight[1]{\fontsize{\fsize}{#1\fsize}\selectfont}%
  \ifx\svgwidth\undefined%
    \setlength{\unitlength}{296.75550795bp}%
    \ifx\svgscale\undefined%
      \relax%
    \else%
      \setlength{\unitlength}{\unitlength * \real{\svgscale}}%
    \fi%
  \else%
    \setlength{\unitlength}{\svgwidth}%
  \fi%
  \global\let\svgwidth\undefined%
  \global\let\svgscale\undefined%
  \makeatother%
  \begin{picture}(1,0.52998907)%
    \lineheight{1}%
    \setlength\tabcolsep{0pt}%
    \put(0,0){\includegraphics[width=\unitlength,page=1]{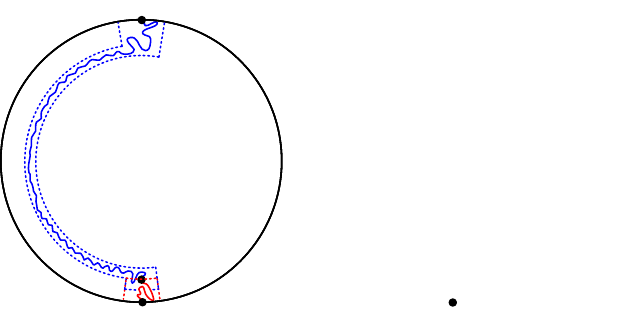}}%
    \put(0.98881232,0.25994945){\makebox(0,0)[lt]{\lineheight{1.25}\smash{\begin{tabular}[t]{l}$1$\end{tabular}}}}%
    \put(0.61066919,0.49840622){\makebox(0,0)[lt]{\lineheight{1.25}\smash{\begin{tabular}[t]{l}$y$\end{tabular}}}}%
    \put(0.71110563,0.00043771){\makebox(0,0)[lt]{\lineheight{1.25}\smash{\begin{tabular}[t]{l}$-i$\end{tabular}}}}%
    \put(0.76562172,0.00103789){\makebox(0,0)[lt]{\lineheight{1.25}\smash{\begin{tabular}[t]{l}$w_0$\end{tabular}}}}%
    \put(0.73728849,0.52455088){\makebox(0,0)[lt]{\lineheight{1.25}\smash{\begin{tabular}[t]{l}$i$\end{tabular}}}}%
    \put(0.76906596,0.52483331){\makebox(0,0)[lt]{\lineheight{1.25}\smash{\begin{tabular}[t]{l}$w_\infty$\end{tabular}}}}%
    \put(0,0){\includegraphics[width=\unitlength,page=2]{exponent-map-in.pdf}}%
  \end{picture}%
\endgroup%

	\caption{\emph{Left.} With positive probability $\gamma_-\sim
		\SLE_\kappa(\bar{\rho}_--2,2+\rho_-+\rho_+)$ hits the top of the red dashed
    box before it hits the sides and after that, it does not hit the the blue
    dashed lines. \emph{Right.} This drawing shows $\gamma_-$
    (in red) and $\gamma_+'$ (in blue). The conditional law of $\gamma_+'$ given
		$\gamma_-$ is given by a $\SLE_\kappa(\rho_-,\rho_+)$ from $w_0$ to
    $w_\infty$ in the connected component right of $\gamma_-$ which has $1$ on
    its boundary. The sets $\partial B_r(y)\cap \D$ and $\partial
    B_\epsilon(1)\cap \D$ are drawn in green and $\partial B_R(-i)\cap \D$ and
    $\partial ( (\!(1/I_{c\epsilon},I_{c\epsilon})\!)+B_{c_0\epsilon}(0) )
    \cap \D$ are drawn dashed in green.}
	\label{fig:exponent-map-in}
\end{figure}

\begin{proof}
	Let $\gamma_-\sim \SLE_\kappa(\bar{\rho}_--2,2+\rho_-+\rho_+)$ be coupled
	with $\gamma_+\sim \SLE_\kappa(\rho_-+\bar{\rho}_-,\rho_+)$ as in Lemma
    \ref{lem:imaginary-map-in}. Let $D$ be the connected component of
    $\D\setminus\gamma_-([0,1])$ with $1$ on its boundary, define $w_0$ to be
    the most counterclockwise point on $(\!(-i,1)\!)\cap \gamma_-([0,1])$ and
    $w_\infty$ to be the most clockwise point on $(\!(1,i)\!)\cap
    \gamma_-([0,1])$. Let $\gamma_+'$ be the part of $\gamma_+$ from $w_0$ to
    $w_\infty$. Then by the lemma, conditionally on $\gamma_-$, the curve
	$\gamma_+'$ is a $\SLE_\kappa(\rho_-,\rho_+)$ in $D$ from $w_0$ to
    $w_\infty$.

    We write $\phi\colon D\to \D$ for the unique conformal transformation with
    $\phi(0)=0$ and $\phi'(0)>0$. Then $\phi$ extends continuously to
    $(\!(w_0,w_\infty)\!)\cup\{w_0,w_\infty\}$ and we write $f$ for the unique
    Möbius transformation from $\D$ to itself mapping
    $(\phi(w_0),\phi(1),\phi(w_\infty))$ to $(-i,1,i)$. We let $\psi = f\circ
    \phi$.

    From Lemma \ref{lem:positive-whole} (applied twice) and the strong Markov
    property for SLE curves, we deduce that for all $\delta\in (0,1/16)$ the
    following event occurs with positive probability (see Figure
    \ref{fig:exponent-map-in}):
    \begin{itemize}
        \item The curve $\gamma_-$ hits $S_T := (1-2\delta)\cdot
            (\!(-i/I_\delta,-iI_\delta)\!)$ before $S_{LR}:= (1-2\delta,1)\cdot
            \{-i/I_\delta,-iI_\delta\}$ and we write $\tau$ for the hitting
            time.
		\item The curve $\gamma_-\lvert_{[\tau,1]}$ hits $S'_T$ before $S'_{LR}:=
            S'_B\setminus S'_T$ where $S'_T:=(\!(i/I_\delta,iI_\delta)\!)$,
            \begin{align*}
                S' &:= (1-3\delta,1-\delta)\cdot (\!(-i/I_\delta,-iI_\delta)\!)
                \cup (1-3\delta,1-2\delta)\cdot (\!(i/I_\delta,-iI_\delta)\!)\\
                &\qquad \cup (1-3\delta,1)\cdot (\!(i/I_\delta,iI_\delta)\!)
            \end{align*}
            and where $S'_B$ is the connected component of $\partial S'\setminus
            \gamma([0,\tau])$ containing $i$.
    \end{itemize}
    Let us call this event $N_\delta$; it ensures that $B_{1-3\delta}(0)\subset
    D$ and that $\gamma_+$ hits $w_0$ before exiting
    $(\!(-i/I_\delta,-iI_\delta)\!)\cdot (1-2\delta,1)$. We will now argue that
    for some (small) $\delta$ we have the inclusion
    \begin{align}
        \label{eq:inclusion-map-in}
        N_\delta\cap
        B'_k(\psi\circ \gamma_+,1,\epsilon/\lambda,y,r/\lambda,\lambda c,
        \lambda c_0, R/\lambda)  \subset
        B'_k(\gamma_+,1,\epsilon,y,r,c,c_0,R)
    \end{align}
    for all sufficiently small $\epsilon$. From this the result follows by
    taking probabilities and using the independence of $\gamma_-$ and
    $\psi\circ\gamma_+$.

    We will now work on the event $N_\delta$. For all $\delta'>0$ by Lemma
    \ref{lem:kernel-result} there exists $\delta$ such that we have
	$\lvert\phi(w)-w\rvert\le \delta'$ for all $w\in D_{(4\delta,8\delta)}$ (we are using
    the notation from Appendix \ref{sec:app-conformal}). We make the following
    key observations:
    \begin{itemize}
        \item The points $w_0,w_\infty,1$ lie in the closure of
            $D_{(4\delta,8\delta)}$, so since $\phi$ extends continuously to
		these points, we also have $\lvert\phi(w)-w\rvert\le \delta'$ for $w\in
            \{w_0,w_\infty,1\}$.
        \item Let $S$ denote the connected component of $\partial
            B_r(y)\setminus \gamma_-([0,1])$ containing $y(1-r)$ (this is
            defined for $3\delta<r$) and let $S'$ denote the connected component
            of $(\partial B_R(-i)\cap \D)\setminus \gamma_-([0,1])$ containing
            $i(R-1)$. Then we have $S,S'\subset D_{(4\delta,8\delta)}$.
    \end{itemize}
    It follows that for $\delta$ sufficiently small, $\psi(S)\cap
    B_{r/\lambda}(y)=\emptyset$, $\psi(S')\cap B_{R/\lambda}(-i)=\emptyset$.
    Moreover, $B_{1/2}(1)\cap\D\subset D$ and so by further decreasing $\delta$
    and using Lemma \ref{lem:boundary-koebe} applied to the function $\psi$ we
    get that for all $x\in (\!(1/I_{1/4},I_{1/4})\!)$ and all sufficiently small
    $\epsilon$,
    \begin{align*}
        B_{\epsilon/\lambda}(\psi(x))\cap\D\subset \psi(B_\epsilon(x)\cap\D)
        \subset B_{\lambda\epsilon}(\psi(x))\cap\D\;.
    \end{align*}
    In particular $B_{\epsilon/\lambda}(1)\cap\D\subset
    \psi(B_\epsilon(1)\cap\D)$ and
    \begin{align*}
        ((\!(1/I_{\lambda c\epsilon},I_{\lambda c\epsilon})\!)+
        B_{\lambda c_0\epsilon}(0))\cap\D
        \supset \psi( ((\!(1/I_{c\epsilon},I_{c\epsilon})\!)+B_{c_0\epsilon}(0))
        \cap\D  )
    \end{align*}
    for $\epsilon$ sufficiently small. Combining these inclusions readily
    implies \eqref{eq:inclusion-map-in} since one can see that if $\psi\circ
    \gamma_+$ satisfies the arm event and $N_\delta$ holds then $\gamma_+$ will
    also satisfy an arm event.

    The result on interior exponents follows similarly in the case where
    $\bar{\rho}_-=0$ or when $\bar{\rho}_+=0$ (now using Lemma
    \ref{lem:interior-koebe} instead of Lemma \ref{lem:boundary-koebe}). The
    general case is obtained by bounding the arm event probability of a
	$\SLE_\kappa(\rho_-,\rho_+)$ by that of a
	$\SLE_\kappa(\rho_-+\bar{\rho}_-,\rho_+)$ and then further by that of a
	$\SLE_\kappa(\rho_-+\bar{\rho}_-,\rho_++\bar{\rho}_+)$.
\end{proof}

The next lemma makes use of Lemma \ref{lem:sle-excursion} and implements a very
similar proof strategy to the one in the previous Lemma \ref{lem:exp-map-in} to
relate arm event probabilities for the two types of SLE appearing in Lemma
\ref{lem:sle-excursion}. Note that this result will only be needed in the
interior case.

\begin{lemma}
    \label{lem:exp-sle-excursion}
    Suppose that $\kappa\in (0,4)$ and $\rho\in (-2,\kappa/2-2)$. For all
    $\lambda>1$ sufficiently close to $1$, there exists $C_\lambda<\infty$ such that
    \begin{align*}
		\inf_{\abs{z'}<\lambda c}
        \P(I'_{2j}(\gamma_{0,\kappa-4-\rho},
		z',\epsilon/\lambda,y,r/\lambda,R/\lambda)) \le C_\lambda \inf_{\abs{z}<c}
        \P( I'_{2j}(\gamma_{0,\rho},z,\epsilon,y,r,R))
    \end{align*}
    for all $\epsilon>0$ sufficiently small.
\end{lemma}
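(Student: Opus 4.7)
The plan is to mirror the structure of the proof of Lemma~\ref{lem:exp-map-in}, using Lemma~\ref{lem:sle-excursion} in place of Lemma~\ref{lem:imaginary-map-in}. I begin with $\gamma \sim \SLE_\kappa(0,\rho)$ from $-i$ to $i$ in $\D$ and introduce the stopping times $\zeta_{1-}$ and $\zeta_1$ from Lemma~\ref{lem:sle-excursion}. Writing $D$ for the connected component of $\D \setminus \gamma([0,\zeta_{1-}] \cup [\zeta_1,1])$ whose boundary contains $1$, let $\psi \colon D \to \D$ be the unique conformal map with $\psi(\gamma_{\zeta_{1-}}) = -i$, $\psi(\gamma_{\zeta_1}) = i$ and $\psi(1) = 1$. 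By Lemma~\ref{lem:sle-excursion}, the reparametrized curve $\tilde{\gamma} := \psi \circ \gamma|_{[\zeta_{1-},\zeta_1]}$ has the law of an $\SLE_\kappa(0,\kappa-4-\rho)$ from $-i$ to $i$ in $\D$ and is independent of $\gamma|_{[0,\zeta_{1-}] \cup [\zeta_1,1]}$.

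Next, I would build a positive-probability event $N_\delta$, measurable with respect to $\gamma|_{[0,\zeta_{1-}] \cup [\zeta_1,1]}$, on which $\gamma_{\zeta_{1-}} \in B_\delta(-i)$, $\gamma_{\zeta_1} \in B_\delta(i)$ and $\gamma([0,\zeta_{1-}]) \cup \gamma([\zeta_1,1]) \subset B_{2\delta}(-i) \cup B_{2\delta}(i)$. To construct it, I would first apply Lemma~\ref{lem:positive-hit} to obtain positive probability that $\gamma$ first hits $(\!(-i,1)\!) \cap B_\delta(-i)$ while staying in $B_{2\delta}(-i)$. Then, invoking the strong Markov property at this stopping time together with Lemma~\ref{lem:positive-whole}, the remainder of $\gamma$ has positive conditional probability to travel to $(\!(1,i)\!) \cap B_\delta(i)$ without returning to $(\!(-i,1)\!)$, and then to stay in $B_{2\delta}(i)$ up to reaching $i$. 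By Lemma~\ref{lem:kernel-result}, on $N_\delta$ the map $\psi$ is uniformly close to the identity on compact subsets of $\overline{D} \setminus (B_{3\delta}(-i) \cup B_{3\delta}(i))$.

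With $N_\delta$ in hand, the transfer of arm events is a Koebe-type argument exactly as in the proof of Lemma~\ref{lem:exp-map-in}. For fixed $|z| < c$, set $z' := \psi(z)$, which satisfies $|z'| < \lambda c$ on $N_\delta$ for $\delta$ small. Using Lemma~\ref{lem:interior-koebe} together with Lemma~\ref{lem:kernel-result}, I verify that on $N_\delta$ the inclusion
\begin{align*}
    \{\tilde{\gamma} \text{ satisfies } I'_{2j}(\cdot, z', \epsilon/\lambda, y, r/\lambda, R/\lambda)\}
    \cap N_\delta \subset \{\gamma \text{ satisfies } I'_{2j}(\cdot, z, \epsilon, y, r, R)\}
\end{align*}
holds for all sufficiently small $\epsilon$; here one also uses that $\gamma([0,\zeta_{1-}])$ and $\gamma([\zeta_1,1])$ lie in $B_{2\delta}(\pm i) \subset B_R(-i)$ and are disjoint from the relevant annuli around $z$ and $y$. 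Exploiting the independence of $N_\delta$ and $\tilde{\gamma}$ and integrating out $\psi$ gives
\begin{align*}
    \P(N_\delta)\cdot \inf_{|z'|<\lambda c}
    \P(I'_{2j}(\gamma_{0,\kappa-4-\rho},z',\epsilon/\lambda,y,r/\lambda,R/\lambda))
    \le \P(I'_{2j}(\gamma_{0,\rho},z,\epsilon,y,r,R))\;,
\end{align*}
and taking the infimum over $|z|<c$ yields the claim with $C_\lambda = 1/\P(N_\delta)$.

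The main obstacle, I expect, is the construction of $N_\delta$: unlike in Lemma~\ref{lem:exp-map-in}, where Lemma~\ref{lem:imaginary-map-in} cleanly decouples an outer $\SLE_\kappa(\bar{\rho}-2,2+\rho_-+\rho_+)$ from the inner $\SLE_\kappa(\rho_-,\rho_+)$ in a given domain, here the pieces $\gamma|_{[0,\zeta_{1-}]}$ and $\gamma|_{[\zeta_1,1]}$ are determined by $\gamma$'s own excursion structure around $1$. Showing that the required confinement near $-i$ and $i$ has positive probability therefore relies on a more delicate iterative use of the strong Markov property combined with Lemmas~\ref{lem:positive-hit} and~\ref{lem:positive-whole}, and some care is needed to ensure that $\zeta_{1-}$ and $\zeta_1$ are finite on $N_\delta$ so that Lemma~\ref{lem:sle-excursion} genuinely applies. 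Once this is settled, the distortion bounds from Appendix~\ref{sec:app-conformal} take care of the remaining geometric details as in Lemma~\ref{lem:exp-map-in}.
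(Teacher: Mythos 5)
Your proposal follows essentially the same route as the paper's proof: decompose $\gamma\sim\SLE_\kappa(0,\rho)$ around its excursion across $1$ via Lemma~\ref{lem:sle-excursion}, define $N_\delta$ as the confinement event (measurable with respect to $\gamma|_{[0,\zeta_{1-}]\cup[\zeta_1,1]}$, hence independent of $\psi\circ\gamma'$), show $\P(N_\delta)>0$, transfer the arm event through $\psi$ using the Koebe and kernel estimates of Appendix~\ref{sec:app-conformal}, and integrate using independence to get $C_\lambda = 1/\P(N_\delta)$. The only deviation is in how $N_\delta$ is shown to be non-null — you invoke Lemma~\ref{lem:positive-hit} once and then Lemma~\ref{lem:positive-whole}, whereas the paper applies Lemma~\ref{lem:positive-whole} three times in a corridor geometry; this is a cosmetic difference and both constructions work. (Your worry about finiteness of $\zeta_{1-},\zeta_1$ is not an issue here, since these are bounded by the terminal parametrization time regardless of $N_\delta$.)
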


\begin{proof}
	Let $\gamma\sim \SLE_\kappa(0,\rho)$, $\zeta_1$ and $\zeta_{1-}$ be as in
   Lemma \ref{lem:sle-excursion}. We write $D$ for the connected component of
   $\D\setminus (\gamma([0,\zeta_{1-}]\cup [\zeta_1,1]))$ having $1$ on its
	boundary. We write $\gamma'$ for $\gamma\lvert_{[\zeta_{1-},\zeta_1]}$
   (reparametrized so that it is a function on $[0,1]$). Thus by the lemma,
   conditionally on $\gamma([0,\zeta_{1-}])\cup\gamma([\zeta_1,1])$, the curve
	$\gamma'$ is a $\SLE_\kappa(0,\kappa-4-\rho)$ in $D$ from
   $w_0:=\gamma_{\zeta_{1-}}$ to $w_\infty:=\gamma_{\zeta_1}$. We define $\phi$,
   $f$ and $\psi$ as in the proof of Lemma \ref{lem:exp-map-in}.

   Consider $\delta\in (0,1/16)$. We first argue that with positive probability
   $\gamma([0,\zeta_{1-}])\subset [-\delta,\delta]+i[-1,-1+\delta]$ and
   $\gamma([\zeta_1,1])\subset [-\delta,\delta]+i[1-\delta,1]$; we call this
   event $N_\delta$. Indeed, Lemma \ref{lem:positive-whole} (applied three
   times) and the strong Markov property of SLE (applied twice) shows that the
   following event has positive probability (see also Figure
   \ref{fig:exp-continue}):
   \begin{itemize}
       \item Let $S_T$ and $S_{LR}$ be the top and the union of the left and right sides of
            $S=((-\delta,\delta)+i(-1,-1+2\delta))\cap\D$ respectively. The
            curve $\gamma$ hits $S_T$ before $S_{LR}$ and at the time $\tau$.
        \item Let $S'_T = (-\delta,\delta) + (1-\delta)i$ and
            $S'_{LR}=S'_B\setminus S'_T$ where
            $S'=(-\delta,\delta)+i(-1+\delta,1-\delta)$ and where $S'_B$ denotes
            the connected component of $\partial S'\setminus \gamma([0,\tau_1])$
		containing $i(1-\delta)$. The curve $\gamma\lvert_{[\tau,1]}$ hits $S'_T$
            before $S'_{LR}$ at a time $\tau'$.
        \item Let $S''_T$ denote the top of $S''=((-\delta,\delta)+
            i(1-2\delta,1))\cap\D$ and let $S''_{LR}=S''_B\setminus S''_T$ where
            $S''_B$ denotes the connected component of $\partial S''\setminus
		\gamma([0,\tau'])$ containing $i$. Then $\gamma\lvert_{[\tau',1]}$ hits
            $S''_T$ before $S''_{LR}$.
    \end{itemize}
	There exists $\delta>0$ such that for $\abs{z}<c$ one has $\lvert\psi(z)\rvert<\lambda
    c$ on the event $N_\delta$ as well as the inclusion
    \begin{align*}
        N_\delta \cap I'_{2j}(\psi\circ\gamma',
        \psi(z),\epsilon/\lambda,y,r/\lambda,R/\lambda) \subset
        I'_{2j}(\gamma,z,\epsilon,y,r,R)
    \end{align*}
    for all sufficiently small $\epsilon>0$. This follows by a reasoning similar
    to the one in the proof of Lemma \ref{lem:exp-map-in}. The result now
    follows by taking probabilities and using the independence of
    $\psi\circ\gamma'$ and $\gamma([0,\zeta_{1-}])\cup \gamma([\zeta_1,1])$.
\end{proof}

The last lemma leverages Lemma \ref{lem:coordinate-change} in combination with
Lemma \ref{lem:positive-hit} to compare the arm event probabilities (both in the
interior and the boundary case) of the types of SLEs appearing in Lemma
\ref{lem:coordinate-change}. The proof will make it transparent why we included
the set $J$ in the definition of the arm events.

\begin{figure}
	\centering
	\def\svgwidth{0.95\columnwidth}
	%% Creator: Inkscape 1.1.2 (0a00cf5339, 2022-02-04, custom), www.inkscape.org
%% PDF/EPS/PS + LaTeX output extension by Johan Engelen, 2010
%% Accompanies image file 'exp-continue.pdf' (pdf, eps, ps)
%%
%% To include the image in your LaTeX document, write
%%   \input{<filename>.pdf_tex}
%%  instead of
%%   \includegraphics{<filename>.pdf}
%% To scale the image, write
%%   \def\svgwidth{<desired width>}
%%   \input{<filename>.pdf_tex}
%%  instead of
%%   \includegraphics[width=<desired width>]{<filename>.pdf}
%%
%% Images with a different path to the parent latex file can
%% be accessed with the `import' package (which may need to be
%% installed) using
%%   \usepackage{import}
%% in the preamble, and then including the image with
%%   \import{<path to file>}{<filename>.pdf_tex}
%% Alternatively, one can specify
%%   \graphicspath{{<path to file>/}}
%% 
%% For more information, please see info/svg-inkscape on CTAN:
%%   http://tug.ctan.org/tex-archive/info/svg-inkscape
%%
\begingroup%
  \makeatletter%
  \providecommand\color[2][]{%
    \errmessage{(Inkscape) Color is used for the text in Inkscape, but the package 'color.sty' is not loaded}%
    \renewcommand\color[2][]{}%
  }%
  \providecommand\transparent[1]{%
    \errmessage{(Inkscape) Transparency is used (non-zero) for the text in Inkscape, but the package 'transparent.sty' is not loaded}%
    \renewcommand\transparent[1]{}%
  }%
  \providecommand\rotatebox[2]{#2}%
  \newcommand*\fsize{\dimexpr\f@size pt\relax}%
  \newcommand*\lineheight[1]{\fontsize{\fsize}{#1\fsize}\selectfont}%
  \ifx\svgwidth\undefined%
    \setlength{\unitlength}{432.39230391bp}%
    \ifx\svgscale\undefined%
      \relax%
    \else%
      \setlength{\unitlength}{\unitlength * \real{\svgscale}}%
    \fi%
  \else%
    \setlength{\unitlength}{\svgwidth}%
  \fi%
  \global\let\svgwidth\undefined%
  \global\let\svgscale\undefined%
  \makeatother%
  \begin{picture}(1,0.36276458)%
    \lineheight{1}%
    \setlength\tabcolsep{0pt}%
    \put(0,0){\includegraphics[width=\unitlength,page=1]{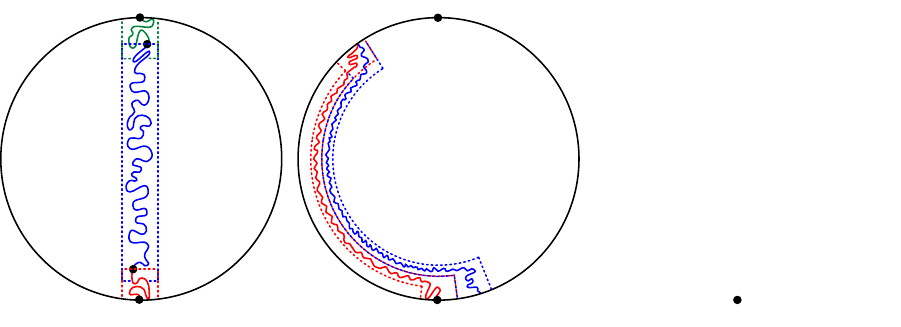}}%
    \put(0.86811517,0.0094831){\makebox(0,0)[lt]{\lineheight{1.25}\smash{\begin{tabular}[t]{l}$w_0$\end{tabular}}}}%
    \put(0.80300491,0.00026828){\makebox(0,0)[lt]{\lineheight{1.25}\smash{\begin{tabular}[t]{l}$-i$\end{tabular}}}}%
    \put(0.66005593,0.28161656){\makebox(0,0)[lt]{\lineheight{1.25}\smash{\begin{tabular}[t]{l}$y$\end{tabular}}}}%
    \put(0.99232177,0.17750177){\makebox(0,0)[lt]{\lineheight{1.25}\smash{\begin{tabular}[t]{l}$1$\end{tabular}}}}%
    \put(0.69107627,0.33515715){\makebox(0,0)[lt]{\lineheight{1.25}\smash{\begin{tabular}[t]{l}$w_\infty$\end{tabular}}}}%
    \put(0.81474257,0.35925388){\makebox(0,0)[lt]{\lineheight{1.25}\smash{\begin{tabular}[t]{l}$i$\end{tabular}}}}%
    \put(0,0){\includegraphics[width=\unitlength,page=2]{exp-continue.pdf}}%
  \end{picture}%
\endgroup%

	\caption{\emph{Left.} With positive probability $\gamma\sim
		\SLE_\kappa(0,\rho)$ hits the top of the red box before it hits the sides,
    then it hits the top dashed blue line before it hits the other blue dashed
    lines and finally then it does not hit any of the green dashed lines.
    \emph{Center.} With positive probability $\gamma\sim
		\SLE_\kappa(\kappa-6-\rho,\rho)$ hits the top boundary interval of the red
    `corridor' before it hits the sides and then it hits the bottom boundary
    interval of the blue `corridor' before it hits the blue dashed lines.
    \emph{Right.} The concatenation $\gamma^0$ of the red and the blue curve
    (the latter is called $\gamma$) is a
		$\SLE_\kappa(\kappa-6-\rho,\rho)$. Moreover, conditionally on the red curve,
		$\gamma$ can be coupled with $\gamma'\sim\SLE_\kappa(0,\rho)$ targeting
    $w_\infty$ which agrees until $\gamma$ disconnects $i$ and $w_\infty$; the
    remainder of $\gamma'$ is drawn dashed in blue.}
	\label{fig:exp-continue}
\end{figure}

\begin{lemma}
    \label{lem:exp-coord}
    Suppose that $\kappa\in (2,4)$ and $\rho\in (-2,\kappa-4)$. Then for all
    $\lambda>1$ sufficiently close to $1$, there exists $C_\lambda<\infty$ such
    that
    \begin{align*}
        \P(B'_k(\gamma_{0,\rho},1,\epsilon/\lambda,y,r/\lambda,\lambda c,
        \lambda c_0,R/\lambda) ) &\le C_\lambda
        \P(B_k(\gamma_{\kappa-6-\rho,\rho},1,\epsilon,y,r) )
    \end{align*}
    for all $\epsilon>0$ sufficiently small. Moreover, for all $\lambda>1$
    sufficiently close to $1$ there exists $C_\lambda'<\infty$ such that
    \begin{align*}
		\inf_{\abs{z'} <\lambda c} \P(I'_{2j}(\gamma_{0,\rho},
        z',\epsilon/\lambda,y,r/\lambda,R/\lambda) )
		&\le C'_\lambda \inf_{\abs{z}<c}
        \P(I_{2j}(\gamma_{\kappa-6-\rho,\rho},z,\epsilon,y,r,R) )
    \end{align*}
    for all $\epsilon>0$ sufficiently small.
\end{lemma}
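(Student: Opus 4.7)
My strategy is to adapt the approach of Lemma \ref{lem:exp-sle-excursion}, replacing Lemma \ref{lem:sle-excursion} by the coordinate change Lemma \ref{lem:coordinate-change} in its mirror-reflected form. The construction is the one sketched in the right panel of Figure \ref{fig:exp-continue}: starting from $\gamma^0 \sim \SLE_\kappa(\kappa-6-\rho,\rho)$ from $-i$ to $i$ in $\D$, I will isolate a positive-probability initial portion (the red curve) from its continuation $\gamma$ (the blue curve), and identify $\gamma$ with an $\SLE_\kappa(0,\rho)$ via Lemma \ref{lem:coordinate-change}. The boundary and interior cases proceed in parallel, the only difference being the use of Lemma \ref{lem:interior-koebe} in place of Lemma \ref{lem:boundary-koebe} for distortion estimates near the marked interior point.

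Because $\kappa-6-\rho \in (-2,\kappa-4) \subset (-2,\kappa/2-2)$ (the last inclusion uses $\kappa<4$), Lemma \ref{lem:positive-hit} applies to $\gamma^0$. By choosing tubes $\nu^\pm$ appropriately, we obtain a positive-probability event $N$ on which the initial portion of $\gamma^0$ stays within a thin corridor on the right side of $\D$ very close to $-i$ and first hits a specified short interval on $(\!(-i,1)\!)$ at a stopping time $\tau$. On $N$, the continuation $\gamma := \gamma^0|_{[\tau,1]}$ is an $\SLE_\kappa(\kappa-6-\rho,\rho)$ in $D_0 := \D \setminus \gamma^0([0,\tau])$ from $w_0 := \gamma^0(\tau)$ to $i$; crucially, because the initial portion terminates on the right boundary arc, the right force point $O^+(\tau)$ coincides with the tip $w_0$, while the left force point $O^-(\tau)$ has only moved slightly along the left arc away from $-i$. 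Applying the conformal map $\psi : D_0 \to \D$ normalized by $\psi(w_0)=-i$, $\psi(i)=i$ and $\psi(1)=1$, the pushforward $\psi \circ \gamma$ is an $\SLE_\kappa(\kappa-6-\rho,\rho)$ from $-i$ to $i$ in $\D$ with left force point at $x' := \psi(O^-(\tau)) \in (\!(i,-i)\!)$ and right force point at $-i$.

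The mirror-reflected form of Lemma \ref{lem:coordinate-change} (whose parameter coincides with our $\rho$) then couples $\psi \circ \gamma$ with $\gamma' \sim \SLE_\kappa(0,\rho)$ from $-i$ to $x'$ in $\D$, agreeing until the first hit of $(\!(i,x')\!)$. A further Möbius transformation $\phi : \D \to \D$ fixing both $-i$ and $1$ and sending $x'$ to $i$ identifies $\phi \circ \gamma'$ with the scale-invariant $\gamma_{0,\rho} \sim \SLE_\kappa(0,\rho)$ from $-i$ to $i$ in $\D$. By designing the initial corridor so that $x'$ lies in a fixed compact subset of $(\!(i,-i)\!)$ bounded away from $\{-i,i\}$, the composition $\phi \circ \psi$ is a uniformly bounded perturbation near $1$; combining this with Lemma \ref{lem:kernel-result} and Lemma \ref{lem:boundary-koebe} (or Lemma \ref{lem:interior-koebe} in the interior case), the arm event radii distort by at most a factor $\lambda$. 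Assembling everything gives, on $N$, the inclusion of the event $B'_k(\gamma_{0,\rho},1,\epsilon/\lambda,y,r/\lambda,\lambda c,\lambda c_0,R/\lambda)$ inside $B_k(\gamma^0,1,\epsilon,y,r)$. Taking probabilities and using independence of $\gamma'$ from the red curve on $N$ yields the boundary bound with $C_\lambda = \P(N)^{-1}$, and the interior case is analogous.

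The main obstacle I anticipate is verifying that the arm crossings near $1$ occur strictly before $\gamma'$ hits the arc $(\!(i,x')\!)$, since only up to that time does the coupling let us transfer the event from $\gamma'$ to $\psi \circ \gamma$ and hence to $\gamma^0$. Intuitively, $1$ lies far from both $i$ and $x'$, so any macroscopic excursion of the curve into a neighborhood of $1$ must occur before the curve can reach the arc $(\!(i,x')\!)$; making this precise, however, requires a careful geometric argument uniform in $\epsilon$, paralleling the positive-probability event constructions in the proofs of Lemmas \ref{lem:exp-map-in} and \ref{lem:exp-sle-excursion}.
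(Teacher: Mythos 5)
Your strategy shares the paper's skeleton --- a positive-probability initial event followed by the coordinate change of Lemma \ref{lem:coordinate-change} and distortion estimates --- but the geometry of your initial event is wrong, and the extra Möbius transformation $\phi$ you introduce to compensate cannot be made close to the identity, which is what the lemma requires. In your construction the initial portion stays in a thin corridor near $-i$ and lands at $w_0 \in (\!(-i,1)\!)$, so the left force point remains essentially at $-i$; after the normalization $\psi: D_0\to\D$ with $(w_0,1,i)\mapsto(-i,1,i)$ (which is close to the identity precisely because the corridor is small), $x'=\psi(O^-(\tau))$ is therefore close to $-i$, not bounded away from $\{-i,i\}$ as you assert. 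Your two requirements --- $\psi$ close to the identity near $1$ (thin corridor) and $x'$ away from $-i$ (macroscopic hull) --- are mutually incompatible. And even granting $x'$ bounded away from $\{-i,i\}$, the Möbius map $\phi$ fixing $-i,1$ and sending $x'$ to $i$ has $\phi'(1)$ bounded strictly away from $1$ (e.g.\ $\phi'(1)=1/2$ for $x'=-1$), so the arm-event radii around $1$ are distorted by a factor bounded away from $1$; this contradicts the requirement that the lemma hold for all $\lambda>1$ sufficiently close to $1$.

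The paper uses a different positive-probability event that avoids this altogether: $\gamma^0$ is forced (via two applications of Lemma \ref{lem:positive-hit} and the strong Markov property) first to touch a short arc near $i$ on the left boundary arc, and only then to land on a short arc near $-i$ on the right boundary arc. After this, the most clockwise touched point $w_\infty$ on $(\!(i,-i)\!)$ is close to $i$ while $w_0=\gamma^0_{\tau'}$ is close to $-i$, so the single conformal map $\psi:D\to\D$ normalized by $(w_0,1,w_\infty)\mapsto(-i,1,i)$ is close to the identity by Lemma \ref{lem:kernel-result} and no further Möbius map is needed; this is what the center and right panels of Figure \ref{fig:exp-continue} depict. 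Your other flagged obstacle --- that the arm event must occur before $\gamma'$ first hits the disconnecting arc --- is genuine and left open in your writeup; the paper closes it in one short paragraph by observing that for boundary arm events the swallowing of $1$ built into $B_k$ happens no later than $\nu$, and for interior arm events the ordering $\tau'_1<\nu<\tau'_j$ would place $\psi(z)$ to the left of $\psi\circ\gamma'([0,\nu])$, contradicting the definition of the event.
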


\begin{proof}
	Let $\gamma^0\sim \SLE_\kappa(\kappa-6-\rho,\rho)$ from $-i$ to $i$ in $\D$.
    Consider $\delta\in (0,1/16)$. We first argue that the following event
    occurs with positive probability (see Figure \ref{fig:exp-continue}):
    \begin{itemize}
        \item Let $S_T=(\!(iI_\delta,iI_{3\delta})\!)$, $S_{LR}=S_B\setminus
            S_T$, $S=(1-2\delta,1)\cdot (\!(-i/I_\delta,iI_\delta)\!) \cup
            (1-2\delta,1-\delta)\cdot (\!(iI_\delta,-iI_\delta)\!) \cup
            (1-2\delta,1)\cdot (\!(iI_\delta,iI_{3\delta})\!)$ and
            $S_B=\partial S\setminus (\!(-i/I_\delta,-iI_\delta)\!)$. Then the
            curve $\gamma^0$ hits $S_T$ before hitting $S_{LR}$. We write $\tau$
            for the hitting time.
        \item Let $S'_T = (\!(-iI_\delta,-iI_{3\delta})\!)$,
            $S'_{LR}=S'_B\setminus S'_T$ where $S'= (1-3\delta,1)\cdot
            (\!(iI_\delta,iI_{3\delta})\!) \cup (1-3\delta,1-2\delta)\cdot
            (\!(iI_\delta,-iI_{3\delta})\!) \cup (1-3\delta,1)\cdot
            (\!(-iI_\delta,-iI_{3\delta})\!)$ and where $S'_B$ denotes the
		connected component of $\partial S'\setminus (\gamma^0\lvert_{[0,\tau]}
            \cup (\!( iI_\delta,iI_{3\delta} )\!) )$ containing $-iI_{2\delta}$.
		Then $\gamma^0\lvert_{[\tau,1]}$ hits $S'_T$ before $S'_{LR}$ and we
            write $\tau'$ for the hitting time.
    \end{itemize}
    Indeed, this follows by applying Lemma \ref{lem:positive-hit} (twice)
    together with the strong Markov property of SLE. From now on, we will work
    on the event which is described above. Let $D$ be the connected component of
    $\D\setminus \gamma^0([0,\tau'])$ with $1$ on its boundary, write
    $w_0=\gamma^0_{\tau'}$ and let $w_\infty$ be the most clockwise point on
    $(\!(i,-i)\!)\cap \gamma^0([0,\tau'])$. Define $\phi$, $f$ and $\psi$ as in
    the proof of Lemma \ref{lem:exp-map-in}.

	We now write $\gamma$ for $\gamma^0\lvert_{[\tau',1]}$ (reparametrized to be a
    function on $[0,1]$) and using Lemma \ref{lem:coordinate-change} couple it
    with a curve $\gamma'$ in $D$ from $w_0$ to $w_\infty$ which agrees with
    $\gamma$ until it hits $(\!(i,w_\infty)\!)$ (at the time $\nu$) and is a
	$\SLE_\kappa(0,\rho)$ in $D$ from $w_0$ to $w_\infty$ conditionally on
    $\gamma^0([0,\tau'])$. By the same reasoning as in the proof of Lemma
    \ref{lem:exp-map-in} we obtain that there exists $\delta$ such that for all
    $\epsilon$ sufficiently small, we have
    \begin{align*}
		N_\delta\cap B'_k(\psi\circ \gamma'\lvert_{[0,\nu]},1,
        \epsilon/\lambda,y,r/\lambda, \lambda c,\lambda c_0,R/\lambda) &\subset
        B_k(\gamma^0,1,\epsilon,y,r)\;.
    \end{align*}
	Also, there exists $\delta$ such that for $\lvert z\rvert<c$ one has $\lvert\psi(z)\rvert<\lambda
    c$ on the event $N_\delta$ and for $\epsilon>0$ sufficiently small,
    \begin{align*}
		N_\delta \cap I'_{2j}( \psi\circ\gamma'\lvert_{[0,\nu]},
        \psi(z),\epsilon/\lambda,y,r/\lambda,R/\lambda) &\subset
        I_{2j}(\gamma^0,z,\epsilon,y,r,R) \;.
    \end{align*}
    It remains to show that we can drop the restriction to the interval
    $[0,\sigma]$ since the result then follows by taking probabilities and using
    the independence of $\gamma^0([0,\sigma])$ and $\psi\circ\gamma'$.

    In the case of the boundary arm events this is clear since the swallowing
    time of the point $1$ by the curve $\psi\circ\gamma'$ appearing in the
    definition of the arm event occurs (not necessarily strictly) before $\nu$.
    For the case of the interior arm event, we suppose that $\sigma<\tau_j$, and
    hence $\tau_1<\sigma<\tau_j$ and therefore the point $\psi(z)$ lies left of
    $\psi\circ\gamma'([0,\nu])$ which contradicts the definition of the interior
    point arm event.
\end{proof}

Proposition \ref{prop:main-exp-boundary} and \ref{prop:main-exp-interior} are
now obtained by combining Lemma \ref{lem:exp-map-in},
\ref{lem:exp-sle-excursion} and \ref{lem:exp-coord} in a rather
straightforward way.

\begin{proof}[Proof of Proposition \ref{prop:main-exp-boundary}]
    The upper bound follows from the upper bound of Theorem \ref{thm:wu-boundary}
    together with the first display in Lemma \ref{lem:exp-map-in} with
    $\rho_+=\rho$, $\rho_-=\kappa-6-\rho$ and $\bar{\rho}_+ = -\rho_-$.
    The lower bound follows from the lower bound in Theorem
    \ref{thm:wu-boundary} and the first inequality of Lemma \ref{lem:exp-coord}.
\end{proof}

\begin{proof}[Proof of Proposition \ref{prop:main-exp-interior}]
    The upper bound follows from the upper bound in Theorem \ref{thm:wu-interior}
    together with the second display in Lemma \ref{lem:exp-map-in} with
    $\rho_+=\rho$, $\rho_-=\kappa-6-\rho$, $\bar{\rho}_+=-\rho_+$ and
    $\bar{\rho}_- = -\rho_-$. The lower bound is obtained as follows: We use the
    lower bound from Theorem \ref{thm:wu-interior} and then apply the lemmas
    above in the following settings:
    \begin{itemize}
        \item Use Lemma \ref{lem:exp-map-in} to bound the arm event probability
		of a $\SLE_\kappa$ by that of a $\SLE_\kappa(0,\kappa-4-\rho)$ using
            that $\kappa-4-\rho>0$.
        \item Use Lemma \ref{lem:exp-sle-excursion} to bound the arm event
		probability of a $\SLE_\kappa(0,\kappa-4-\rho)$ curve by that of a
		$\SLE_\kappa(0,\rho)$.
        \item Use Lemma \ref{lem:exp-coord} to bound the arm event probability
		of a $\SLE_\kappa(0,\rho)$ curve by that of a
		$\SLE_\kappa(\kappa-6-\rho,\rho)$.
    \end{itemize}
    Chaining these inequalities yields the claim.
\end{proof}

\section{Combining the discrete with the continuum results}
\label{sec:combine-results}

The proofs of our main theorems are now standard by combining the tools
collected in the previous sections.

\begin{proof}[Proof of Theorem \ref{thm:main-bulk}]
    Let us first consider the case where $\tau$ is alternating, i.e.\
	$\lvert\tau\rvert=I(\tau)=2j$. In this case, we see using Proposition
    \ref{prop:main-exp-interior}, Theorem \ref{thm:interface-convergence} and
    Theorem \ref{thm:arm-separation} (and mixing) by the same reasoning as in
    \cite{wu-ising-arm} that
    \begin{align*}
        \liminf_{N\to\infty}\mu_{\Z^2}(A_\tau(\epsilon N,N))\asymp \limsup_{N\to
			\infty} \mu_{\Z^2}(A_\tau(\epsilon N,N)) =
         \epsilon^{\alpha_{2j}(r)+o(1)} \quad \text{as $\epsilon\to 0$.}
    \end{align*}
	In fact, we note that if the limiting curve appearing in the statement of Theorem  \ref{thm:interface-convergence} satisfies the arm event appearing in Proposition \ref{prop:main-exp-interior}, then the two approximating discrete fuzzy Potts interfaces from Theorem \ref{thm:interface-convergence} will also satisfy the arm event simultaneously for sufficiently small lattice mesh size. In particular, we obtain
    \begin{align*}
	\liminf_{N\to\infty}\mu_{\Z^2}(A^s_\tau(\epsilon N,N))\asymp \limsup_{N\to
			\infty} \mu_{\Z^2}(A^s_\tau(\epsilon N,N))= 
	\epsilon^{\alpha_{2j}(r)+o(1)} \quad \text{as $\epsilon\to 0$.}
\end{align*}
     The theorem in the case where $\tau$ is alternating is
    then a consequence of quasi-multiplicativity (see Theorem
    \ref{thm:quasi-multiplicativity}) and follows again as in
    \cite{wu-ising-arm}. In particular, this shows that the exponent in the case
    of four alternating arms is $>2$ and so the general case of the theorem is a
    consequence of Proposition \ref{prop:alternating-non-alternating}.
\end{proof}

\begin{proof}[Proof of Theorem \ref{thm:main-boundary}]
    Again, let us first consider the case where $\tau$ is alternating and
    $I(\tau)=k$. In this case, using Proposition \ref{prop:main-exp-boundary},
    Theorem \ref{thm:interface-convergence} and Theorem
    \ref{thm:arm-separation-halfplane} (and mixing), we get by the same arguments as in
    \cite{wu-ising-arm} and as before that
    \begin{align*}
        &\liminf_{N\to\infty}\mu_{\Z\times\Z_+}^0(A^+_\tau(\epsilon N,N))\asymp \limsup_{N\to
        \infty} \mu_{\Z\times\Z_+}^0(A^+_\tau(\epsilon N,N))\\
        &\qquad \asymp
        \liminf_{N\to\infty}\mu_{\Z\times\Z_+}^0(A^{+s}_\tau(\epsilon N,N))\asymp \limsup_{N\to
			\infty} \mu_{\Z\times\Z_+}^0(A^{+s}_\tau(\epsilon N,N))\asymp \epsilon^{\alpha^+_k(r)}.
    \end{align*}
    Again, the theorem follows in the case where $\tau$ is alternating by
    quasi-multiplicativity (see Theorem
    \ref{thm:quasi-multiplicativity-halfplane}) as in \cite{wu-ising-arm} and
    the general case can be deduced from Remark
    \ref{rk:halfplane-non-alternating} by making use of Theorem
    \ref{thm:main-bulk} to see that the alternating four-arm exponent is $>2$.
\end{proof}

\appendix

\section{A technical lemma on the loop topology}
\label{sec:app-loops}

In this section we will prove a simple lemma which gives a practical criterion for
checking convergence with respect to $d_\mathcal{C}$. Roughly speaking, if
a curve is simple and is approximated in Hausdorff distance by a sequence of
curves which do not oscillate too much, then one can deduce convergence with
respect to the metric $d_\mathcal{C}$. The scenario that needs to be excluded is
that the approximating curves trace macroscopic segments of the limiting loop
several times. The proofs are illustrated in Figure \ref{fig:technical-loop}.

\begin{figure}
	\centering
	\def\svgwidth{0.8\columnwidth}
	%% Creator: Inkscape 1.1.2 (0a00cf5339, 2022-02-04, custom), www.inkscape.org
%% PDF/EPS/PS + LaTeX output extension by Johan Engelen, 2010
%% Accompanies image file 'technical-loop.pdf' (pdf, eps, ps)
%%
%% To include the image in your LaTeX document, write
%%   \input{<filename>.pdf_tex}
%%  instead of
%%   \includegraphics{<filename>.pdf}
%% To scale the image, write
%%   \def\svgwidth{<desired width>}
%%   \input{<filename>.pdf_tex}
%%  instead of
%%   \includegraphics[width=<desired width>]{<filename>.pdf}
%%
%% Images with a different path to the parent latex file can
%% be accessed with the `import' package (which may need to be
%% installed) using
%%   \usepackage{import}
%% in the preamble, and then including the image with
%%   \import{<path to file>}{<filename>.pdf_tex}
%% Alternatively, one can specify
%%   \graphicspath{{<path to file>/}}
%% 
%% For more information, please see info/svg-inkscape on CTAN:
%%   http://tug.ctan.org/tex-archive/info/svg-inkscape
%%
\begingroup%
  \makeatletter%
  \providecommand\color[2][]{%
    \errmessage{(Inkscape) Color is used for the text in Inkscape, but the package 'color.sty' is not loaded}%
    \renewcommand\color[2][]{}%
  }%
  \providecommand\transparent[1]{%
    \errmessage{(Inkscape) Transparency is used (non-zero) for the text in Inkscape, but the package 'transparent.sty' is not loaded}%
    \renewcommand\transparent[1]{}%
  }%
  \providecommand\rotatebox[2]{#2}%
  \newcommand*\fsize{\dimexpr\f@size pt\relax}%
  \newcommand*\lineheight[1]{\fontsize{\fsize}{#1\fsize}\selectfont}%
  \ifx\svgwidth\undefined%
    \setlength{\unitlength}{317.06407213bp}%
    \ifx\svgscale\undefined%
      \relax%
    \else%
      \setlength{\unitlength}{\unitlength * \real{\svgscale}}%
    \fi%
  \else%
    \setlength{\unitlength}{\svgwidth}%
  \fi%
  \global\let\svgwidth\undefined%
  \global\let\svgscale\undefined%
  \makeatother%
  \begin{picture}(1,0.42924073)%
    \lineheight{1}%
    \setlength\tabcolsep{0pt}%
    \put(0,0){\includegraphics[width=\unitlength,page=1]{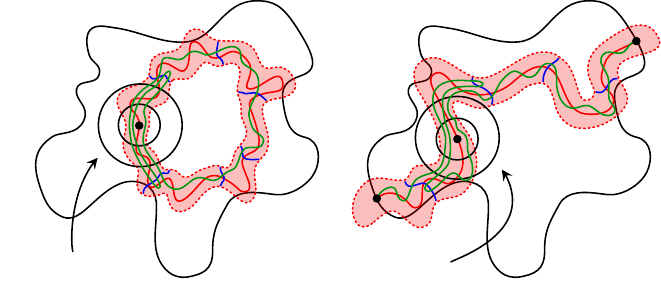}}%
    \put(0.44737207,0.09578785){\makebox(0,0)[lt]{\lineheight{1.25}\smash{\begin{tabular}[t]{l}\textcolor{red}{$U_r$}\end{tabular}}}}%
    \put(0.43894907,0.02360909){\makebox(0,0)[lt]{\lineheight{1.25}\smash{\begin{tabular}[t]{l}$B_\delta(\eta_{t'_i})\setminus B_{\delta'}(\eta_{t'_i}$)\end{tabular}}}}%
    \put(-0.00039109,0.00099664){\makebox(0,0)[lt]{\lineheight{1.25}\smash{\begin{tabular}[t]{l}$B_\delta(\eta_{t'_i})\setminus B_{\delta'}(\eta_{t'_i}$)\end{tabular}}}}%
  \end{picture}%
\endgroup%

	\caption{\emph{Left.} Illustration of the proof of Lemma
	\ref{lem:technical-loop}. \emph{Right.} Illustration of the proof of Lemma
	\ref{lem:technical-curve}. In the left (resp.\ right) figure, $\eta$ (resp.
	$\gamma$) is drawn in red, $U_r$ is shaded in red, $\eta^n$ (resp.
	$\gamma^n$) is drawn in green and the segments $I_r^0,\dots,I_r^{M-1}$ are
	shown in blue. Both figures display the scenario where the green curve has
	at least three crossings between two blue segments; this results in six
	crossings of the annulus $B_\delta(\eta_{t_i'})\setminus
	B_{\delta'}(\eta_{t_i'})$ which by assumption does not occur for
	$\delta'\in (0,\delta)$ sufficiently small.}
	\label{fig:technical-loop}
\end{figure}

\begin{lemma}
	\label{lem:technical-loop}
	Let $D$ be a Jordan domain and consider a simple loop $\eta\colon
    \partial\D\to \overline{D}$ and $\eta^n\in C^s(\partial\D,\C)$ all
    surrounding a fixed point $z_0\in D$. Suppose that $Z=\{t\in \partial\D
    \colon \eta_t\in \partial D\}$ has empty interior and that
	\begin{align}
		\label{eq:conv-assump-technical}
		\sup_{s\in \partial\D}\inf_{t\in \partial\D} \lvert\eta_t -\eta^n_s\rvert\to 0
		\quad\text{as $n\to\infty$}\;.
	\end{align}
	Also assume that for each $\delta>0$ there is $\delta' \in (0,\delta)$ with
	the following property: For all $n\ge 1$, whenever $z\in D$ is such that
	$B_{2\delta}(z)\subset D$, the annulus $B_\delta(z)\setminus B_{\delta'}(z)$
	is not crossed six times (or more) by $\eta^n$. Then
	$d_\mathcal{C}(\eta^n,\eta)\to 0$ as $n\to \infty$.
\end{lemma}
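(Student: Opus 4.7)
The plan is to construct, for each $\epsilon>0$, reparametrizations $\phi_n$ of $\partial\D$ with $\|\eta^n \circ \phi_n - \eta\|_\infty < \epsilon$ for $n$ large. First, using uniform continuity of $\eta$ and density of $\partial\D\setminus Z$ (from the empty-interior hypothesis on $Z$), I would pick counterclockwise-ordered landmarks $t_0 < t_1 < \cdots < t_{M-1}$ on $\partial\D$ with $\eta_{t_i}\in D$ and $\diam(\eta([t_i,t_{i+1}])) < \epsilon/100$, and then insert further auxiliary landmarks $v_{i,k}\in(t_i,t_{i+1})$ with $\eta_{v_{i,k}}\in D$ refining the partition. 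Choose $\delta>0$ so that $B_{2\delta}$ around each landmark is contained in $D$, the balls are pairwise disjoint, and each has diameter less than $\epsilon/100$, then apply the hypothesis to get $\delta'\in(0,\delta)$. By \eqref{eq:conv-assump-technical}, for $n$ sufficiently large $\eta^n$ lies in the tube $\mathcal{T}:=\{z:\dist(z,\eta(\partial\D))<\delta'/10\}$.

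For $\delta'$ sufficiently small $\mathcal{T}$ is a topological annulus with core $\eta$, so it admits a continuous retraction onto $\eta(\partial\D)$; composing with $\eta^{-1}$ gives a continuous $u_n:\partial\D\to\partial\D$ of degree $\pm1$ (by the winding of $\eta^n$ around $z_0$), and one reduces to the $+1$ case by reversing parametrization if needed. Let $\tilde u_n:[0,1]\to\R$ be a continuous lift with $\tilde u_n(1)=\tilde u_n(0)+1$, normalized so $\tilde u_n(0)=\tilde t_0$. The heart of the argument is an \emph{almost-monotonicity} statement: for $s<s'$ in $[0,1]$ with $\tilde u_n(s)>\tilde u_n(s')$, the open interval $(\tilde u_n(s'),\tilde u_n(s))$ contains no lift of any landmark. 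Indeed, if a landmark lift $\tilde v$ lay in this interval, then by the intermediate value theorem $\tilde u_n$ would attain $\tilde v$ in each of the three disjoint subintervals $[0,s]$, $(s,s')$, $[s',1]$. The landmark-separation arrangement ensures $\eta_v$ is at distance at least $3\delta$ from $\eta_{\tilde u_n(s)}$ and $\eta_{\tilde u_n(s')}$, so the three corresponding visits of $\eta^n$ to $B_{\delta'}(\eta_v)$ are genuinely disjoint, yielding six crossings of the annulus $B_\delta(\eta_v)\setminus B_{\delta'}(\eta_v)$ --- contradicting the hypothesis.

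With almost-monotonicity in hand, define $s_i^n$ iteratively as the first time after $s_{i-1}^n$ at which $\tilde u_n$ reaches $\tilde t_i$, giving cyclic-ordered $s_0^n<s_1^n<\cdots<s_{M-1}^n$ with $u_n(s_i^n)=t_i$. For $s\in[s_i^n,s_{i+1}^n]$ the greedy construction forces $\tilde u_n(s)\le\tilde t_{i+1}$, and applying almost-monotonicity to the pair $(s_i^n,s)$ gives $\tilde u_n(s)\ge\tilde t_{i-1}$; a refinement via the auxiliary landmarks then yields $|\eta_t-\eta_{u_n(s)}|<\epsilon/50$ for $t\in[t_i,t_{i+1}]$. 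Letting $\phi_n$ be the piecewise-linear homeomorphism of $\partial\D$ with $\phi_n(t_i)=s_i^n$, a triangle inequality
\[
|\eta^n(\phi_n(t))-\eta_t|\le |\eta^n(\phi_n(t))-\eta_{u_n(\phi_n(t))}|+|\eta_{u_n(\phi_n(t))}-\eta_t|<\delta'/10+\epsilon/50<\epsilon
\]
completes the proof. The main obstacle will be the almost-monotonicity step: one must verify that three successive crossings of the value $\tilde v$ by $\tilde u_n$ produce three \emph{disjoint} visits of $\eta^n$ to $B_{\delta'}(\eta_v)$, rather than merging into fewer annular crossings; this is handled via the landmark-separation constraint, which forces $\eta^n$ to genuinely leave $B_\delta(\eta_v)$ between consecutive level crossings, together with shrinking $\delta'$ enough that the relevant portion of $\eta$ near $v$ stays inside $B_\delta(\eta_v)$.
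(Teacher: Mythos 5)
Your overall strategy is the same as the paper's: fix landmarks on $\eta$, build a correspondence $\partial\D\to\partial\D$ linking parameters of $\eta^n$ to parameters of $\eta$, and use the six-crossing hypothesis to bound its oscillation before assembling the reparametrization. However, there are two genuine gaps. First, the claim that the metric tube $\mathcal{T}=\{z:\dist(z,\eta(\partial\D))<\delta'/10\}$ is a topological annulus with a retraction onto $\eta(\partial\D)$ is not true for a general continuous Jordan curve: $\eta$ may have inward (or outward) ``pockets'' whose mouths are narrower than $\delta'$ at every scale, so that $\C\setminus\mathcal{T}$ has more than two components for every $\delta'>0$, and no retraction onto the core need exist. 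This is precisely the issue the paper sidesteps by building the collar $U_r$ from the two Carath\'eodory boundary extensions $\psi_\pm$ rather than from a metric neighbourhood, with the radial segments $I^i_r$ serving as topological barriers.

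Second, and more seriously, the almost-monotonicity claim as you state it --- that for $s<s'$ with $\tilde u_n(s)>\tilde u_n(s')$ the open interval $(\tilde u_n(s'),\tilde u_n(s))$ contains \emph{no} landmark lift --- does not follow from the six-crossing hypothesis. Your IVT argument produces three times $r_1<r_2<r_3$ with $\tilde u_n(r_i)=\tilde v$, but between consecutive visits the lift only reaches $\tilde u_n(s)$ resp.\ $\tilde u_n(s')$, and neither of these is a landmark value; if $\tilde v$ lies close to $\tilde u_n(s)$, then $\eta_{u_n(s)}$ is (by continuity of $\eta$) arbitrarily close to $\eta_v$, so the excursion between $r_1$ and $r_2$ need not leave $B_\delta(\eta_v)$ and you do not get disjoint annulus crossings. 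You flag this obstacle, but the proposed resolution (``the landmark-separation constraint forces $\eta^n$ to genuinely leave'') does not apply because $u_n(s)$ and $u_n(s')$ are not landmarks. A working repair is possible --- e.g.\ replace the statement with a quantitative bound $\tilde u_n(s')\ge\tilde u_n(s)-3g$ where $g$ is the landmark spacing, obtained by choosing the test landmark $\tilde v$ at parameter distance at least $g$ from both $\tilde u_n(s)$ and $\tilde u_n(s')$ and invoking uniform continuity of $\eta^{-1}$ to lower-bound $|\eta_{u_n(s)}-\eta_v|$ and $|\eta_{u_n(s')}-\eta_v|$, then choosing $\delta$ small with respect to that lower bound before invoking the hypothesis to get $\delta'$ --- but as written the proof is incomplete at its central step.
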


\begin{proof}
	The proof relies on finding suitable reparametrizations of the curves
	$\eta^n$. Without loss of generality, all loops are oriented
	counterclockwise around $z_0$. Consider $\epsilon>0$ and let $M\ge 1$ be
	maximal such that there are points $t_0,\dots,t_{M-1}\in \partial\D$
	(ordered counterclockwise) with $\lvert\eta_{t_i}-\eta_{t_{i+1}}\rvert\ge \epsilon$
	for all $i<M$ (addition modulo $M$).

	Let $D_-$ and $D_+$ be the bounded and
	unbounded component of $\C\setminus \eta(\partial\D)$ respectively. Let
	$\psi_-\colon \D\to D_-$ and $\psi_\infty\colon \C\setminus\D\to D_+$
	be conformal transformations with $\psi_-(0)=z_0$. By Carathéodory's theorem
	(see e.g.\ \cite[Theorem 14.5.6]{conway-complex-ii}), $\psi_\pm$
	extend continuously to the boundary since $\eta$ is a continuous simple
	loop. Since $\psi_\pm\lvert_{\partial\D}$ are reparametrizations of $\eta$, for
	all $i<M$ there exist $t^\pm_i\in \partial\D$ with
	$\psi_+(t^+_i)=\psi_-(t^-_i)=\eta_{t_i}$. For $r\in (0,1)$ and $i<M$ we let
	\begin{align*}
		U_r&=\psi_-(\{z\in \C\colon 1-r< \abs{z}\le 1\})\cup \psi_+(\{z\in
		\C\colon 1\le \abs{z}< 1+r\})\,,\\
		I_r^i &= \psi_-(t^-_i\cdot (1-r,1])\cup \psi_+(t^+_i\cdot [1,1+r))\;,\\
		S_r^i &= I_r^i\cup \psi_-( (\!(t_i,t_{i+1})\!)\cdot (1-r,1]) \cup
		\psi_+( (\!(t_i,t_{i+1})\!)\cdot [1,1+r))\cup I_r^{i+1}\;.
	\end{align*}
	Note that $U_r$ has the topology of an annulus (which $\eta$ follows in
	a counterclockwise way) and that $I_r^0,\dots,I_r^{M-1}$ are `radial
	segments' within $U_r$ and splitting $U_r$ into $S_r^0,\dots,S_r^{M-1}$. For
	each $n\ge 1$ with $\eta^n(\partial\D)\subset U_r$, the curve $\eta^n$
	follows the annulus $U_r$ in counterclockwise way and we may take
	$s^n_0,\dots,s^n_{M-1}\in \partial\D$ which are distinct and
	counterclockwise ordered such that
	\begin{align*}
		\eta^n_{s^n_i}\in I^r_i\quad\text{for all $i<M$}\;.
	\end{align*}
	In this case, we pick an orientation-preserving homeomorphism $\phi_n\colon
	\partial\D\to \partial\D$ such that $\phi_n(t_i)=s^n_i$ for all $i<M$ and
	our goal will be to bound  $\|\eta-\eta^n\circ\phi_n\|_\infty$.

	Since $Z$ has empty interior, we see (arguing by contradiction) that there
	is $\epsilon'>0$ such that for all $u,v\in \partial \D$ with
	$\lvert\eta_u-\eta_v\rvert\ge \epsilon$ there exist $t\in (\!(u,v)\!)$ such that
	$\lvert\eta_u-\eta_t\rvert,\lvert\eta_v-\eta_t\rvert\ge \epsilon/4$ and $\dist(\eta_t,\partial
	D)>\epsilon'$. For any $i<M$, we apply this with $u=t_i$ and $v=t_{i+1}$ to
	obtain a point $t_i'\in (\!(t_i, t_{i+1})\!)$ with the stated property.

	Fix any $\delta<(\epsilon'/2)\wedge (\epsilon/8)$ and let
	$\delta'\in (0,\delta)$ be chosen as in the statement of the lemma.
	Moreover, take $r\in (0,1)$ sufficiently small such that
	\begin{align}
		\label{eq:technical-psi-func}
		\sup_{t\in \partial\D}\sup_{c\in (1-r,1]} \lvert\psi_-(ct)-\psi_-(t)\rvert\;,
		\sup_{t\in \partial\D}\sup_{c\in [1,1+r)} \lvert \psi_+(ct)-\psi_+(t)\rvert <
		\delta' \;.
	\end{align}
	By \eqref{eq:conv-assump-technical}, there exists $N\ge 1$ such that
	$\eta^n(\partial\D)\subset U_r$ (and $\eta^n$ follows the annulus $U_r$ in a
	counterclockwise way) for all $n\ge N$. Fix any such $n\ge N$.
	Since $\delta'<\delta<\epsilon/8$ we get $\delta <  \epsilon/4-\delta'$ and
	we also have $2\delta<\epsilon'$. Hence
	\begin{align*}
		 I^i_r\cap B_{\delta}(\eta_{t_i'})=I^{i+1}_r\cap
		 B_{\delta}(\eta_{t_i'})=\emptyset\quad\text{and}\quad
		B_{2\delta}(\eta_{t_i'})\subset D
	\end{align*}
	by \eqref{eq:technical-psi-func} and the
	definition of $t_i'$.
	Therefore the maximal disjoint number of subintervals of $\partial\D$ where
	$\eta^n$ (in its counterclockwise
	parametrization) crosses from $I^i_r$ to $I^{i+1}_r$ is $1$ since
	otherwise the annulus $B_{\delta}(\eta_{t_i'})\setminus
	B_{\delta'}(\eta_{t_i'})$ would be crossed at least six times by $\eta^n$
	(see Figure \ref{fig:technical-loop}). This implies that
	\begin{align*}
		\eta^n_{s}\in S^{i-1}_r\cup S^i_r\cup S^{i+1}_r\quad\text{for $s\in
		(\!(s^n_i,s^n_{i+1})\!)$}\;.
	\end{align*}
	Indeed, if this did not hold we would cross between
	two radial segments in counterclockwise direction more than once (again, see
	Figure \ref{fig:technical-loop}).

	If $t\in (\!(t_i,t_{i+1})\!)$ then by maximality of $t_0,\dots,t_{M-1}$ we
	deduce that $\lvert\eta_t-\eta_{t_i}\rvert\le 2\epsilon$. By maximality also
	$\diam(\eta([t_i,t_{i+1}]))\le 2\epsilon$ for all $i$ and hence for
	$s\in (\!(s^n_i,s^n_{i+1})\!)$,
	\begin{align*}
		\lvert \eta^n_{s^n_i}-\eta^n_s\rvert \le \diam(S^{i-1}_r\cup S^{i}_r\cup
		S^{i+1}_r) \le 3\cdot 2\epsilon+2\delta\le 7\epsilon\;.
	\end{align*}
	Clearly $\lvert\eta_{t_i}-\eta^n_{s^n_i}\rvert \le \delta'\le \epsilon/8\le \epsilon$.
	Combining everything yields
	$\|\eta-\eta^n\circ\phi_n\|_\infty\le 10\epsilon$.
\end{proof}

There is an analogous statement in the case of curves which we state here as
well. Since the proof is very similar to the one for Lemma
\ref{lem:technical-loop} we will only sketch the proof.

\begin{lemma}
	\label{lem:technical-curve}
    Let $D$ be a Jordan domain. Consider a simple curve $\gamma\colon [0,1]\to
    \overline{D}$ and $\gamma^n\in C^s([0,1],\C)$. We suppose that $Z=\{t\in
    [0,1]\colon \gamma_t\in \partial D\}$ has empty interior. We also assume
    that
	\begin{align*}
		\sup_{s\in [0,1]}\inf_{t\in [0,1]} \lvert \gamma_t - \gamma^n_s\rvert  \to 0\quad
        \text{and}\quad (\gamma^n_0,\gamma^n_1)\to(\gamma_0,\gamma_1)\quad
		\text{as $n\to \infty$}\;.
	\end{align*}
	Again we assume that for each $\delta>0$ there is $\delta' \in (0,\delta)$
	with the following property: For all $n\ge 1$, whenever $z\in D$ is such
	that $B_{2\delta}(z)\subset D$, the annulus $B_\delta(z)\setminus
	B_{\delta'}(z)$ is not crossed six times (or more) by $\gamma^n$. Then
	$d_{\mathcal{C}'}(\gamma^n,\gamma)\to 0$ as $n\to \infty$.
\end{lemma}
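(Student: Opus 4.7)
The proof parallels Lemma~\ref{lem:technical-loop}, with the main adaptations being that curves carry a linear (rather than cyclic) order and have two distinguished endpoints. The plan is to find a sequence of points along $\gamma$ spaced at macroscopic distance $\epsilon$, construct short transversal segments at these points, use the no-six-crossing condition to force $\gamma^n$ to cross these transversals in the correct order, and then build an increasing homeomorphism $\phi_n\colon [0,1]\to [0,1]$ fixing the endpoints which pairs up the transversal crossings with the chosen points on $\gamma$.

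First I would fix $\epsilon>0$ and choose a maximal increasing sequence $0\le t_0<t_1<\dots<t_{M-1}\le 1$ with $|\gamma_{t_i}-\gamma_{t_{i+1}}|\ge \epsilon$ for all $i<M-1$, $|\gamma_0-\gamma_{t_0}|<\epsilon$, and $|\gamma_1-\gamma_{t_{M-1}}|<\epsilon$. By maximality, the diameter of $\gamma$ restricted to each of the intervals $[0,t_0]$, $[t_i,t_{i+1}]$, $[t_{M-1},1]$ is bounded by $2\epsilon$. Since $Z$ has empty interior, for each $i$ I can choose an intermediate time $t_i'\in (t_i,t_{i+1})$ with $\gamma_{t_i'}\in D$, $|\gamma_{t_i'}-\gamma_{t_j}|\ge \epsilon/4$ for $j\in\{i,i+1\}$, and $\dist(\gamma_{t_i'},\partial D)>\epsilon'$ for some uniform $\epsilon'>0$.

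Next, since $\gamma$ is simple and $\gamma_{t_i'}$ is an interior point of $D$, a small neighborhood of $\gamma_{t_i'}$ is locally cut by $\gamma$ into two components, and I can build a short transverse segment $I^i_r\subset B_\delta(\gamma_{t_i'})\subset D$, for some $\delta<(\epsilon'/2)\wedge(\epsilon/8)$, which meets $\gamma$ only at $\gamma_{t_i'}$ and whose endpoints lie in the two local complementary components. (In the setting of the application, where $\gamma$ is an interface from $\partial D$ to $\partial D$, one could alternatively recover the two-sided construction of Lemma~\ref{lem:technical-loop} globally via Carath\'eodory extensions of uniformizing maps from the two complementary components of $D\setminus \gamma$.) Let $\delta'\in (0,\delta)$ be associated to $\delta$ by the hypothesis. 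For $n$ large enough, the assumed Hausdorff-type convergence forces $\gamma^n$ to meet each $I^i_r$, and exactly as in Lemma~\ref{lem:technical-loop}, if $\gamma^n$ crossed between two consecutive transversals $I^i_r$ and $I^{i+1}_r$ more than once, we would see at least six crossings of the annulus $B_\delta(\gamma_{t_i'})\setminus B_{\delta'}(\gamma_{t_i'})$, contradicting our assumption. Hence there exist unique times $0\le s^n_0<s^n_1<\dots<s^n_{M-1}\le 1$ with $\gamma^n_{s^n_i}\in I^i_r$, and for $s\in (s^n_i,s^n_{i+1})$ the point $\gamma^n_s$ lies within Euclidean distance $O(\epsilon)$ of $\gamma([t_{i-1},t_{i+2}])$.

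Finally I would take any increasing homeomorphism $\phi_n$ of $[0,1]$ with $\phi_n(0)=0$, $\phi_n(t_i')=s^n_i$, and $\phi_n(1)=1$, and conclude $\|\gamma-\gamma^n\circ\phi_n\|_\infty = O(\epsilon)$ by combining the maximality bound, the localization of $\gamma^n$ between consecutive crossings, and the triangle inequality; since $\epsilon>0$ is arbitrary, this gives $d_{\mathcal{C}'}(\gamma^n,\gamma)\to 0$. The main obstacle, just as in Lemma~\ref{lem:technical-loop}, is to rigorously exclude $\gamma^n$ doubling back between consecutive transversals; the only genuinely new ingredient compared to that proof is the handling of the two endpoint intervals $[0,s^n_0]$ and $[s^n_{M-1},1]$, which is dealt with using the assumed endpoint convergence $(\gamma^n_0,\gamma^n_1)\to (\gamma_0,\gamma_1)$ together with Hausdorff convergence to localize $\gamma^n$ near $\gamma([0,t_0'])$ and $\gamma([t_{M-1}',1])$ respectively.
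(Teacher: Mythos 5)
Your proposal goes off the rails at the construction of the transversal arcs $I^i_r$. You assert that, since $\gamma$ is simple and $\gamma_{t_i'}$ is an interior point of $D$, ``a small neighborhood of $\gamma_{t_i'}$ is locally cut by $\gamma$ into two components'' and that one can place a short arc meeting $\gamma$ only at $\gamma_{t_i'}$ with endpoints in those two pieces. This is false for a general simple curve: $\gamma$ may re-enter any fixed ball $B_\rho(\gamma_{t_i'})$ at parameter times far from $t_i'$, and it may be locally wild (comb-like accumulations), so $B_\rho(\gamma_{t_i'})\setminus\gamma([0,1])$ can have many components and no Euclidean segment through $\gamma_{t_i'}$ need hit $\gamma$ exactly once. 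Without a well-defined two-sided transversal you also lose the sector decomposition that makes the no-six-crossing argument force monotone ordering of the $s^n_i$; that step in Lemma~\ref{lem:technical-loop} relies on the transversals $I^i_r$ cutting a definite tubular neighborhood $U_r$ of the curve into consecutive pieces $S^i_r$, which a merely local arc does not provide.

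Your parenthetical alternative does not rescue this either: you propose uniformizing ``the two complementary components of $D\setminus\gamma$,'' but the lemma only assumes $Z=\{t:\gamma_t\in\partial D\}$ has empty interior, so $\gamma$ may touch $\partial D$ at many points and $D\setminus\gamma([0,1])$ need not have exactly two components (and in the intended application $\gamma^{a,b}$ is a boundary-touching $\SLE$, so it certainly does not). The paper's actual device is different and handles both problems at once: it uniformizes the slit complement in the full plane by a single conformal map $\psi:\C\setminus\D\to\C\setminus\gamma([0,1])$, extends $\psi$ continuously to $\partial\D$ by the arc version of Carath\'eodory's theorem, picks preimages $x,y\in\partial\D$ of $\gamma_0,\gamma_1$, and observes that $\psi$ restricted to each of the two boundary arcs $(\!(x,y)\!)$, $(\!(y,x)\!)$ is a homeomorphism onto $\gamma((0,1))$. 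The transversals $I^i_r$ are then images of the two radial segments emanating from the two prime-end preimages $t_i^\pm$ of $\gamma_{t_i}$, and $U_r=\psi(\{1\le|z|<1+r\})$ is the tubular neighborhood that these transversals split into ordered sectors. From there the argument really is verbatim Lemma~\ref{lem:technical-loop}, with the endpoint convergence used to pin down $s^n_0$ and $s^n_{M-1}$ as you say. You should replace your local construction by this global uniformization of the slit complement.
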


\begin{proof}
	Fix $\epsilon>0$ and let $M\ge 1$ be maximal such that there are points
	$t_0,\dots,t_{M-1}\in (0,1)$ (in increasing order) with
	$\lvert \gamma_{t_i}-\gamma_{t_{i+1}}\rvert \ge \epsilon$ for all $0\le i<M$. Let
	$\psi\colon \C\setminus \D\to \C\setminus \gamma([0,1])$ be a conformal
	transformation. By a version of Carathéodory's theorem \cite[Theorem
	14.5.5]{conway-complex-ii} $\psi$ extends continuously to $\partial \D$
    and we may take $x,y\in \partial\D$ such that $\psi(x)=\gamma_0$ and
    $\psi(y)=\gamma_1$. Let $I^+=(\!(x,y)\!)$ and $I^-=(\!(y,x)\!)$. 
    
    Since
	$\psi\lvert_{I^\pm}\colon I^\pm\to \gamma( (0,1))$ are homeomorphisms, there
    exist $t^\pm_0,\dots,t^\pm_{M-1}\in I^\pm$ such that
    $\psi(t^+_i)=\psi(t^-_i)=\eta_{t_i}$ for all $i<M$. For $r\in (0,1)$ and
    $i<M$ we now let
	\begin{align*}
		U_r &= \psi(\{z\in \C\colon \abs{z}\in [1,1+r)\})\;,\\
		I^i_r &= \psi(t^-_i[1,1+r))\cup \psi(t^+_i[1,1+r))\;.
	\end{align*}
	If $\gamma^n([0,1])\subset U_r$ then we can take $s^n_0,\dots,s^n_{M-1}\in
	(0,1)$ (in increasing order) such that $\gamma^n_{s^n_i}\in I^i_r$ for all
	$i<M$ and we let $\phi_n\colon [0,1]\to [0,1]$ be an increasing
	homeomorphisms with $\phi_n(t_i)=s^n_i$ for all $i$. The remainder of the
	argument is virtually identical to the one of Lemma \ref{lem:technical-loop}
	and is thus omitted.
\end{proof}

\section{Distortion estimates up to the boundary}
\label{sec:app-conformal}

In Section \ref{sec:continuum-exp} we need to carefully consider the
distortion of points, balls and semiballs by conformal transformations. The
complex analysis input is given by the following lemma.
We write $\phi_D\colon D\to \D$ for the
unique conformal transformation satisfying $\phi_D(0)=0$ and $\phi_D'(0)>0$
whenever $D\subset \D$ is a simply connected domain containing $0$. By Schwarz
reflection, $\phi_D$ continuously extends to the closure of $(\partial
D\cap\partial\D)^o$. We write $D_{(\delta,\delta_0)}$ for the union of
$B_{1-\delta}(0)$ and the collection of all points in $z\in D\setminus
B_{1-\delta}(0)$ which can be connected to $\partial B_{1-\delta}(0)$ by a curve
which remains in $D\setminus B_{1-\delta}(0)$ and has diameter $\le \delta_0$.

\begin{lemma}
    \label{lem:kernel-result}
	For all $\epsilon>0$ there exists $\delta_0\in (0,1)$ with the following
	property: For all $\delta\in (0,\delta_0)$ there exists $\delta'\in
	(0,\delta)$ such that $\lvert \phi_D(z)-z\rvert \le \epsilon$ whenever $z\in
	D_{(\delta,\delta_0)}$ and $B_{1-\delta'}(0)\subset D\subset \D$.
\end{lemma}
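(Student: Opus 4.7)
I argue by contradiction using Carath\'eodory's kernel convergence theorem. Negating the statement for some $\epsilon>0$ and extracting a diagonal sequence, we obtain $\delta_0^{(n)}\to 0$, $\delta_n\in(0,\delta_0^{(n)})$, and $\delta_n'\in(0,\delta_n)$ with $\delta_n'\to 0$, along with domains $D_n$ satisfying $B_{1-\delta_n'}(0)\subset D_n\subset\D$ and points $z_n\in D_{n,(\delta_n,\delta_0^{(n)})}$ so that $|\phi_{D_n}(z_n)-z_n|>\epsilon$. Since $\delta_n'\to 0$, the $D_n$ converge to $\D$ in the Carath\'eodory kernel sense, so Carath\'eodory's theorem yields $\phi_{D_n}\to\mathrm{id}$ and $\phi_{D_n}^{-1}\to\mathrm{id}$ locally uniformly on $\D$. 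Passing to a subsequence, $z_n\to z_\infty\in\overline{\D}$; if $z_\infty\in\D$ then $z_n$ eventually lies in a compact subset and local uniform convergence immediately contradicts the hypothesis, so the substantive case is $z_\infty\in\partial\D$, in which I must show $w_n:=\phi_{D_n}(z_n)\to z_\infty$.

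For large $n$ we have $|z_n|>1-\delta_n$, so the definition of $D_{n,(\delta_n,\delta_0^{(n)})}$ furnishes a curve $\gamma_n\subset D_n\setminus B_{1-\delta_n}(0)$ of Euclidean diameter at most $\delta_0^{(n)}\to 0$ joining $z_n$ to some $y_n\in\partial B_{1-\delta_n}(0)$, and thus $y_n\to z_\infty$. Fix a small $\eta>0$ (to be sent to $0$ at the end) and, for $n$ large so that $\max(\delta_0^{(n)},\delta_n')<\eta$, concatenate $\gamma_n$ with the radial segment from $y_n$ to $q_n:=(1-\eta)e^{i\arg y_n}\in\partial B_{1-\eta}(0)\subset D_n$; this yields a curve $\widetilde\beta_n\subset D_n$ from $z_n$ to $q_n$, contained in $B_{2\eta}(z_\infty)$. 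Locally uniform convergence on the compact circle $\partial B_{1-\eta}(0)\subset\D$ gives $\phi_{D_n}(q_n)\to(1-\eta)z_\infty$, so it remains to bound $|w_n-\phi_{D_n}(q_n)|$ in terms of $\eta$. Consider the Jordan subdomain $\Omega_n\subset D_n$ whose boundary consists of $\widetilde\beta_n$ together with the piece of $\partial D_n$ sitting near $z_\infty$, so that $\Omega_n\subset B_{3\eta}(z_\infty)\cap D_n$. Since $D_n\supset B_{1-\delta_n'}(0)$, the portion of $\partial\Omega_n$ inside $\D$ is confined to the thin shell $\D\setminus B_{1-\delta_n'}(0)$, so $\partial\Omega_n\cap\partial\D$ lies in an arc of $\partial\D$ of length $O(\eta)$. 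Conformal invariance of harmonic measure (applied via $\phi_{D_n}$ on the complementary region $D_n\setminus\overline{\Omega_n}$, which contains $0$) together with the maximum-principle bounds $\omega_{D_n\setminus\overline{\Omega_n}}(0,\partial\Omega_n\cap\partial\D)\le \omega_\D(0,\partial\Omega_n\cap\partial\D)=O(\eta)$ and $\omega_V(0,J_n)\ge|J_n|/(2\pi)$ for simply connected $V\subset\D$ containing $0$ force the image arc $J_n:=\phi_{D_n}(\partial\Omega_n\cap\partial\D)\subset\partial\D$ to have length $O(\eta)$ as well. A Beurling-projection argument for a simply connected subdomain of $\D$ attached to $\partial\D$ only along a short arc then gives $\diam\phi_{D_n}(\Omega_n)=O(\sqrt{\eta})$, hence $|w_n-\phi_{D_n}(q_n)|=O(\sqrt{\eta})$ and $|w_n-z_\infty|=O(\sqrt{\eta})+o_n(1)$. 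Letting $\eta\to 0$ after $n\to\infty$ yields $w_n\to z_\infty$, the desired contradiction.

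\textbf{Main obstacle.} The hardest point is the boundary control, because locally uniform convergence from the Carath\'eodory theorem degenerates as one approaches $\partial\D$. Translating the Euclidean smallness of $\widetilde\beta_n$ into smallness of its conformal image requires carefully controlling how $\phi_{D_n}$ distorts a region attached to $\partial\D$ along a short arc; I handle this by the harmonic-measure/Beurling-projection estimate above, with the main computational point being the diameter bound on $\phi_{D_n}(\Omega_n)$. The radial-slit example, in which a slit of length $\delta'$ produces angular distortion of order $\sqrt{\delta'}$, shows that the lemma is sharp in requiring $\delta'$ to be much smaller than $\delta$.
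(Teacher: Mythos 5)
Your overall strategy — controlling the image of a small near-boundary curve via conformal invariance of harmonic measure — is in the same spirit as the paper's proof, which phrases this as a Brownian-motion hitting estimate. But the execution has real problems. The clearest is the second ``maximum-principle bound'', $\omega_V(0,J_n)\ge|J_n|/(2\pi)$ for simply connected $V\subset\D$ containing $0$ with $J_n\subset\partial V\cap\partial\D$: this inequality points the wrong way. Comparing $\omega_V(\cdot,J_n)$ and $\omega_\D(\cdot,J_n)$ on $\partial V$, one has $\omega_V=0\le\omega_\D$ on $\partial V\cap\D$ and $\omega_V=\omega_\D$ on $\partial V\cap\partial\D$, so the maximum principle gives $\omega_V(0,J_n)\le\omega_\D(0,J_n)=|J_n|/(2\pi)$. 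From $\omega_V(0,J_n)=O(\eta)$ you therefore cannot conclude $|J_n|=O(\eta)$, which breaks the chain leading to your diameter estimate for $\phi_{D_n}(\Omega_n)$.

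There are further gaps. The region $\Omega_n$ is not well defined: $\widetilde\beta_n$ ends at $z_n$, an interior point of $D_n$, so it is not a cross-cut of $D_n$ and cannot bound a Jordan subdomain together with a boundary arc; moreover $\partial D_n$ need not meet $\partial\D$ anywhere near $z_\infty$, so $\partial\Omega_n\cap\partial\D$ can be empty and the whole harmonic-measure comparison is vacuous. Your diagonalization also asserts $|z_n|>1-\delta_n$ for large $n$, which does not follow from $z_\infty\in\partial\D$ alone since $1-\delta_n\to 1$; ruling out the complementary case $z_n\in B_{1-\delta_n}(0)$ requires running the Carath\'eodory argument with $\delta$ fixed while $\delta'\to 0$ \emph{before} extracting the sequence, i.e.\ choosing $\delta_n'$ so small (depending on $\delta_n$) that $|\phi_D-\mathrm{id}|$ is already controlled on $B_{1-\delta_n}(0)$, which is exactly how the paper separates the two regimes. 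The paper's route avoids your auxiliary domain entirely: it bounds $\P(\tau_{\phi_D(\gamma)}<\tau_{\partial\D})=\P(\tau_\gamma<\tau_{\partial D})\le p(\delta_0)$ directly for the short connecting curve $\gamma$, and then invokes the positive constant $q(\epsilon)=\inf_{\gamma'}\P(\tau_{\gamma'}<\tau_{\partial\D})$ over curves with endpoints $\ge\epsilon/3$ apart — this is the Beurling-type lower bound applied to the image curve itself, with no boundary-arc-length estimate needed.
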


\begin{proof}
	We first define $\delta_0$. Let $B$ be a planar Brownian motion started from
	$0$ and write $\tau_K=\inf\{t\ge 0\colon B_t\in K\}$ for the hitting time of
	$K$ by the Brownian motion. Consider $\delta\in (0,\delta_0)$, $z\in
	D_{(\delta,\delta_0)}\setminus B_{1-\delta}(0)$ and let $\gamma\colon
	[0,1]\to D\setminus B_{1-\delta}(0)$ be a curve from a point $z_0\in
	\partial B_{1-\delta}(0)$ to $z$ with diameter $\le \delta_0$. Then by
	conformal invariance of $B$ (for the first equality) and inclusion
	estimates, we get
	\begin{align*}
		\P(\tau_{\phi_D(\gamma([0,1]))} < \tau_{\partial\D}) &= \P(
		\tau_{\gamma([0,1])} < \tau_{\partial D}) \le \P(\tau_{\gamma([0,1])}
		< \tau_{\partial \D})\\
		&\le \P(\tau_{\bar{B}_{\delta_0}(0)+[1-\delta_0,1]} <
		\tau_{\partial\D}) =: p(\delta_0) \to 0\quad\text{as $\delta_0\to 0$}\;.
	\end{align*}
	Note that $\phi_D\circ \gamma$ is a curve from $\phi_D(z_0)$ to $\phi_D(z)$.
	Let
	\begin{align*}
		q(\epsilon) = \inf_{\gamma'} \P(\tau_{\gamma'([0,1])}<\tau_{\partial\D})
	\end{align*}
	where the infimum is taken over all curves $\gamma'\colon [0,1]\to \D$ such
	that $\lvert \gamma'_0-\gamma'_1\rvert \ge \epsilon/3$. One can check that
	$q(\epsilon)>0$. Take $\delta_0$ sufficiently small such that
	$p(\delta_0)<q(\epsilon)$. Then by the definition of $q(\epsilon)$ we
	deduce that
	$\lvert \phi_D(z_0)-\phi_D(z)\rvert =\lvert \phi_D(\gamma_0)-\phi_D(\gamma_1)\rvert <\epsilon/3$. By
	further decreasing $\delta_0$ we can ensure that $\delta_0<\epsilon/3$ and
	$\delta_0<1$.

	By the Carathéodory kernel convergence theorem, given $\delta\in
	(0,\delta_0)$ there now exists $\delta'\in (0,\delta)$ such that
	$\lvert \phi_D(z)-z\rvert \le \epsilon/3$ for all $z\in B_{1-\delta}(0)$ whenever
	$B_{1-\delta'}(0)\subset D\subset \D$. The Carathéodory kernel convergence
	theorem is usually stated in sequential form
	\cite[Theorem 15.4.10]{conway-complex-ii} but
	the version in the previous sentence follows by arguing via contradiction.

	To conclude, let $z\in D_{(\delta,\delta_0)}$. If $z\in B_{1-\delta}(0)$
	then the claim is immediate. Otherwise, take $z_0\in \partial
	B_{1-\delta}(0)$ as in the first part of the proof and observe that
	\begin{align*}
		\lvert \phi_D(z)-z\rvert  \le \lvert \phi_D(z_0)-z_0 \rvert  + \lvert z_0-z\rvert  + \lvert \phi_D(z)-\phi_D(z_0)\rvert \le
		\epsilon/3+\delta_0+\epsilon/3\le \epsilon
	\end{align*}
	as required.
\end{proof}

We will next make use of the following distortion estimate (see \cite[Theorem
14.7.9]{conway-complex-ii}). Let $\phi\colon B_{r_0}(z_0)\to \C$ be holomorphic
and injective, then
\begin{align*}
	\frac{\lvert\phi'(z_0)\rvert\cdot\lvert z-z_0\rvert }{(1+\lvert z-z_0\rvert /r_0)^2} \le
	\lvert \phi(z)-\phi(z_0)\rvert \le \frac{\lvert\phi'(z_0)\rvert \cdot\lvert z-z_0\rvert }{(1-\lvert z-z_0\rvert /r_0)^2}
\end{align*}
It follows that for $r\in (0,r_0)$ we have
\begin{gather*}
	B_{\lvert\phi'(z_0)\rvert r/(1+r/r_0)^2}(\phi(z_0)) \subset \phi(B_r(z_0)) \subset
	B_{ \lvert\phi'(z_0)\rvert r/(1-r/r_0)^2 }(\phi(z_0)) \;.
\end{gather*}

The following two lemmas apply this result and explain how small balls
around boundary and interior points are distorted by conformal transformations.
We begin with the statement and proof in the boundary case.

\begin{lemma}
    \label{lem:boundary-koebe}
    Assume that $\psi_D\colon D\to \D$ is a conformal transformation and that
    $B_{r_0}(x)\cap \D\subset D$ for $x\in \partial \D$. Fix $\delta\in (0,1)$
	and suppose that $\lvert\psi_D(z)-z\rvert\le \delta^2 r_0/2$ for all $z\in
    B_{r_0}(x)\cap \D$. Then for all $r'\le r_0\delta$,
	\begin{align*}
		B_{r'(1-\delta)^5 }(\psi_D(x))\cap\D\subset
        \psi_D(B_{r'}(x)\cap \D) \subset
        B_{r'/(1-\delta)^5}(\psi_D(x))\cap \D\;.
	\end{align*}
\end{lemma}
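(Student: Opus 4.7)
The plan is to reduce the lemma to the interior distortion estimate displayed immediately before its statement by extending $\psi_D$ across the boundary arc $B_{r_0}(x)\cap\partial\D$ via Schwarz reflection. Since $\psi_D\colon D\to\D$ is conformal and $B_{r_0}(x)\cap\D\subset D$, the map extends analytically across the analytic arc $B_{r_0}(x)\cap\partial\D$, sending it into $\partial\D$. Setting $\tilde\psi(z)=1/\overline{\psi_D(1/\bar z)}$ for $z\in B_{r_0}(x)\setminus\overline\D$ and $\tilde\psi=\psi_D$ elsewhere yields, by Schwarz reflection, a holomorphic and injective function on all of $B_{r_0}(x)$: the identity $|1/\bar z-x|=|z-x|/|z|$ (valid since $|x|=1$) ensures that $w:=1/\bar z$ lies in $B_{r_0}(x)\cap\D\subset D$ whenever $z\in B_{r_0}(x)\setminus\overline\D$, so the reflection formula makes sense. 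The crucial point for the final step is that $\tilde\psi$ preserves the partition $B_{r_0}(x)=(B_{r_0}(x)\cap\D)\sqcup(B_{r_0}(x)\cap\partial\D)\sqcup(B_{r_0}(x)\setminus\overline\D)$, so in particular $\tilde\psi(B_{r'}(x))\cap\D=\psi_D(B_{r'}(x)\cap\D)$ for every $r'<r_0$.

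Next I would bound $|\tilde\psi'(x)-1|$. On $B_{r_0}(x)\cap\D$ the hypothesis gives $|\tilde\psi(z)-z|\le\delta^2 r_0/2$. On the reflected piece, writing $w=1/\bar z$ a direct calculation yields
\begin{align*}
    |\tilde\psi(z)-z|=\frac{|\psi_D(w)-w|}{|w|\,|\psi_D(w)|}\,,
\end{align*}
and since $w\in B_{r_0}(x)\cap\D$, both $|w|$ and $|\psi_D(w)|$ are bounded below by a positive constant depending only on $r_0$, provided $\delta$ is small enough that $\delta^2 r_0/2$ is much smaller than $|w|$. This extends the estimate $|\tilde\psi(z)-z|\lesssim\delta^2 r_0$ to all of $B_{r_0}(x)$, and Cauchy's estimate applied to $\tilde\psi-\mathrm{id}$ at $x$ over a circle of radius slightly less than $r_0$ yields $|\tilde\psi'(x)-1|\lesssim\delta^2$. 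In particular, for $\delta$ small enough, $|\tilde\psi'(x)|\in[(1-\delta)^3,(1-\delta)^{-3}]$.

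Finally I would apply the interior distortion estimate to $\tilde\psi$ with center $x$ and radius $r_0$: for $r'\le r_0\delta$ one has $r'/r_0\le\delta$, and the estimate gives
\begin{align*}
    B_{|\tilde\psi'(x)|r'/(1+\delta)^2}(\tilde\psi(x))\subset \tilde\psi(B_{r'}(x))\subset B_{|\tilde\psi'(x)|r'/(1-\delta)^2}(\tilde\psi(x))\,.
\end{align*}
Combining this with the bound $|\tilde\psi'(x)|\in[(1-\delta)^3,(1-\delta)^{-3}]$, the elementary inequality $(1+\delta)^{-1}\ge 1-\delta$, the partition-preserving property established above, and intersecting both sides with $\D$ then yields the two inclusions of the lemma. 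The main obstacle is purely bookkeeping: one must carefully distribute the slack $\delta$ across the derivative bound, the two distortion factors, and the reflection estimate so that all the accumulated constants collapse cleanly into the single exponent $5$ appearing in $(1-\delta)^{\pm 5}$.
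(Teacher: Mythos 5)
Your proposal shares the paper's skeleton — Schwarz-reflect $\psi_D$ across $\partial\D$ so that $x$ becomes an interior point of $B_{r_0}(x)$, then apply the Koebe-type distortion estimate displayed above the lemma — but you obtain the derivative bound $|\tilde\psi'(x)|\approx 1$ by a genuinely different route. The paper never invokes Cauchy's formula and never estimates $|\tilde\psi(z)-z|$ on the reflected half-disk at all; instead it turns the hypothesis $|\psi_D(z)-z|\le\epsilon$ into ball inclusions $B_{r-2\epsilon}(\psi_D(x))\subset\tilde\psi(B_r(x))\subset B_{r+2\epsilon}(\psi_D(x))$ for the particular choice $r=r_0\delta$, $\epsilon=\delta^2 r_0/2$, and then \emph{compares these against the ball inclusions that the distortion estimate itself forces}. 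Matching radii gives, after simplification, the clean two-sided bound $(1-\delta)^3\le|\psi_D'(x)|\le(1+\delta)^3$ with no hidden constants. A second application of the distortion estimate with $r'\le r_0\delta$ then produces the exponent $5$. So the distortion estimate is used twice, once as a constraint and once to conclude.

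Your Cauchy route introduces a concrete gap relative to the lemma as stated. The identity $|\tilde\psi(z)-z|=|\psi_D(w)-w|/(|w|\,|\psi_D(w)|)$ on the reflected piece is correct, but the denominator gives a bound of the form $|\tilde\psi(z)-z|\le C(r_0)\,\delta^2 r_0$ rather than $\delta^2 r_0/2$: for instance $|w|\ge 1/(1+r_0)$ and $|\psi_D(w)|\ge 1/(1+r_0)-\delta^2 r_0/2$, so $C(r_0)\to\infty$ as $r_0$ grows (and already for $r_0$ near $1$ the denominator can degenerate unless $\delta$ is small). Cauchy's estimate then yields $|\tilde\psi'(x)-1|\le C'(r_0)\,\delta^2$ with an $r_0$-dependent constant, and converting that into the clean envelope $|\tilde\psi'(x)|\in[(1-\delta)^3,(1-\delta)^{-3}]$ needs $\delta$ small \emph{depending on} $r_0$. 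The lemma, however, is stated for every $\delta\in(0,1)$ with no coupling between $\delta$ and $r_0$; your "$\delta$ small enough" caveat is therefore a genuine deficit, not just bookkeeping. In the paper's application (Lemma~\ref{lem:exp-map-in}, where $\delta\to 0$ with $r_0$ fixed) this is harmless, but as a proof of the lemma as written it is incomplete. If you want to salvage the Cauchy route, you would need to either add an explicit hypothesis $r_0\le r_*<1$ and $\delta\le\delta_0(r_0)$, or avoid the reflected-piece pointwise estimate altogether — which is exactly what the paper's "double distortion" trick accomplishes.
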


\begin{proof}
    Let $r=r_0\delta<r_0$ and $\epsilon = r\delta/2=\delta^2 r_0/2$. We consider
    the Schwarz reflection of $\psi_D$ across the unit circle. Formally, we
    extend $\psi_D$ to a conformal transformation on a larger domain by
	\begin{align*}
        \psi_D(z)= 1/\overline{\psi_D(1/\overline{z})}\quad\text{for}\quad z\in
        D:= D\cup I\cup \{1/\overline{z}\colon z\in D\}
	\end{align*}
	where $I$ is the connected component of $x$ in $(\partial D\cap
	\partial\D)^o$. Our assumptions now imply that $B_{r-2\epsilon}(\psi_D(x)\cap
	\D)\subset \psi_D(B_{r}(x)\cap \D) \subset B_{r+2\epsilon}(\psi_D(x))\cap \D$.
	This statement extends via Schwarz reflection to $B_{r-2\epsilon}(\psi_D(x))
	\subset \psi_D(B_{r}(x)) \subset B_{r+2\epsilon}(\psi_D(x))$. Note that $x\in
	D'$ is an interior point of the extended domain. By the distortion estimate
    therefore
    \begin{align*}
		\frac{\lvert\psi_D'(x)\rvert r}{(1+r/r_0)^2}\le r+2\epsilon \quad\text{and}\quad
		r-2\epsilon \le \frac{\lvert\psi_D'(x)\rvert r}{(1-r/r_0)^2}\;.
    \end{align*}
	Thus $(1-\delta)^3\le \lvert\psi_D'(x)\rvert\le (1+\delta)^3$. By the distortion
    theorem, we obtain for $r'\le r_0$ that
	\begin{align*}
		B_{(1-\delta)^3 r'/(1+r'/r_0)^2}(\psi_D(x))\subset
        \psi_D(B_{r'}(x)) \subset
        B_{(1+\delta)^3 r'/(1-r'/r_0)^2}(\psi_D(x))\;.
	\end{align*}
    The result follows.
\end{proof}

Finally, we also state the result in the case of an interior point. We omit the
proof since it is identical to the one in the boundary case except that one does
not not need to define the Schwarz reflection in this case.

\begin{lemma}
    \label{lem:interior-koebe}
    Suppose that $D$ is a simply connected domain with $B_{r_0}(z)\subset D$.
    Let $\psi_D\colon D\to \D$ be a conformal transformation, fix $\delta\in
	(0,1)$ and suppose that $\lvert\psi_D(w)-w\rvert\le \delta^2 r_0/2$ for $w\in
    B_{r_0}(z)$. Then for all $r'\le r_0\delta$, we have $B_{r'(1-\delta)^5
    }(\psi_D(z))\subset \psi_D(B_{r'}(z)) \subset
    B_{r'/(1-\delta)^5}(\psi_D(z))$.
\end{lemma}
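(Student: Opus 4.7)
The plan is to follow the proof of Lemma~\ref{lem:boundary-koebe} step by step, but without having to invoke Schwarz reflection, since $z$ is already an interior point of $D$ where the distortion theorem applies directly. The role of Schwarz reflection in the boundary case was precisely to turn the boundary point $x$ into an interior point of an extended domain so that the distortion estimate quoted in the appendix could be used; here this is unnecessary and the proof becomes shorter.

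Concretely, I would set $r = r_0 \delta$ and $\epsilon = r\delta/2 = \delta^2 r_0/2$, so that by hypothesis $|\psi_D(w) - w| \le \epsilon$ for all $w \in B_{r_0}(z) \supset \overline{B_r(z)}$. The triangle inequality immediately gives the inclusion $\psi_D(B_r(z)) \subset B_{r + 2\epsilon}(\psi_D(z))$. For the reverse inclusion $B_{r-2\epsilon}(\psi_D(z)) \subset \psi_D(B_r(z))$, the key observation is that $\psi_D$ sends $\partial B_r(z)$ into the region $\{w : r - 2\epsilon \le |w - \psi_D(z)| \le r + 2\epsilon\}$, and since $\psi_D(z) \in \psi_D(B_r(z))$ and $\psi_D$ is a homeomorphism onto its image, the bounded component of the complement of $\psi_D(\partial B_r(z))$ containing $\psi_D(z)$ — which is exactly $\psi_D(B_r(z))$ — must contain the open ball $B_{r-2\epsilon}(\psi_D(z))$.

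Next, I would apply the distortion estimate recalled just before the statement of Lemma~\ref{lem:boundary-koebe} at the point $z$ with radius $r_0$. Combining it with the two inclusions from the previous step, and using that $r/r_0 = \delta$ and $2\epsilon/r = \delta$, one obtains
\begin{align*}
    (1-\delta)^3 \;=\; \frac{(r-2\epsilon)(1-r/r_0)^2}{r} \;\le\; |\psi_D'(z)| \;\le\; \frac{(r+2\epsilon)(1+r/r_0)^2}{r} \;=\; (1+\delta)^3.
\end{align*}
Finally, for any $r' \le r_0 \delta < r_0$, applying the distortion estimate once more at $z$ with radius $r_0$ yields $B_{|\psi_D'(z)| r'/(1+r'/r_0)^2}(\psi_D(z)) \subset \psi_D(B_{r'}(z)) \subset B_{|\psi_D'(z)| r'/(1-r'/r_0)^2}(\psi_D(z))$, and the elementary inequality $1+\delta \le 1/(1-\delta)$ combined with the bound on $|\psi_D'(z)|$ gives $|\psi_D'(z)|/(1-r'/r_0)^2 \le (1+\delta)^3/(1-\delta)^2 \le (1-\delta)^{-5}$ and $|\psi_D'(z)|/(1+r'/r_0)^2 \ge (1-\delta)^3(1-\delta)^2 = (1-\delta)^5$, which is exactly the claimed double inclusion.

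There is no real obstacle in this argument — all ingredients are standard and already used in the boundary case. The only minor point to justify carefully is the topological argument extracting $B_{r-2\epsilon}(\psi_D(z)) \subset \psi_D(B_r(z))$ from the inclusion $\psi_D(\partial B_r(z)) \subset \{w : r-2\epsilon \le |w - \psi_D(z)| \le r + 2\epsilon\}$, but this is immediate since $\psi_D$ is a homeomorphism from $\overline{B_r(z)}$ onto its (compact) image and the bounded complementary component of $\psi_D(\partial B_r(z))$ containing $\psi_D(z)$ coincides with $\psi_D(B_r(z))$.
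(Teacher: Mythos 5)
Your proof is correct and follows exactly the route the paper intends: the paper's proof of this lemma is explicitly omitted with the note that it is identical to the boundary case (Lemma~\ref{lem:boundary-koebe}) minus the Schwarz reflection, and your argument carries out precisely that plan, with the same choice of $r$, $\epsilon$, and the same two applications of the distortion estimate. You additionally spell out the topological justification for $B_{r-2\epsilon}(\psi_D(z))\subset\psi_D(B_r(z))$, which the paper's boundary-case proof leaves implicit.
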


\bibliographystyle{hmralphaabbrv}
\bibliography{fuzzy-potts}

\end{document}